\let\footnote=\endnote
\newcommand{\GG}[1]{}
\def\T{{ \mathrm{\scriptscriptstyle T} }}
\DeclareMathOperator*{\pr}{Pr}
\DeclareMathOperator*{\Cov}{Cov} \DeclareMathOperator*{\Var}{Var}
\DeclareMathOperator*{\diag}{diag}
\DeclareMathOperator*{\dett}{det}
\DeclareMathOperator*{\gp}{\Gcal}
\DeclareMathOperator*{\kl}{KL}
\DeclareMathOperator*{\op}{op}
\DeclareMathOperator*{\tap}{tap}
\DeclareMathOperator*{\tr}{tr}
\newcommand{\ud}{\mathrm{d}}
\newcommand{\HW}{\mathrm{HW}}
\newtheorem{theorem}{Theorem}
\newtheorem{lemma}{Lemma}
\newtheorem{proposition}{Proposition}
\newtheorem{assumption}{Assumption}
\newtheorem{example}{Example}
\def\pof{\text{Proof of}}
\DeclareMathOperator{\Acal}{\mathcal{A}}
\DeclareMathOperator{\Bcal}{\mathcal{B}}
\DeclareMathOperator{\Dcal}{\mathcal{D}}
\DeclareMathOperator{\Ecal}{\mathcal{E}}
\DeclareMathOperator{\Gcal}{\mathcal{G}}
\DeclareMathOperator{\Ical}{\mathcal{I}}
\DeclareMathOperator{\Kcal}{\mathcal{K}}
\DeclareMathOperator{\Lcal}{\mathcal{L}}
\DeclareMathOperator{\Ncal}{\mathcal{N}}
\DeclareMathOperator{\Scal}{\mathcal{S}}
\DeclareMathOperator{\Ucal}{\mathcal{U}}
\DeclareMathOperator{\Wcal}{\mathcal{W}}
\def\RR{\mathbb{R}}
\def\NN{\mathbb{N}}
\def\ZZ{\mathbb{Z}}
\def\EE{\mathbb{E}}
\def\PP{\mathbb{P}}
\def\bb{\mathrm{b}}
\def\ee{\mathrm{e}}
\def\ff{\mathrm{f}}
\def\mm{\mathrm{m}}
\def\vv{\mathrm{v}}
\def\bfb{\mathbf{b}}
\def\bfe{\mathbf{e}}
\def\bfi{\mathbf{i}}
\def\bfj{\mathbf{j}}
\def\bfk{\mathbf{k}}
\def\bfs{\mathbf{s}}
\def\bft{\mathbf{t}}
\def\bfx{\mathbf{x}}
\def\sD{\mathsf{D}}
\def\sK{\mathsf{K}}
\def\sa{\mathsf{a}}
\def\sj{\mathsf{j}}
\begin{document}
\title{\bf Fixed-domain Posterior Contraction Rates for Spatial Gaussian Process Model with Nugget}

\author[1]{Cheng Li \thanks{stalic@nus.edu.sg}}
\author[1]{Saifei Sun \thanks{sunsf1015@163.com}}
\affil[1]{Department of Statistics and Data Science, National University of Singapore}
\author[2]{Yichen Zhu \thanks{yichen.zhu@duke.edu}}
\affil[2]{Department of Statistical Science, Duke University}

\date{}
\maketitle

\begin{abstract}
Spatial Gaussian process regression models typically contain finite dimensional covariance parameters that need to be estimated from the data. We study the Bayesian estimation of covariance parameters including the nugget parameter in a general class of stationary covariance functions under fixed-domain asymptotics, which is theoretically challenging due to the increasingly strong dependence among spatial observations. We propose a novel adaptation of the Schwartz's consistency theorem for showing posterior contraction rates of the covariance parameters including the nugget. We derive a new polynomial evidence lower bound, and propose consistent higher-order quadratic variation estimators that satisfy concentration inequalities with exponentially small tails. Our Bayesian fixed-domain asymptotics theory leads to explicit posterior contraction rates for the microergodic and nugget parameters in the isotropic Mat\'ern covariance function under a general stratified sampling design. We verify our theory and the Bayesian predictive performance in simulation studies and an application to sea surface temperature data.
\end{abstract}
{\bf Keywords:} Bayesian inference, Mat\'ern covariance function, Evidence lower bound, Higher-order quadratic variation.

\section{Introduction} \label{sec:intro}
Gaussian process has been widely used in spatial statistics for modeling spatial and spatiotemporal correlations. This paper studies the parameter estimation in the following spatial Gaussian process regression model:
\begin{align} \label{eq:obs.model}
& Y(\bfs) = \ff(\bfs)^\T  \beta + X(\bfs) + \varepsilon(\bfs), \text{ for } \bfs\in [0,1]^d,
\end{align}
where $d$ is the dimension of domain, $\ff(\cdot)=(\ff_1(\cdot),\ldots,\ff_p(\cdot))^\T  \in \RR^p$ is a $p$-dimensional vector of deterministic known functions on $[0,1]^d$, and $\beta\in \RR^p$ is the regression coefficient vector. In applications, $\ff_1,\ldots,\ff_p$ can include the constant function $1$, and hence $\beta$ can include an intercept term. The term $X(\cdot)$ is a Gaussian process $X=\left\{X(\bfs): \bfs\in [0,1]^d \right\}$, and $\{\varepsilon(\bfs): \bfs\in [0,1]^d\}$ is a Gaussian white noise process independent of $X(\cdot)$ that satisfies $\varepsilon(\bfs)\sim \Ncal(0,\tau)$ for all $\bfs\in [0,1]^d$. In practice, we observe the process $Y(\cdot)$ and the functions $\ff(\cdot)$ on a set of distinct sampling points $S_n=\{\bfs_1,\ldots,\bfs_n\}\subseteq [0,1]^d$. The observed data from the model \eqref{eq:obs.model} are $Y_n=(Y(\bfs_1),\ldots,Y(\bfs_n))^\T $ and $ F_n  = (\ff(\bfs_1)^\T ,\ldots,\ff(\bfs_n)^\T ) \in \RR^{n\times p}$. All statistical inference is based on the data $(Y_n, F_n)$.

Model \eqref{eq:obs.model} is one of the most important models in spatial statistics and can be used as building blocks for other more sophisticated models. Besides the regression term $\ff(\bfs)^\T  \beta$, the Gaussian process term $X(\cdot)$ captures the spatial association and explains the spatial random effects from unmeasured or unobserved covariates with spatial pattern (\citealt{Banetal08}). The noise $\varepsilon(\cdot)$ captures the measurement error, whose variance parameter $\tau$ is also known as the nugget parameter.

It remains to specify the Gaussian process for $X(\cdot)$. We assume that the mean function of $X(\cdot)$ is zero, and the covariance function of $X(\cdot)$ can be written as $\Cov(X(\bfs),X(\bft)) = \theta K_{\alpha,\nu}(\bfs-\bft)$ for $\bfs,\bft\in [0,1]^d$, where $\theta>0$ is the microergodic parameter that will be defined later, $K_{\alpha,\nu}(\cdot):\RR^d\to\RR$ is a positive definite function, $\alpha$ is the spatial range parameter, and $\nu$ is the smoothness parameter. We will give several examples of covariance functions with this form in Section \ref{sec:general.theorem}. For example, one popular choice in spatial statistics is the Mat\'ern covariance function (\citealt{Stein99a})
\begin{align} \label{eq:MaternCov}
& \Cov(X(\bfs),X(\bft)) = \sigma^2\frac{2^{1-\nu}}{\Gamma(\nu)} \left(\alpha\|\bfs-\bft\|\right)^{\nu} \Kcal_{\nu}\left(\alpha\|\bfs-\bft\|\right),
\end{align}
where $\nu>0$ is the smoothness parameter, $\sigma^2>0$ is the variance (partial sill) parameter, $\alpha>0$ is the inverse range (or length-scale) parameter, $\Gamma(\cdot)$ is the gamma function, $\Kcal_{\nu}(\cdot)$ is the modified Bessel function of the second kind, and $\|\cdot\|$ is the Euclidean norm. If we let $\theta=\sigma^2\alpha^{2\nu}$  and $K(x)=\{2^{1-\nu}/\Gamma(\nu)\} \left(\|x\|/\alpha\right)^{\nu} \Kcal_{\nu}\left(\alpha\|x\|\right)$ for $x\in \RR^d$, then \eqref{eq:MaternCov} can be written as $\theta K_{\alpha,\nu}(\bfs-\bft)$ for $\bfs,\bft\in [0,1]^d$. For the Gaussian processes $X$ and $Y$ in Model \eqref{eq:obs.model}, we write $X\sim \gp(0,\theta K_{\alpha,\nu})$ and $Y\sim \gp( \beta^\T  \ff, \theta K_{\alpha,\nu} + \tau \delta_0)$, respectively, where $\delta_0$ denotes the Dirac delta function at zero.

We study the Bayesian inference of Model \eqref{eq:obs.model} (\citealt{HanSte93}, \citealt{DeO97}), where the common practice is to assign prior distributions on the regression coefficients $\beta$, the nugget parameter $\tau$, and the parameters in the covariance function of $X(\cdot)$ such as $\theta$ and $\alpha$. Bayesian posterior inference, including the prediction of $Y(s^*)$ at a new location $s^*\in [0,1]^d$, is fully based on the posterior distribution of these model parameters. In application, these parameters are randomly drawn from their posterior using sampling algorithms such as Markov chain Monte Carlo. Therefore, the behavior of their posterior distributions will heavily affect the Bayesian posterior predictive performance. However, despite the routine practice of using them in spatial statistics, there is a severe lack of theoretical understanding of Bayesian parameter estimation in the general spatial Gaussian process regression model with nugget \eqref{eq:obs.model}, especially on the asymptotic behavior of the posterior distribution of model parameters when the number of observations becomes large. Such theory becomes increasingly important given the advance in high-resolution remote sensing technology, and the research on Gaussian process with massive spatial datasets is prevalent in Bayesian spatial statistics, such as \citet{Banetal08}, \citet{SanHua12}, \citet{Datetal16}, \citet{Heatonetal18}, \citet{KatGui21}, \citet{Peretal21}, \citet{Guhetal22}, etc.

We consider the Bayesian fixed-domain asymptotics (or infill asymptotics) framework (\citealt{Stein99a}, \citealt{Zhang04}). For the first time in the literature, we derive the fixed-domain Bayesian posterior contraction rates of model parameters in the general spatial Gaussian process regression model \eqref{eq:obs.model} with both the regression term $\ff(\bfs)^\T \beta$ and the measurement error (with the nugget $\tau$). In the fixed-domain asymptotics regime, the spatial domain $[0,1]^d$, as we have assumed, always remains fixed and bounded. This implies that as $n$ increases, the sampling locations in $S_n$ become increasingly dense in the domain $[0,1]^d$, leading to increasingly stronger dependence between adjacent observations in the data $Y_n$. The fixed-domain asymptotics has several advantages over other regimes such as increasing-domain asymptotics (\citealt{MarMar84}). First, a fixed domain matches up with the reality in many spatial applications. For instance, the advances in remote sensing technology enable routine collection of spatial data in larger volume and higher resolution from a given region (\citealt{Sunetal18}). Second, the stationarity assumption on the Gaussian process $X(\cdot)$ is more likely to hold on a fixed domain rather than an expanding domain. Therefore, the fixed-domain asymptotics regime is more suitable for interpolation of spatial processes (Section 3.3 of \citealt{Stein99a}). Third, fixed-domain asymptotics has better parameter estimation performance than the increasing-domain asymptotics (\citealt{ZhaZim05}).

On the other hand, fixed-domain asymptotics also poses several significant theoretical challenges. We take the isotropic Mat\'ern covariance function for example. The first challenge comes from the lack of identification for the covariance parameters $(\sigma^2,\alpha)$ in \eqref{eq:MaternCov} when the domain dimension $d=1,2,3$, due to the increasingly stronger dependence in the data $Y_n$. We briefly review some basic results in equivalence of Gaussian processes. If two Gaussian processes are defined on a fixed domain, then by the result in \citet{IbrRoz78} and Chapter 4 of \citet{Stein99a}, they must be either equivalent or orthogonal to each other, where the equivalence means that their Gaussian measures are mutually absolutely continuous. In particular, \citet{Zhang04} has shown the well-known result that when the spatial domain has the dimension $d=1,2,3$, for two isotropic Mat\'ern with parameters $(\sigma^2_1,\alpha_1)$ and $(\sigma^2_2,\alpha_2)$ and the same smoothness $\nu$, their Gaussian measures are equivalent if $\sigma_1^2\alpha_1^{2\nu}=\sigma_2^2\alpha_2^{2\nu}$, and they are orthogonal otherwise. This equivalence result for $d=1,2,3$ is a consequence of the integral test for the spectral densities of two covariance functions; see for example, Theorem A.1 of \citet{Stein04}. On the other hand, when $d\geq 5$, \citet{And10} has proposed consistent moment estimators for both $\sigma^2$ and $\alpha$ with a given $\nu$ under fixed-domain asymptotics. Recently \citet{BolKir21} have shown that for any $d\geq 4$, two Gaussian measures are equivalent if and only if they have the same parameters $(\sigma^2,\alpha,\nu)$, though finding the consistent estimators of $(\sigma^2,\alpha)$ for $d=4$ under fixed-domain asymptotics requires further study.

A direct consequence of the theory on equivalence of Gaussian processes is that for the domain dimension $d=1,2,3$, which is of primary interest to spatial statistics, only the microergodic parameter $\theta=\sigma^2\alpha^{2\nu}$ and continuous functions of $\theta$ can be consistently estimated from the data under fixed-domain asymptotics. However, the individual variance parameter $\sigma^2$ and range parameter $\alpha$ have no consistent estimators when $d=1,2,3$, and therefore they cannot have Bayesian posterior consistency. In general, the microergodic parameter is defined to be the parameter that uniquely determines the equivalence class of Gaussian processes; see Section 6.2 of \citet{Stein99a} for a detailed explanation on the microergodic parameter. Besides Mat\'ern, the exact form of microergodic parameter $\theta$ for $d=1,2,3$ has been identified in several other families, such as generalized Wendland (\citealt{Bevetal19}) and confluent hypergeometric covariance functions (\citealt{MaBha22}); see Examples \ref{ex:GW} and \ref{ex:CH} in Section \ref{sec:general.theorem}. The microergodic parameter has been further studied for covariance functions on non-Euclidean domains, such as spheres (\citealt{Araetal18}) and Riemannian manifolds (\citealt{Lietal21}), where \citet{Lietal21} have discovered a similar phenomenon for the Mat\'ern covariogram when $d=1,2,3$. Consequently, statistical inference for fixed-domain asymptotics when $d=1,2,3$ differs completely from that of regular parametric models with independent or weakly dependent data. For example, instead of increasing with the sample size, the Fisher information for the non-microergodic parameters $(\sigma^2,\alpha)$ has a finite limit in the case $d=1,2,3$ under fixed-domain asymptotics (\citealt{ZhaZim05}, \citealt{Loh05}).

The second main challenge comes from the impact of the regression terms $\ff(\bfs)^\T \beta$ and the nugget parameter $\tau$ on the asymptotics of other parameters, in particular the microergodic parameter $\theta$ when $d=1,2,3$. In real spatial applications, typically one assumes that $Y_n$ from Model \eqref{eq:obs.model} contains some regression terms and measurement error, though most existing frequentist fixed-domain asymptotics works have only considered estimation in a Gaussian process without regression terms and measurement error (\citealt{Ying91}, \citealt{Zhang04}, \citealt{Duetal09}, \citealt{WangLoh11}, \citealt{KauSha13}, \citealt{Bevetal19}, \citealt{MaBha22}, etc.) In these works, the maximum likelihood estimator (MLE) of the microergodic parameter $\theta$ is asymptotically normal with a parametric $n^{-1/2}$ convergence rate. The cross-validation estimator (\citealt{Bacetal17}) and the composite likelihood estimator (\citealt{Bacetal19}) have the same parametric rate for Mat\'ern with $\nu=1/2$ and $d=1$. This is no longer true for the microergodic parameter $\theta$ in Model \eqref{eq:obs.model}. Although the nugget $\tau$ is also identifiable in Model \eqref{eq:obs.model} (\citealt{Stein99a}, \citealt{Tangetal21}), it will dramatically change the convergence rate for the estimators of $\theta$. For example, \citet{Chenetal00} have studied a special case of Model \eqref{eq:obs.model} without regression terms, with $d=1$, $\nu=1/2$, and $S_n$ being the equispaced grid in $[0,1]$. The convergence rate for the MLE of $\theta$ slows down from $n^{-1/2}$ in the model without $\tau$ to $n^{-1/4}$ in the model with $\tau$. Under the assumptions of equispaced grid and exact decaying rate of eigenvalues of the Mat\'ern covariance matrix, \citet{Tangetal21} have shown that the asymptotic normality for the MLE of $\theta$, whose convergence rate is $n^{-1/(2+4\nu/d)}$. This deterioration is mainly caused by the convolution of $X(\cdot)$ with the Gaussian noise $\varepsilon(\cdot)$ in Model \eqref{eq:obs.model}. As a consequence, the Bayesian fixed-domain asymptotic theory for the model with nugget \eqref{eq:obs.model} becomes much more challenging than the Bayesian theory for the model without nugget (\citealt{Li20}).

The main contribution of this work is to devise a general theoretical framework for studying the Bayesian fixed-domain posterior contraction rates of the microergodic parameter $\theta$ and the nugget parameter $\tau$ in Model \eqref{eq:obs.model} where $X(\cdot)$ has the covariance function $\theta K_{\alpha,\nu}(\bfs-\bft)$, which covers a wide range of stationary covariance functions in spatial statistics; see our Examples \ref{ex:Matern}-\ref{ex:CH} in Section \ref{sec:general.theorem}. In Bayesian asymptotic theory, the posterior contraction rate describes how fast the posterior distribution of certain parameters contracts to their true values. General Bayesian theory of posterior consistency and contraction rates has been extensively studied; see Chapters 6--9 in \citet{GhoVan17} for a thorough treatment. The main idea in establishing such Bayesian asymptotic results is to adapt the posterior consistency theorem from \citet{Sch65}. That is, for models with independent data, one only needs to verify two conditions: (a) the prior distribution contains a Kullback-Leibler support of the target parameter or nonparametric function, and (b) the existence of an exponentially consistent test. Our theoretical framework resembles the Schwartz's consistency framework, but with two key adaptations to the spatial Gaussian process regression model \eqref{eq:obs.model} under fixed-domain asymptotics.

First, we establish a new polynomial lower bound for the denominator of the posterior density given the strongly dependent data $Y_n$ under fixed-domain asymptotics, based on the spectral analysis of covariance function. This is vastly different from the lower bound of denominator for independent data where the lower bound is established by the law of large numbers in the original Schwartz's theorem. Based on this new evidence lower bound, we establish a general posterior contraction rate theorem for the microergodic parameter and the nugget parameter in Model \eqref{eq:obs.model} under fixed-domain asymptotics.

Second, we propose new exponentially consistent tests for the microergodic parameter $\theta$ and the nugget parameter $\tau$ using higher-order quadratic variation estimators. We illustrate this using the isotropic Mat\'ern covariance function in Example \ref{ex:Matern}. Higher-order quadratic variation has been successfully applied to parameter estimation in frequentist fixed-domain asymptotics for Gaussian process models without nugget (\citealt{Loh15}, \citealt{Lohetal21}). We further generalize this powerful tool to Model \eqref{eq:obs.model} with nugget, similar to the recent work \citet{LohSun23}.

Our new techniques allow us to bypass the difficult problem of expanding the likelihood function for $(\theta,\tau)$ in the presence of nonidentifiable parameters such as the range parameter $\alpha$. For a fixed value of range parameter $\alpha$, the MLE of $(\theta,\tau)$ in Model \eqref{eq:obs.model} without the regression terms $\ff(\bfs)^\T \beta$ satisfies the frequentist asymptotic normality (Theorem 5 of \citealt{Tangetal21}). However, these MLEs do not have any closed-form expression or even first-order approximation except for a few special cases (such as $\nu=1/2$, $d=1$, $S_n$ being an equispaced grid in \citealt{Chenetal00}), which makes it technically highly challenging to construct exponentially consistent tests based on these MLEs. Our higher-order quadratic variation estimators are constructive, explicit, and completely circumvent the difficulty caused by the complicated likelihood function. They can be directly applied to Model \eqref{eq:obs.model} with the regression terms $\ff(\bfs)^\T \beta$. More importantly, they satisfy concentration inequalities with exponentially small tails which work uniformly well for the range parameter $\alpha$ in a wide expanding interval and for a large class of stratified sampling designs without requiring the sampling points $S_n$ to be on an equispaced grid. Details will be discussed in Section \ref{sec:estimator}.

The rest of the paper is organized as follows. Section \ref{sec:general.theorem} presents a new general framework for deriving posterior contraction rates for model parameters under fixed-domain asymptotics. Section \ref{sec:estimator} presents the details of our high-order quadratic variation estimators and the explicit posterior contraction rates for the isotropic Mat\'ern covariance function. Section \ref{sec:numerical} contains simulation studies and a real data example that verify our theory. Section \ref{sec:discussion} includes some discussion. The technical proofs of all theorems and propositions as well as additional simulation results can be found in the Supplementary Material.

\section{General Theory for Fixed-Domain Posterior \\ Contraction Rates} \label{sec:general.theorem}
We first provide some examples of the assumed covariance function $\theta K_{\alpha,\nu}(\bfs-\bft)$ for $\bfs,\bft\in [0,1]^d$. This format of covariance function is very general and includes several classes of covariance functions used in spatial statistics.
\begin{example} \label{ex:Matern}
The isotropic Mat\'ern covariance function defined in \eqref{eq:MaternCov} takes the form $\theta K_{\alpha,\nu}$ with $\theta=\sigma^2\alpha^{2\nu}$ and $K_{\alpha,\nu}(x)=\{2^{1-\nu}/\Gamma(\nu)\} \left(\|x\|/\alpha\right)^{\nu} \Kcal_{\nu}\left(\alpha\|x\|\right)$ for $x\in \RR^d$.
\end{example}
\begin{example} \label{ex:taper}
The tapered isotropic Mat\'ern covariance function (\citealt{Kauetal08}) is $\theta \tilde K_{\alpha,\nu}(\bfs-\bft)= \theta K_{\alpha,\nu}(\bfs-\bft) \times K_{\tap}(\bfs-\bft)$ for any $\bfs,\bft\in [0,1]^d$, where $\theta K_{\alpha,\nu}(\bfs-\bft)$ is the isotropic Mat\'ern covariance function in Example \ref{ex:Matern}, and $K_{\tap}: \RR^d \to \RR$ is a positive definite function, whose spectral density $f_{\tap}(w) = (2\pi)^{-d}\int_{\RR^d} \exp\left(-\imath w^\T  x \right) K_{\tap}(x) \ud x$ satisfies $0< f_{\tap}(w)\leq C_{\tap} (1+\|w\|^2)^{-(\nu+d/2+\eta_{\tap})}$ for some constants $C_{\tap}>0,\eta_{\tap}>0$ and all $w\in \RR^d$, where $\imath^2=-1$.
\end{example}
\begin{example} \label{ex:GW}
The isotropic generalized Wendland covariance function (\citealt{Bevetal19}) is $\theta K_{\alpha,\nu}(\bfs-\bft)$ for any $\bfs,\bft\in [0,1]^d$, where
$\theta=\sigma^2 \alpha^{2\nu}$,
\begin{align} \label{eq:GWCov}
& K_{\alpha,\nu}(x) = \frac{\Ical(0\leq \|x\|<1/\alpha)}{\alpha^{2\nu} B(2\nu,\mu)}\int_{\alpha\|x\|}^1 \big(t^2-\alpha^2\|x\|^2 \big)^{\nu-1/2} (1-t)^{\mu -1} \ud t, \quad \text{ for } x\in \RR^d,
\end{align}
and $\sigma^2>0$, $\alpha>0$, $\nu \geq 1/2$, $\mu > \nu + d$, $B(\cdot,\cdot)$ is the Beta function, $\Ical(\cdot)$ is the indicator function. We assume that $\mu$ is known and suppress the dependence of $K_{\alpha,\nu}(\cdot)$ on $\mu$.
\end{example}
\begin{example} \label{ex:CH}
The isotropic confluent hypergeometric covariance function (\citealt{MaBha22}) is $\theta K_{\alpha,\nu}(\bfs-\bft)$ for any $\bfs,\bft\in [0,1]^d$, where $\theta=\sigma^2 \alpha^{2\nu} \Gamma(\nu+\mu)/\Gamma(\mu)$,
\begin{align} \label{eq:CHCov}
& K_{\alpha,\nu}(x) = \frac{1}{\Gamma(\nu)} \int_0^{\infty} t^{\nu-1} \big(\alpha^2t+1\big)^{-(\nu+\mu)} \exp\left(-\nu \|x\|^2/t \right) \ud t , \quad \text{ for } x\in \RR^d,
\end{align}
where $\sigma^2>0$, $\alpha>0$, $\nu>0$ and $\mu>0$. We assume that $\mu$ is known and suppress the dependence of $K_{\alpha,\nu}(\cdot)$ on $\mu$, though $\mu$ can also be estimated as in \citet{MaBha22}.
\end{example}
Among these examples, the Mat\'ern covariance function is of central importance since the other three can be viewed as its generalizations. The tapered Mat\'ern and the generalized Wendland covariance functions have compact supports and hence computationally more efficient, while the confluent hypergeometric is suitable for modeling polynomially decaying spatial dependence.

Throughout the paper, we assume that the smoothness parameter $\nu>0$ is fixed and known. Estimation of $\nu$ is an important and technically challenging problem for spatial statistics, with some recent advances in the frequentist fixed-domain asymptotics (\citealt{Loh15}, \citealt{Lohetal21}, \citealt{LohSun23}); see our detailed discussion in Section \ref{sec:discussion}. Our main goal is to perform Bayesian inference on the parameters $(\theta,\alpha,\tau,\beta)$ in Model \eqref{eq:obs.model} based on the observed data $(Y_n, F_n)$. We assume that the true parameters are $(\theta_0,\alpha_0,\tau_0,\beta_0)$. We use $\PP_{(\theta,\alpha,\tau,\beta)}$ to denote the probability measure of $\gp(\beta^\T  \ff,\sigma_0^2K_{\alpha_0,\nu}+\tau \delta_{0})$, and hence $\PP_{(\theta_0,\alpha_0,\tau_0,\beta_0)}$ is the true probability measure of the observed process $Y(\cdot)$.

The log-likelihood function based on the data $(Y_n, F_n )$ is
\begin{align} \label{eq:loglik}
\Lcal_n(\theta,\alpha,\tau,\beta) &= - \frac{1}{2} (Y_n-  F_n  \beta)^\T  \left\{\theta K_{\alpha,\nu}(S_n) + \tau I_n \right\}^{-1} (Y_n-  F_n  \beta)  \nonumber \\
&\quad  - \frac{1}{2}\log
\dett\left\{\theta K_{\alpha,\nu}(S_n) + \tau I_n \right\},
\end{align}
where $K_{\alpha,\nu}(S_n)$ is the $n\times n$ covariance matrix whose $(i,j)$-entry is $K_{\alpha,\nu}(\bfs_i-\bfs_j)$, $I_n$ is the $n\times n$ identity matrix, $\dett(A)$ is the determinant of a matrix $A$, and $cA$ denotes the matrix of $A$ with all entries multiplied by the number $c$.

For Bayesian inference, we impose a prior distribution with the density $\pi(\theta,\alpha,\tau,\beta)$ on the parameters. Let $\RR_+=(0,+\infty)$ and $\ZZ_+$ be the set of all positive integers. Then the posterior density of $(\theta,\alpha,\tau,\beta)$ is
\begin{align}\label{jointpost1}
\pi(\theta,\alpha,\tau,\beta~|~ Y_n, F_n ) &= \frac{\exp\left\{ \Lcal_n(\theta,\alpha,\tau,\beta) \right\} \pi(\theta,\alpha,\tau,\beta)}{\int_{\RR_+^{3}\times \RR^p} \exp\left\{\Lcal_n(\theta',\alpha',\tau',\beta') \right\} \pi(\theta',\alpha',\tau',\beta')\ud \theta' \ud \tau' \ud \alpha' \ud \beta' } .
\end{align}
We use $\Pi(\cdot)$ and $\Pi(\cdot \mid Y_n, F_n )$ to denote the prior and posterior measures, respectively.

Our general framework requires a few assumptions on the model setup.
\begin{assumption} \label{cond:model}
The data $(Y_n, F_n)$ are observed on the distinct sampling points $S_n=\{\bfs_i:i=1,\ldots,n\}$ in a fixed domain $[0,1]^d$ following the model \eqref{eq:obs.model}. The sequence of $S_n$ is getting dense in $[0,1]^d$ as $n\to\infty$, in the sense that $\sup_{\bfs^*\in [0,1]^d} \min_{1\leq i\leq n} \|\bfs^*-\bfs_i\| \to 0$ as $n\to\infty$. There exists a constant $C_{\ff}>0$ such that $|\ff_l(\bfs)|\leq C_{\ff}$ for all $l=1,\ldots,p$ and all $\bfs\in [0,1]^d$.
\end{assumption}
\begin{assumption} \label{cond:spectral}
Let $f_{\theta,\alpha,\nu}(w)= (2\pi)^{-d}\int_{\RR^d} \exp(-\imath w^\T  x) \theta K_{\alpha,\nu}(x) \ud x$ be the spectral density of the covariance function $\theta K_{\alpha,\nu}$, where $\imath^2 = -1$ and $\alpha \in \RR_+ $. Then \\
\noindent (i) There exist constants $L>0,r_0\in (0,1/2),\kappa>0$ that may depend on $\theta_0,\alpha_0,\nu,d$, such that for all $\alpha$ that satisfies $|\alpha/\alpha_0 -1|\leq r_0$,
\begin{align*}
\sup_{w\in \RR^d} \left|f_{\theta_0,\alpha,\nu}(w)/f_{\theta_0,\alpha_0,\nu}(w)-1\right| \leq L |\alpha/\alpha_0-1|^{\kappa}.
\end{align*}
\noindent (ii) For any given $\theta>0$ and $w\in \RR^d$, $f_{\theta,\alpha,\nu}(w)$ is a non-increasing function in $\alpha$.
\end{assumption}
\begin{proposition} \label{prop:4cov}
All the four covariance functions in Examples \ref{ex:Matern}--\ref{ex:CH} satisfy Assumption \ref{cond:spectral}.
\end{proposition}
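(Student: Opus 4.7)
The plan is to verify Assumption \ref{cond:spectral} separately for each of the four covariance families in Examples \ref{ex:Matern}--\ref{ex:CH}. In every case, part (ii) is a pointwise monotonicity claim in $\alpha$ with $\theta$ and $w$ held fixed, while part (i) requires a H\"older estimate on $f_{\theta_0,\alpha,\nu}/f_{\theta_0,\alpha_0,\nu}-1$ that is \emph{uniform} in $w\in\RR^d$. The recurring mechanism I would exploit is that whenever $f_{\theta,\alpha,\nu}(w)$ can be written as a positive integral of an integrand that is (a) non-increasing in $\alpha$ for each auxiliary variable, and (b) differs from its $\alpha_0$-counterpart by a multiplicative factor lying in $[1-L|\alpha/\alpha_0-1|^{\kappa},\,1+L|\alpha/\alpha_0-1|^{\kappa}]$ uniformly in that variable, both conditions follow at once. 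The Mat\'ern case (Example \ref{ex:Matern}) admits the closed form $f_{\theta,\alpha,\nu}(w)=C_{\nu,d}\,\theta\,(\alpha^2+\|w\|^2)^{-(\nu+d/2)}$; the ratio $(\alpha_0^2+\|w\|^2)/(\alpha^2+\|w\|^2)$ differs from $1$ by $(\alpha_0^2-\alpha^2)/(\alpha^2+\|w\|^2)$, which is uniformly bounded in $w$ by $|\alpha_0^2-\alpha^2|/\alpha^2$, and the elementary estimate $|x^s-1|\le C_s|x-1|$ for $x$ close to $1$ then yields (i) with $\kappa=1$; monotonicity (ii) is evident.

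For the tapered Mat\'ern (Example \ref{ex:taper}), the pointwise product of covariance functions corresponds to convolution of spectral densities under the $(2\pi)^{-d}$ normalization, so the tapered spectral density equals $\int f_{\theta,\alpha,\nu}(w-u)f_{\tap}(u)\,du$ with $f_{\tap}\ge 0$. Non-negativity of $f_{\tap}$ carries the Mat\'ern monotonicity through the convolution, and the pointwise Mat\'ern bound $|f_{\theta_0,\alpha,\nu}(v)-f_{\theta_0,\alpha_0,\nu}(v)|\le L|\alpha/\alpha_0-1|^{\kappa}f_{\theta_0,\alpha_0,\nu}(v)$, integrated against $f_{\tap}(w-v)\,dv$, transfers verbatim to the tapered spectral densities. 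For the confluent hypergeometric (Example \ref{ex:CH}) I would use Fubini to interchange the Fourier transform with the outer integral in \eqref{eq:CHCov}, together with the Gaussian Fourier identity for $\exp(-\nu\|x\|^2/t)$, to arrive at
\begin{equation*}
f_{\theta,\alpha,\nu}(w)=\theta\,c_{\nu,d}\int_0^{\infty} t^{\nu+d/2-1}(\alpha^2 t+1)^{-(\nu+\mu)}\exp\!\bigl(-\|w\|^2 t/(4\nu)\bigr)\,dt.
\end{equation*}
Condition (ii) then follows because $(\alpha^2 t+1)^{-(\nu+\mu)}$ is non-increasing in $\alpha$ for every $t\ge 0$, and for (i) the identity $(\alpha_0^2 t+1)/(\alpha^2 t+1)-1=(\alpha_0^2-\alpha^2)t/(\alpha^2 t+1)$ is uniformly bounded in $t\ge 0$ by $|\alpha_0^2/\alpha^2-1|$, which pushes through the integral as in the Mat\'ern argument.

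The generalized Wendland (Example \ref{ex:GW}) is the main obstacle. Writing $K_{\alpha,\nu}(x)=\alpha^{-2\nu}\widetilde{K}(\alpha x)$ for a universal radial profile $\widetilde{K}$ supported in the unit ball gives $f_{\theta,\alpha,\nu}(w)=\theta\,\alpha^{-(2\nu+d)}f_{\widetilde{K}}(w/\alpha)$, reducing (i) to a uniform bound on $(\alpha_0/\alpha)^{2\nu+d}f_{\widetilde{K}}(w/\alpha)/f_{\widetilde{K}}(w/\alpha_0)-1$ and (ii) to the monotonicity in $z$ of $z^{2\nu+d}f_{\widetilde{K}}(z)$. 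I would invoke the closed-form spectral density from \citet{Bevetal19}, which expresses $f_{\widetilde{K}}(z)$ through a ${}_1F_2$ hypergeometric function of argument $-z^2/4$, together with the tail asymptotic $f_{\widetilde{K}}(z)\sim c\,z^{-(2\nu+d)}$ as $z\to\infty$ that is guaranteed by the parameter restriction $\mu>\nu+d$. Splitting into a compact regime $\|w\|/\alpha\le R$, where analyticity of the ${}_1F_2$ in $\alpha$ and compactness give a modulus-of-continuity bound, and a tail regime $\|w\|/\alpha>R$, where the ratio is close to the explicit power $(\alpha_0/\alpha)^{2\nu+d}$, and choosing $R$ appropriately relative to the H\"older tolerance, will deliver (i). Monotonicity (ii) can be extracted either directly from ${}_1F_2$ identities or, more cleanly, from a scale-mixture representation of the Wendland covariance as a superposition of Askey-type kernels, which yields the pointwise monotonicity of the spectral density in $\alpha$ transparently. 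The technically hardest step will be producing a representation of the Wendland spectral density in which both conditions are simultaneously manifest.
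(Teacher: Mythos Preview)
Your proposal is correct and differs from the paper's route in a few places. The paper handles part~(i) for all four families by a single mean-value argument: it bounds $\sup_{|\alpha/\alpha_0-1|\le r_0}\sup_{w}\bigl|\partial_\alpha f_{\theta_0,\alpha,\nu}(w)\bigr|\big/f_{\theta_0,\alpha_0,\nu}(w)$ by a finite constant (hence $\kappa=1$ throughout), computing the $\alpha$-derivative of each spectral density explicitly; for part~(ii) it appeals to the monotonicity lemmas in \citet{Bevetal19} and \citet{MaBha22} for Wendland and confluent hypergeometric. You instead control the ratio of integrands uniformly in an auxiliary variable and then integrate, which is more self-contained: in the CH case your Fubini representation yields~(ii) immediately without an external citation, and in the tapered case you correctly work with the convolution $f^{\mathrm{Mat}}_{\theta,\alpha,\nu}*f_{\tap}$ (the spectral density of a pointwise product of covariances), whereas the paper treats it as the pointwise product $f_{\tap}(w)\cdot f^{\mathrm{Mat}}_{\theta,\alpha,\nu}(w)$ so that the $\alpha$-free factor cancels in the ratio; your convolution argument is the rigorous formulation and still delivers both~(i) and~(ii) since the pointwise Mat\'ern bounds integrate through against $f_{\tap}\ge 0$. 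For Wendland, both approaches rest on the ${}_1F_2$ formula and tail asymptotics from \citet{Bevetal19}: the paper differentiates in $\alpha$ and bounds the resulting ${}_1F_2$ ratio, while you propose a compact/tail split in $\|w\|$; either works once those ingredients are in hand, and for~(ii) both effectively defer to \citet{Bevetal19}. What the paper's derivative approach buys is a uniform template across all four examples; what your integral-representation approach buys is that~(i) and~(ii) tend to fall out of a single representation simultaneously.
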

Assumption \ref{cond:spectral} imposes mild and general conditions on the spectral density $f_{\theta,\alpha,\nu}$. For example, the spectral density of the isotropic Mat\'ern in Example \ref{ex:Matern} is $f_{\theta,\nu}(w)=\Gamma(\nu+d/2)/\Gamma(\nu) \times \pi^{-d/2}\theta (\alpha^2+\|w\|^2)^{\nu+d/2}$, which is both Lipschitz continuous ($\kappa=1$) and decreasing in $\alpha$. The same can be proved for Examples \ref{ex:taper}--\ref{ex:CH}. Assumption \ref{cond:spectral} makes our Bayesian fixed-domain asymptotic theory much more general and works far beyond the popular Mat\'ern covariance function. The proof of Proposition \ref{prop:4cov} is in the Supplementary Material.
\begin{assumption} \label{cond:prior1}
The prior probability measure $\Pi(\cdot)$ does not depend on $n$ and has a proper density $\pi(\theta,\alpha,\tau,\beta)$ that is continuous in a neighborhood of $(\theta_0,\alpha_0,\tau_0,\beta_0)$, and satisfies $\pi(\theta_0,\alpha_0,\tau_0,\beta_0)>0$. The marginal prior of $\tau$ satisfies $\int_0^{\infty} \tau^{-n/2}\pi(\tau) \ud \tau <\infty$ for any $n\in \ZZ_+$.
\end{assumption}
Assumption \ref{cond:prior1} requires that the prior distribution has some positive probability mass around the true parameter $(\theta_0,\alpha_0,\tau_0,\beta_0)$. This is a mild condition and necessary for showing posterior consistency. The finite integral condition on $\pi(\tau)$ is to ensure the propriety of the posterior density defined in \eqref{jointpost1}, and is satisfied by the commonly used inverse gamma prior on $\tau$; see Proposition \ref{prop:prior} in Section \ref{subsec:matern.rate} below.

To formulate the posterior contraction, for any $\epsilon_1>0,\epsilon_2>0$, we define the neighborhood
$$\Bcal_0(\epsilon_1,\epsilon_2)=\{(\theta,\alpha,\tau,\beta): |\theta/\theta_0-1| < \epsilon_1 , |\tau/\tau_0-1|< \epsilon_2, \alpha \in \RR_+, \beta \in \RR^p \} .$$
This is an open neighborhood for the microergodic parameter $\theta$ and the nugget parameter $\tau$, but with the range parameter $\alpha$ and the regression coefficient $\beta$ unconstrained. We will focus on showing that the posterior probability of this neighborhood converges to one and characterize the dependence of $\epsilon_1,\epsilon_2$ on the sample size $n$ under the fixed-domain asymptotics regime.

Our posterior contraction results will exclude the range parameter $\alpha$ and the regression coefficient $\beta$. This is because they are in general inconsistent under fixed-domain asymptotics following the general frequentist theory of \citet{Zhang04} when the domain dimension $d=1,2,3$. This will be proved rigorously in Section S3 of the Supplementary Material. In general, their inconsistency is not likely to affect the asymptotic prediction performance of Model \eqref{eq:obs.model} as long as $(\theta,\tau)$ are consistent (\citealt{Stein90b}).

In the original Schwartz's consistency theorem, one important step is to show that the denominator of the posterior density has a lower bound. In the regular parametric models with independent observations, this is achieved by applying concentration inequalities to the log-likelihood ratio together with Jensen's inequality and Fubini's theorem; see for example, the proof of Theorem 6.17 and Lemma 8.10 in \citet{GhoVan17}. This lower bound of denominator is also known as the \textit{evidence lower bound}, which plays a key role in determining the posterior contraction rates.

Our first result is the following theorem that characterizes the evidence lower bound based on the strongly dependent data $Y_n$ observed from Model \eqref{eq:obs.model} under fixed-domain asymptotics.
\begin{theorem}[Fixed-domain Evidence Lower Bound] \label{thm:conv.rate.denom}
Suppose that Assumptions \ref{cond:model}, \ref{cond:spectral} and \ref{cond:prior1} hold. There exists a constant $D>0$ and a large integer $N_1$ that only depend on $\nu,d,p$, $\theta_0,\tau_0,\alpha_0$, $C_{\ff},L,r_0,\kappa$ and $\pi(\theta_0,\tau_0,\alpha_0,\beta_0)$, such that for all $n>N_1$, with probability at least $1-3\exp(-\log^2 n)$,
\begin{align}
& \int \exp\left\{\Lcal_n(\theta,\alpha,\tau,\beta) - \Lcal_n(\theta_0,\alpha_0,\tau_0,\beta_0) \right\} \cdot \pi(\theta,\alpha,\tau,\beta)~ \ud \theta \ud \tau \ud \alpha \ud \beta \geq  Dn^{-(3p+2+2/\kappa)}.  \nonumber
\end{align}
\end{theorem}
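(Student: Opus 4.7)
The strategy is a fixed-domain adaptation of Schwartz's evidence-bound trick: restrict the integral to a carefully calibrated shrinking neighborhood $B_n$ of $(\theta_0,\alpha_0,\tau_0,\beta_0)$ on which the log-likelihood ratio $\Lcal_n(\theta,\alpha,\tau,\beta)-\Lcal_n(\theta_0,\alpha_0,\tau_0,\beta_0)$ stays bounded below by a universal constant with high probability, and use $\Pi(B_n)\cdot e^{-O(1)}$ as the final lower bound. The Schwartz law-of-large-numbers step is unavailable here because of the spatial dependence under fixed-domain asymptotics, so the new device is to convert the spectral density ratio of Assumption~\ref{cond:spectral}(i) into a \emph{multiplicative} matrix sandwich inequality for $\theta K_{\alpha,\nu}(S_n)+\tau I_n$, complemented by Hanson--Wright type concentration of Gaussian quadratic forms.

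\textbf{Neighborhood choice and prior mass.} I would take
\[
B_n=\bigl\{(\theta,\alpha,\tau,\beta):|\theta-\theta_0|\leq c/n,\ |\alpha-\alpha_0|\leq cn^{-2/\kappa},\ |\tau-\tau_0|\leq c/n,\ \|\beta-\beta_0\|_\infty\leq c/n^{3}\bigr\}
\]
for a sufficiently small constant $c>0$. By Assumption~\ref{cond:prior1} and the continuity and positivity of $\pi$ at the truth, the prior mass of $B_n$ is at least $\pi(\theta_0,\alpha_0,\tau_0,\beta_0)(2c)^{p+3}n^{-(3p+2+2/\kappa)}$ for all large $n$, which supplies the polynomial factor. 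The radii are calibrated so that each contribution to the log-likelihood ratio is $O(1)$ uniformly on $B_n$: the $\alpha$ radius $n^{-2/\kappa}$ makes $L\eta^{\kappa}=O(n^{-2})$ match the $O(1/n)$ perturbations from $\theta,\tau$, and the cautious $n^{-3}$ radius for $\beta$ ensures that even a crude $\sqrt{n\log n}$ tail bound on $\|Y_n-F_n\beta_0\|$ suffices.

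\textbf{Matrix sandwich and bounds on the log-likelihood ratio.} For any $v\in\RR^n$, Bochner's representation
\[
v^\T \theta_0 K_{\alpha,\nu}(S_n)v=\int_{\RR^d}\Bigl|\sum_{i=1}^n v_i\,e^{-\imath w^\T \bfs_i}\Bigr|^2 f_{\theta_0,\alpha,\nu}(w)\,dw
\]
together with Assumption~\ref{cond:spectral}(i) immediately yields $(1-L\eta^\kappa)\theta_0 K_{\alpha_0,\nu}(S_n)\preceq\theta_0 K_{\alpha,\nu}(S_n)\preceq(1+L\eta^\kappa)\theta_0 K_{\alpha_0,\nu}(S_n)$ with $\eta=|\alpha-\alpha_0|/\alpha_0$. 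Absorbing the $\theta,\tau$ perturbations via $\theta_0 K_{\alpha_0,\nu}(S_n)\preceq A_0$, this upgrades on $B_n$ to $(1-c'/n)A_0\preceq A\preceq(1+c'/n)A_0$, where $A=\theta K_{\alpha,\nu}(S_n)+\tau I_n$ and $A_0=\theta_0 K_{\alpha_0,\nu}(S_n)+\tau_0 I_n$. Hence $|\log\det A-\log\det A_0|\leq n\log(1+c'/n)\leq c'$, and writing $Z=A_0^{-1/2}(Y_n-F_n\beta_0)\sim\Ncal(0,I_n)$, the main quadratic form change equals $Z^\T(A_0^{1/2}A^{-1}A_0^{1/2}-I)Z$, bounded by $(c'/n)\|Z\|^2=O(1)$ since $\|Z\|^2\sim\chi_n^2$ concentrates at $n+O(\sqrt{n}\log n)$ with probability $1-\exp(-\log^2 n)$ by Hanson--Wright. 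The $\beta$-dependent cross term is controlled by $\|F_n\|_{\op}\|\beta-\beta_0\|\cdot\|A^{-1}\|_{\op}\|Y_n-F_n\beta_0\|\lesssim n^{-3}\sqrt{n}\sqrt{n\log n}=o(1)$ using a Gaussian tail bound on $\|Y_n-F_n\beta_0\|$, and the pure quadratic $\beta$-term is $O(n\|\beta-\beta_0\|^2)=O(n^{-5})$. The three high-probability events combine to the stated $1-3\exp(-\log^2 n)$.

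\textbf{Main obstacle.} The subtle point is that the perturbation of $A$ around $A_0$ must be \emph{multiplicative} rather than additive: a naive operator-norm bound $\|A-A_0\|_{\op}=O(\|A_0\|_{\op}/n)$ applied to the $\chi^2$-size quadratic form $(Y_n-F_n\beta_0)^\T A_0^{-1}(Y_n-F_n\beta_0)=O(n)$ would produce a vacuous $O(n)$ error that destroys the polynomial lower bound. The Bochner identity is what makes Assumption~\ref{cond:spectral}(i) transfer into a multiplicative matrix sandwich valid for any sampling set $S_n$, and this is precisely what keeps the quadratic-form perturbation at $O(1)$. The technical heart of the argument is then to orchestrate the sandwich together with the three concentration inequalities and the uniform control over $\beta$ so that all polynomial factors reassemble cleanly into $n^{-(3p+2+2/\kappa)}$ without any spurious super-polynomial factors.
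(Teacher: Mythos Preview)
Your proposal is correct and follows the same high-level plan as the paper: restrict the integral to the shrinking box with radii $n^{-1}$ in $\theta,\tau$, $n^{-2/\kappa}$ in $\alpha$, and $n^{-3}$ in $\beta$; use Assumption~\ref{cond:prior1} to get the $n^{-(3p+2+2/\kappa)}$ prior mass; translate Assumption~\ref{cond:spectral}(i) into a matrix comparison via spectral densities; and invoke Hanson--Wright for the Gaussian quadratic forms. The neighborhood, the polynomial exponent, and the probability $1-3\exp(-\log^2 n)$ all match.

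Where you differ is in the technical execution of the likelihood-ratio bound, and your route is in fact more streamlined than the paper's. You go straight from Bochner and Assumption~\ref{cond:spectral}(i) to the \emph{multiplicative} sandwich $(1-c'/n)A_0\preceq A\preceq(1+c'/n)A_0$ with $A=\theta K_{\alpha,\nu}(S_n)+\tau I_n$, which instantly gives both $|\log\det A-\log\det A_0|\leq c'$ and $|Z^\T(A_0^{1/2}A^{-1}A_0^{1/2}-I)Z|\leq(2c'/n)\|Z\|^2$; crucially, the only random event you need is the parameter-free $\|Z\|^2\leq 2n$, so uniformity over $B_n$ is automatic. The paper instead passes through a simultaneous diagonalization $U_\alpha^\T K_{\alpha_0,\nu}(S_n)U_\alpha=I_n$, $U_\alpha^\T K_{\alpha,\nu}(S_n)U_\alpha=\Lambda_n(\alpha)$, bounds the eigenvalue deviations $|\lambda_{k,n}(\alpha)-1|$ via the same spectral ratio, expands $\Omega_\alpha^{-1}-\Omega_0^{-1}$ by Sherman--Morrison--Woodbury and Weyl's inequality, and then applies Hanson--Wright at a single value $\overline\alpha_n$ by invoking the monotonicity in $\alpha$ from Assumption~\ref{cond:spectral}(ii). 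Your argument bypasses this machinery and, as a side benefit, does not actually need Assumption~\ref{cond:spectral}(ii) at all. The paper's approach, on the other hand, makes the eigenvalue structure explicit, which may be useful if one later wants sharper constants or a refined variance analysis. One small cosmetic point: your sketch names ``three high-probability events'' but only two are visible ($\|Z\|^2\leq 2n$ and the tail bound on $\|\widetilde Y_n\|$); you can safely quote two events and still stay within $1-3\exp(-\log^2 n)$, or split the quadratic-form and $\beta$-cross-term contributions as the paper does to match the count exactly.
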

Theorem \ref{thm:conv.rate.denom} shows a polynomially decaying evidence lower bound for the posterior distribution, which can be used for showing posterior contraction together with some exponentially consistent tests. The major difference between the proof of Theorem \ref{thm:conv.rate.denom} and that of regular parametric models with independent data is that our data $Y_n$ are strongly dependent and our likelihood ratio depends heavily on the covariance matrix $\theta K_{\alpha,\nu}(S_n)+\tau I_n$, where $\theta K_{\alpha,\nu}(S_n)$ becomes increasingly singular as $S_n$ becomes denser in the domain $[0,1]^d$. Thus the standard techniques for showing evidence lower bound with independent data such as Lemma 8.10 of \citet{GhoVan17} cannot be directly applied here. Fortunately, the structure of this covariance matrix can be related to the spectral density $f_{\theta,\alpha,\nu}$, such that under Assumption \ref{cond:spectral}, the likelihood ratio can be lower bounded when $(\theta,\alpha,\tau,\beta)$ is in a shrinking neighborhood of $(\theta_0,\alpha_0,\tau_0,\beta_0)$.

The next two assumptions are on the existence of consistent estimators for $(\theta,\tau)$ and the prior distribution. We define the function $\varphi(x) = \min(x^2,x)$ for $x>0$, and $\Ecal^c$ to be the complement of a generic set $\Ecal$.
\begin{assumption} \label{cond:exptest1}
There exist estimators $\widehat\theta_n$ and $\widehat\tau_n$, positive constants $b_1,b_2,c_1,c_2$, and a parameter set $\Ecal_n\subseteq \RR_+^{3} \times \RR^p$, such that for any $\epsilon_1,\epsilon_2 \in (0,1/2)$, for all $n>n_0$ where $n_0$ depends on $\epsilon_1,\epsilon_2$,
\begin{align}
\PP_{(\theta_0,\alpha_0,\tau_0,\beta_0)} \left(|\widehat\theta_n/\theta_0-1|\geq \epsilon_1/2 \right) & \leq \exp\left\{- c_1 \varphi\left( n^{b_1} \epsilon_1 \right)\right\}, \label{eq:test.theta1} \\
\PP_{(\theta_0,\alpha_0,\tau_0,\beta_0)} \left( |\widehat\tau_n/\tau_0-1|\geq \epsilon_2/2 \right) & \leq \exp\left\{ - c_2 \varphi\left( n^{b_2} \epsilon_2 \right) \right\}, \label{eq:test.tau1} \\
\sup_{\Bcal_0(\epsilon_1,\epsilon_2)^c \cap \Ecal_n} \PP_{(\theta,\alpha,\tau,\beta)} \left( |\widehat\theta_n/\theta_0-1|\leq \epsilon_1/2 \right) & \leq \exp\left\{ - c_1 \varphi\left(n^{b_1} \epsilon_1\right)\right\}, \label{eq:test.theta2} \\
\sup_{\Bcal_0(\epsilon_1,\epsilon_2)^c \cap \Ecal_n} \PP_{(\theta,\alpha,\tau,\beta)} \left(  |\widehat\tau_n/\tau_0-1|\leq\epsilon_2/2 \right) & \leq \exp\left\{- c_2 \varphi\left( n^{b_2} \epsilon_2\right)\right\}, \label{eq:test.tau2}
\end{align}
where the supremum is taken over $(\theta,\alpha,\tau,\beta)$.
\end{assumption}
\begin{assumption} \label{cond:prior2}
For the parameter set $\Ecal_n$ in Assumption \ref{cond:exptest1}, the prior satisfies $\Pi(\Ecal_n^c ) \leq n^{-(3p+4+2/\kappa)}$ for all sufficiently large $n$, where $\kappa$ is given in Assumption \ref{cond:spectral}.
\end{assumption}
Assumption \ref{cond:exptest1} assumes the existence of consistent estimators $\widehat\theta_n$ and $\widehat\tau_n$ with exponential tail probabilities for large values of $\epsilon_1,\epsilon_2$ and sub-Gaussian tails for small values of $\epsilon_1,\epsilon_2$, which are reasonable given the normality assumption on both $X(\cdot)$ and $\varepsilon(\cdot)$ and can usually be derived from the Hanson-Wright inequality (\citealt{RudVer13}). The four inequalities are used for constructing uniformly consistent tests in the Schwartz's theorem; see for example, Proposition 6.22 in \citet{GhoVan17}. In particular, the set $\Ecal_n$ is a sieve parameter space, which typically expands as $n$ increases and eventually covers the whole parameter space. Furthermore, we require in Assumption \ref{cond:prior2} that the prior probability of the sieve space $\Ecal_n$ converges to one faster than the evidence lower bound in Theorem \ref{thm:conv.rate.denom}. This puts a mild restriction on the tail behavior of the prior distribution, which is generally weaker than assuming a prior tail probability exponentially small in $n$ as used in the Bayesian nonparametrics literature; see for example, Condition (iii) in Theorems 8.9 and 8.11 in \citet{GhoVan17}. In Section \ref{sec:estimator}, we will show that the higher-order quadratic variation estimators of $\theta$ and $\tau$ satisfy Assumption \ref{cond:exptest1}, and will discuss the concrete prior tail conditions that satisfy Assumption \ref{cond:prior2}.

The following theorem provides the general posterior contraction rates for $\theta$ and $\tau$.
\begin{theorem} [General Fixed-domain Posterior Contraction Rates] \label{thm:convergence}
Suppose that Assumptions \ref{cond:model}, \ref{cond:spectral}, \ref{cond:prior1}, \ref{cond:exptest1}, and \ref{cond:prior2} hold. Then the posterior distribution $\Pi(\cdot \mid Y_n, F_n )$ with the density given in \eqref{jointpost1} exists and contracts to the true parameters $(\theta_0,\tau_0)$ at the rate $(n^{-b_1} \log n, n^{-b_2}\log n)$ with $b_1$ and $b_2$ given in Assumption \ref{cond:exptest1}, in the sense that for any positive sequence $M_n\to \infty$ as $n\to\infty$,
\begin{align}
\Pi \left( |\theta/\theta_0 - 1| < M_n n^{-b_1} \log n \text{ and } |\tau/\tau_0 - 1|< M_n n^{-b_2}\log n \mid Y_n, F_n  \right) \to 1,  \nonumber
\end{align}
almost surely $\PP_{(\theta_0,\alpha_0,\tau_0,\beta_0)}$ as $n\to\infty$.
\end{theorem}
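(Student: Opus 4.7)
The plan is to follow a Schwartz-style argument adapted to the fixed-domain setting. Write the posterior of the bad event as a ratio
\[
\Pi\bigl(\Bcal_0(\epsilon_1,\epsilon_2)^c \mid Y_n, F_n\bigr) \;=\; \frac{N_n(\epsilon_1,\epsilon_2)}{D_n},
\]
where the denominator $D_n$ is the integral on the left-hand side of Theorem \ref{thm:conv.rate.denom} and the numerator $N_n$ is the same integral restricted to $\Bcal_0(\epsilon_1,\epsilon_2)^c$. By Theorem \ref{thm:conv.rate.denom}, there is an event $\Omega_n$ with $\PP_0(\Omega_n^c) \leq 3\exp(-\log^2 n)$ on which $D_n \geq D\, n^{-(3p+2+2/\kappa)}$. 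Because $\sum_n 3\exp(-\log^2 n)<\infty$, Borel--Cantelli lets us work on $\Omega_n$ almost surely for large $n$.

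Next, I would construct a uniformly exponentially consistent test using the estimators from Assumption \ref{cond:exptest1}. Set
\[
\phi_n \;=\; \Ical\!\bigl(|\widehat\theta_n/\theta_0-1|\geq \epsilon_1/2\bigr) \;+\; \Ical\!\bigl(|\widehat\tau_n/\tau_0-1|\geq \epsilon_2/2\bigr),
\]
capped at $1$. Bounds \eqref{eq:test.theta1}--\eqref{eq:test.tau1} give $\EE_0 \phi_n \leq \exp\{-c_1\varphi(n^{b_1}\epsilon_1)\}+\exp\{-c_2\varphi(n^{b_2}\epsilon_2)\}$, while \eqref{eq:test.theta2}--\eqref{eq:test.tau2} yield the uniform Type-II bound $\sup_{\Bcal_0(\epsilon_1,\epsilon_2)^c\cap \Ecal_n}\EE_{(\theta,\alpha,\tau,\beta)}(1-\phi_n)$ controlled by the same quantity. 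Split $N_n = N_n^{\mathrm{in}} + N_n^{\mathrm{out}}$ according to whether the parameter lies in $\Ecal_n$ or $\Ecal_n^c$. Then
\[
\Pi\bigl(\Bcal_0^c \mid Y_n, F_n\bigr) \;\leq\; \phi_n \;+\; (1-\phi_n)\frac{N_n^{\mathrm{in}}}{D_n} \;+\; \frac{N_n^{\mathrm{out}}}{D_n}.
\]
The standard Fubini trick $\EE_0\bigl[(1-\phi_n)\exp(\Lcal_n-\Lcal_n^0)g\bigr] = \EE_{(\theta,\alpha,\tau,\beta)}\bigl[(1-\phi_n)g\bigr]$ and interchange of integrals give
\[
\EE_0 N_n^{\mathrm{in}} \;\leq\; \bigl[\exp\{-c_1\varphi(n^{b_1}\epsilon_1)\}+\exp\{-c_2\varphi(n^{b_2}\epsilon_2)\}\bigr], \qquad \EE_0 N_n^{\mathrm{out}} \;\leq\; \Pi(\Ecal_n^c) \;\leq\; n^{-(3p+4+2/\kappa)},
\]
using Assumption \ref{cond:prior2} for the second bound.

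Now substitute $\epsilon_1 = M_n n^{-b_1}\log n$ and $\epsilon_2 = M_n n^{-b_2}\log n$. For $n$ large enough that $M_n \log n \geq 1$, we have $n^{b_i}\epsilon_i = M_n\log n$ and hence $\varphi(n^{b_i}\epsilon_i) = M_n \log n$, so that $\exp\{-c_i\varphi(n^{b_i}\epsilon_i)\} = n^{-c_i M_n}$ decays faster than any polynomial. Combining with the denominator bound on $\Omega_n$,
\[
\EE_0\bigl[\Ical_{\Omega_n}\Pi(\Bcal_0(\epsilon_1,\epsilon_2)^c\mid Y_n, F_n)\bigr] \;\leq\; \EE_0\phi_n + \frac{\EE_0 N_n^{\mathrm{in}} + \EE_0 N_n^{\mathrm{out}}}{D n^{-(3p+2+2/\kappa)}} \;=\; O(n^{-2}),
\]
which is summable. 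A final Markov plus Borel--Cantelli argument, combined with the summability of $\PP_0(\Omega_n^c)$, delivers the almost-sure posterior contraction claim. The propriety of the posterior in \eqref{jointpost1} follows from the lower bound in Theorem \ref{thm:conv.rate.denom} together with the moment condition on $\pi(\tau)$ in Assumption \ref{cond:prior1}.

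The main obstacle is reconciling the only \emph{polynomial} evidence lower bound from Theorem \ref{thm:conv.rate.denom} with the need for a bound on $\Pi(\Bcal_0^c\mid Y_n, F_n)$ that is summable in $n$. This is precisely why the test tails in Assumption \ref{cond:exptest1} must be of $\varphi$-form: for $\epsilon_i = M_n n^{-b_i}\log n$ the exponent collapses to $M_n\log n$, producing super-polynomial decay $n^{-c_iM_n}$ that overcomes the $n^{3p+2+2/\kappa}$ blow-up in the denominator, with a buffer of exactly $n^{-2}$ provided by the sieve prior mass condition in Assumption \ref{cond:prior2}. Any weaker test decay (e.g., merely polynomial) would fail to close this gap, and any weaker prior mass condition on $\Ecal_n^c$ would leave the $N_n^{\mathrm{out}}/D_n$ term unsummable.
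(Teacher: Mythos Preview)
Your proposal is correct and follows essentially the same Schwartz-type argument as the paper: the same test $\phi_n$, the same sieve split into $\Ecal_n$ and $\Ecal_n^c$, the same Fubini change-of-measure bound on the numerator, and the same invocation of Theorem~\ref{thm:conv.rate.denom} for the denominator, finished off with Markov plus Borel--Cantelli. One small correction: propriety of the posterior does \emph{not} come from the lower bound in Theorem~\ref{thm:conv.rate.denom}; the paper instead shows the marginal likelihood is \emph{finite} via $\dett\{\theta K_{\alpha,\nu}(S_n)+\tau I_n\}\geq \dett(\tau I_n)=\tau^n$ together with the moment condition $\int_0^\infty \tau^{-n/2}\pi(\tau)\,\ud\tau<\infty$ from Assumption~\ref{cond:prior1}.
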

Theorem \ref{thm:convergence} is an adapted version of the Schwartz's theorem and is established based on our new evidence lower bound in Theorem \ref{thm:conv.rate.denom}, as well as the uniformly consistent tests constructed from the estimators $\widehat\theta_n$ and $\widehat\tau_n$ in Assumption \ref{cond:exptest1}. Theorem \ref{thm:convergence} essentially translates the complicated problem of finding Bayesian posterior contraction rates under fixed-domain asymptotics to the more direct problem of finding the frequentist consistent estimators $\widehat\theta_n$ and $\widehat\tau_n$ that satisfy Assumption \ref{cond:exptest1}. The provable rates $n^{-b_1} \log n$ for the microergodic parameter $\theta$ and $n^{-b_2} \log n$ for the nugget parameter $\tau$ are completely determined by how efficient the frequentist estimators of $\theta$ and $\tau$ are inside the four inequalities of Assumption \ref{cond:exptest1}. We find $b_1$ and $b_2$ explicitly in Section \ref{sec:estimator}.

\section{Higher-Order Quadratic Variation Estimators for \\ Isotropic Mat\'ern} \label{sec:estimator}

For the isotropic Mat\'ern covariance function in Example \ref{ex:Matern}, we verify Assumptions \ref{cond:exptest1} and \ref{cond:prior2} by proposing the higher-order quadratic variation estimators of the microergodic parameter $\theta$ and the nugget parameter $\tau$. We explicitly find the rates $n^{-b_1} \log n$ and $n^{-b_2} \log n$ in Theorem \ref{thm:convergence}, and thus provide the posterior contraction rates for $\theta$ and $\tau$ in Model \eqref{eq:obs.model} under fixed-domain asymptotics. To focus on the main idea and also due to the space limit, we will only show the rates for isotropic Mat\'ern which can be compared directly with the existing frequentist fixed-domain asymptotic results. Our techniques can also be extended to the three covariance functions in Examples \ref{ex:taper}, \ref{ex:GW} and \ref{ex:CH} with additional technical adjustment.

Higher-order quadratic variation has been adopted in recent theoretical works of spatial Gaussian processes (\citealt{Loh15}, \citealt{Lohetal21}, \citealt{LohSun23}). The basic idea is to use a carefully designed series of constants ($c_{\bfi,d,\ell}^{(k_1,\ldots,k_d)}$ and $c_{d,\ell}^{(k_1,\ldots,k_d)}$ in Lemma \ref{lem:Cor1.Loh21}), such that finite differencing the observations $Y_n$ weighted by these constants can approximately solve for both the microergodic and nugget parameters. This method has at least two major technical advantages. First, as a frequentist estimator, it is simple to implement in practice and does not involve any numerical optimization such as the maximum likelihood estimator. Second, it does not require the observations $Y_n$ to be made strictly on an equispaced grid and hence is applicable to a much wider range of sampling designs. The previous works \citet{Loh15} and \citet{Lohetal21} have considered such estimators for the model without measurement error and nugget parameter $\tau$, while our version of higher-order quadratic variation estimators are similar to those in \citet{LohSun23} which work for the general spatial model \eqref{eq:obs.model} with measurement error and the nugget $\tau$.

\subsection{Construction of Estimators} \label{subsec:construction.estimator}
We consider the stratified sampling design, in which the domain $[0,1]^d$ is divided into cells with the same size and observations are made inside each cell, but not necessarily on the grid points. Without loss of generality, we let $n=m^d$ for an integer $m$ since we only study the asymptotics in $n$. We rewrite the set $S_n=\{\bfs_1,\ldots,\bfs_n\}$ using the ordering in each coordinate:
\begin{align} \label{eq:Sn}
S_n=\left\{\bfs(\bfi)=(s_1(\bfi),\ldots,s_d(\bfi))^\T :~ \bfi=(i_1,\ldots,i_d), 1\leq i_1,\ldots,i_d\leq m\right\},
\end{align}
and for $k=1,\ldots,d$, $s_k(\bfi)$ satisfies
\begin{align} \label{eq:si}
s_k(\bfi) &= \frac{i_k-1}{m} + \frac{\delta_{\bfi;k}}{m} ,
\end{align}
where $0\leq \delta_{\bfi;k}<1$ is arbitrary for all $k=1,\ldots,d$. We make the following assumption on the stratified sampling design.
\begin{assumption} \label{cond:sampling}
Suppose that the sampling points $S_n$ satisfies \eqref{eq:Sn} and \eqref{eq:si}, where all $\delta_{\bfi;k}$'s can take arbitrary values in $[0,1)$. Then for any $1\leq i_1,\ldots,i_d\leq m$,
\begin{align*}
& \bfs(\bfi) \in \left[\frac{i_1-1}{m}, \frac{i_1}{m} \right) \times \ldots \times \left[\frac{i_d-1}{m}, \frac{i_d}{m} \right) .
\end{align*}
\end{assumption}
This stratified sampling design is much more relaxed than assuming sampling locations strictly on equispaced grid points. We allow the perturbations $\delta_{\bfi;k}$ to be arbitrary in $[0,1)$, so there is little restriction other than that the sampling locations need to be roughly evenly distributed across the domain.
\begin{assumption} \label{cond:deriv}
For the regression functions $\ff_1,\ldots,\ff_p$, there exists a constant $C_{\ff}'>0$ such that their partial derivatives satisfy $\left|\sD^{\sj}~\ff_l(\bfs)\right| \leq C_{\ff}'$ for all $l=1,\ldots,p$, all $\bfs\in [0,1]^d$, and all index vector $\sj=(j_1,\ldots,j_d)\in \NN^d$ that satisfies $j_1+\ldots +j_d\leq \lceil \nu+d/2\rceil$, where $\lceil x\rceil$ is the smallest integer greater than or equal to $x$, and $\sD^{\sj}$ is the partial differentiation operator of order $\sj$.
\end{assumption}
Assumption \ref{cond:deriv} requires the regression functions $\ff_1,\ldots,\ff_p$ to have bounded derivatives up to the order of $\lceil \nu+d/2\rceil$. Because the sample path $X(\cdot)$ of the Mat\'ern covariance function \eqref{eq:MaternCov} are only mean square differentiable up to the order of $\nu$, we essentially assume that the functions $\ff_1,\ldots,\ff_p$ are smoother than the sample path $X(\cdot)$. Furthermore, the order $\lceil \nu+d/2\rceil$ implies that the functions $\ff_1,\ldots,\ff_p$ all lie in the reproducing kernel Hilbert space of $\theta K_{\alpha,\nu}$, which is known to be norm equivalent to the Sobolev space of order $\nu+d/2$ (Corollary 10.48 of \citealt{Wen05}), and therefore satisfy Assumption \ref{cond:deriv}.

Let $\lfloor x\rfloor$ be the largest integer less than or equal to $x$. Define $\omega_m=\lfloor m^{\gamma} \rfloor$ for a positive constant $\max\big\{1-d/(4\nu),0\big\} < \gamma < 1$ and assume that $\omega_m$ is an even integer without loss of generality. The higher-order quadratic variation method relies on the following sequence of $m$-dependent (and so $n$-dependent) constants $c_{\bfi,d,\ell}^{(k_1,\ldots,k_d)}$ and $c_{d,\ell}^{(k_1,\ldots,k_d)}$ described in \citet{Lohetal21}.
\begin{lemma}\label{lem:Cor1.Loh21}
(Corollary 1 and Lemma 2 of \citealt{Lohetal21}) Let $d,\ell\in \ZZ_+$. Let $\bfi=(i_1,\ldots,i_d)^\T $ where $1\leq i_1,\ldots,i_d\leq m - \ell \omega_m$. Then there exists a sequence of constants
$$\left\{c_{\bfi, d, \ell}^{(k_1,\ldots,k_d)}: 0\leq k_1,\ldots,k_d\leq \ell, \text{and } \bfi= (i_1,\ldots,i_d), 1\leq i_1,\ldots,i_d\leq m \right\}, $$
such that for any $\bfs(\bfi) = (s_1(\bfi),\ldots,s_d(\bfi))^\T  \in [0,1]^d$, for all integers $l_1,\ldots,l_d$ satisfying $0\leq l_1,\ldots,l_{d-1}\leq \ell$, $0\leq l_d\leq \ell-1$, and $0\leq l_1+\ldots + l_d\leq \ell$,
\begin{align*}
& \textstyle \sum_{0\leq k_1,\ldots,k_d\leq \ell} c_{\bfi,d,\ell}^{(k_1,\ldots,k_d)} \cdot s_d(i_1+k_1\omega_m,\ldots,i_d+k_d\omega_m)^{\ell} = \ell! \left(\omega_m/m\right)^{\ell}, \\
\text{and } & \textstyle \sum_{0\leq k_1,\ldots,k_d\leq \ell} c_{\bfi,d,\ell}^{(k_1,\ldots,k_d)} \cdot \textstyle \prod_{j=1}^d s_j(i_1+k_1\omega_m,\ldots,i_d+k_d\omega_m)^{l_j} = 0 .
\end{align*}
Furthermore, there exists a sequence of $m$-independent constants $c_{d,\ell}^{(k_1,\ldots,k_d)}$, such that
\begin{align*}
& c_{\bfi,d,\ell}^{(k_1,\ldots,k_d)} = c_{d,\ell}^{(k_1,\ldots,k_d)} + O(\omega_m^{-1}), \quad \text{for all } 0\leq k_1,\ldots,k_d\leq \ell,
\end{align*}
as $m\to\infty$, where the term $O(\omega_m^{-1})$ is uniform over all $1\leq i_1,\ldots,i_d\leq m- \ell\omega_m$ and any $\delta_{\bfi;k}\in [0,1)$ for $k=1,\ldots,d$.
\end{lemma}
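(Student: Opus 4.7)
The statement is attributed to Corollary 1 and Lemma 2 of \citet{Lohetal21}, so the plan is to recapitulate their argument. The approach is to view the desired identities as a finite linear system in the unknowns $\bigl\{c_{\bfi,d,\ell}^{(k_1,\ldots,k_d)}: 0 \le k_1,\ldots,k_d \le \ell\bigr\}$, exhibit an explicit solution for an idealized equispaced grid, and then extract the $O(\omega_m^{-1})$ rate from the smooth dependence of the system on the perturbations $\delta_{\bfi;k}$.

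First, I would set up the linear system. The unknowns number $(\ell+1)^d$, indexed by $(k_1,\ldots,k_d) \in \{0,1,\ldots,\ell\}^d$, while the constraints are indexed by the monomials $\prod_{j=1}^d s_j^{l_j}$ with $l_1+\ldots+l_d \le \ell$ and $l_d \le \ell-1$ (required value $0$), together with the extra identity on $s_d^\ell$ (value $\ell!(\omega_m/m)^\ell$). A dimension count gives $\binom{\ell+d}{d}$ constraints against $(\ell+1)^d$ unknowns, so for $d \ge 2$ the system is underdetermined and only existence is needed.

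For the equispaced configuration $\delta_{\bfi;k} \equiv 0$, I would take the candidate limit supported only on the slice $k_1 = \ldots = k_{d-1} = 0$ and set $c_{d,\ell}^{(0,\ldots,0,k_d)} = (-1)^{\ell-k_d}\binom{\ell}{k_d}$, the $\ell$-th order forward finite difference coefficient. By the classical finite difference identity along the $s_d$-direction with step $\omega_m/m$, this choice annihilates every polynomial in $s_d$ of degree less than $\ell$ and returns $\ell!(\omega_m/m)^\ell$ on $s_d^\ell$. Because only $k_d$ varies in this slice, the factors $s_j^{l_j}$ for $j<d$ remain constant under the sum, so every monomial with $l_d \le \ell-1$ vanishes while $s_d^\ell$ delivers the required value. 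These are the candidate $m$-independent constants.

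For the general stratified design I would perturb this baseline. Writing each coordinate $s_k(i_1+k_1\omega_m,\ldots,i_d+k_d\omega_m) = (i_k+k_k\omega_m-1)/m + \delta_{\cdot;k}/m$, the perturbation from the equispaced value is $O(1/m)$, which is $O(\omega_m^{-1})$ times the intrinsic grid spacing $\omega_m/m$. After rescaling the $s_k$-axes by $m/\omega_m$ and translating by $(i_k-1)/\omega_m$, the constraints become $\bfi$-independent and affine in the dimensionless perturbations $\delta/\omega_m$. Treating the perturbed system as a first-order deformation of the equispaced one and applying a Moore--Penrose pseudoinverse (whose norm is a fixed constant depending only on $d$ and $\ell$) yields a correction of size $O(\omega_m^{-1})$, uniform over $\bfi$ and over $\delta \in [0,1)^d$. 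The main obstacle is precisely this uniformity: it relies on the rescaled constraint matrix being independent of $\bfi$, so that a single bounded pseudoinverse suffices.
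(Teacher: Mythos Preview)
The paper does not prove this lemma; both in the main text and in Section~\ref{sec:proof.E1n.rate} of the supplement it is quoted from Corollary~1 and Lemma~2 of \citet{Lohetal21} and used as a black box, so there is no paper proof to compare against.

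Your reconstruction is a sound outline. The equispaced candidate $c_{d,\ell}^{(0,\ldots,0,k_d)}=(-1)^{\ell-k_d}\binom{\ell}{k_d}$ (and zero off this slice) does satisfy the limiting identities recorded in Lemma~\ref{lem:Cor2.Loh21}, and the perturbation scheme is correct once one passes to the rescaled coordinates $\tilde s_j=(m/\omega_m)\bigl(s_j-(i_j-1)/m\bigr)=k_j+\delta/\omega_m$, in which the leading constraint matrix $A_0$ has entries $\prod_j k_j^{l_j}$ and is genuinely $\bfi$-independent. One point to tighten: to invoke a uniformly bounded pseudoinverse you need $A_0$ to have full row rank, i.e., no nonzero polynomial of total degree $\le\ell$ vanishes on the grid $\{0,\ldots,\ell\}^d$; this follows from a one-variable-at-a-time Vandermonde argument but should be stated. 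A minor imprecision: the entries of $A$ are polynomial, not affine, in $\delta/\omega_m$, though this does not affect the $O(\omega_m^{-1})$ estimate. Finally, the lemma only asserts existence; the particular constants \citet{Lohetal21} construct, which feed into $H_{\ell_{\star},\nu}$ downstream, need not coincide with your one-dimensional finite-difference choice.
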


For $d, \ell \in \ZZ_+$ and $\bfi=(i_1,\ldots, i_d)^\T$ such that $1\leq i_1, \ldots, i_d\leq m-\ell \omega_m$, we
define the following differencing operator
\begin{align*}
& \nabla_{ d, \ell} Y \left(\bfs(\bfi) \right)
=  \textstyle \sum_{0\leq k_1, \ldots, k_d\leq \ell}  c_{\bfi, d, \ell}^{(k_1, \ldots, k_d)}  Y \left(\bfs (i_1+k_1 \omega_m , \ldots, i_d+ k_d \omega_m ) \right),
\end{align*}
and we define the $\ell$th-order quadratic variation $V_{u,d, \ell} = V_{u,d, \ell}(m)$ by
\begin{align} \label{eq:3.1}
& V_{u, d, \ell} = \textstyle \sum_{\bfi\in \Xi_{u,m}} \left\{ \nabla_{d, \ell} Y \left(\bfs(\bfi)\right) \right\}
\left \{ \nabla_{d, \ell} Y \left(\bfs(\bfi + u\bfe_1) \right) \right\},
\end{align}
where $\bfe_1 = (1,0,\ldots,0)^\T  \in \RR^d$, $u\in\{0,1\}$ and
$\Xi_{u,m}=\left\{\bfi:~ 1\leq i_1+ u,i_1, \ldots, i_d \leq m- 2 \ell \omega_m \right\}$. The cardinality of the set $\Xi_{u,m}$, denoted by $|\Xi_{u,m}|$, has the same order as $m^d$.

We set $\ell_{\star}=\lceil \nu+d/2 \rceil$. Our higher-order quadratic variation estimators for $\tau$ and $\theta$ are defined as follows:
\begin{align}
\widehat{\tau}_n &= V_{0,d,\ell_{\star}} / C_{V,0},  \qquad \widehat{\theta}_n = V_{1, d,\ell_{\star}} / g_{\ell_{\star},\nu} ,\label{eq:tau.theta.es} \\
\text{where }
C_{V,0} &= \textstyle\sum_{\bfi \in \Xi_{0,m}} \textstyle\sum_{0\leq k_1, \ldots, k_{d}\leq \ell_{\star}} \left\{c_{\bfi, d, \ell_{\star}}^{(k_1, \ldots, k_d)} \right\}^2, \quad g_{\ell_{\star},\nu} = \left(\omega_m/m\right)^{2 \nu} |\Xi_{1, m}| \xi^*_{\nu} H_{\ell_{\star},\nu},  \nonumber \\
H_{\ell_{\star},\nu} &= \textstyle\sum_{0\leq k_1,\ldots, k_{2d}\leq \ell_{\star}} c_{d,\ell_{\star}}^{(k_1,\ldots, k_d)} c_{d,\ell_{\star}}^{(k_{d+1},\ldots, k_{2d})}
	G_\nu \left( \big\|  (k_1,\ldots, k_d)^\T  - (k_{d+1},\ldots, k_{2d} )^\T  \big\| \right), \nonumber \\
\text{and } G_\nu (t)& = t^{2 \nu}, ~~ \xi^*_{\nu} = -\pi/\{2^{2\nu}\Gamma(\nu+1)\Gamma(\nu)\sin(\nu\pi) \}  , \text{ if } \nu\notin \ZZ, \nonumber \\
G_\nu (t)& = t^{2 \nu} \log(t), ~~ \xi^*_{\nu} = (-1)^{\nu+1}/\{2^{2\nu-1}\nu!(\nu-1)!\} , \text{ if } \nu\in \ZZ_+ . \label{eq:CgH}
\end{align}

We also define the set $\Ecal_{n}$ in Assumption \ref{cond:exptest1} as
\begin{align} \label{eq:E1n}
\Ecal_{n}=\left\{ (\theta,\alpha,\tau,\beta)\in \RR_+^{3}\times \RR^p: \frac{\|\beta\|^2}{\theta}  \leq  n^{\rho_1}, \frac{\tau}{\theta} \in [n^{-\rho_{21}},n^{\rho_{22}}], \alpha\in [n^{-\rho_{31}},n^{\rho_{32}}] \right\} ,
\end{align}
where the constants $\rho_1,\rho_{21},\rho_{22},\rho_{31},\rho_{32}$ satisfy the following relations:
\begin{align} \label{eq:rho.relation1}
& 0< \rho_1 < 1-\gamma,  \quad
0<\rho_{21} < 2\nu(1-\gamma)/d, \nonumber \\
& 0<\rho_{22} < 1/2 - 2(1-\gamma)\nu/d, \quad \rho_{31}>0, \quad 0<\rho_{32} < (1-\gamma)/d.
\end{align}
In the next theorem, we show that the estimators of $\widehat{\tau}_n$ and $\widehat\theta_n$ in \eqref{eq:tau.theta.es} together with the set $\Ecal_{n}$ in \eqref{eq:E1n} satisfy Assumption \ref{cond:exptest1}.

\begin{theorem}\label{thm:E1n.rate}
Suppose that $X(\cdot)$ in Model \eqref{eq:obs.model} has the isotropic Mat\'ern covariance function in \eqref{eq:MaternCov}. Suppose that Assumptions \ref{cond:model}, \ref{cond:sampling} and \ref{cond:deriv} hold. Let $\omega_m=\lfloor m^{\gamma} \rfloor$ be an even number for a constant $\max\big\{1-d/(4\nu),0\big\} < \gamma < 1$. Let $\Ecal_n$ in Assumption \ref{cond:exptest1} be defined in \eqref{eq:E1n} with $\rho_1,\rho_{21},\rho_{22},\rho_{31},\rho_{32}$ satisfying \eqref{eq:rho.relation1}. Then uniform over all $\delta_{\bfi;k}\in [0,1)$ with $\bfi=(i_1,\ldots,i_d)^\T $ for all $1\leq i_1,\ldots,i_d\leq m$ and $1\leq k \leq d$, for any $d\in \ZZ_+$,
\begin{itemize}[leftmargin=7mm]
\item[(i)] $\widehat{\theta}_n$ in \eqref{eq:tau.theta.es} satisfies \eqref{eq:test.theta1} and \eqref{eq:test.theta2} in Assumption \ref{cond:exptest1}, where $b_1$ is
\begin{align} \label{eq:E1n.b1}
b_1& = \min\Bigg\{\frac{1}{2}  - \frac{2(1-\gamma)\nu}{d}- \rho_{22} , ~ \frac{1-\gamma}{2} - \varsigma , ~\frac{1}{4} + \frac{(1-\gamma)(\ell_{\star}-2\nu)}{d}- \frac{\rho_{1} +\rho_{22}}{2} , \nonumber \\
& \qquad \qquad \frac{1-\gamma}{4} + \frac{(1-\gamma)(\ell_{\star}-\nu)}{d} - \frac{\rho_{1}}{2} - \varsigma  \Bigg\}  ,
\end{align}
for an arbitrarily small constant $\varsigma>0$.
\item[(ii)] $\widehat{\tau}_n$ in \eqref{eq:tau.theta.es} satisfies \eqref{eq:test.tau1} and \eqref{eq:test.tau2} in Assumption \ref{cond:exptest1}, where $b_2$ is
\begin{align} \label{eq:E1n.b2}
b_2& =  \min\Bigg\{\frac{1}{2} , ~ \frac{(1-\gamma)(4\nu+d)}{2d} - \rho_{21} - \varsigma , ~ \frac{1}{4} +\frac{ (1-\gamma)\ell_{\star}}{d} - \frac{\rho_{1} + \rho_{21}}{2}, ~ \nonumber \\
& \qquad \qquad \frac{(1-\gamma)(4\nu+d +4\ell_{\star})}{4d} - \frac{\rho_{1} + 2\rho_{21} }{2}  - \varsigma  \Bigg\} ,
\end{align}
for an arbitrarily small constant $\varsigma>0$.
\end{itemize}
\end{theorem}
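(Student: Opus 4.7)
The plan is to express $V_{0,d,\ell_\star}$ and $V_{1,d,\ell_\star}$ as quadratic forms $Y_n^\top A_u Y_n$ in the Gaussian vector $Y_n\sim\Ncal(F_n\beta,\,\theta K_{\alpha,\nu}(S_n)+\tau I_n)$, and apply the Hanson-Wright inequality of \citet{RudVer13}. Its two-regime deviation bound $\PP(|Z^\top B Z-\EE Z^\top B Z|>t)\le 2\exp\{-c\min(t^2/\|B\|_F^2,\,t/\|B\|_{op})\}$ applied to $B=\Sigma_n^{1/2}A_u\Sigma_n^{1/2}$ matches the function $\varphi(x)=\min(x^2,x)$ in Assumption~\ref{cond:exptest1} exactly, and will be used after recentering each $V_{u,d,\ell_\star}$ at its mean.

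For the bias step, Lemma~\ref{lem:Cor1.Loh21} gives that the weights $c_{\bfi,d,\ell_\star}^{(k_1,\ldots,k_d)}$ annihilate all monomials in $\bfs$ of total degree strictly less than $\ell_\star=\lceil\nu+d/2\rceil$. Together with Assumption~\ref{cond:deriv} and a Taylor expansion of each $\ff_l$, this makes the regression contribution $\beta^\top F_n^\top A_u F_n\beta$ of order $\|\beta\|^2(\omega_m/m)^{2\ell_\star}$, controlled on $\Ecal_n$ by $\|\beta\|^2/\theta\le n^{\rho_1}$. Under $\PP_{(\theta_0,\alpha_0,\tau_0,\beta_0)}$ the same annihilation also kills the smooth polynomial part of $K_{\alpha_0,\nu}$, leaving only the singular term $|x|^{2\nu}G_\nu(|x|)$ (with the $\log$ correction when $\nu\in\ZZ_+$). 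This produces $\tr(A_1\Sigma_n)=\theta_0\, g_{\ell_\star,\nu}+\text{lower-order}$ and $\tr(A_0\Sigma_n)=\tau_0\, C_{V,0}+\theta_0\cdot O((\omega_m/m)^{2\nu}|\Xi_{0,m}|)$, so that after normalization $|\EE\widehat\theta_n-\theta_0|$ and $|\EE\widehat\tau_n-\tau_0|$ decay at polynomial rates whose exponents appear as the bias-driven arguments of the $\min$ in \eqref{eq:E1n.b1}--\eqref{eq:E1n.b2}.

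For the variance, I use the sparsity of $A_u$: each row has $O(\ell_\star^d)$ nonzero entries that vanish unless the two index vectors lie within distance $\ell_\star\omega_m$, giving $\|A_u\|_{op}=O(\omega_m^{-d})$ and $\|A_u\|_F^2=O(|\Xi_{u,m}|\omega_m^{-2d})$ after normalization. On $\Ecal_n$ the spectral bound $\|\Sigma_n\|_{op}\lesssim \theta\|K_{\alpha,\nu}(S_n)\|_{op}+\tau$ is controlled uniformly via $\alpha\in[n^{-\rho_{31}},n^{\rho_{32}}]$ and $\tau/\theta\in[n^{-\rho_{21}},n^{\rho_{22}}]$. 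Combining these produces explicit bounds on $\|B\|_{op}$ and $\|B\|_F$ that, when fed into Hanson-Wright, yield the sub-Gaussian and sub-exponential branches in $b_1,b_2$; the remaining two arguments of the $\min$ come from the interplay of bias and variance with the regression-growth exponent $\rho_1$ and the nugget-dominance exponents $\rho_{21},\rho_{22}$.

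The main obstacle is the uniform control required for \eqref{eq:test.theta2} and \eqref{eq:test.tau2}: for $(\theta,\alpha,\tau,\beta)\in\Bcal_0(\epsilon_1,\epsilon_2)^c\cap\Ecal_n$, one must argue that $\EE_{(\theta,\alpha,\tau,\beta)}\widehat\theta_n$ tracks $\theta$ closely enough that the event $\{|\widehat\theta_n/\theta_0-1|\le\epsilon_1/2\}$ forces a deviation of at least order $\epsilon_1\theta_0$ between $\widehat\theta_n$ and its own mean, allowing Hanson-Wright to bite. Ensuring that this bias-small-relative-to-$\epsilon$ property holds uniformly over the non-compact sieve $\Ecal_n$, with exponents sharp enough to match \eqref{eq:E1n.b1}--\eqref{eq:E1n.b2}, is the technical core: the constraints \eqref{eq:rho.relation1} on $\rho_1,\rho_{21},\rho_{22},\rho_{31},\rho_{32}$ together with the condition $\gamma>\max\{1-d/(4\nu),0\}$ are precisely what make every bias and variance term decay polynomially with the right exponents simultaneously.
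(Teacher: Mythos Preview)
Your overall architecture---Hanson--Wright applied to $B=\Sigma_n^{1/2}A_u\Sigma_n^{1/2}$, with the bias handled by polynomial annihilation and the principal irregular term of the Mat\'ern expansion---matches the paper's, and you correctly identify the uniform control over $\Ecal_n$ as the technical core.

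The gap is in the variance step. Bounding $\|B\|_F$ and $\|B\|_{\op}$ through the sparsity of $A_u$ together with a global spectral bound $\|\Sigma_n\|_{\op}\le\theta\|K_{\alpha,\nu}(S_n)\|_{\op}+\tau$ is too crude to reach \eqref{eq:E1n.b1}--\eqref{eq:E1n.b2}: under fixed-domain sampling $\|K_{\alpha,\nu}(S_n)\|_{\op}$ is of order $n$, so $\|\Sigma_n\|_{\op}\|A_u\|_F$ exceeds the true $\|B\|_F$ by a positive power of $n$. (Your stated orders $\|A_u\|_{\op}=O(\omega_m^{-d})$ and $\|A_u\|_F^2=O(|\Xi_{u,m}|\,\omega_m^{-2d})$ also do not come out of a row-sum count under either normalization; before normalization one gets $\|A_u\|_{\op}=O(1)$ and $\|A_u\|_F^2\asymp m^d$.) What is needed is the observation that $\|B\|_F^2$ equals $\sum_{\bfi,\bfj}\big[\Cov\big(\nabla_{d,\ell_\star}Y(\bfs(\bfi)),\,\nabla_{d,\ell_\star}Y(\bfs(\bfj))\big)\big]^2$, and that the \emph{same} annihilation you used for the bias acts on $K_{\alpha,\nu}$ inside this covariance. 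For pairs with $\|\bfi-\bfj\|$ larger than a multiple of $\omega_m$, a Taylor expansion of $K_{\alpha,\nu}$ to order $2\ell_\star$ combined with the derivative bound $|\sD^{\sa} K_{\alpha,\nu}(\bfx)|\le C(1+|\log\alpha|)\,\|\bfx\|^{2\nu-2\ell_\star}$ reduces the cross-covariance to $O\big((\omega_m/m)^{2\ell_\star}\|\bfs(\bfi)-\bfs(\bfj)\|^{2\nu-2\ell_\star}\big)$. The paper carries out exactly this near/far split and integrates the far field via $\int_{\omega_m/m}^{\sqrt d} s^{4\nu-4\ell_\star+d-1}\,\ud s$; that is what produces the factor $(\omega_m/m)^{4\nu+d}$ in the Frobenius bound and hence the terms $\tfrac{1-\gamma}{2}$ and $\tfrac{(1-\gamma)(4\nu+d)}{2d}$ in $b_1,b_2$. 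Without this refinement Hanson--Wright still applies, but at rates strictly worse than those claimed.
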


Theorem \ref{thm:E1n.rate} shows that the higher-order quadratic variation estimators $\widehat{\theta}_n$ and $\widehat{\tau}_n$ satisfy Assumption \ref{cond:exptest1}. We explicitly give the values of $b_1$ and $b_2$ in the four exponential tail inequalities in Assumption \ref{cond:exptest1}, which directly determine the posterior contraction rates for $\theta$ and $\tau$ in Theorem \ref{thm:convergence}. Further simplified values of $b_1$ and $b_2$ will be given in Section \ref{subsec:matern.rate}.

Theorem \ref{thm:E1n.rate} demonstrates several advantages of the higher-order quadratic variation estimators $\widehat{\theta}_n$ and $\widehat{\tau}_n$ for constructing the exponentially consistent tests needed for our Bayesian theory. First, they satisfy Assumption \ref{cond:exptest1} with the sieve $\Ecal_n$ defined in \eqref{eq:E1n}, where the range parameter $\alpha$ lies in an expanding interval $[n^{-\rho_{31}},n^{\rho_{32}}]$ which eventually covers the entire $\RR_+$ as $n\to\infty$. Though varying $\alpha$ is technically challenging in studying frequentist estimators (\citealt{Chenetal00} and \citealt{Tangetal21}), we do not need to fix $\alpha$ at a given value and we can assign a general prior on $\alpha$. Second, the inequalities in Assumption \ref{cond:exptest1} hold uniformly over all possible stratified sampling designs in Assumption \ref{cond:sampling}, for arbitrary values of $\delta_{\bfi;k}\in [0,1)$. This generality significantly broadens the applicability of our results to real-world spatial data, as we do not need the sampling locations $S_n$ to be exactly on equispaced grids as in many frequentist fixed-domain asymptotics works. Third, Theorem \ref{thm:E1n.rate} holds for any domain dimension $d\in \ZZ_+$, which includes both the case of $d\in \{1,2,3\}$ when the range parameter $\alpha$ cannot be consistently estimated (\citealt{Zhang04}), and the case of $d\geq 5$ when $\alpha$ can be consistently estimated (\citealt{And10}).

\subsection{Explicit Posterior Contraction Rates for Isotropic Mat\'ern} \label{subsec:matern.rate}
We recall that if Assumptions \ref{cond:model}-\ref{cond:prior2} hold, then Theorem \ref{thm:convergence} shows that the posterior contraction rates for $\theta$ and $\tau$ are $n^{-b_1}\log n$ and $n^{-b_2}\log n$, respectively. Theorem \ref{thm:E1n.rate} provides the explicit values for $b_1$ and $b_2$ based on the higher-order quadratic variation estimators in \eqref{eq:tau.theta.es}. In the following, we simplify their general expressions in \eqref{eq:E1n.b1} and \eqref{eq:E1n.b2}. The positive constants $\rho_1,\rho_{21},\rho_{22},\rho_{31},\rho_{32}$ need to satisfy \eqref{eq:rho.relation1}, but can be all taken as arbitrarily small and close to zero. The condition on $\gamma$ in Theorem \ref{thm:E1n.rate} is equivalent to $1-\gamma\in \big(0,\min\{d/(4\nu),1\}\big)$, and $\gamma$ is related to the cell size $\omega_m=\lfloor m^{\gamma} \rfloor$ of the higher-order quadratic variation estimators. We also emphasize that the value of $\gamma$ can be chosen differently for the two estimators $\widehat\theta_n$ and $\widehat\tau_n$, since in Assumption \ref{cond:exptest1}, the inequalities \eqref{eq:test.theta1} and \eqref{eq:test.theta2} for $\widehat\theta_n$ and \eqref{eq:test.tau1} and \eqref{eq:test.tau2} for $\widehat\tau_n$ are fully separate and do not affect one another.

First we consider $b_1$ defined in \eqref{eq:E1n.b1}, which is related to the posterior contraction rate of the microergodic parameter $\theta$. When the constants $\rho_1,\rho_{21},\rho_{22},\rho_{31},\rho_{32},\varsigma$ are very close to zero, we can show that the first two terms in the minimum are smaller than the last two terms, i.e., $b_1\approx \min\big\{0.5 - 2(1-\gamma)\nu/d, (1-\gamma)/2 \big\}$. We can choose $\gamma$ to balance the two terms by setting $1-\gamma=1/(4\nu/d+1)$, such that $b_1\approx 1/\{2(4\nu/d+1)\}$. From our general Theorem \ref{thm:convergence}, the posterior contraction rate of $\theta$ then becomes $n^{-b_1}\log n\approx n^{-1/\{2(4\nu/d+1)\}} \log n$. This rate seems to be slightly slower than the rate $n^{-1/\{2(2\nu/d+1)\}}$ for the MLE in Theorem 5 of \citet{Tangetal21}, where the observations $Y_n$ are assumed to be made on an equispaced grid. Our stratified sampling design in Assumption \ref{cond:sampling} does not require the equispaced grid. Meanwhile, it is completely unknown whether the faster rate $n^{-1/\{2(2\nu/d+1)\}}$ for $\theta$ can still hold under our stratified sampling design.  We emphasize that the implied posterior contraction rate for $\theta$ (Theorem \ref{thm:post.rate2} below) is the first of its kind in the Bayesian literature, and the optimal convergence rate for such stratified sampling design in Assumption \ref{cond:sampling} remains an open problem.

Next, we consider $b_2$ defined in \eqref{eq:E1n.b2}, which is related to the posterior contraction rate of the nugget parameter $\tau$. Again, we set the constants $\rho_1,\rho_{21},\rho_{22},\rho_{31},\rho_{32},\varsigma$ to be very close to zero. For $\gamma$, we choose it to be very close to its lower bound $\max\{0,1-d/(4\nu)\}$, such that the minimum in \eqref{eq:E1n.b2} is equal to $1/2$. As a result, from our general Theorem \ref{thm:convergence}, the largest possible posterior contraction rate for the nugget parameter $\tau$ implied by our Theorem \ref{thm:E1n.rate} is $n^{-1/2}\log n$. This is almost the same rate (up to the logarithm factor) as the parametric rate $n^{-1/2}$ for the MLE of $\tau$ as shown for the special case of $d=1,\nu=1/2$ in \citet{Chenetal00} and for the general cases in Theorem 5 of \citet{Tangetal21}.

We summarize the analysis above for the posterior contraction rates in the following theorem, which works for the general stratified sampling design in Assumption \ref{cond:sampling} and for any domain dimension $d\in \ZZ_+$.

\begin{theorem}[Explicit Posterior Contraction Rates for Isotropic Mat\'ern] \label{thm:post.rate2}
Suppose that $X(\cdot)$ in Model \eqref{eq:obs.model} has the isotropic Mat\'ern covariance function in Example \ref{ex:Matern}. Suppose that Assumptions \ref{cond:model}, \ref{cond:prior1}, \ref{cond:prior2}, \ref{cond:sampling} and \ref{cond:deriv} hold with $\kappa=1$, $\Ecal_n$ defined in \eqref{eq:E1n} and $\rho_1,\rho_{21},\rho_{22},\rho_{31},\rho_{32}$ satisfying
\begin{align} \label{eq:rho.relation2}
& 0< \rho_1 < \frac{d}{4\nu+d},   ~~ 0< \rho_{21} <
\min\left(\frac{2\nu}{4\nu+d}, \frac{d}{8\nu} \right) ,
~~ 0<\rho_{22} < \frac{d}{2(4\nu+d)},   \nonumber \\
&  \rho_{31}>0, ~~ 0<\rho_{32} < \frac{1}{4\nu+d},~~ \rho_{1} + \rho_{21} < \min\left(\frac{1}{2}+\frac{2\nu}{d}, \frac{d}{4\nu}\right) .
\end{align}
Then uniform over all $\delta_{\bfi;k}\in [0,1)$ with $\bfi=(i_1,\ldots,i_d)^\T $ for all $1\leq i_1,\ldots,i_d\leq m$ and $1\leq k \leq d$, for any positive sequence $M_n\to \infty$ as $n\to\infty$,
\[\Pi \left(  |\theta/\theta_0 - 1| < M_n n^{-\frac{1}{2(4\nu/d+1)}+\varrho} \log n, \text{ and } |\tau/\tau_0 - 1|< M_n n^{-\frac{1}{2}}\log n  \mid  Y_n, F_n  \right) \to 1,  \]
almost surely $\PP_{(\theta_0,\alpha_0,\tau_0,\beta_0)}$ as $n\to\infty$, where
$\varrho  = \max(\rho_1/2, \rho_{22})$ and $\rho_1,\rho_{22}$ are as defined in \eqref{eq:E1n} and \eqref{eq:rho.relation2}.
\end{theorem}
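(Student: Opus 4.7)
\textbf{Proof plan for \cref{thm:post.rate2}.} The plan is to invoke the general contraction result \cref{thm:convergence} with the higher-order quadratic variation estimators of \cref{thm:E1n.rate} supplying the uniformly exponentially consistent tests. First I would check that all five hypotheses of \cref{thm:convergence} are in force: \cref{cond:model,cond:prior1,cond:prior2,cond:sampling,cond:deriv} are explicitly assumed; \cref{cond:spectral} with $\kappa=1$ holds for the isotropic Mat\'ern via \cref{prop:4cov} applied to \cref{ex:Matern}; and \cref{cond:exptest1} is furnished by \cref{thm:E1n.rate} once the sieve $\Ecal_n$ is taken as in \eqref{eq:E1n}. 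It should be checked (a brief bookkeeping step) that the more restrictive constraints \eqref{eq:rho.relation2} assumed here imply the constraints \eqref{eq:rho.relation1} required by \cref{thm:E1n.rate}, so that the latter applies.

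The bulk of the proof is then a reduction of the four-term minima in \eqref{eq:E1n.b1} and \eqref{eq:E1n.b2} to the explicit exponents claimed. I would treat $\widehat\theta_n$ and $\widehat\tau_n$ separately, exploiting the observation in Section~\ref{subsec:matern.rate} that the grid-scale parameter $\gamma$ in the construction of the two estimators can be chosen independently, since the four tail inequalities in \cref{cond:exptest1} are uncoupled in $\gamma$. For $b_1$ I would pick $\gamma$ so that $1-\gamma = d/(4\nu+d)$; then the first two terms of \eqref{eq:E1n.b1} are both equal (up to the $\rho_{22}$ and $\varsigma$ perturbations) to $1/(2(4\nu/d+1))$, while the third and fourth terms, under the constraints $\rho_1 < d/(4\nu+d)$, $\rho_{22} < d/(2(4\nu+d))$, $\rho_1+\rho_{21} < 1/2 + 2\nu/d$ of \eqref{eq:rho.relation2}, can be shown to be no smaller. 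Tracking the $\rho$ deductions yields $b_1 \geq 1/(2(4\nu/d+1)) - \varrho$ with $\varrho = \max(\rho_1/2,\rho_{22})$. For $b_2$ I would push $\gamma$ toward its lower bound $\max\{0,1-d/(4\nu)\}$, i.e., $1-\gamma$ toward $\min\{d/(4\nu),1\}$; at this choice, the first term $1/2$ in \eqref{eq:E1n.b2} becomes binding, and the remaining three terms all exceed $1/2$ provided $\rho_{21} < \min(2\nu/(4\nu+d), d/(8\nu))$ and $\rho_1+\rho_{21} < d/(4\nu)$, which are exactly the conditions in \eqref{eq:rho.relation2}. Taking the $\rho$'s and $\varsigma$ small enough then gives $b_2$ arbitrarily close to $1/2$, and the corresponding $\log n$ factor absorbs the slack.

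Finally I would substitute the resulting $(b_1,b_2)$ into the contraction statement of \cref{thm:convergence} to obtain the posterior rate $M_n n^{-1/(2(4\nu/d+1))+\varrho}\log n$ for $\theta$ and $M_n n^{-1/2}\log n$ for $\tau$, uniformly over the stratified sampling designs in \cref{cond:sampling}, as claimed. The main obstacle is not a single deep step but rather the combinatorial bookkeeping: one must verify that each of the four terms in both \eqref{eq:E1n.b1} and \eqref{eq:E1n.b2} is controlled by exactly one inequality in \eqref{eq:rho.relation2}, and that this control persists under the two (different) optimal choices of $\gamma$. Once this is carried out, the theorem follows by direct invocation of \cref{thm:convergence}.
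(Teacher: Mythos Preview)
Your proposal is correct and follows essentially the same route as the paper's proof: invoke \cref{thm:convergence} with \cref{cond:exptest1} supplied by \cref{thm:E1n.rate}, choose $1-\gamma = d/(4\nu+d)$ to balance the first two terms of \eqref{eq:E1n.b1} (showing the last two terms are dominated), choose $1-\gamma$ near $\min\{1,d/(4\nu)\}$ to make $1/2$ the binding term in \eqref{eq:E1n.b2}, and verify that the resulting constraints on the $\rho$'s are exactly those in \eqref{eq:rho.relation2}. One small correction: the condition $\rho_1+\rho_{21}<1/2+2\nu/d$ is not needed for the $b_1$ analysis (only $\rho_1,\rho_{22}$ enter there); that constraint, together with $\rho_1+\rho_{21}<d/(4\nu)$, is used solely to control the last two terms of \eqref{eq:E1n.b2} in the $b_2$ step.
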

We remark on the prior Assumptions \ref{cond:prior1} and \ref{cond:prior2}. They can be verified by many commonly used priors on $(\theta,\alpha,\tau,\beta)$, such as the priors in the following proposition. Let $\text{IG}(a,b)$ be the inverse gamma distribution with shape parameter $a>0$ and rate parameter $b>0$. Let $\text{IGauss}(\mu,\lambda)$ be the inverse Gaussian distribution with mean parameter $\mu>0$ and shape parameter $\lambda>0$. Then we have the following proposition.

\begin{proposition} \label{prop:prior}
Suppose that $X(\cdot)$ in Model \eqref{eq:obs.model} has the isotropic Mat\'ern covariance function in Example \ref{ex:Matern}. Suppose that the independent priors are assigned on $\theta,\alpha,\tau,\beta$, where $\beta\sim \Ncal(0,a_0I_p)$, $\theta\sim \textup{IG}(a_1,b_1)$, $\tau\sim \textup{IG}(a_2,b_2)$, $\alpha\in \textup{IGauss}(\mu,\lambda)$, for some positive constant hyperparameters $a_0,a_1,b_1,a_2,b_2,\mu,\lambda$ that satisfy $a_1 > 2(3p+6)/\rho_{21}$ and $a_2 > 2(3p+6)/\rho_{22}$. Then Assumption \ref{cond:prior1} is satisfied, and Assumption \ref{cond:prior2} is satisfied by the set $\Ecal_{n}$ defined in \eqref{eq:E1n} with $\kappa=1$.
\end{proposition}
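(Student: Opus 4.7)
The plan is to verify the two prior assumptions directly from the explicit densities. For Assumption~\ref{cond:prior1}, the product $\pi(\theta,\alpha,\tau,\beta) = \pi(\theta)\pi(\alpha)\pi(\tau)\pi(\beta)$ is proper because each factor is a standard proper density, and each of the Gaussian, inverse-gamma, and inverse-Gaussian densities is continuous and strictly positive on the interior of its support; since $\theta_0, \alpha_0, \tau_0 > 0$, continuity and positivity at $(\theta_0,\alpha_0,\tau_0,\beta_0)$ are immediate. For the integrability condition, substituting $u = 1/\tau$ into the $\textup{IG}(a_2,b_2)$ density yields $\int_0^\infty \tau^{-n/2}\pi(\tau)\,\ud\tau = b_2^{-n/2}\Gamma(a_2+n/2)/\Gamma(a_2)$, which is finite for every $n\in\ZZ_+$.

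For Assumption~\ref{cond:prior2} I would decompose $\Ecal_n^c$ into the five violating events coming from the three constraints in \eqref{eq:E1n}, and bound each by Markov's inequality using independence of the four priors:
\begin{align*}
\Pi(\|\beta\|^2/\theta > n^{\rho_1}) & \leq n^{-k\rho_1}\,\EE\|\beta\|^{2k}\cdot\EE\theta^{-k}, \\
\Pi(\tau/\theta > n^{\rho_{22}}) & \leq n^{-k\rho_{22}}\,\EE\tau^{k}\cdot\EE\theta^{-k}, \\
\Pi(\tau/\theta < n^{-\rho_{21}}) & \leq n^{-k\rho_{21}}\,\EE\theta^{k}\cdot\EE\tau^{-k}, \\
\Pi(\alpha > n^{\rho_{32}}) & \leq n^{-k\rho_{32}}\,\EE\alpha^{k}, \qquad \Pi(\alpha < n^{-\rho_{31}}) \leq n^{-k\rho_{31}}\,\EE\alpha^{-k}.
\end{align*}
The Gaussian $\beta$ and inverse-Gaussian $\alpha$ have finite moments of every positive and negative order (the IGauss density is exponentially suppressed at both $0$ and $\infty$), and for $\theta\sim\textup{IG}(a_1,b_1)$ the reciprocal moments $\EE\theta^{-k}$ are always finite; hence for the four events involving only $\beta$ or $\alpha$, $k$ can be chosen arbitrarily large and each bound decays faster than any polynomial in $n$. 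The only binding quantities are $\EE\tau^{k}$ and $\EE\theta^{k}$, which are finite iff $k<a_2$ and $k<a_1$ respectively. Taking $k = a_2/2$ in the second bound and $k = a_1/2$ in the third, the hypotheses $a_2 > 2(3p+6)/\rho_{22}$ and $a_1 > 2(3p+6)/\rho_{21}$ simultaneously give $k\rho_{22} > 3p+6$, $k\rho_{21} > 3p+6$ and keep the inverse-gamma moments finite. Summing the five polynomial bounds and noting that $\kappa=1$ turns the target rate $n^{-(3p+4+2/\kappa)}$ into $n^{-(3p+6)}$ completes the verification.

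The main (and essentially only) non-routine point will be bookkeeping which moments must exist and matching this to the prescribed constraints on $a_1$ and $a_2$; the factor of $2$ in the hypotheses is precisely what lets us take $k$ strictly inside the window $\big((3p+6)/\rho_{2j},\, a_j\big)$ via the clean choice $k=a_j/2$. No delicate spatial-statistics input is needed here, since the argument is a purely prior-side tail calculation that then feeds into Theorems~\ref{thm:conv.rate.denom} and~\ref{thm:convergence} of the general framework.
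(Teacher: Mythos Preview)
Your proposal is correct and uses the same decomposition of $\Ecal_n^c$ into the five violating events that the paper uses, leading to the identical binding constraints $a_1\rho_{21}/2>3p+6$ and $a_2\rho_{22}/2>3p+6$. The only difference is in how each tail is bounded: you apply Markov's inequality with suitably chosen moments (taking $k=a_j/2$ for the two critical events), whereas the paper splits each double integral at an intermediate threshold (e.g.\ $\theta=n^{-\rho_1/2}$ or $\tau=n^{\rho_{22}/2}$) and bounds the two pieces by direct integration, picking up one exponentially small term and one polynomial term $n^{-a_j\rho_{2j}/2}$. Your moment argument is slightly cleaner and makes more transparent why exactly $a_1$ and $a_2$ are the binding hyperparameters, while the paper's integral-splitting yields sharper constants but the same polynomial order; either route is fully adequate here.
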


\section{Numerical Experiments} \label{sec:numerical}

We investigate the posterior contraction behavior for the posterior distribution of parameters in Model \eqref{eq:obs.model}. In particular, we focus on how fast the marginal posteriors of the microergodic parameter $\theta$ and the nugget parameter $\tau$ contracts towards their true values as we increase the sample size $n$. Simulation studies for the frequentist properties of the proposed higher-order quadratic variation estimators $\widehat\theta_n$ and $\widehat\tau_n$ in Section \ref{sec:estimator} can be found in the PhD thesis \citet{Sun21}. The simulations below will focus exclusively on the Bayesian posterior contraction for $\theta$ and $\tau$ as well as the Bayesian posterior predictive performance.

\subsection{Simulations} \label{subsec:simulations}
We present the results for Model \eqref{eq:obs.model} with domain dimension $d=2$ with the isotropic Mat\'ern covariance function in \eqref{eq:MaternCov}. We also have additional simulation results for $d=1$ in Section S8 of the Supplementary Material. We consider two values of the smoothness parameter $\nu=1/2$ and $\nu=1/4$, characterizing different smoothness of the Gaussian process sample paths. The true covariance parameters are set to be $\theta_0=5,\alpha_0=1,\tau_0=0.5$ for both $\nu=1/2$ and $\nu=1/4$. For $d=2$ and the domain $[0,1]^2$, we choose the sampling points $S_n$ to be the regular grid $\left((2i-1)/(2m),(2j-1)/(2m)\right)$ for $i,j=1,\ldots,m$, where we choose $m=20, 22, 25, 28, 31, 35, 39, 44, 49, 55$ such that the sample size roughly follows the geometric sequence $n= m^2\approx 400\times 1.25^{k-1}$ for $k=1,\ldots,10$. For the regression functions, we let $\ff(\bfs)=\left(1,s_1,s_2,s_1^2,s_1s_2,s_2^2\right)^\T$ for $\bfs=(s_1,s_2)\in [0,1]^2$ and $\beta_0=(1,-1.5,-1.5,2,1,2)^\T$.

For Bayesian inference, we assign the prior specified in Proposition \ref{prop:prior} on $(\theta,\alpha,\tau,\beta)$, with hyperparameters $a_0=10^6$, $a_1=b_1=a_2=b_2=0.1$, and $\mu=\lambda=1$. Such values of $a_1,a_2$ are small and do not satisfy the sufficient conditions in Proposition \ref{prop:prior}, but we show that this does not affect the convergence results. We integrate out $\beta$ given the conjugate normal prior and then use the random walk Metropolis algorithm to draw $2000$ samples of $(\theta,\alpha,\tau)$ after $1000$ burnins from the posterior density $\pi(\theta,\alpha,\tau|Y_n)$. We simulate Model \eqref{eq:obs.model} for 40 independent copies of the dataset $(Y_n,F_n)$ and find their posterior distributions. The results are summarized in Figure \ref{fig:para.0.5} for $\nu=1/2$ and Figure \ref{fig:para.0.25} for $\nu=1/4$. The boxplots are the marginal posterior distributions of $\theta$ and $\tau$, obtained by averaging over the 40 macro replications of posterior distributions using the Wasserstein-2 barycenter (\citealt{Lietal17}). Clearly in both $\nu=1/2$ and $\nu=1/4$ cases, the posterior distribution contracts to the true parameter $\theta_0=5,\tau_0=0.5$ as $n$ increases. The right panels of Figures \ref{fig:para.0.5} and Figure \ref{fig:para.0.25} are the means of absolute differences from all posterior draws of $(\theta,\tau)$ to the true parameters $(\theta_0,\tau_0)$ versus the sample size on the logarithm scale. We can see that they approximately decrease in straight lines on the logarithm scale after $n$ becomes larger. These linear trends indicate that the posterior contractions for both $\theta$ and $\tau$ happen at polynomial rates and hence corroborate our theory.

\begin{figure}[ht]
\center
\includegraphics[width=0.8\textwidth]{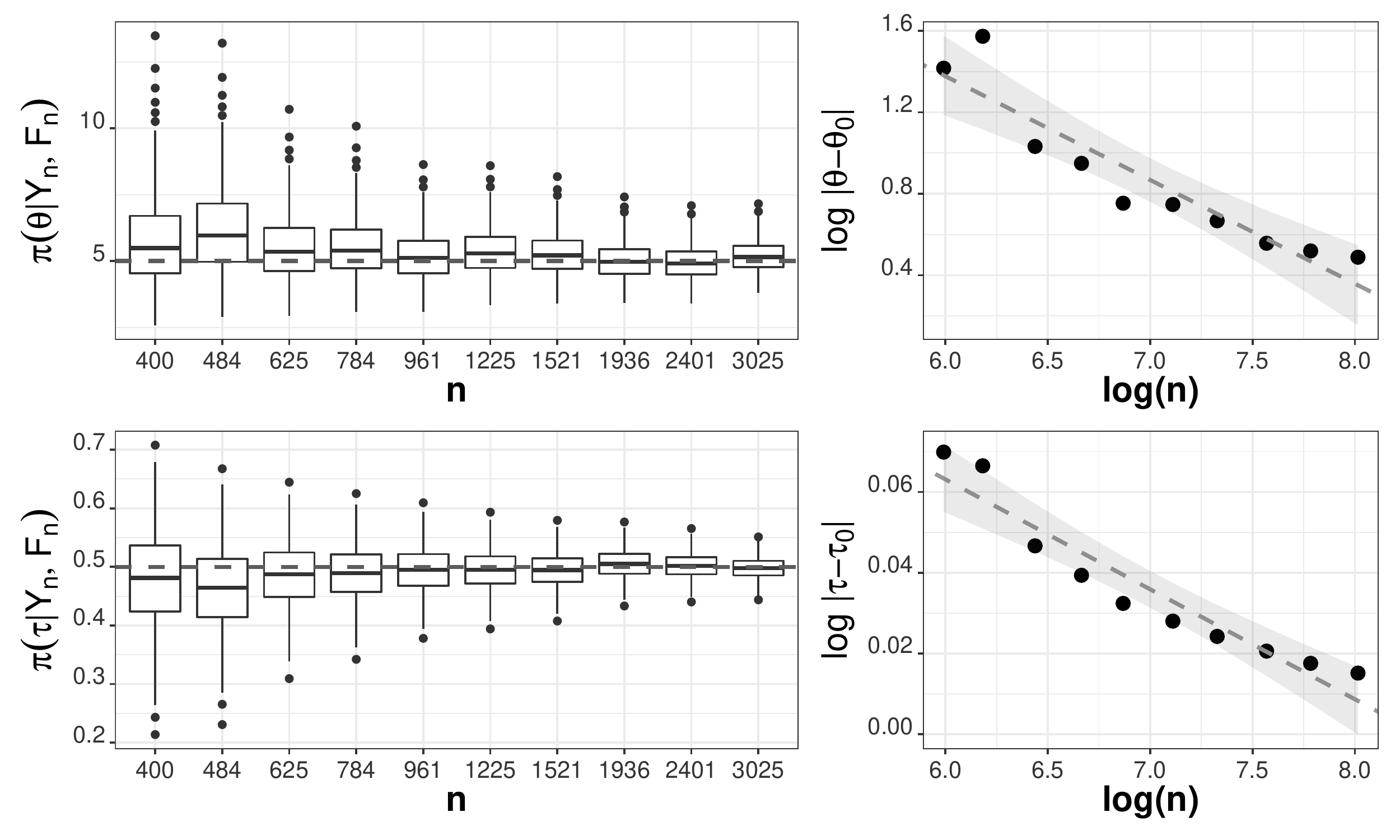}
\caption{\small Posterior contraction for $\nu=1/2$. Left column: Boxplots for the marginal posterior densities of $\theta$ and $\tau$ versus the increasing sample size $n$. The grey dashed lines are the true parameters $\theta_0=5$ and $\tau_0=0.5$. Right column: Posterior means of $|\theta-\theta_0|$ and $|\tau-\tau_0|$ versus the increasing sample size $n$, on the logarithm scale. The dashed lines are the linear regression fits, and the grey shaded areas are the 95\% confidence bands. All posterior summaries are averaged over 40 macro Monte Carlo replications.}
\label{fig:para.0.5}
\end{figure}
\begin{figure}[ht]
\center
\includegraphics[width=0.8\textwidth]{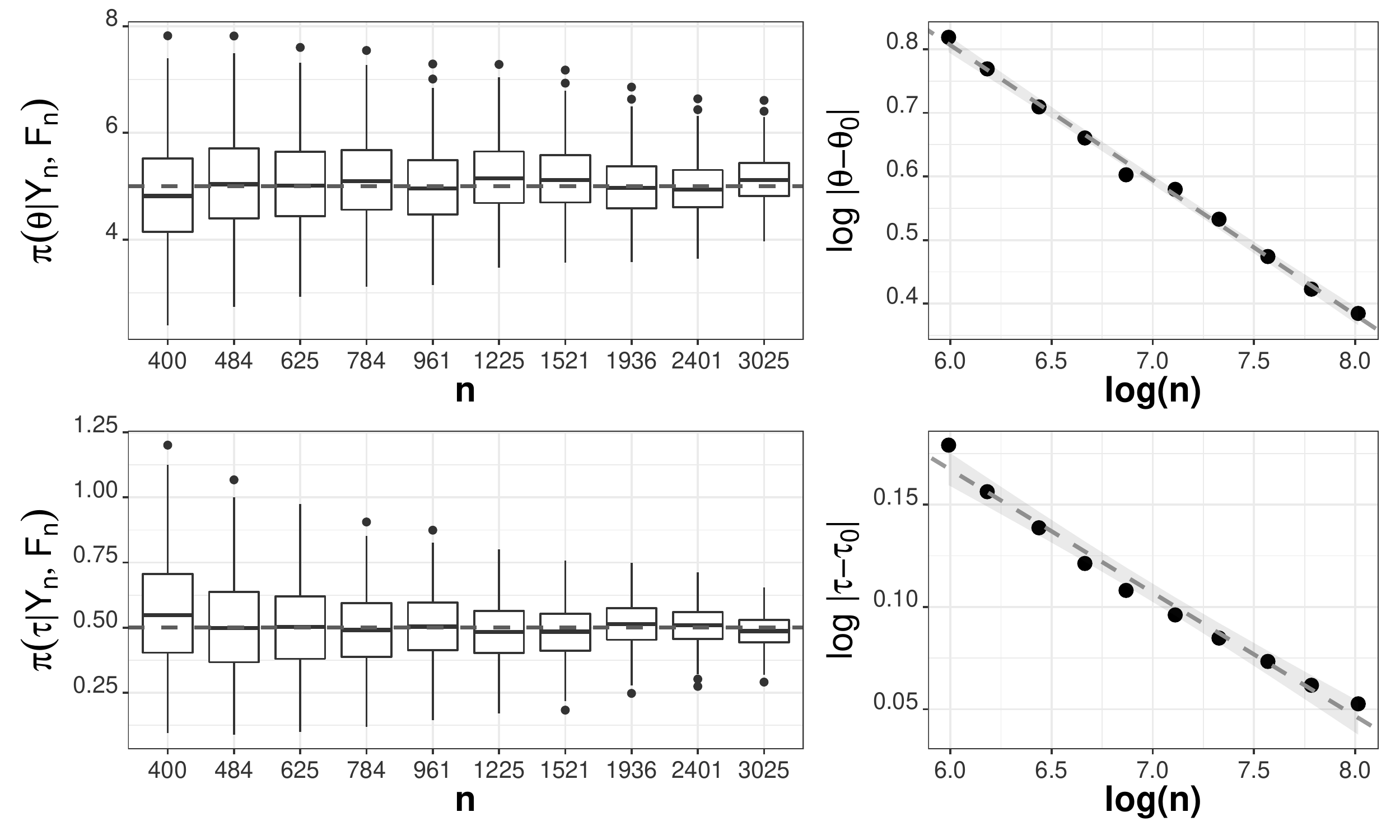}
\caption{\small Posterior contraction for $\nu=1/4$. Left column: Boxplots for the marginal posterior densities of $\theta$ and $\tau$ versus the increasing sample size $n$. The grey dashed lines are the true parameters $\theta_0=5$ and $\tau_0=0.5$. Right column: Posterior means of $|\theta-\theta_0|$ and $|\tau-\tau_0|$ versus the increasing sample size $n$, on the logarithm scale. The dashed lines are the linear regression fits, and the grey shaded areas are the 95\% confidence bands. All posterior summaries are averaged over 40 macro Monte Carlo replications.}
\label{fig:para.0.25}
\end{figure}

We further investigate the Bayesian posterior prediction performance and compare with the best possible prediction. We draw another $N=2500$ points $\bfs_1^*,\ldots,\bfs_N^*$ uniformly from the domain $[0,1]^2$ as the testing locations, and compute the prediction mean squared error
$$\EE_{Y_n}  \Bigg\{N^{-1}\sum_{l=1}^N M_{\text{post}}(\bfs_l^*)\Bigg\} =\EE_{Y_n} \Bigg\{ N^{-1}\sum_{l=1}^N \EE_{\theta,\alpha,\tau,\beta\mid Y_n,F_n} \big\{(\tilde Y(\bfs_l^*)-Y_0(\bfs_l^*)\big\}^2\Bigg\}, $$
where $\tilde Y(\bfs_l^*)$ is a draw from the Bayesian predictive posterior distribution of $Y(\cdot)$ at $\bfs_l^*$ given a random draw $(\theta,\alpha,\tau,\beta)$ from the posterior distribution $\Pi(\cdot\mid Y_n,F_n)$, $Y_0(\bfs^*) = \beta_0^\T \ff(\bfs^*) + X(\bfs^*)$ is the true mean function at $\bfs^*$ without measurement error, and the two layers of expectations are with respect to both the posterior distribution and the distribution of $Y_n$ given the true parameters $(\theta_0,\alpha_0,\tau_0,\beta_0)$. The oracle prediction mean squared error for Model \eqref{eq:obs.model} is calculated as $\EE_{Y_n}  \big\{N^{-1}\sum_{l=1}^N M_{0}(\bfs_l^*)\big\} = \EE_{Y_n} \big[N^{-1}\sum_{l=1}^N \big\{(\tilde Y_0(\bfs_l^*)-Y_0(\bfs_l^*)\big\}^2\big]$ where $\tilde Y_0(\cdot)$ is the best linear unbiased predictor of $Y(\cdot)$ given the true parameters $(\theta_0,\alpha_0,\tau_0,\beta_0)$ (\citealt{Stein99a}). The exact formulas to calculate $\tilde Y(\bfs_l^*),M_{\text{post}}(\bfs_l^*),\tilde Y_0(\bfs_l^*),M_{0}(\bfs_l^*)$ for Model \eqref{eq:obs.model} can be found in Section S8 of the Supplementary Material.

In the left panel of Figure \ref{fig:mse.2}, we plot the prediction mean squared errors for both $\nu=1/2$ and $\nu=1/4$ together with those from $(\theta_0,\alpha_0,\tau_0,\beta_0)$ on the logarithm scale. Clearly the Bayesian predictive posterior has almost the same mean square errors as the oracle prediction. We further take the ratio of the two prediction mean squared errors and plot $\EE_{Y_n} \big\{N^{-1}\sum_{l=1}^N M_{\text{post}}(\bfs_l^*)/M_{0}(\bfs_l^*)\big\}$ in the right panel of Figure \ref{fig:mse.2}, which characterizes the relative efficiency of Bayesian prediction. \citet{Stein88, Stein90a, Stein90b, Stein93} have developed the frequentist theory of posterior asymptotic efficiency for Gaussian processes without regression terms and nugget. The decreasing trend of the ratio towards 1 in the right panel of Figure \ref{fig:mse.2} indicates a possibly similar phenomenon that the Bayesian posterior prediction is asymptotically efficient even for the more general spatial model \eqref{eq:obs.model} under the fixed-domain asymptotics framework.
\begin{figure}[ht]
\center
\includegraphics[width=0.9\textwidth]{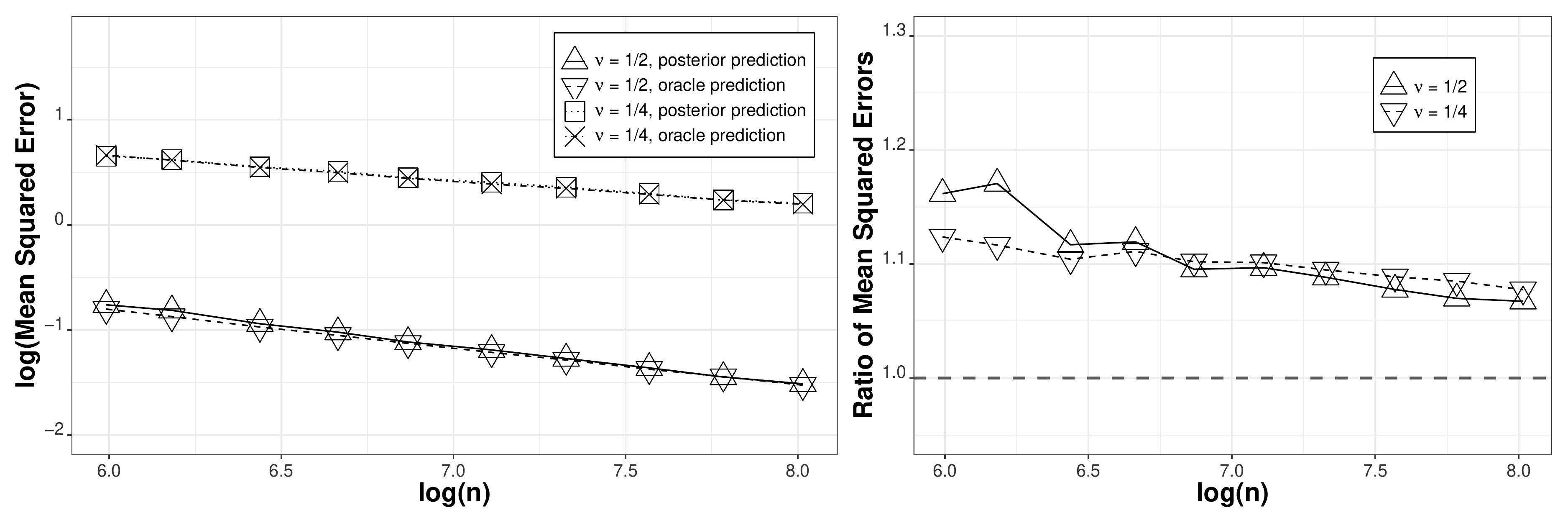}
\caption{\small Prediction mean squared errors for $\nu=1/2$ and $\nu=1/4$. Left panel: The prediction mean squared errors under both the Bayesian posterior prediction and the oracle prediction based on the true parameters. Right panel: Ratios of the Bayesian prediction mean squared error and the oracle prediction mean squared error. All posterior summaries are averaged over 2500 testing locations in $[0,1]^2$ and 40 macro Monte Carlo replications.}
\label{fig:mse.2}
\end{figure}

\subsection{Sea Surface Temperature Data} \label{subsec:SST.data}

For a real data analysis, we fit Model \eqref{eq:obs.model} and the isotropic Mat\'ern covariance function \eqref{eq:MaternCov} with $\nu=1/2$ to the sea surface temperature data on the Pacific Ocean between $45^\circ$--$48^\circ$ north latitudes and $149^\circ$--$152^\circ$ west longitudes on August 16, 2016. The data were collected from remote sensing satellites with a high-resolution on a $0.025^\circ\times 0.025^\circ$ grid, and can be obtained from National Oceanographic Data Centres (NODC) World Ocean Database (\url{https://www.ncei.noaa.gov/products/world-ocean-database}). For Bayesian estimation, we use a total of 3600 observations on the $0.05^\circ\times 0.05^\circ$ grid, and set the sample size $n = 700, 1050, 1600, 2400, 3600$. For each $n$ smaller than 3600, we randomly choose 10 subsets of data $Y_n$. We set the regressors $\ff(\bfs)=(1,s_1,s_2)^\T$ for the latitude $s_1$ and the longitude $s_2$. We assign the same prior as in Proposition \ref{prop:prior} with hyperparameters $a_0=10^6$, $a_1=b_1=a_2=b_2=0.1$, and $\mu=\lambda=1$. We draw $4000$ posterior samples of $(\theta,\alpha,\tau)$ after $1000$ burnins for each of the 10 subsets, and then average the 10 marginal posterior distributions into one summary posterior distribution using the Wasserstein-2 barycenter (\citealt{Lietal17}). Figure \ref{fig:sst.set3} presents the summary marginal posterior densities of $\theta$ and $\tau$ for $n\in \{700, 1050, 1600, 2400, 3600\}$. As $n$ increases, we can see the clear trend of posterior contraction for both $\theta$ and $\tau$, and the posterior of $\tau$ seems to contract faster than that of $\theta$.

\begin{figure}[ht]
\center
\includegraphics[width=\textwidth]{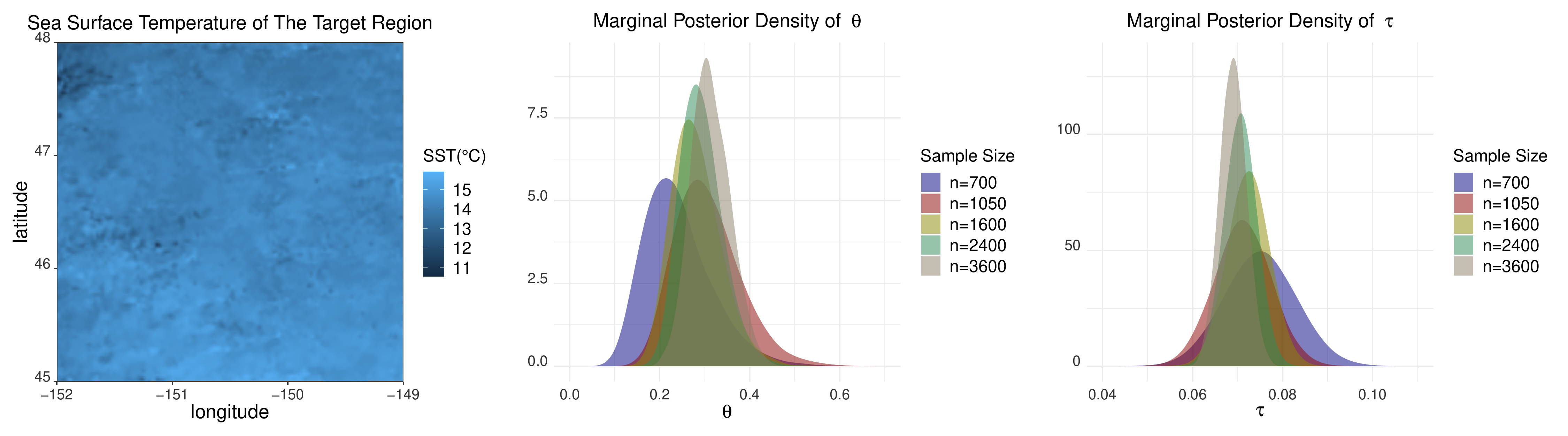}
\caption{\small Sea surface temperature data. Left panel: Temperature in the target region. Middle panel: Marginal posterior densities of $\theta$ with various sample size $n$. Right panel: Marginal posterior densities of $\tau$ with various sample size $n$.}
\label{fig:sst.set3}
\end{figure}

\section{Discussion} \label{sec:discussion}
The general fixed-domain posterior contraction theory developed in this paper can be potentially applied to many stationary covariance functions in spatial statistics. With the new evidence lower bound, we have effectively transformed the problem of finding posterior contraction rates to the problem of finding efficient frequentist estimators that satisfy the concentration inequalities with exponentially small tails as in Assumption \ref{cond:exptest1}.

There are many potential directions to extend the current work. First, for isotropic Mat\'ern, our higher-order quadratic variation estimators deliver the explicit posterior contraction rates, though it remains unclear what is the optimal rate for the microergodic parameter $\theta$ under our flexible stratified sampling design. It would be of further interest to find the limiting Bayesian posterior distribution of $(\theta,\alpha,\tau,\beta)$ and establish the posterior asymptotic normality for both $\theta$ and the nugget $\tau$, similar to the Bayesian fixed-domain asymptotic theory for the model without nugget in \citet{Li20}.

Second, we have assumed that the smoothness parameter $\nu$ is fixed and known in our theory. Because the smoothness parameter $\nu$ determines the degree of mean square differentiability of the random field, estimation of $\nu$ has been an important and meanwhile challenging problem in the spatial literature. Recently, using the new higher-order quadratic variation method, \citet{Loh15}, \citet{Lohetal21}, and \citet{LohSun23} have proposed consistent estimators of $\nu$ for irregularly spaced spatial data under fixed-domain asymptotics. In fact, our estimator of the microergodic parameter $\theta$ in Section \ref{subsec:construction.estimator} is the same as the estimator in \citet{Lohetal21} and \citet{LohSun23} when $\nu$ is assumed to be known. It requires further study whether we can include $\nu$ as part of the unknown parameters in the  Bayesian framework, and establish the posterior contraction for $\nu$ based on the new estimators.

Third, the isotropic covariance function considered in this paper leads to the simplest spatial Gaussian process model for real data. To extend our method to anisotropic covariance functions or even nonstationary spatial processes, we need to both establish the evidence lower bound similar to Theorem \ref{thm:conv.rate.denom} and find some consistent frequentist estimators for the microergodic parameters in these more general covariance functions under fixed-domain asymptotics. From the technical perspective, it seems that finding consistent estimators will be a more challenging task than deriving the evidence lower bound, though both of them may require a case-by-case analysis for different spatial covariance functions. Meanwhile, it is possible to adapt the current proof techniques to a general nonparametric class of covariance functions based the principal irregular terms in their Taylor expansions (see Section 2.7 of \citealt{Stein99a}), in a similar spirit to the recent work \citet{BacLag20}.

Fourth, given the frequentist asymptotic efficiency in \citet{Stein90a,Stein90b} for Gaussian processes without nugget as well as our promising simulation results, it would be of theoretical interest to show that the asymptotic efficiency can still be preserved even in the Bayesian posterior prediction for the true mean function $Y_0(\bfs) = \beta_0^\T \ff(\bfs) + X(\bfs)$ in the more general model \eqref{eq:obs.model} with both regression terms and nugget.

These potential developments will together provide strong theoretical justification for the existing Bayesian spatial inference based on the Gaussian process regression model, including parameter estimation, uncertainty quantification, and prediction. We leave these directions for future research.

\section*{Acknowledgements}
The authors thank Professor Wei-Liem Loh for helpful discussion. This work was supported by Singapore Ministry of Education Academic Research Funds Tier 1 Grant A-0004822-00-00.

\section*{Supplementary Material}
\noindent {\bf Supplementary Material for ``Fixed-domain Posterior Contraction Rates for Spatial Gaussian Process Model with Nugget'':} Technical proofs of all theorems, propositions, and additional simulation results.
\vspace{2mm}

\newpage

\setcounter{section}{0}
\setcounter{equation}{0}
\setcounter{lemma}{0}
\setcounter{proposition}{0}
\renewcommand{\theequation}{S.\arabic{equation}}
\renewcommand{\theproposition}{S.\arabic{proposition}}
\renewcommand{\thelemma}{S.\arabic{lemma}}
\renewcommand\thesection{S\arabic{section}}

\makeatletter
\renewcommand{\thefigure}{S\@arabic\c@figure}
\makeatother

\begin{center}
{\bf \Large Supplementary Material for ``Fixed-domain Posterior Contraction Rates for Spatial Gaussian Process Model with Nugget"}
\end{center}

This supplementary material contains the technical proofs of the theorems and propositions in the main paper as well as additional simulation results. Section \ref{sec:proof.main} contains the proof of Theorem \ref{thm:convergence}. Section \ref{sec:proof.denom} contains the proof of Theorem \ref{thm:conv.rate.denom} and auxiliary technical results on spectral analysis. Section \ref{sec:inconsistency} proves a proposition on the posterior inconsistency of the range parameter $\alpha$ and the regression coefficient vector $\beta$. Sections \ref{sec:proof.prop1}, \ref{sec:proof.post.rate2}, and \ref{sec:proof.prop:prior} include the proofs of Proposition \ref{prop:4cov}, Theorem \ref{thm:post.rate2}, and Proposition \ref{prop:prior}, respectively. Section \ref{sec:proof.E1n.rate} contains the lengthy proof of Theorem \ref{thm:E1n.rate}. Section \ref{sec:add.simu} includes the formulas for calculating the Bayesian and oracle prediction mean squared errors for Model \eqref{eq:obs.model} in the main text and additional simulation results for the case of domain dimension $d=1$.
\vspace{2mm}

We define some universal notation that will be used throughout the proof. Let $\RR_+=(0,+\infty)$, $\ZZ_+$ be the set of all positive integers, and $\NN=\ZZ_+\cup \{0\}$. For any $x\in \RR$, $\lceil x\rceil$ and $\lfloor x\rfloor$ denote the smallest integer $\geq x$ and the largest integer $\leq x$. For any $x=(x_1,\ldots,x_d)^\T  \in \RR^d$, we let $\|x\|=\big(\sum_{i=1}^d x_i^2\big)^{1/2}$, $\|x\|_1=\sum_{i=1}^d |x_i|$, and $\|x\|_{\infty}=\max(x_1,\ldots,x_d)$. For a generic set $\Ecal$, we use $\Ecal^c$ to denote its complement. If $\Ecal$ contains finitely many elements, then $|\Ecal|$ denotes its cardinality.

For two positive sequences $a_n$ and $b_n$, we use $a_n\prec b_n$ and $b_n\succ a_n$ to denote the relation $\lim_{n\to\infty} a_n/b_n=0$. $a_n=O(b_n)$, $a_n\preceq b_n$ and $b_n\succeq a_n$ denote the relation $\limsup_{n\to\infty} a_n/b_n<+\infty$. $a_n\asymp b_n$ denotes the relation $a_n\preceq b_n$ and $a_n\succeq b_n$.

For any integers $k,m$, we let $I_k$ be the $k\times k$ identity matrix, $0_k$ be the $k$-dimensional column vectors of all zeros, $0_{k\times m}$ be the $k\times m$ zero matrix, and $\diag\{c_1,\ldots,c_k\}$ be the diagonal matrix with diagonal entries $c_1,\ldots,c_k\in \RR$. For any generic matrix $A$, $cA$ denotes the matrix of $A$ with all entries multiplied by the number $c$, and $\dett(A)$ denotes the determinant of $A$. For a square matrix $A$, $\tr(A)$ denotes the trace of $A$. If $A$ is symmetric positive semidefinite, then $\mathsf{s}_{\min}(A)$ and $\mathsf{s}_{\max}(A)$ denote the smallest and largest eigenvalues of $A$, and $A^{1/2}$ denotes a symmetric positive semidefinite square root of $A$. For two symmetric matrices $A$ and $B$, we use $A\preceq B$ and $B\succeq A$ to denote the relation that $B-A$ is symmetric positive semidefinite, and use $A\prec B$ and $B\succ A$ to denote the relation that $B-A$ is symmetric positive definite. For any matrix $A$, $\|A\|_{\op}=\big\{\mathsf{s}_{\max}(A^\T  A)\big\}^{1/2} = \big\{\mathsf{s}_{\max}(AA^\T )\big\}^{1/2}$ denotes the operator norm of $A$, and $\|A\|_F= \big\{\tr\left(A^\T  A\right)\big\}^{1/2}$ denotes the Frobenius norm of $A$. We use $\pr(\cdot)$ to denote the probability under the probability measure $\PP_{(\theta_0,\alpha_0,\tau_0,\beta_0)}$.

\section{Proof of Theorem \ref{thm:convergence}} \label{sec:proof.main}
\begin{proof}

We first show the propriety of the posterior distribution defined in Equation \eqref{jointpost1} of the main text, which is equivalent to showing that the integral in the denominator on the right-hand side of \eqref{jointpost1} is finite. Since for two positive definite matrices $A$ and $B$, $\dett(A+B)\geq \dett(A)$, we have that for any given $n\in \ZZ_+$,
\begin{align*}
&\quad~ \int_{\RR_+^{3}\times \RR^p} \exp\left\{\Lcal_n(\theta,\alpha,\tau,\beta) \right\} \pi(\theta,\alpha,\tau,\beta) \ud \theta \ud \tau \ud \alpha \ud \beta  \\
&= \int_{\RR_+^{3}\times \RR^p} \frac{1}{\left[\dett\left\{\theta K_{\alpha,\nu}(S_n)+\tau I_n\right\}\right]^{1/2}} \exp\left\{ - \frac{1}{2} (Y_n-  F_n  \beta)^\T  \left\{\theta K_{\alpha,\nu}(S_n) + \tau I_n \right\}^{-1} (Y_n-  F_n  \beta) \right\} \\
&\qquad \times \pi(\theta,\alpha,\tau,\beta) \ud \theta \ud \tau \ud \alpha \ud \beta  \\
&\leq \int_{\RR_+^{3}\times \RR^p} \frac{1}{\left\{\dett(\tau I_n)\right\}^{1/2}} \cdot 1 \cdot \pi(\theta,\alpha,\tau,\beta) \ud \theta \ud \tau \ud \alpha \ud \beta  \\
&= \int_0^{\infty} \tau^{-n/2} \pi(\tau) \ud \tau < \infty,
\end{align*}
where the last integral is finite following Assumption \ref{cond:prior1}. This proves that the joint posterior density $\pi(\theta,\alpha,\tau,\beta~|~ Y_n, F_n)$ in Equation \eqref{jointpost1} is well defined.

Next, we prove the posterior contraction rates for $\theta$ and $\tau$ in Theorem \ref{thm:convergence}. We follow the classical proof of the Schwartz's theorem for posterior consistency (\citealt{Sch65}); see for example, Theorem 6.17 in \citet{GhoVan17}. Let $\varepsilon_{1n}= M_n n^{-b_1} \log n$ and $\varepsilon_{2n}= M_n n^{-b_2} \log n$. Define the testing function
\begin{align} \label{eq:test}
\phi_n & = \Ical \left( |\widehat\theta_n/\theta_0-1|\geq \varepsilon_{1n}/2, \text{ or } |\widehat\tau_n/\tau_0-1|\geq \varepsilon_{2n}/2 \right),
\end{align}
where $\widehat\theta_n$ and $\widehat\tau_n$ are from Assumption \ref{cond:exptest1}.

We start with the decomposition
\begin{align}\label{eq:cons1}
& \Pi \left(\Bcal_0(\varepsilon_{1n},\varepsilon_{2n})^c ~|~ Y_n, F_n  \right)
= \frac{\int_{\Bcal_0(\varepsilon_{1n},\varepsilon_{2n})^c}\exp\left\{ \Lcal_n(\theta,\alpha,\tau,\beta) \right\} \pi(\theta,\alpha,\tau,\beta) \ud \theta \ud \tau \ud \alpha \ud \beta }
{\int_{\RR_+^{3} \times \RR^p} \exp\left\{\Lcal_n(\theta',\alpha',\tau',\beta') \right\} \pi(\theta',\alpha',\tau',\beta')\ud \theta' \ud \tau' \ud \alpha' \ud \beta' } \nonumber \\
&\leq \phi_n + \frac{(1-\phi_n)\int_{\Bcal_0(\varepsilon_{1n},\varepsilon_{2n})^c}\exp\left\{ \Lcal_n(\theta,\alpha,\tau,\beta)\right\} \pi(\theta,\alpha,\tau,\beta) \ud \theta \ud \tau \ud \alpha \ud \beta}
{\int_{\RR_+^{3} \times \RR^p} \exp\left\{\Lcal_n(\theta',\alpha',\tau',\beta') \right\} \pi(\theta',\alpha',\tau',\beta')\ud \theta' \ud \tau' \ud \alpha' \ud \beta' } .
\end{align}
By Assumption \ref{cond:exptest1}, we have that for some large positive constant $N_0$, for all $n>N_0$,
\begin{align*}
{\EE}_{(\theta_0,\alpha_0,\tau_0,\beta_0)} (\phi_n) & \leq \exp\left\{- c_1 \varphi\left(n^{b_1} \varepsilon_{1n} \right)\right\} + \exp\left\{- c_2 \varphi\left(n^{b_2} \varepsilon_{2n} \right)\right\} \\
& \leq \exp\left(- c_1 M_n \log n \right) + \exp\left(- c_2 M_n \log n  \right)  \to 0,
\end{align*}
where the last step follows from $M_n \to +\infty$ as $n\to\infty$. Notice that for any constant $c>0$ and an arbitrary $c'>1$, $\exp\left(-c M_n \log n\right)\leq \exp(-c' \log n) = n^{-c'}$ for all sufficiently large $n$ and hence the sequence $\exp\left(-c M_n \log n\right)$ is summable over $n$. Therefore, by the Markov's inequality and the Borel-Cantelli lemma, $\phi_n\to 0$ almost surely $\PP_{(\theta_0,\alpha_0,\tau_0)}$ as $n\to\infty$.

We now turn to the second term in \eqref{eq:cons1}. It can be further decomposed into two terms:
\begin{align}\label{eq:cons2}
& \frac{(1-\phi_n)\int_{\Bcal_0(\varepsilon_{1n},\varepsilon_{2n})^c} \exp\left\{ \Lcal_n(\theta,\alpha,\tau,\beta)\right\} \pi(\theta,\alpha,\tau,\beta) \ud \theta \ud \tau \ud \alpha \ud \beta}
{\int_{\RR_+^{3} \times \RR^p} \exp\left\{\Lcal_n(\theta',\alpha',\tau',\beta') \right\} \pi(\theta',\alpha',\tau',\beta')\ud \theta' \ud \tau' \ud \alpha' \ud \beta'}  = T_1+T_2, \quad \text{ where }\nonumber \\
& T_1 = \frac{(1-\phi_n)\int_{\Bcal_0(\varepsilon_{1n},\varepsilon_{2n})^c \cap \Ecal_n} \exp\left\{ \Lcal_n(\theta,\alpha,\tau,\beta)-\Lcal_n(\theta_0,\alpha_0,\tau_0,\beta_0) \right\} \pi(\theta,\alpha,\tau,\beta) \ud \theta \ud \tau \ud \alpha \ud \beta }
{\int_{\RR_+^{3} \times \RR^p} \exp\left\{\Lcal_n(\theta',\alpha',\tau',\beta') -\Lcal_n(\theta_0,\alpha_0,\tau_0,\beta_0)\right\} \pi(\theta',\alpha',\tau',\beta')\ud \theta' \ud \tau' \ud \alpha' \ud \beta' } ,  \nonumber \\
&T_2 =  \frac{(1-\phi_n)\int_{\Bcal_0(\varepsilon_{1n},\varepsilon_{2n})^c \cap \Ecal_n^c} \exp\left\{ \Lcal_n(\theta,\alpha,\tau,\beta)-\Lcal_n(\theta_0,\alpha_0,\tau_0,\beta_0) \right\} \pi(\theta,\alpha,\tau,\beta) \ud \theta \ud \tau \ud \alpha \ud \beta  }
{\int_{\RR_+^{3}\times \RR^p} \exp\left\{\Lcal_n(\theta',\alpha',\tau',\beta')-\Lcal_n(\theta_0,\alpha_0,\tau_0,\beta_0) \right\} \pi(\theta',\alpha',\tau',\beta')\ud \theta' \ud \tau' \ud \alpha' \ud \beta'} .
\end{align}
For the first term $T_1$ in \eqref{eq:cons2}, by Assumption \ref{cond:exptest1} and the Fubini's theorem, the numerator has expectation
\begin{align}\label{eq:cons2.1}
& {\EE}_{(\theta_0,\alpha_0,\tau_0,\beta_0)} (1-\phi_n)\int_{\Bcal_0(\varepsilon_{1n},\varepsilon_{2n})^c \cap \Ecal_n} \exp\Big\{ \Lcal_n(\theta,\alpha,\tau,\beta) \nonumber \\
&~~ -\Lcal_n(\theta_0,\alpha_0,\tau_0,\beta_0)  \Big\} \pi(\theta,\alpha,\tau,\beta) \ud \theta \ud \tau \ud \alpha \ud \beta \nonumber \\
={} & \int_{\Bcal_0(\varepsilon_{1n},\varepsilon_{2n})^c \cap \Ecal_n} {\EE}_{(\theta,\alpha,\tau,\beta)} (1-\phi_n) \pi(\theta,\alpha,\tau,\beta) \ud \theta \ud \tau \ud \alpha \ud \beta \nonumber \\
\leq{}& \sup_{\Bcal_0(\varepsilon_{1n},\varepsilon_{2n})^c \cap \Ecal_n} {\EE}_{(\theta,\alpha,\tau,\beta)} \left\{(1-\phi_n) \cdot \int_{\Bcal_0(\epsilon_1,\epsilon_2)^c \cap \Ecal_n} \pi(\theta,\alpha,\tau,\beta) \ud \theta \ud \tau \ud \alpha \ud \beta \right\} \nonumber \\
\leq{}& \sup_{\Bcal_0(\varepsilon_{1n},\varepsilon_{2n})^c \cap \Ecal_n} {\EE}_{(\theta,\alpha,\tau,\beta)} (1-\phi_n) \nonumber \\
\leq{}& \sup_{\Bcal_0(\varepsilon_{1n},\varepsilon_{2n})^c \cap \Ecal_n} \PP_{(\theta,\alpha,\tau,\beta)} \left( |\widehat\theta_n/\theta_0-1|\leq \varepsilon_{1n}/2 \right)
+ \sup_{\Bcal_0(\varepsilon_{1n},\varepsilon_{2n})^c \cap \Ecal_n} \PP_{(\theta,\alpha,\tau,\beta)} \left( |\widehat\tau_n/\tau_0-1|\leq \varepsilon_{2n}/2 \right) \nonumber \\
\leq{}& \exp\left\{- c_1 \varphi\left(n^{b_1}\varepsilon_{1n} \right)\right\} + \exp\left\{- c_2 \varphi\left(n^{b_2}\varepsilon_{2n} \right)\right\} \nonumber \\
={}& \exp\left(-c_1 M_n \log n\right) + \exp\left(-c_2 M_n \log n\right) .
\end{align}
Therefore, by applying the Markov's inequality and the Borel-Cantelli Lemma to \eqref{eq:cons2.1}, the numerator of $T_1$ in \eqref{eq:cons2} is smaller than $\exp\left\{-\min(c_1,c_2) M_n \log n / 2\right\}$ almost surely $\PP_{(\theta_0,\alpha_0,\tau_0,\beta_0)}$ as $n\to\infty$. On the other hand, Theorem \ref{thm:conv.rate.denom} shows that almost surely $\PP_{(\theta_0,\alpha_0,\tau_0,\beta_0)}$ as $n\to\infty$, for all $d\in \ZZ_+$, the denominator in $T_1$ in \eqref{eq:cons2} is lower bounded by
\begin{align}\label{eq:cons2.2}
& \int_{\RR_+^{3}\times \RR^p} \exp\left\{\Lcal_n(\theta',\alpha',\tau',\beta')-\Lcal_n(\theta_0,\alpha_0,\tau_0,\beta_0)  \right\} \nonumber \\
& ~~ \times  \pi(\theta',\alpha',\tau',\beta')\ud \theta' \ud \tau' \ud \alpha' \ud \beta' \geq D n^{-(3p+2+2/\kappa)},
\end{align}
for some constant $D>0$. Hence for all sufficiently large $n$, the first term $T_1$ in \eqref{eq:cons2} is upper bounded by
\begin{align}\label{eq:cons2.3}
T_1 & \leq D^{-1} n^{(3p+2+2/\kappa)} \exp\left\{-\min(c_1,c_2) M_n \log n / 2\right\}\leq \exp(-c' \log n)  \to 0,
\end{align}
for an arbitrary $c'>1$, almost surely $\PP_{(\theta_0,\alpha_0,\tau_0,\beta_0)}$.

For the second term $T_2$ in \eqref{eq:cons2}, by Assumption \ref{cond:exptest1} and the Fubini's theorem, the numerator has expectation
\begin{align}\label{eq:cons2.4}
& {\EE}_{(\theta_0,\alpha_0,\tau_0,\beta_0)} (1-\phi_n)\int_{\Bcal_0(\varepsilon_{1n},\varepsilon_{2n})^c \cap \Ecal_n^c} \exp\Big\{ \Lcal_n(\theta,\alpha,\tau,\beta) \nonumber \\
&\quad -\Lcal_n(\theta_0,\alpha_0,\tau_0,\beta_0) \Big\} \pi(\theta,\alpha,\tau,\beta) \ud \theta \ud \tau \ud \alpha \ud \beta \nonumber \\
={} & \int_{\Bcal_0(\varepsilon_{1n},\varepsilon_{2n})^c \cap \Ecal_n^c} {\EE}_{(\theta_0,\alpha_0,\tau_0,\beta_0)} (1-\phi_n)\exp\Big\{ \Lcal_n(\theta,\alpha,\tau,\beta) \nonumber \\
&\quad -\Lcal_n(\theta_0,\alpha_0,\tau_0,\beta_0) \Big\} \pi(\theta,\alpha,\tau,\beta) \ud \theta \ud \tau \ud \alpha \ud \beta  \nonumber \\
={} & \int_{\Bcal_0(\varepsilon_{1n},\varepsilon_{2n})^c \cap \Ecal_n^c} {\EE}_{(\theta,\alpha,\tau,\beta)} (1-\phi_n) \pi(\theta,\alpha,\tau,\beta) \ud \theta \ud \tau \ud \alpha \ud \beta \nonumber \\
\leq{}& \int_{\Ecal_n^c} 1\cdot \pi(\theta,\alpha,\tau,\beta) \ud \theta \ud \tau \ud \alpha \ud \beta
= \Pi(\Ecal_n^c) \leq n^{-(3p+4+2/\kappa)} .
\end{align}
Therefore, by applying the Markov's inequality to \eqref{eq:cons2.4}, the numerator of $T_2$ in \eqref{eq:cons2} is smaller than $n^{-\left(3p+\frac{5}{2}+\frac{2}{\kappa}\right)}$ with probability at least $1-n^{-3/2}$. Since the sequence $\{n^{-3/2}:n\geq 1\}$ is summable over $n$, by the Borel-Cantelli Lemma, we have that the numerator of $T_2$ in \eqref{eq:cons2} is smaller than $n^{-\left(3p+\frac{5}{2}+\frac{2}{\kappa}\right)}$ almost surely $\PP_{(\theta_0,\alpha_0,\tau_0,\beta_0)}$ as $n\to\infty$.
We combine this with the lower bound in \eqref{eq:cons2.2} to conclude that almost surely $\PP_{(\theta_0,\alpha_0,\tau_0,\beta_0)}$ as $n\to\infty$, the second term $T_2$ in \eqref{eq:cons2} is upper bounded by
\begin{align}\label{eq:cons2.5}
T_2 &\leq  D^{-1} n^{3p+2+2/\kappa} \cdot n^{-\left(3p+\frac{5}{2}+\frac{2}{\kappa}\right)} = D^{-1} n^{-1/2} \to 0.
\end{align}

Finally, we combine \eqref{eq:cons1}, \eqref{eq:cons2}, \eqref{eq:cons2.3}, and \eqref{eq:cons2.5} to conclude that the right-hand side of \eqref{eq:cons1} converges to zero almost surely $\PP_{(\theta_0,\alpha_0,\tau_0,\beta_0)}$ as $n\to\infty$, which completes the proof of Theorem \ref{thm:convergence}.
\end{proof}

\section{Proof of Theorems \ref{thm:conv.rate.denom} and Auxiliary Technical Lemmas} \label{sec:proof.denom}
\subsection{Proof of Theorem \ref{thm:conv.rate.denom}} \label{sec:proof.thm1}
\begin{proof}
Define the sets $\Acal_{n}$ and $\Dcal_{n}$ as
\begin{align}
& \Acal_{n} = \left\{(\theta,\alpha,\tau,\beta)\in \RR_+^{3} \times \RR^p:~ |\alpha/\alpha_0-1|\leq n^{-2/\kappa} \right\} , \nonumber \\
& \Dcal_{n} = \left\{(\theta,\alpha,\tau,\beta)\in \RR_+^{3} \times \RR^p:~ \|\beta-\beta_0\|\leq n^{-3} \right\}. \nonumber
\end{align}
By the continuity of the prior density in Assumption \ref{cond:prior1}, we have that for all sufficiently large $n$, $\pi(\theta,\tau,\alpha,\beta)>\pi(\theta_0,\tau_0,\alpha_0,\beta_0)/2>0$ for all $(\theta,\tau,\alpha,\beta)\in \Bcal_0(n^{-1}, n^{-1}) \cap \Acal_{n} \cap \Dcal_{n}$. Therefore,
\begin{align} \label{eq:prior.lower1}
\Pi \left(\Bcal_0(n^{-1}, n^{-1}) \cap \Acal_{n} \cap \Dcal_{n}\right)
&\geq \frac{\pi(\theta_0,\tau_0,\alpha_0,\beta_0)}{2} \cdot (2\theta_0n^{-1}) \cdot (2\tau_0n^{-1}) \cdot (2\alpha_0 n^{-2/\kappa}) \cdot v_0 n^{-3p}  \nonumber \\
& = 4 v_0 \theta_0\tau_0\alpha_0 \pi(\theta_0,\tau_0,\alpha_0,\beta_0) n^{-(3p+2+2/\kappa)},
\end{align}
where $v_0 = \pi^{p/2}/\Gamma(p/2+1) $. Using the definition of log-likelihood in \eqref{eq:loglik}, the exponent in Theorem \ref{thm:conv.rate.denom} can be written as
\begin{align}
& \Lcal_n(\theta,\alpha,\tau,\beta) - \Lcal_n(\theta_0,\alpha_0,\tau_0,\beta_0)  \nonumber \\
= {}& -\frac{1}{2} \left\{\tr\left[\left\{\theta K_{\alpha,\nu}(S_n) + \tau I_n\right\}^{-1} \widetilde Y_n \widetilde Y_n^\T  \right] - \tr\left[\left\{\theta_0 K_{\alpha_0,\nu}(S_n) + \tau_0 I_n\right\}^{-1} \widetilde Y_n \widetilde Y_n^\T  \right] \right\} \nonumber \\
& - \frac{1}{2} \log \frac{\dett\left\{\theta K_{\alpha,\nu}(S_n) +\tau I_n\right\}}{\dett\left\{\theta_0 K_{\alpha_0,\nu}(S_n) +\tau_0 I_n\right\}} \nonumber \\
& + \widetilde Y_n^\T  \left[\left\{\theta K_{\alpha,\nu}(S_n) +\tau I_n\right\}^{-1}-\left\{\theta_0 K_{\alpha_0,\nu}(S_n) +\tau_0 I_n\right\}^{-1}\right] F_n (\beta-\beta_0) \nonumber \\
& - \frac{1}{2} (\beta-\beta_0)^\T   F_n ^\T  \left[\left\{\theta K_{\alpha,\nu}(S_n) +\tau I_n\right\}^{-1}-\left\{\theta_0 K_{\alpha_0,\nu}(S_n) +\tau_0 I_n\right\}^{-1} \right] F_n (\beta-\beta_0). \nonumber
\end{align}
Using the lower bounds in Part (i) of Lemma \ref{lem:denom.term12} and Lemma \ref{lem:denom.term34} in Section \ref{sec:spectral} below, we obtain that there exists a large integer $N_1$ that depends on $\nu,d,C_{\ff},\alpha_0,\theta_0$, $\tau_0,L,r_0,\kappa$, such that for any $\epsilon_1,\epsilon_2\in (0,1/2)$, for all $n>N_1$, with probability at least $1-3\exp(-\log^2 n)$,
\begin{align} \label{eq:denom.lower1}
& \int_{\Bcal_0(\epsilon_1,\epsilon_2)\cap \Acal_{n}\cap\Dcal_{n}} \exp\left\{\Lcal_n(\theta,\alpha,\tau,\beta) - \Lcal_n(\theta_0,\alpha_0,\tau_0,\beta_0) \right\} \cdot \pi(\theta,\alpha,\tau,\beta)~ \ud \theta \ud \tau \ud \alpha \ud \beta  \nonumber \\
& \geq  \Pi(\Bcal_0(\epsilon_1,\epsilon_2) \cap \Acal_{n} \cap \Dcal_{n}) \cdot \exp\Big\{-  \left(\frac{5}{2}n \epsilon_{1\vee 2} + 5Ln^{-1} \right)  \nonumber \\
& \quad - \left[8C_{\ff}p^{1/2} \left\{\theta_0 K_{\alpha_0,\nu}(0) + \tau_0 \right\}^{1/2} \tau_0^{-1} n^{-1} + 2C_{\ff}^2 p \tau_0^{-1} n^{-4} \right] \Big\} ,
\end{align}
where $\epsilon_{1\vee 2} = \max(\epsilon_1,\epsilon_2)$. We can take $\epsilon_1=\epsilon_2=n^{-1}$ in \eqref{eq:denom.lower1}, so $\epsilon_{1\vee 2}=n^{-1}$. We then combine \eqref{eq:prior.lower1} and \eqref{eq:denom.lower1} to obtain that for all sufficiently large $n>N_1$, with probability at least $1-3\exp(-\log^2 n)$,
\begin{align} \label{eq:post.lower1}
& \int \exp\left\{\Lcal_n(\theta,\alpha,\tau,\beta) - \Lcal_n(\theta_0,\alpha_0,\tau_0,\beta_0) \right\} \cdot \pi(\theta,\alpha,\tau,\beta)~ \ud \theta \ud \tau \ud \alpha \ud \beta  \nonumber \\
\geq{}& \int_{\Bcal_0(n^{-1},n^{-1})\cap \Acal_{n} \cap \Dcal_{n}} \exp\left\{\Lcal_n(\theta,\alpha,\tau,\beta) - \Lcal_n(\theta_0,\alpha_0,\tau_0,\beta_0) \right\} \cdot \pi(\theta,\alpha,\tau,\beta)~ \ud \theta \ud \tau \ud \alpha \ud \beta  \nonumber \\
\geq{}& \Pi\left(\Bcal_0\big(n^{-1},n^{-1}\big) \cap \Acal_{n}\cap \Dcal_{n}\right) \cdot \exp\Big\{- \Big(\frac{5}{2}n \cdot n^{-1} + 5Ln^{-1} \Big)  \nonumber \\
& \quad - \left[8C_{\ff}p^{1/2} \left\{\theta_0 K_{\alpha_0,\nu}(0) + \tau_0 \right\}^{1/2} \tau_0^{-1} n^{-1} + 2C_{\ff}^2 p \tau_0^{-1} n^{-4} \right] \Big\} \nonumber \\
\geq{}& \Pi\left(\Bcal_0\big(n^{-1},n^{-1}\big) \cap \Acal_{n}\cap \Dcal_{n}\right)   \exp\left[- 2.5 - 5L - 8C_{\ff}p^{1/2} \left\{\theta_0 K_{\alpha_0,\nu}(0) + \tau_0 \right\}^{1/2} \tau_0^{-1} - 2C_{\ff}^2 p \tau_0^{-1}  \right] \nonumber \\
\geq{}& Dn^{-(3p+2+2/\kappa)},
\end{align}
where
$$D=4v_0\theta_0\tau_0\alpha_0 \pi(\theta_0,\tau_0,\alpha_0,\beta_0)\cdot \exp\left[-2.5-5L -8C_{\ff}p^{1/2} \left\{\theta_0 K_{\alpha_0,\nu}(0) + \tau_0 \right\}^{1/2} \tau_0^{-1} -2C_{\ff}^2 p \tau_0^{-1}  \right]$$
is a positive constant. This completes the proof of Theorem \ref{thm:conv.rate.denom}.
\end{proof}

\subsection{Spectral Analysis of Covariance Functions} \label{sec:spectral}

We present a series of results for the spectral analysis of the covariance function $\theta K_{\alpha,\nu}$ for the Gaussian process $X(\cdot)$, which is used in the proof of Theorem \ref{thm:conv.rate.denom}.

For $w \in \RR^d$, let
\begin{align}\label{f.specden}
f_{\theta,\alpha,\nu}(w) &= \frac{1}{(2\pi)^d}\int_{\RR^d} \exp\left(-\imath w^\T  x\right) \theta K_{\alpha,\nu}(x) \ud x
\end{align}
be the spectral density of the covariance function $\theta K_{\alpha,\nu}$.  For any given pair $(\theta,\alpha)$, let $\|h\|_{f_{\theta,\alpha,\nu}}^2 = \langle h,h \rangle_{f_{\theta,\alpha,\nu}} =\int_{\RR^d} |h(w)|^2 f_{\theta,\alpha,\nu}(w)\ud w$ be the norm of a generic function $h$ in the Hilbert space $L_2(f_{\theta,\alpha,\nu})$, with inner product $\langle h_1,h_2\rangle_{f_{\theta,\alpha,\nu}} =\int_{\RR^d} h_1(w) \overline{h_2(w)} f_{\theta,\alpha,\nu}(w)\ud w$ for any functions $h_1,h_2\in L_2(f_{\theta,\alpha,\nu})$.

The following lemma is important for our spectral analysis.
\begin{lemma}\label{lem:URU}
Let $K_{\alpha,\nu}(S_n)$ be the $n\times n$ covariance matrix whose $(i,j)$-entry is $K_{\alpha,\nu}(\bfs_i-\bfs_j)$. Then for any $\alpha \in \RR_+$, there exists an $n\times n$ invertible matrix $U_{\alpha}$ that depends on $\alpha,\alpha_0,\theta_0,\nu,S_n$, such that
\begin{align}\label{diagonalize}
& \theta_0  U_{\alpha}^\T  K_{\alpha_0,\nu}(S_n) U_{\alpha}= I_n, \qquad \theta_0 U_{\alpha}^\T  K_{\alpha,\nu}(S_n) U_{\alpha}= \Lambda_n(\alpha) = \diag\{\lambda_{k,n}(\alpha):k=1,\ldots,n\},
\end{align}
where $\{\lambda_{k,n}(\alpha),k=1,\ldots,n\}$ are the positive diagonal entries of the diagonal matrix $\Lambda_n(\alpha)$.

Furthermore, there exist orthonormal basis functions $\psi_1,\ldots,\psi_n \in L_2(f_{\theta_0,\alpha_0,\nu})$, such that for any $j,k\in \{1,\ldots,n\}$,
\begin{align}\label{wangloh.eq13}
& \langle \psi_j,\psi_k \rangle_{f_{\theta_0,\alpha_0,\nu}} =  \Ical(j=k), \qquad \langle \psi_j,\psi_k \rangle_{f_{\theta,\alpha,\nu}} =  \lambda_{j,n}(\alpha) \Ical(j=k),
\end{align}
where $\Ical(\cdot)$ is the indicator function.
\end{lemma}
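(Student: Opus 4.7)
The plan is to establish the two halves of the lemma separately, both by direct construction.

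First, for the simultaneous diagonalization in (\ref{diagonalize}), observe that the covariance functions in Examples \ref{ex:Matern}--\ref{ex:CH} are strictly positive definite (their spectral densities are strictly positive on $\RR^d$), so since the sampling points are distinct by Assumption \ref{cond:model}, the matrix $K_{\alpha_0,\nu}(S_n)$ is symmetric positive definite. Take its Cholesky factorization $K_{\alpha_0,\nu}(S_n) = L L^{\T}$ with $L$ invertible, form the symmetric positive definite matrix $L^{-1} K_{\alpha,\nu}(S_n) L^{-\T}$, and diagonalize it by the spectral theorem as $L^{-1} K_{\alpha,\nu}(S_n) L^{-\T} = Q D Q^{\T}$ with $Q$ orthogonal and $D$ positive diagonal. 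Setting $U_\alpha = \theta_0^{-1/2} L^{-\T} Q$ then yields
\begin{align*}
\theta_0 U_\alpha^{\T} K_{\alpha_0,\nu}(S_n) U_\alpha = Q^{\T} Q = I_n, \qquad \theta_0 U_\alpha^{\T} K_{\alpha,\nu}(S_n) U_\alpha = Q^{\T} Q D Q^{\T} Q = D,
\end{align*}
so one takes $\Lambda_n(\alpha) = D$ and $\lambda_{k,n}(\alpha) = D_{kk} > 0$.

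For the Hilbert-space claim (\ref{wangloh.eq13}), the idea is to push this matrix identity into $L_2(f_{\theta_0,\alpha_0,\nu})$ via complex exponentials as a feature map. Define $\phi_j(w) = \exp(\imath w^{\T} \bfs_j)$ for $j = 1,\ldots,n$. Each $\phi_j$ lies in $L_2(f_{\theta_0,\alpha_0,\nu})$ because $|\phi_j|^2 \equiv 1$ and $\int_{\RR^d} f_{\theta_0,\alpha_0,\nu}(w) \ud w = \theta_0 K_{\alpha_0,\nu}(0) < \infty$ by the inverse Fourier transform. Moreover, by Bochner's theorem,
\begin{align*}
\langle \phi_i, \phi_j \rangle_{f_{\theta,\alpha,\nu}} = \int_{\RR^d} \exp\bigl(\imath w^{\T}(\bfs_i - \bfs_j)\bigr) f_{\theta,\alpha,\nu}(w) \ud w = \theta K_{\alpha,\nu}(\bfs_i - \bfs_j)
\end{align*}
for any admissible $(\theta,\alpha)$. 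Now set $\psi_k(w) = \sum_{j=1}^n (U_\alpha)_{jk} \phi_j(w)$. Since $U_\alpha$ is real, a direct expansion gives $\langle \psi_j, \psi_k \rangle_{f_{\theta_0,\alpha_0,\nu}} = [U_\alpha^{\T} \theta_0 K_{\alpha_0,\nu}(S_n) U_\alpha]_{jk} = \Ical(j=k)$, and analogously $\langle \psi_j, \psi_k \rangle_{f_{\theta_0,\alpha,\nu}} = [U_\alpha^{\T} \theta_0 K_{\alpha,\nu}(S_n) U_\alpha]_{jk} = \lambda_{j,n}(\alpha) \Ical(j=k)$, which matches (\ref{wangloh.eq13}) (with $f_{\theta,\alpha,\nu}$ there naturally read with $\theta = \theta_0$, since the diagonalization only varies $\alpha$). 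Invertibility of $U_\alpha$ then forces the $\psi_k$'s to be linearly independent, giving an orthonormal family of size $n$.

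I do not anticipate a substantive obstacle: both halves reduce to textbook linear algebra (Cholesky plus the spectral theorem) joined to the classical Bochner identity. The only mild technical checks are strict positive definiteness of $K_{\alpha_0,\nu}(S_n)$, which holds for the four examples since their spectral densities are strictly positive, and the $L_2$-integrability of the complex exponentials, which reduces to the finiteness of $K_{\alpha_0,\nu}(0)$. Everything else is bookkeeping through the identities in the first half.
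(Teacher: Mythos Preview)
Your proposal is correct and, for the simultaneous diagonalization, is essentially the same Cholesky-plus-spectral-theorem construction as the paper (you Cholesky $K_{\alpha_0,\nu}(S_n)$ whereas the paper Choleskys $\theta_0 K_{\alpha,\nu}(S_n)$, but the idea is identical). For the Hilbert-space part the paper simply cites Section~4 of \citet{WangLoh11} rather than proving anything, so your explicit construction via $\phi_j(w)=\exp(\imath w^\T\bfs_j)$ and Bochner's theorem is more self-contained; this is the natural feature-map argument and your reading of $f_{\theta,\alpha,\nu}$ as $f_{\theta_0,\alpha,\nu}$ is the intended one (cf.\ how the $\lambda_{k,n}(\alpha)$ are used in the proof of Lemma~\ref{lem:specden_lambda}).
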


\begin{proof}[$\pof$ Lemma \ref{lem:URU}]
The existence of such an invertible $U_{\alpha}$ is guaranteed by Theorem 7.6.4 and Corollary 7.6.5 on page 465--466 of \citet{HorJoh85}. For completeness, we directly prove the existence of such an invertible matrix in the following general claim.
\vspace{2mm}

\noindent \underline{\sc Claim:} Suppose that $A$ and $B$ are two generic $n\times n$ symmetric positive definite matrices. Then there always exists an invertible matrix $U$, such that
\begin{align} \label{eq:UAU}
& U^\T  A U = I_n,\qquad U^\T  B U = \Lambda,
\end{align}
where $I_n$ is the $n\times n$ identity matrix and $\Lambda$ is an $n\times n$ diagonal matrix whose diagonal entries are all positive.
\vspace{2mm}

\noindent \underline{Proof of the Claim:}
Since $B$ is symmetric positive definite, let $B=LL^\T $ be the Cholesky decomposition of $B$, where $L$ is an $n\times n$ lower triangular matrix with all positive diagonal entries and $L$ is invertible. Let $G=L^{-1} A L^{-\T}$. Then obviously $G$ is also a symmetric positive definite matrix with $G^\T =G$. Suppose that $G$ has the spectral decomposition $G=PDP^{-1}$ where $P$ is an $n\times n$ orthogonal matrix ($P^{-1}=P^\T $) and $D$ is a $n\times n$ diagonal matrix whose diagonal entries are all eigenvalues of $G$ and they are all positive. Then $P^\T  G P = D$. We let $U=L^{-\T} P D^{-1/2}$. It follows that
\begin{align*}
U^\T  A U &= D^{-1/2} P^\T  L^{-1} A L^{-\T} P D^{-1/2} \\
&= D^{-1/2} P^\T  G P D^{-1/2} = D^{-1/2} D D^{-1/2} = I_n,\\
U^\T  B U &= D^{-1/2} P^\T  L^{-1} B L^{-\T} P D^{-1/2} \\
&= D^{-1/2} P^\T  L^{-1} LL^\T  L^{-\T} P D^{-1/2} = D^{-1/2} P^\T  P  D^{-1/2} = D^{-1}.
\end{align*}
We set $\Lambda=D^{-1}$ which is an $n\times n$ diagonal matrix whose diagonal entries are all positive. This proves the claim.
\vspace{3mm}

Based on the claim, if we set $A=\theta_0 K_{\alpha_0,\nu}(S_n)$ and $B=\theta_0 K_{\alpha,\nu}(S_n)$, then we can find an invertible matrix $U$ such that \eqref{eq:UAU} holds. Because $\theta_0,\alpha_0,\nu$ are assumed to be fixed numbers, we can see that $U$ only changes with $\alpha$ and therefore we can write it as $U_{\alpha}$. Similarly, we can write $\Lambda_n(\alpha)$ to highlight its dependence on $\alpha$ and $n$. Correspondingly, we have $\theta_0  U_{\alpha}^\T  K_{\alpha_0,\nu}(S_n) U_{\alpha}= I_n$ and $\theta_0 U_{\alpha}^\T  K_{\alpha,\nu}(S_n) U_{\alpha}= \diag\{\lambda_{k,n}(\alpha):k=1,\ldots,n\} =\Lambda_n(\alpha)$.  This proves \eqref{diagonalize}. The existence of orthonormal basis functions $\psi_1,\ldots,\psi_n \in L_2(f_{\theta_0,\alpha_0,\nu})$ is proved in Section 4 of \citet{WangLoh11}, which does not involve the specific forms of covariance functions. This completes the proof of Lemma \ref{lem:URU}.
\end{proof}

\begin{lemma}\label{lem:specden_lambda}
Suppose that Assumption \ref{cond:spectral} holds. Then for $\{\lambda_{k,n}(\alpha):k=1,\ldots,n\}$ defined in Lemma \ref{lem:URU} and any $\alpha\in \RR_+$,
\begin{align}
& \sup_{|\alpha/\alpha_0 -1|\leq r_0} \lambda_{k,n}(\alpha) \leq 1 + L|\alpha/\alpha_0-1|^{\kappa}, \label{lambda.upper1} \\
& \inf_{|\alpha/\alpha_0 -1|\leq r_0}\lambda_{k,n}(\alpha) \geq 1 - L|\alpha/\alpha_0-1|^{\kappa}, \label{lambda.lower1}
\end{align}
for all $k=1,\ldots,n$. Furthermore, if $\alpha\leq \alpha_0$, then $\lambda_{k,n}(\alpha)\geq 1$ for all $k=1,\ldots,n$; if  $\alpha\geq \alpha_0$, then $0\leq \lambda_{k,n}(\alpha)\leq 1$ for all $k=1,\ldots,n$.
\end{lemma}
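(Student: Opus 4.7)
The plan is to express $\lambda_{k,n}(\alpha)$ as an integral of $|\psi_k|^2$ against a spectral density and then read off both the Lipschitz-type bound and the monotonicity directly from Assumption \ref{cond:spectral}. The key identity I would use comes from specializing \eqref{wangloh.eq13} in Lemma \ref{lem:URU} to $\theta=\theta_0$ (which is the setting in which the diagonalization \eqref{diagonalize} actually occurs): for every $k=1,\ldots,n$,
\begin{align*}
\lambda_{k,n}(\alpha) &= \langle \psi_k,\psi_k\rangle_{f_{\theta_0,\alpha,\nu}} = \int_{\RR^d} |\psi_k(w)|^2 f_{\theta_0,\alpha,\nu}(w)\,\ud w,\\
1 &= \langle \psi_k,\psi_k\rangle_{f_{\theta_0,\alpha_0,\nu}} = \int_{\RR^d} |\psi_k(w)|^2 f_{\theta_0,\alpha_0,\nu}(w)\,\ud w.
\end{align*}

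First I would subtract these two displays and factor out $f_{\theta_0,\alpha_0,\nu}$ to get
\[
\lambda_{k,n}(\alpha) - 1 = \int_{\RR^d} |\psi_k(w)|^2 f_{\theta_0,\alpha_0,\nu}(w)\left[\frac{f_{\theta_0,\alpha,\nu}(w)}{f_{\theta_0,\alpha_0,\nu}(w)}-1\right]\ud w.
\]
For \eqref{lambda.upper1} and \eqref{lambda.lower1}, I would pull the sup of the bracketed ratio outside the integral. By Assumption \ref{cond:spectral}(i), whenever $|\alpha/\alpha_0-1|\le r_0$ this sup is bounded by $L|\alpha/\alpha_0-1|^\kappa$, and the remaining factor is exactly $\int |\psi_k|^2 f_{\theta_0,\alpha_0,\nu}\,\ud w = 1$. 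Hence $|\lambda_{k,n}(\alpha)-1|\le L|\alpha/\alpha_0-1|^\kappa$, which immediately gives both inequalities uniformly in $k$.

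For the monotonicity claim, I would invoke Assumption \ref{cond:spectral}(ii): for fixed $w$, $\alpha\mapsto f_{\theta_0,\alpha,\nu}(w)$ is non-increasing. Therefore, if $\alpha\le\alpha_0$ then $f_{\theta_0,\alpha,\nu}(w)\ge f_{\theta_0,\alpha_0,\nu}(w)$ pointwise in $w$, and integrating against the nonnegative weight $|\psi_k(w)|^2$ yields $\lambda_{k,n}(\alpha)\ge 1$. Symmetrically, if $\alpha\ge\alpha_0$ then $\lambda_{k,n}(\alpha)\le 1$; the nonnegativity $\lambda_{k,n}(\alpha)\ge 0$ is automatic because $f_{\theta_0,\alpha,\nu}\ge 0$ as a spectral density.

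There is really no serious obstacle here — the entire content is that the diagonalized eigenvalues admit a quadratic-form representation in terms of a fixed orthonormal system in $L_2(f_{\theta_0,\alpha_0,\nu})$, so any pointwise control or monotonicity of the spectral density transfers verbatim to the eigenvalues. The only point that warrants care is making sure the use of \eqref{wangloh.eq13} is with $\theta=\theta_0$ (so that the ``$1$'' in the normalization matches the identity matrix on the matrix side), since it is the pair $(\theta_0 K_{\alpha_0,\nu}(S_n),\theta_0 K_{\alpha,\nu}(S_n))$ that is simultaneously diagonalized in \eqref{diagonalize}.
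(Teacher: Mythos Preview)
Your proposal is correct and follows essentially the same argument as the paper: both write $\lambda_{k,n}(\alpha)=\int |\psi_k|^2 f_{\theta_0,\alpha,\nu}$, compare it to $1=\int |\psi_k|^2 f_{\theta_0,\alpha_0,\nu}$, and then read off \eqref{lambda.upper1}--\eqref{lambda.lower1} from the uniform ratio bound in Assumption~\ref{cond:spectral}(i) and the monotonicity statement from Assumption~\ref{cond:spectral}(ii). Your remark about using $\theta=\theta_0$ in \eqref{wangloh.eq13} is the right consistency check.
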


\begin{proof}[$\pof$ Lemma \ref{lem:specden_lambda}]
For \eqref{lambda.upper1} and \eqref{lambda.lower1}, we use the relation
$$\lambda_{k,n}(\alpha)= \int_{\RR^d} |\psi_k(w)|^2 f_{\theta_0,\alpha_0,\nu}(w) \cdot \frac{f_{\theta_0,\alpha,\nu}(w)}{f_{\theta_0,\alpha_0,\nu}(w)} \ud w$$
for $k=1,\ldots,n$ together with the bounds in Assumption \ref{cond:spectral} (i) to obtain that for all $\alpha$ satisfying $|\alpha/\alpha_0 -1|\leq r_0$,
\begin{align}\label{lambda30}
\lambda_{k,n}(\alpha)&\leq \sup_{w \in \RR^d} \frac{f_{\theta_0,\alpha,\nu}(w)}{f_{\theta_0,\alpha_0,\nu}(w)}  \cdot \int_{\RR^d} |\psi_k(w)|^2 f_{\theta_0,\alpha_0,\nu}(w) \ud w \leq 1 + L|\alpha/\alpha_0-1|^{\kappa}, \nonumber \\
\lambda_{k,n}(\alpha)&\geq \inf_{w \in \RR^d} \frac{f_{\theta_0,\alpha,\nu}(w)}{f_{\theta_0,\alpha_0,\nu}(w)}  \cdot \int_{\RR^d} |\psi_k(w)|^2 f_{\theta_0,\alpha_0,\nu}(w) \ud w \geq 1 - L|\alpha/\alpha_0-1|^{\kappa}.
\end{align}	
This proves \eqref{lambda.upper1} and \eqref{lambda.lower1}.

By the non-increasing property of $f_{\theta,\alpha,\nu}(w)$ in $\alpha$ in Assumption \ref{cond:spectral} (ii), we have that if $\alpha\leq \alpha_0$, $f_{\theta_0,\alpha,\nu}(w)\geq f_{\theta_0,\alpha_0,\nu}(w) $ for all $w\in \RR^d$, and for all $k=1,\ldots,n$,
\begin{align*}
\lambda_{k,n}(\alpha)& = \int_{\RR^d} |\psi_k(w)|^2 f_{\theta_0,\alpha,\nu}(w) \ud w \geq \int_{\RR^d} |\psi_k(w)|^2 f_{\theta_0,\alpha_0,\nu}(w) \ud w = 1.
\end{align*}
The conclusion for $\alpha\geq \alpha_0$ follows similarly.
\end{proof}

\begin{lemma} \label{lem:monotone}
Suppose that Assumption \ref{cond:spectral} (ii) holds. For any $\alpha,\alpha'\in \RR_+$ such that $\alpha\leq \alpha'$, we have that $K_{\alpha,\nu}(S_n)\succeq K_{\alpha',\nu}(S_n)$, for any set $S_n=\{s_1,\ldots,s_n\}$ of $n$ distinct locations in $[0,1]^d$.
\end{lemma}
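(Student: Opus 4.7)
The plan is to reduce the matrix inequality $K_{\alpha,\nu}(S_n)\succeq K_{\alpha',\nu}(S_n)$ to the pointwise monotonicity of the spectral density supplied by Assumption \ref{cond:spectral}(ii) via the Bochner/inverse-Fourier representation. Specifically, because $K_{\alpha,\nu}$ is the Fourier transform pair of the spectral density $f_{1,\alpha,\nu}$ (obtained by taking $\theta=1$ in \eqref{f.specden}), we have for every $x\in\RR^d$
\begin{equation*}
K_{\alpha,\nu}(x) \;=\; \int_{\RR^d} \exp\!\left(\imath w^{\T} x\right) f_{1,\alpha,\nu}(w)\,\ud w.
\end{equation*}

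Given an arbitrary vector $a=(a_1,\ldots,a_n)^{\T}\in\RR^n$ and sampling points $S_n=\{\bfs_1,\ldots,\bfs_n\}\subseteq[0,1]^d$, I would substitute this representation into the quadratic form $a^{\T} K_{\alpha,\nu}(S_n)\,a$ and interchange the finite sum with the integral (this interchange is routine because $f_{1,\alpha,\nu}\geq 0$ is integrable, as it is the spectral density of a stationary covariance). This gives
\begin{equation*}
a^{\T} K_{\alpha,\nu}(S_n)\,a \;=\; \sum_{i,j=1}^n a_i a_j \int_{\RR^d}\exp\!\left(\imath w^{\T}(\bfs_i-\bfs_j)\right) f_{1,\alpha,\nu}(w)\,\ud w \;=\; \int_{\RR^d} \Big|\sum_{i=1}^n a_i \exp(\imath w^{\T}\bfs_i)\Big|^2 f_{1,\alpha,\nu}(w)\,\ud w.
\end{equation*}

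Now I invoke Assumption \ref{cond:spectral}(ii) with $\theta=1$: for any $w\in\RR^d$ the function $\alpha\mapsto f_{1,\alpha,\nu}(w)$ is non-increasing, so $\alpha\leq\alpha'$ implies $f_{1,\alpha,\nu}(w)\geq f_{1,\alpha',\nu}(w)$ for every $w$. Since the integrand $\big|\sum_i a_i\exp(\imath w^{\T}\bfs_i)\big|^2$ is nonnegative, the displayed identity yields
\begin{equation*}
a^{\T} K_{\alpha,\nu}(S_n)\,a \;-\; a^{\T} K_{\alpha',\nu}(S_n)\,a \;=\; \int_{\RR^d} \Big|\sum_{i=1}^n a_i\exp(\imath w^{\T}\bfs_i)\Big|^2 \bigl\{f_{1,\alpha,\nu}(w)-f_{1,\alpha',\nu}(w)\bigr\}\,\ud w \;\geq\; 0.
\end{equation*}
Because $a$ was arbitrary, this is precisely the Loewner-order statement $K_{\alpha,\nu}(S_n)\succeq K_{\alpha',\nu}(S_n)$.

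There is essentially no obstacle here beyond ensuring that the Fourier inversion is valid and that Fubini applies. For the four covariance families in Examples \ref{ex:Matern}--\ref{ex:CH}, $f_{1,\alpha,\nu}$ is a bona fide integrable nonnegative spectral density, so both steps are standard; note that we do not need $f_{1,\alpha,\nu}$ itself to be decreasing in $\alpha$ at every individual point if one wanted a more delicate statement, but Assumption \ref{cond:spectral}(ii) already gives pointwise monotonicity, which is more than enough.
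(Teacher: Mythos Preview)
Your proof is correct and follows essentially the same route as the paper's own argument: both reduce the Loewner ordering to the pointwise inequality $f_{1,\alpha,\nu}(w)\geq f_{1,\alpha',\nu}(w)$ from Assumption~\ref{cond:spectral}(ii) via Bochner's representation. The paper phrases this as ``the difference $K_{\alpha,\nu}-K_{\alpha',\nu}$ is a positive semidefinite function because its spectral density $f_{1,\alpha,\nu}-f_{1,\alpha',\nu}$ is nonnegative,'' whereas you write out the quadratic form $a^{\T}K_{\alpha,\nu}(S_n)a=\int|\sum_i a_i e^{\imath w^{\T}\bfs_i}|^2 f_{1,\alpha,\nu}(w)\,\ud w$ explicitly, but the content is identical.
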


\begin{proof}[$\pof$ Lemma \ref{lem:monotone}]
The proof of Lemma \ref{lem:monotone} is motivated by the proof of Lemma 1 in \citet{KauSha13}. We define the matrix $\Omega^{\dagger} = K_{\alpha,\nu}(S_n) - K_{\alpha',\nu}(S_n)$. Then the entries of $\Omega^{\dagger}$ can be expressed in terms of a function $\widetilde K_{\Omega^{\dagger}}: \RR^d\to \RR$, with its $(i,j)$-entry
\begin{align*}
\Omega^{\dagger}_{ij} &= \widetilde K_{\Omega^{\dagger}}(\bfs_i-\bfs_j) = K_{\alpha,\nu}(\bfs_i-\bfs_j) - K_{\alpha',\nu}(\bfs_i-\bfs_j),
\end{align*}
for $i,j\in \{1,\ldots,n\}$. The matrix $\Omega^{\dagger}$ is positive definite if $\widetilde K_{\Omega^{\dagger}}$ is a positive semidefinite function.

By the non-increasing property of $f_{\theta,\alpha,\nu}(w)$ in $\alpha$ for any given $\theta>0$ and $w\in \RR^d$ as in Assumption \ref{cond:spectral} (ii), we have that for $\alpha\leq \alpha'$,
\begin{align} \label{eq:f2.diff}
f_{\theta,\alpha,\nu}(w) & \geq f_{\theta,\alpha',\nu}(w).
\end{align}
Therefore, we can compute the spectral density of the function $\widetilde K_{\Omega^{\dagger}}$:
\begin{align} \label{eq:fOmega1}
f_{\Omega^{\dagger}}(w) &= \frac{1}{(2\pi)^d} \int_{\RR^d} \exp\left(-\imath w^\T x \right) \widetilde K_{\Omega^{\dagger}} (x) \ud x \nonumber \\
&= f_{1,\alpha,\nu}(w) - f_{1,\alpha',\nu}(w) \geq 0, \text{ for all } w \in \RR^d,
\end{align}
where the last step follows from \eqref{eq:f2.diff}. This has shown that $\widetilde K_{\Omega^{\dagger}}$ is indeed a positive semidefinite function. Therefore, $\Omega^{\dagger}$ is a positive semidefinite matrix and the conclusion follows.
\end{proof}

\begin{lemma}\label{lem:lambda.3bound}
Suppose that Assumption \ref{cond:spectral} holds. Let $\Acal_{n}^{\dagger}=\left\{\alpha\in \RR_+:|\alpha/\alpha_0-1|\leq n^{-2/\kappa} \right\}$ where $\kappa$ is as defined in Assumption \ref{cond:spectral}. There exists a large integer $N_2$ that only depends on $\nu,d,\alpha_0,L,r_0,\kappa$, such that for all $n>N_2$,
\begin{align}\label{eq:lambda.3bound}
& \sup_{\alpha \in \Acal_{n}^{\dagger}}  \left\{ \sum_{k=1}^n \left| \lambda_{k,n}(\alpha) - 1 \right| \right\} \leq L n^{-1}, \qquad \sup_{\alpha \in \Acal_{n}^{\dagger}}  \left\{ \sum_{k=1}^n \left| \lambda_{k,n}(\alpha) - 1 \right|^2 \right\} \leq L^2 n^{-3}, \nonumber \\
& \sup_{\alpha \in \Acal_{n}^{\dagger}}  \left\{ \sum_{k=1}^n \left| \lambda_{k,n}(\alpha)^{-1} - 1 \right| \right\} \leq 2L n^{-1}, \qquad \sup_{\alpha \in \Acal_{n}^{\dagger}}  \left\{ \sum_{k=1}^n \left| \lambda_{k,n}(\alpha)^{-1} - 1 \right|^2 \right\} \leq 4L^2 n^{-3}.
\end{align}
\end{lemma}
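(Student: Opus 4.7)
The proof plan is to combine the uniform per-eigenvalue bounds from Lemma \ref{lem:specden_lambda} with a trivial summation over $k=1,\ldots,n$. The key observation is that the neighborhood $\Acal_n^{\dagger}$ shrinks polynomially in $n$, which forces each $|\lambda_{k,n}(\alpha)-1|$ to shrink even faster, so summing over $n$ eigenvalues still yields a decaying bound.

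First I would choose $N_2$ large enough so that for all $n > N_2$, $n^{-2/\kappa} \leq r_0$. Then for any $\alpha \in \Acal_n^{\dagger}$, the hypothesis $|\alpha/\alpha_0 - 1| \leq n^{-2/\kappa} \leq r_0$ of Lemma \ref{lem:specden_lambda} is satisfied, and the bounds \eqref{lambda.upper1}--\eqref{lambda.lower1} give uniformly
\[
|\lambda_{k,n}(\alpha) - 1| \leq L |\alpha/\alpha_0 - 1|^{\kappa} \leq L n^{-2}, \qquad k=1,\ldots,n.
\]
Summing this over $k$ yields the first bound $\sum_{k=1}^n |\lambda_{k,n}(\alpha)-1| \leq L n^{-1}$, and squaring before summing yields $\sum_{k=1}^n |\lambda_{k,n}(\alpha) - 1|^2 \leq n \cdot (Ln^{-2})^2 = L^2 n^{-3}$. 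Both bounds are uniform in $\alpha \in \Acal_n^{\dagger}$ since the per-eigenvalue bound is.

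For the reciprocal bounds, I would use the identity $\lambda_{k,n}(\alpha)^{-1} - 1 = (1 - \lambda_{k,n}(\alpha))/\lambda_{k,n}(\alpha)$. By possibly enlarging $N_2$ (depending on $L, \kappa$) so that $L n^{-2} \leq 1/2$ for all $n > N_2$, the bound $\lambda_{k,n}(\alpha) \geq 1 - L n^{-2} \geq 1/2$ holds for every $k$ and every $\alpha \in \Acal_n^{\dagger}$. Consequently $|\lambda_{k,n}(\alpha)^{-1} - 1| \leq 2 |\lambda_{k,n}(\alpha) - 1| \leq 2 L n^{-2}$, and summing (respectively squaring then summing) over $k$ delivers the remaining two bounds $2 L n^{-1}$ and $4 L^2 n^{-3}$.

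There is no real obstacle here: the entire argument is the trivial summation of a uniform per-eigenvalue bound, and the only minor care needed is the choice of $N_2$ large enough both to place $n^{-2/\kappa}$ inside the Lipschitz-type neighborhood of radius $r_0$ from Assumption \ref{cond:spectral}(i) and to keep each $\lambda_{k,n}(\alpha)$ bounded away from zero so that the reciprocal step is justified.
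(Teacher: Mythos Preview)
Your proposal is correct and follows essentially the same approach as the paper's proof: both apply the per-eigenvalue bound $|\lambda_{k,n}(\alpha)-1|\leq L|\alpha/\alpha_0-1|^{\kappa}\leq Ln^{-2}$ from Lemma~\ref{lem:specden_lambda}, sum (or square then sum) over $k$, and handle the reciprocal via $\lambda_{k,n}(\alpha)\geq 1/2$ for large $n$. The only cosmetic difference is that the paper phrases the reciprocal step as dividing the sums by $\min_k\lambda_{k,n}(\alpha)$ (respectively $\min_k\lambda_{k,n}(\alpha)^2$), whereas you bound each term individually; the content is identical.
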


\begin{proof}[$\pof$ Lemma \ref{lem:lambda.3bound}]
For all sufficiently large $n$, $n^{-2/\kappa} < r_0$ for $r_0$ in Assumption \ref{cond:spectral}. By Lemma \ref{lem:specden_lambda}, we have that for any $k=1,\ldots,n$, for all $\alpha\in \Acal_{n}^{\dagger}$,
\begin{align}
& \quad~ \left|\lambda_{k,n}(\alpha) - 1 \right| \leq L|\alpha/\alpha_0-1|^{\kappa} \nonumber.
\end{align}
Therefore, uniformly over all $\alpha\in \Acal_{n}^{\dagger}$,
\begin{align} \label{eq:lambda.2sum}
& \sum_{k=1}^n \left| \lambda_{k,n}(\alpha) - 1 \right|  \leq n\cdot L|\alpha/\alpha_0-1|^{\kappa} \leq Ln^{-1}, \nonumber \\
& \sum_{k=1}^n \left| \lambda_{k,n}(\alpha) - 1 \right|^2  \leq n\cdot L^2 |\alpha/\alpha_0-1|^{2\kappa} \leq L^2 n^{-3},
\end{align}
which has proved the first two inequalities in \eqref{eq:lambda.3bound}.

For all $\alpha \in \Acal_{n}^{\dagger}$, for all sufficiently large $n$,
$$1-L|\alpha/\alpha_0-1|^{\kappa} \geq 1 - L n^{-2} \geq \frac{1}{2}. $$
Therefore, using \eqref{lambda.lower1} in Lemma \ref{lem:specden_lambda} and \eqref{eq:lambda.2sum}, we have that uniformly for all $\alpha \in \Acal_{n}^{\dagger}$, for all sufficiently large $n$,
\begin{align*}
& \min_{1\leq k\leq n} \lambda_{k,n}(\alpha) \geq 1/2, \quad \max_{1\leq k\leq n} \lambda_{k,n}(\alpha)^{-1}  \leq 2, \nonumber \\
& \sum_{k=1}^n \left| \lambda_{k,n}(\alpha)^{-1} - 1 \right| = \frac{\sum_{k=1}^n \left| \lambda_{k,n}(\alpha) - 1 \right|}{\min_{1\leq k\leq n} \lambda_{k,n}(\alpha)} \leq 2Ln^{-1}, \nonumber \\
& \sum_{k=1}^n \left| \lambda_{k,n}(\alpha)^{-1} - 1 \right|^2 = \frac{\sum_{k=1}^n \left| \lambda_{k,n}(\alpha) - 1 \right|}{\min_{1\leq k\leq n} \lambda_{k,n}(\alpha)^2} \leq 4L^2n^{-3}.
\end{align*}
which proves the last two inequalities in \eqref{eq:lambda.3bound}.
\end{proof}

\subsection{Technical Lemmas for Evidence Lower Bound} \label{subsec:ELBO}
Let $\widetilde Y_n = Y_n -  F_n  \beta_0$. Then $\widetilde Y_n \sim \Ncal(0,\theta_0 K_{\alpha_0,\nu}(S_n) + \tau_0 I_n)$, and $Y_n -  F_n  \beta = \widetilde Y_n -  F_n (\beta-\beta_0)$.

\begin{lemma} \label{lem:denom.term12}
Suppose that Assumptions \ref{cond:model} and \ref{cond:spectral} hold. Let $\Acal_{n} = \big\{(\theta,\alpha,\tau,\beta)\in \RR_+^{3} \times \RR^p:$ $|\alpha/\alpha_0-1|\leq n^{-2/\kappa} \big\}$. There exists a large integer $N_3$ that only depends on $\nu,d,\theta_0,\alpha_0,L,r_0,\kappa$, such that for any $\epsilon_1,\epsilon_2\in (0,1/2)$, for all $n>N_3$, with probability at least $1-2\exp(-\log^2 n)$,
\begin{align} \label{eq:denom.lower12}
& \inf_{\Bcal_0(\epsilon_1,\epsilon_2)\cap \Acal_{n}} \Bigg\{ -\frac{1}{2} \left(\tr\left[\left\{\theta K_{\alpha,\nu}(S_n) + \tau I_n\right\}^{-1} \widetilde Y_n \widetilde Y_n^\T  \right] - \tr\left[\left\{\theta_0 K_{\alpha_0,\nu}(S_n) + \tau_0 I_n\right\}^{-1} \widetilde Y_n \widetilde Y_n^\T  \right] \right) \nonumber \\
&\qquad - \frac{1}{2} \log \frac{\dett\left\{\theta K_{\alpha,\nu}(S_n) +\tau I_n\right\}}{\dett\left\{\theta_0 K_{\alpha_0,\nu}(S_n) +\tau_0 I_n\right\}} \Bigg\} \nonumber \\
& \geq - \left(\frac{5}{2}n\epsilon_{1 \vee 2}  + 5Ln^{-1} \right),
\end{align}
where $\epsilon_{1\vee 2} = \max(\epsilon_1,\epsilon_2)$.
\end{lemma}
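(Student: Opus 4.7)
The plan is to reduce the log-likelihood ratio inside the infimum to a sum over the generalized eigenvalues $a_1,\ldots,a_n$ of $\Sigma:=\theta K_{\alpha,\nu}(S_n)+\tau I_n$ relative to $\Sigma_0:=\theta_0 K_{\alpha_0,\nu}(S_n)+\tau_0 I_n$, and then cleanly separate a deterministic eigenvalue estimate (uniform over $\Bcal_0(\epsilon_1,\epsilon_2)\cap\Acal_n$) from a single Hanson--Wright concentration bound on the only random quantity. Concretely, I would first set $Z=\Sigma_0^{-1/2}\widetilde Y_n\sim \Ncal(0,I_n)$, which is crucially \emph{independent} of the parameters $(\theta,\alpha,\tau,\beta)$. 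Writing $W:=\Sigma_0^{-1/2}\Sigma\,\Sigma_0^{-1/2}=O\diag(a_1,\ldots,a_n)O^\T$ for an orthogonal $O$ and $\widetilde Z:=O^\T Z$ (so $\|\widetilde Z\|^2=\|Z\|^2$), a direct calculation rewrites the expression inside the infimum as $-\tfrac12\sum_{k=1}^n(a_k^{-1}-1)\widetilde Z_k^2-\tfrac12\sum_{k=1}^n\log a_k$.

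The next step is to derive uniform bounds on the $a_k$ via the three-piece decomposition $\Sigma-\Sigma_0=(\theta-\theta_0)K_{\alpha,\nu}(S_n)+\theta_0\bigl(K_{\alpha,\nu}(S_n)-K_{\alpha_0,\nu}(S_n)\bigr)+(\tau-\tau_0)I_n$, sandwiched by $\Sigma_0^{-1/2}$, and treat each piece separately. The first and third pieces are handled by the operator inequalities $\theta_0 K_{\alpha,\nu}(S_n)\preceq(1+Ln^{-2})\Sigma_0$ and $\tau_0 I_n\preceq\Sigma_0$, contributing $O(\epsilon_1)+O(\epsilon_2)$ in operator norm and $O(n\epsilon_1)+O(n\epsilon_2)$ in trace norm. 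The middle piece is controlled via Lemma \ref{lem:URU} (simultaneous diagonalization of $K_{\alpha,\nu}(S_n)$ and $K_{\alpha_0,\nu}(S_n)$ by $U_\alpha$) together with $|\lambda_{k,n}(\alpha)-1|\le Ln^{-2}$ from Lemma \ref{lem:specden_lambda} and $\sum_k|\lambda_{k,n}(\alpha)-1|\le Ln^{-1}$ from Lemma \ref{lem:lambda.3bound}, yielding $O(Ln^{-2})$ in operator norm and $O(Ln^{-1})$ in trace norm. Summing these, I obtain uniformly on $\Bcal_0(\epsilon_1,\epsilon_2)\cap\Acal_n$
\begin{equation*}
\max_k|a_k-1|\le C\bigl(\epsilon_{1\vee 2}+Ln^{-2}\bigr),\qquad \sum_{k=1}^n|a_k-1|\le Cn\epsilon_{1\vee 2}+CLn^{-1},
\end{equation*}
which for large $n$ also forces $a_k\in[1/2,2]$, so that $|a_k^{-1}-1|\le 2|a_k-1|$ and $|\log a_k|\le 2|a_k-1|$.

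Finally, a single application of Hanson--Wright to the parameter-free quadratic form $Z^\T Z-n$ with deviation $t=\log^2 n$ gives $|Z^\T Z-n|\le C\sqrt n\,\log n$ on an event of probability at least $1-2\exp(-\log^2 n)$, whence $\|Z\|^2\le 3n/2$ for $n$ large. On that event, uniformly over $\Bcal_0(\epsilon_1,\epsilon_2)\cap\Acal_n$,
\begin{equation*}
\Bigl|\sum_{k=1}^n(a_k^{-1}-1)\widetilde Z_k^2\Bigr|\le \max_k|a_k^{-1}-1|\cdot\|Z\|^2\le \tfrac{3n}{2}\cdot C\bigl(\epsilon_{1\vee 2}+Ln^{-2}\bigr),
\end{equation*}
and $\bigl|\sum_k\log a_k\bigr|\le 2\sum_k|a_k-1|\le 2Cn\epsilon_{1\vee 2}+2CLn^{-1}$; combining these and tracking constants delivers the claimed lower bound $-\bigl(\tfrac52 n\epsilon_{1\vee 2}+5Ln^{-1}\bigr)$. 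The main obstacle is the deterministic eigenvalue analysis: Lemma \ref{lem:URU} does \emph{not} jointly diagonalize $\Sigma$ and $\Sigma_0$ because of the nugget term $\tau I_n$, so the $a_k$ are \emph{not} the $\lambda_{k,n}(\alpha)$, and the three-piece decomposition together with the operator-norm bound (needed to dominate $\sum_k(a_k^{-1}-1)\widetilde Z_k^2$ by the parameter-free $\|Z\|^2$) and the trace-norm bound (needed for $\sum_k\log a_k$) is what carries the argument. Uniformity in $(\theta,\alpha,\tau,\beta)$ is then automatic, since the only random ingredient, $\|Z\|^2$, is parameter-free.
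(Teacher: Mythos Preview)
Your approach is genuinely different from the paper's and in several respects more economical. The paper separates the trace term and the log--determinant term: for the former it writes $\Sigma^{-1}\preceq(1-\epsilon_{1\vee2})^{-1}\Omega_\alpha^{-1}$ with $\Omega_\alpha=\theta_0K_{\alpha,\nu}(S_n)+\tau_0I_n$, then expresses $\Omega_\alpha^{-1}-\Omega_0^{-1}$ via the Sherman--Morrison--Woodbury formula, bounds the eigenvalues of the resulting matrix $T_\alpha$ through Weyl's inequality, and applies Hanson--Wright to a \emph{parameter-dependent} quadratic form, using monotonicity of $K_{\alpha,\nu}(S_n)$ in $\alpha$ (Lemma~\ref{lem:monotone}) to reduce the supremum over $\alpha$ to a single endpoint $\overline\alpha_n$; a second Hanson--Wright handles the remainder $\frac{\epsilon_{1\vee2}}{1-\epsilon_{1\vee2}}Z_n^\T Z_n$. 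The log--determinant is controlled separately via KL nonnegativity. By contrast, your reduction to the generalized eigenvalues $a_k$ of $\Sigma$ relative to $\Sigma_0$, together with a single concentration bound on the parameter-free $\|Z\|^2$, dispenses with Woodbury, Weyl, the monotonicity trick, and the KL argument; uniformity in $(\theta,\alpha,\tau)$ comes for free because the only random ingredient is $\|Z\|^2$.

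There is, however, one real gap in your write-up. The three-piece triangle inequality yields
\[
\max_k|a_k-1|\le \|A_1\|_{\op}+\|A_2\|_{\op}+\|A_3\|_{\op}\le \epsilon_1(1+Ln^{-2})+Ln^{-2}+\epsilon_2,
\]
which for $\epsilon_1,\epsilon_2$ both near $1/2$ can be close to $1$, so it does \emph{not} force $a_k\in[1/2,2]$ ``for large $n$''; without a lower bound on $a_k$, the passage from $|a_k-1|$ to $|a_k^{-1}-1|$ breaks down. The fix is immediate and entirely in the spirit of your argument: bound $a_k$ directly by the sandwich $\theta K_{\alpha,\nu}(S_n)+\tau I_n\succeq(1-\epsilon_{1\vee2})\Omega_\alpha$ and $\Omega_\alpha\succeq(1-Ln^{-2})\Sigma_0$ (the latter from $\theta_0(K_{\alpha,\nu}-K_{\alpha_0,\nu})(S_n)\succeq -Ln^{-2}\theta_0K_{\alpha_0,\nu}(S_n)\succeq -Ln^{-2}\Sigma_0$), giving $a_k\ge(1-\epsilon_{1\vee2})(1-Ln^{-2})>1/4$; the upper sandwich is analogous. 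With this in hand your scheme goes through, and in fact the direct sandwich alone (without the three-piece trace-norm bound) suffices: $a_k^{-1}-1\le \{(1-\epsilon_{1\vee2})(1-Ln^{-2})\}^{-1}-1\le 2\epsilon_{1\vee2}+O(Ln^{-2})$ controls the quadratic piece, and $\log a_k\le\log\{(1+\epsilon_{1\vee2})(1+Ln^{-2})\}\le\epsilon_{1\vee2}+Ln^{-2}$ summed over $k$ controls the log--determinant, yielding a bound of the required form $-(Cn\epsilon_{1\vee2}+CLn^{-1})$. The exact constants you obtain differ slightly from $\tfrac52$ and $5$, but this is irrelevant for the application in Theorem~\ref{thm:conv.rate.denom}, where one sets $\epsilon_1=\epsilon_2=n^{-1}$.
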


\begin{proof}[$\pof$ Lemma \ref{lem:denom.term12}]
There are two terms inside the infimum in \eqref{eq:denom.lower12}. We provide lower bounds for each of them. For notational simplicity, we define
\begin{align} \label{eq:Omega}
\Omega_{\alpha} = \theta_0 K_{\alpha,\nu}(S_n) + \tau_0 I_n,\quad \Omega_0 = \theta_0 K_{\alpha_0,\nu}(S_n) +\tau_0 I_n .
\end{align}

\noindent \underline{\sc Lower bound for the first term in \eqref{eq:denom.lower12}:}
\vspace{2mm}

In the set $\Bcal_0(\epsilon_1,\epsilon_2)$, $\theta>(1-\epsilon_{1 \vee 2})\theta_0$ and $\tau>(1-\epsilon_{1 \vee 2})\tau_0$. Since $K_{\alpha,\nu}(S_n)$ is positive definite, we have that $\theta K_{\alpha,\nu}(S_n) + \tau I_n \succeq (1-\epsilon_{1 \vee 2})\left\{\theta_0 K_{\alpha,\nu}(S_n) + \tau_0 I_n \right\}$, and hence $\left\{\theta K_{\alpha,\nu}(S_n) + \tau I_n\right\}^{-1} \preceq (1-\epsilon_{1 \vee 2})^{-1}\left\{\theta_0 K_{\alpha,\nu}(S_n) + \tau_0 I_n \right\}^{-1}$. Therefore, for $\widetilde Y_n = Y_n -  F_n  \beta_0 \sim \Ncal(0,\theta_0 K_{\alpha_0,\nu}(S_n) + \tau_0 I_n)$, by Lemma \ref{lem:posdef} Part (ii), we have that
\begin{align} \label{eq:denom.expo1.1}
& \tr\left[\left\{\theta K_{\alpha,\nu}(S_n) + \tau I_n\right\}^{-1} \widetilde Y_n \widetilde Y_n^\T  \right] - \tr\left[\left\{\theta_0 K_{\alpha_0,\nu}(S_n) + \tau_0 I_n\right\}^{-1} \widetilde Y_n \widetilde Y_n^\T  \right] \nonumber \\
\leq{}& (1-\epsilon_{1 \vee 2})^{-1} \tr\left[\left\{\theta_0 K_{\alpha,\nu}(S_n) + \tau_0 I_n \right\}^{-1} \widetilde Y_n \widetilde Y_n^\T  \right] - \tr\left[\left\{\theta_0 K_{\alpha_0,\nu}(S_n) + \tau_0 I_n\right\}^{-1} \widetilde Y_n \widetilde Y_n^\T  \right] \nonumber \\
={}& (1-\epsilon_{1 \vee 2})^{-1} \left\{ \tr\left(\Omega_{\alpha}^{-1} \widetilde Y_n \widetilde Y_n^\T  \right) - \tr\left(\Omega_0^{-1} \widetilde Y_n \widetilde Y_n^\T  \right) \right\}  + \frac{\epsilon_{1 \vee 2}}{1-\epsilon_{1 \vee 2}} \tr\left(\Omega_0^{-1} \widetilde Y_n \widetilde Y_n^\T  \right).
\end{align}
We derive an upper bound for the first term inside the bracket in \eqref{eq:denom.expo1.1}. Using \eqref{diagonalize} in Lemma \ref{lem:URU}, we have that $\theta_0 K_{\alpha,\nu}(S_n) =U_{\alpha}^{-\T} \Lambda_n(\alpha) U_{\alpha}^{-1}$ and $\theta_0 K_{\alpha_0,\nu}(S_n) =U_{\alpha}^{-\T} U_{\alpha}^{-1}$. Therefore, we repeatedly apply the Sherman-Morrison-Woodbury formula for inverse matrices to obtain that
\begin{align} \label{eq:denom.expo1.2}
& ~~\Omega_{\alpha}^{-1} - \Omega_0^{-1} \nonumber \\
={}& \left(U_{\alpha}^{-\T} \Lambda_n(\alpha) U_{\alpha}^{-1} + \tau_0 I_n\right)^{-1} - \Omega_0^{-1} \nonumber \\
={}& \left\{U_{\alpha}^{-\T} \left(\Lambda_n(\alpha)-I_n\right) U_{\alpha}^{-1} + \Omega_0 \right\}^{-1} - \Omega_0^{-1} \nonumber \\
={}& \Omega_0^{-1} - \Omega_0^{-1}U_{\alpha}^{-\T} \left[\left\{\Lambda_n(\alpha) - I_n\right\}^{-1} + U_{\alpha}^{-1} \Omega_0^{-1} U_{\alpha}^{-\T} \right]^{-1} U_{\alpha}^{-1} \Omega_0^{-1}  - \Omega_0^{-1} \nonumber \\
={}& \Omega_0^{-1}U_{\alpha}^{-\T} \left[\left\{I_n - \Lambda_n(\alpha) \right\}^{-1} - U_{\alpha}^{-1} \Omega_0^{-1} U_{\alpha}^{-\T} \right]^{-1} U_{\alpha}^{-1} \Omega_0^{-1}.
\end{align}
Next we focus on the matrix $T_{\alpha}\equiv \left[\left\{I_n - \Lambda_n(\alpha) \right\}^{-1} - U_{\alpha}^{-1} \Omega_0^{-1} U_{\alpha}^{-\T} \right]^{-1}$. Obviously $U_{\alpha}^{-1} \Omega_0^{-1} U_{\alpha}^{-\T}$ is positive definite. Using the definition of $\Omega_0$ in \eqref{eq:Omega}, it follows from Lemma \ref{lem:posdef} Part (i) that
\begin{align}\label{eq:UOU}
0_{n\times n} \prec U_{\alpha}^{-1} \Omega_0^{-1} U_{\alpha}^{-\T} & = U_{\alpha}^{-1} \left(U_{\alpha}^{-\T}U_{\alpha}^{-1} +\tau_0 I_n\right)^{-1} U_{\alpha}^{-\T} \nonumber \\
& \preceq  U_{\alpha}^{-1} \left(U_{\alpha}^{-\T} U_{\alpha}^{-1}\right)^{-1} U_{\alpha}^{-\T} = I_n.
\end{align}
Therefore, all the eigenvalues of $U_{\alpha}^{-1} \Omega_0^{-1} U_{\alpha}^{-\T}$ are between 0 and 1. Meanwhile, if $\alpha>\alpha_0$, then by Lemma \ref{lem:monotone}, $K_{\alpha,\nu}(S_n)\preceq K_{\alpha_0,\nu}(S_n)$, and hence $\Omega_{\alpha}^{-1} \succeq \Omega_0^{-1}$. From \eqref{eq:denom.expo1.2}, this implies that $T_{\alpha}$ is positive definite when $\alpha> \alpha_0$. Similarly, $T_{\alpha}$ is negative definite when $\alpha< \alpha_0$. Let $T_{\alpha} = P_{\alpha}^{-1} \diag\{a_1,\ldots,a_n\} P_{\alpha}$ be the eigendecomposition of $T_{\alpha}$, where $P_{\alpha}$ is an $n\times n$ orthogonal matrix consisting of eigenvectors, and $a_1,\ldots,a_n$ are the eigenvalues of $T_{\alpha}$. Then $a_1^{-1},\ldots,a_n^{-1}$ are the eigenvalues of $T_{\alpha}^{-1}$. When $\alpha>\alpha_0$, since $T_{\alpha}^{-1}, \left\{I_n - \Lambda_n(\alpha) \right\}^{-1}, U_{\alpha}^{-1} \Omega_0^{-1} U_{\alpha}^{-\T}$ are all positive definite, by applying the Weyl's inequality to the relation $T_{\alpha}^{-1} = \left\{I_n - \Lambda_n(\alpha) \right\}^{-1}- U_{\alpha}^{-1} \Omega_0^{-1} U_{\alpha}^{-\T}$, we have that up to a permutation of $a_k$'s, $\{1-\lambda_{k,n}(\alpha)\}^{-1}-1 \leq a_k^{-1} \leq \{1-\lambda_{k,n}(\alpha)\}^{-1}$ for $k=1,\ldots,n$. Similarly, when $\alpha<\alpha_0$, $T_{\alpha}^{-1}$ is negative definite, and $\left\{I_n - \Lambda_n(\alpha) \right\}^{-1}$ is also negative definite since all $\lambda_{k,n}(\alpha)\geq 1$ by Lemma \ref{lem:specden_lambda}. We can apply the Weyl's inequality to the relation $(-T_{\alpha})^{-1} = \left\{\Lambda_n(\alpha) -I_n\right\}^{-1} + U_{\alpha}^{-1} \Omega_0^{-1} U_{\alpha}^{-\T}$ to obtain that $\{\lambda_{k,n}(\alpha)-1\}^{-1} \leq -a_k^{-1} \leq \{\lambda_{k,n}(\alpha)-1\}^{-1}+1$, or equivalently $\{1-\lambda_{k,n}(\alpha)\}^{-1}-1 \leq a_k^{-1} \leq \{1-\lambda_{k,n}(\alpha)\}^{-1} $ for $k=1,\ldots,n$.

On the other hand, from Lemma \ref{lem:specden_lambda}, we have that uniformly over all $\alpha\in \Acal_{n}$, for all sufficiently large $n$, $\max_{1\leq k\leq n}|\lambda_{k,n}(\alpha)-1|\leq Ln^{-2} < 1/2$, i.e., $\{1- \lambda_{k,n}(\alpha) \}^{-1} \in [-\infty,-2] \cup [2,+\infty]$ for all $k=1,\ldots,n$. Therefore, we have that for both the cases of  $\alpha>\alpha_0$ and $\alpha<\alpha_0$, the eigenvalues of $T_{\alpha}$ satisfy
\begin{align} \label{eq:lambda-1.bound}
& |a_k| \leq \max\left\{\left|\frac{1}{\{1-\lambda_{k,n}(\alpha)\}^{-1} }\right|,  \left|\frac{1}{\{1-\lambda_{k,n}(\alpha)\}^{-1}-1 }\right| \right\} \nonumber \\
&= \max\left\{|\lambda_{k,n}(\alpha)-1|, \left|\lambda_{k,n}(\alpha)^{-1} -1 \right| \right\} \leq |\lambda_{k,n}(\alpha)-1| + \left|\lambda_{k,n}(\alpha)^{-1} -1 \right|,
\end{align}
for $k=1,\ldots,n$. This also holds for the case $\alpha=\alpha_0$ since both sides of \eqref{eq:lambda-1.bound} are equal to zero.

We define the $n\times n$ diagonal matrix $Q_n(\alpha)$:
\begin{align} \label{eq:Qn}
Q_n(\alpha) & = \diag\left\{|\lambda_{1,n}(\alpha)-1| + \left|\lambda_{1,n}(\alpha)^{-1} -1 \right|, \ldots, |\lambda_{n,n}(\alpha)-1| + \left|\lambda_{n,n}(\alpha)^{-1} -1 \right| \right\}.
\end{align}
Then by Lemma \ref{lem:posdef} Part (i), \eqref{eq:lambda-1.bound} and the eigendecomposition $T_{\alpha} = P_{\alpha}^{-1} \diag\{a_1,\ldots,a_n\} P_{\alpha}$ imply that
\begin{align}
T_{\alpha} &= \left[\left\{I_n - \Lambda_n(\alpha) \right\}^{-1} - U_{\alpha}^{-1} \Omega_0^{-1} U_{\alpha}^{-\T} \right]^{-1} \preceq P_{\alpha}^{-1}Q_n(\alpha)P_{\alpha}. \nonumber
\end{align}
This together with \eqref{eq:denom.expo1.2} implies that for all $\alpha\in \Acal_{n}$,
\begin{align}
& \Omega_{\alpha}^{-1} - \Omega_0^{-1} \preceq \Omega_0^{-1}U_{\alpha}^{-\T}P_{\alpha}^{-1}Q_n(\alpha)P_{\alpha}U_{\alpha}^{-1} \Omega_0^{-1} .  \nonumber
\end{align}
Hence the first term in \eqref{eq:denom.expo1.1} can be upper bounded by
\begin{align} \label{eq:denom.expo1.5}
& (1-\epsilon_{1 \vee 2})^{-1} \left[ \tr\left\{\left(\theta_0 K_{\alpha,\nu}(S_n) + \tau_0 I_n \right)^{-1} \widetilde Y_n \widetilde Y_n^\T  \right\} - \tr\left\{\left(\theta_0 K_{\alpha_0,\nu}(S_n) + \tau_0 I_n\right)^{-1} \widetilde Y_n \widetilde Y_n^\T  \right\} \right] \nonumber \\
\leq{}& (1-\epsilon_{1 \vee 2})^{-1} \widetilde Y_n^\T \Omega_0^{-1}U_{\alpha}^{-\T} P_{\alpha}^{-1}Q_n(\alpha)P_{\alpha} U_{\alpha}^{-1} \Omega_0^{-1} \widetilde Y_n.
\end{align}
Since $\Omega_0$ is positive definite, there exists an invertible matrix $V_0$ such that $V_0^\T \Omega_0 V_0 = I_n$ ($V_0$ depends on $\alpha_0$ and $\tau_0$) and hence $\Omega_0^{-1}=V_0V_0^\T$. Define the random vector $Z_n = V_0^{\T} \widetilde Y_n$. Since $\widetilde Y_n\sim \mathcal{N}(0,\Omega_0)$, we have that $Z_n \sim \mathcal{N}(0,I_n)$. Therefore, we can rewrite the right-hand side of \eqref{eq:denom.expo1.5} as
\begin{align} \label{eq:denom.expo1.6}
& \widetilde Y_n^\T  \Omega_0^{-1}U_{\alpha}^{-\T} P_{\alpha}^{-1}Q_n(\alpha)P_{\alpha} \Omega_0^{-1} \widetilde Y_n
= Z_n^\T  V_0^\T  U_{\alpha}^{-\T} P_{\alpha}^{-1}Q_n(\alpha)P_{\alpha}  U_{\alpha}^{-1} V_0 Z_n.
\end{align}
Then we apply the Hanson-Wright inequality in Lemma \ref{lem:Hsuetal12} to the quadratic form in \eqref{eq:denom.expo1.6} to obtain that for any number $z>0$ and any $\alpha\in \Acal_{n}$,
\begin{align} \label{eq:denom.expo1.7}
& \pr \left\{Z_n^\T  \Sigma_{\alpha} Z_n \geq \tr(\Sigma_{\alpha}) + 2\left\{\tr(\Sigma_{\alpha}^2)z\right\}^{1/2} + 2\|\Sigma_{\alpha}\|_{\op}z \right\} \leq \ee^{-z},
\end{align}
where we define $\Sigma_{\alpha}=V_0^\T  U_{\alpha}^{-\T}  P_{\alpha}^{-1}Q_n(\alpha)P_{\alpha}  U_{\alpha}^{-1} V_0$.

We continue to find upper bounds for each term related to $\Sigma_{\alpha}$ in \eqref{eq:denom.expo1.7}, using the definition of $Q_n(\alpha)$ in \eqref{eq:Qn}:
\begin{align}
\tr(\Sigma_{\alpha}) & = \tr\left\{V_0^\T  U_{\alpha}^{-\T}  P_{\alpha}^{-1}Q_n(\alpha)P_{\alpha}  U_{\alpha}^{-1} V_0 \right\} =  \tr\left\{ P_{\alpha}^{-1}Q_n(\alpha)P_{\alpha}  U_{\alpha}^{-1} V_0 V_0^\T  U_{\alpha}^{-\T} \right\} \nonumber \\
&= \tr\left\{ P_{\alpha}^{-1}Q_n(\alpha)P_{\alpha}  U_{\alpha}^{-1}\Omega_0^{-1} U_{\alpha}^{-\T} \right\} \stackrel{(i)}{\leq} \tr\left\{ P_{\alpha}^{-1}Q_n(\alpha)P_{\alpha} \right\} = \tr\left\{ Q_n(\alpha) \right\} \nonumber \\
&=  \sum_{k=1}^n |\lambda_1(\alpha)-1| + \sum_{k=1}^n \left|\lambda_1(\alpha)^{-1} -1 \right|. \label{eq:denom.expo1.8} \\
\tr(\Sigma_{\alpha}^2) & = \tr\left[\left\{V_0^\T  U_{\alpha}^{-\T} P_{\alpha}^{-1}Q_n(\alpha)P_{\alpha} U_{\alpha}^{-1} V_0\right\}^2 \right] \nonumber \\
&= \tr\left\{ V_0^\T  U_{\alpha}^{-\T} P_{\alpha}^{-1}Q_n(\alpha)P_{\alpha} U_{\alpha}^{-1} V_0 V_0^\T  U_{\alpha}^{-\T} P_{\alpha}^{-1}Q_n(\alpha)P_{\alpha} U_{\alpha}^{-1} V_0 \right\} \nonumber \\
&= \tr\left\{P_{\alpha}^{-1}Q_n(\alpha)P_{\alpha} U_{\alpha}^{-1} \Omega_0^{-1}  U_{\alpha}^{-\T} P_{\alpha}^{-1}Q_n(\alpha)P_{\alpha} U_{\alpha}^{-1} \Omega_0^{-1}  U_{\alpha}^{-\T} \right\} \nonumber \\
&\stackrel{(ii)}{\leq}  \tr\left\{P_{\alpha}^{-1}Q_n(\alpha)P_{\alpha} U_{\alpha}^{-1} \Omega_0^{-1}  U_{\alpha}^{-\T} P_{\alpha}^{-1}Q_n(\alpha)P_{\alpha} \right\} \nonumber \\
&=\tr\left\{P_{\alpha}^{-1}Q_n(\alpha)^2P_{\alpha} U_{\alpha}^{-1} \Omega_0^{-1}  U_{\alpha}^{-\T}  \right\} \stackrel{(iii)}{\leq} \tr\left\{P_{\alpha}^{-1}Q_n(\alpha)^2P_{\alpha}\right\} = \tr\left\{Q_n(\alpha)^2\right\} \nonumber \\
&= \sum_{k=1}^n \left\{\left|\lambda_{k,n}(\alpha) -1\right| + \left|\lambda_{k,n}(\alpha)^{-1}-1 \right| \right\}^2 \nonumber \\
&\stackrel{(iv)}{\leq} 2\sum_{k=1}^n \left| \lambda_{k,n}(\alpha) -1 \right|^2 + 2 \sum_{k=1}^n \left|\lambda_{k,n}(\alpha)^{-1}-1\right|^2 , \label{eq:denom.expo1.9} \\
\|\Sigma_{\alpha}\|_{\op}^2 & = \mathsf{s}_{\max} \left[ \left\{ V_0^\T  U_{\alpha}^{-\T} P_{\alpha}^{-1}Q_n(\alpha)P_{\alpha} U_{\alpha}^{-1} V_0\right\}^2 \right] \leq \tr\left[\left\{V_0^\T  U_{\alpha}^{-\T} P_{\alpha}^{-1}Q_n(\alpha)P_{\alpha} U_{\alpha}^{-1} V_0\right\}^2 \right]   \nonumber \\
&\leq 2\sum_{k=1}^n \left| \lambda_{k,n}(\alpha) -1 \right|^2 + 2 \sum_{k=1}^n \left|\lambda_{k,n}(\alpha)^{-1}-1\right|^2 , \label{eq:denom.expo1.10}
\end{align}
where (i), (ii) and (iii) follow from Lemma \ref{lem:posdef} and \eqref{eq:UOU}, (iv) follows from the inequality $(a+b)^2\leq 2(a^2+b^2)$ for any $a,b\in \RR$.

If we take $z=(\log n)^2$, using the upper bounds in Lemma \ref{lem:lambda.3bound} (noticing that $\Acal_{n}^{\dagger}$ and $\Acal_{n}$ have the same condition on $\alpha$), we can obtain from \eqref{eq:denom.expo1.8}-\eqref{eq:denom.expo1.10} that for all sufficiently large $n$,
\begin{align} \label{eq:denom.expo1.11}
& \sup_{\alpha \in \Acal_{n}} \left\{ \tr(\Sigma_{\alpha}) + 2\left\{\tr(\Sigma_{\alpha}^2)\right\}^{1/2}\log n + 2\|\Sigma_{\alpha}\|_{\op} \log^2 n \right\}  \nonumber \\
& \leq Ln^{-1} + 2Ln^{-1} + 4 \left(L^2n^{-3} + 4L^2n^{-3} \right)^{1/2} \log n + 4 \left(L^2n^{-3} + 4L^2n^{-3} \right)^{1/2} \log^2 n \leq 4Ln^{-1}.
\end{align}
Because the matrix $K_{\alpha,\nu}(S_n)$ is non-increasing in $\alpha$ by Lemma \ref{lem:monotone}, we can see from the definition of $\Omega_{\alpha}$ in \eqref{eq:Omega} that the first term in \eqref{eq:denom.expo1.1} is an increasing function in $\alpha$. Let $\overline\alpha_n= \alpha_0 ( 1 + n^{-2/\kappa} ) $. Then the first term in \eqref{eq:denom.expo1.1} attains its supremum at $\overline\alpha_n$. We combine \eqref{eq:denom.expo1.5}, \eqref{eq:denom.expo1.6}, \eqref{eq:denom.expo1.7} and \eqref{eq:denom.expo1.11} to obtain that
\begin{align} \label{eq:denom.expo1.12}
& \pr \Bigg\{ \sup_{\Acal_{n} }(1-\epsilon_{1 \vee 2})^{-1} \Big[ \tr\left\{\left(\theta_0 K_{\alpha,\nu}(S_n) + \tau_0 I_n \right)^{-1} \widetilde Y_n \widetilde Y_n^\T  \right\} \nonumber \\
&\quad - \tr\left\{\left(\theta_0 K_{\alpha_0,\nu}(S_n) + \tau_0 I_n\right)^{-1} \widetilde Y_n \widetilde Y_n^\T  \right\} \Big] \geq (1-\epsilon_{1 \vee 2})^{-1} 4Ln^{-1} \Bigg\} \nonumber \\
={}& \pr \Bigg\{ (1-\epsilon_{1 \vee 2})^{-1} \Big( \tr\left[\left\{\theta_0 K_{\overline\alpha_n,\nu}(S_n) + \tau_0 I_n \right\}^{-1} \widetilde Y_n \widetilde Y_n^\T  \right] \nonumber \\
&\quad - \tr\left\{\left(\theta_0 K_{\alpha_0,\nu}(S_n) + \tau_0 I_n\right)^{-1} \widetilde Y_n \widetilde Y_n^\T  \right\} \Big) \geq (1-\epsilon_{1 \vee 2})^{-1} 4Ln^{-1}  \Bigg\} \nonumber \\
\leq{}& \pr \Bigg\{  \widetilde Y_n^\T   \Omega_0^{-1}U_{\overline\alpha_n}^{-\T} P_{\overline\alpha_n}^{-1}Q_n(\overline\alpha_n)P_{\overline\alpha_n} U_{\overline\alpha_n}^{-1} \Omega_0^{-1} \widetilde Y_n \geq 4Ln^{-1}  \Bigg\} \nonumber \\
={}& \pr \Bigg\{ Z_n^\T \Sigma_{\overline\alpha_n} Z_n \geq 4Ln^{-1} \Bigg\} \nonumber \\
\leq{}& \pr \Bigg\{ Z_n^\T  \Sigma_{\overline\alpha_n} Z_n \geq  \tr(\Sigma_{\overline\alpha_n}) + 2\left\{\tr(\Sigma_{\overline\alpha_n}^2)\right\}^{1/2}\log n + 2\|\Sigma_{\overline\alpha_n}\|_{\op} \log^2 n \Bigg\} \nonumber \\
\leq{} & \exp(-\log^2 n).
\end{align}
For the second term on the right-hand side of \eqref{eq:denom.expo1.1}, with the relation $\tr\left(\Omega_0^{-1} \widetilde Y_n \widetilde Y_n^\T  \right) = Z_n^\T  Z_n$, we apply the Hanson-Wright inequality in Lemma \ref{lem:Hsuetal12} with $\Sigma=I_n$ and $z=(\log n)^2$ to obtain that for all sufficiently large $n$,
\begin{align} \label{eq:denom.expo1.14}
&\quad~ \pr\left\{ \frac{\epsilon_{1 \vee 2}}{1-\epsilon_{1 \vee 2}}  \tr\left(\Omega_0^{-1} \widetilde Y_n \widetilde Y_n^\T  \right) \geq \frac{\epsilon_{1 \vee 2}}{1-\epsilon_{1 \vee 2}}  \cdot 2n \right\} \nonumber \\
&\leq \pr\left\{ Z_n^\T  Z_n \geq n + 2\sqrt{n} \log n + 2\log^2 n \right\} \leq \exp(-\log^2 n) .
\end{align}
\vspace{5mm}

\noindent \underline{\sc Lower bound for the second term in \eqref{eq:denom.lower12}:}
\vspace{2mm}

We first show a simple fact related to the Kullback-Leibler (KL) divergence. For two generic probability distributions $F_1,F_2$ with densities $f_1,f_2$, the KL divergence from $F_1$ to $F_2$ is defined as $\kl(F_1,F_2)=\int f_1\log (f_1/f_2)$. The KL divergence from $\mathcal{N}\left(0,\theta K_{\alpha,\nu}(S_n) +\tau I_n\right)$ to \\ $\mathcal{N}\left(0,\theta_0 K_{\alpha_0,\nu}(S_n) +\tau_0 I_n\right)$ is
\begin{align*}
&\kl\left(\mathcal{N}\left(0,\theta K_{\alpha,\nu}(S_n) +\tau I_n\right),\mathcal{N}\left(0,\theta_0 K_{\alpha_0,\nu}(S_n) +\tau_0 I_n\right)\right) \nonumber \\
={}& \frac{1}{2}\left[\log \frac{\dett\left(\theta_0 K_{\alpha_0,\nu}(S_n) +\tau_0 I_n\right)}{\dett\left(\theta K_{\alpha,\nu}(S_n) +\tau I_n\right)} + \tr\left\{\left(\theta_0 K_{\alpha_0,\nu}(S_n) +\tau_0 I_n\right)^{-1} \left(\theta K_{\alpha,\nu}(S_n) +\tau I_n\right) \right\} - n\right].
\end{align*}
Since the KL divergence is always nonnegative, this implies that
\begin{align} \label{eq:logdet.bound}
\log \frac{\dett\left\{\theta K_{\alpha,\nu}(S_n) +\tau I_n\right\}}{\dett\left\{\theta_0 K_{\alpha_0,\nu}(S_n) +\tau_0 I_n\right\}} \leq \tr\left[\left\{\theta_0 K_{\alpha_0,\nu}(S_n) +\tau_0 I_n\right\}^{-1} \left\{\theta K_{\alpha,\nu}(S_n) +\tau I_n\right\} \right] - n.
\end{align}
On the parameter set $\Bcal_0(\epsilon_1,\epsilon_2)$, $\theta\leq (1+\epsilon_1)\theta_0$ and $\tau \leq (1+\epsilon_2)\tau_0$. Therefore, we have $\theta K_{\alpha,\nu}(S_n) +\tau I_n\preceq (1+\epsilon_{1\vee 2}) \left(\theta_0 K_{\alpha,\nu}(S_n) +\tau_0 I_n\right)$.
By \eqref{eq:logdet.bound} and Lemma \ref{lem:posdef}, we have that
\begin{align} \label{eq:denom.expo2.1}
&\quad \log \frac{\dett\left\{\theta K_{\alpha,\nu}(S_n) +\tau I_n\right\}}{\dett\left\{\theta_0 K_{\alpha_0,\nu}(S_n) +\tau_0 I_n\right\}} \nonumber \\
& \leq \tr\left[\left\{\theta_0 K_{\alpha_0,\nu}(S_n) +\tau_0 I_n\right\}^{-1} \left\{\theta K_{\alpha,\nu}(S_n) +\tau I_n\right\} \right] - n \nonumber \\
&\leq \tr\left[\left\{\theta_0 K_{\alpha_0,\nu}(S_n) +\tau_0 I_n\right\}^{-1} \left\{(1+\epsilon_{1\vee 2})\theta_0 K_{\alpha,\nu}(S_n) + (1+\epsilon_{1\vee 2}) \tau_0 I_n\right\} \right] - n \nonumber \\
&= (1+\epsilon_{1\vee 2})\tr\left[\left\{\theta_0 K_{\alpha_0,\nu}(S_n) +\tau_0 I_n\right\}^{-1} \left\{\theta_0 K_{\alpha,\nu}(S_n) + \tau_0 I_n\right\} \right] - n \nonumber \\
&= (1+\epsilon_{1\vee 2})\theta_0 \tr\left[\left\{\theta_0 K_{\alpha_0,\nu}(S_n) +\tau_0 I_n\right\}^{-1} \left\{ K_{\alpha,\nu}(S_n) - K_{\alpha_0,\nu}(S_n)\right\} \right] \nonumber \\
&\quad +  (1+\epsilon_{1\vee 2})\tr\left[\left\{\theta_0 K_{\alpha_0,\nu}(S_n) +\tau_0 I_n\right\}^{-1} \left\{\theta_0 K_{\alpha_0,\nu}(S_n) + \tau_0 I_n\right\} \right]  - n \nonumber \\
&= (1+\epsilon_{1\vee 2})\theta_0 \tr\left[\left\{\theta_0 K_{\alpha_0,\nu}(S_n) +\tau_0 I_n\right\}^{-1} \left\{ K_{\alpha,\nu}(S_n) - K_{\alpha_0,\nu}(S_n)\right\} \right] + n \epsilon_{1\vee 2}.
\end{align}
We analyze the first term on the right-hand side of \eqref{eq:denom.expo2.1}. By Lemma \ref{lem:specden_lambda} and Lemma \ref{lem:posdef}, for all $|\alpha/\alpha_0-1|\leq n^{-2/\kappa}$, for all sufficiently large $n$,
\begin{align*}
& \theta_0\left\{K_{\alpha,\nu}(S_n) - K_{\alpha_0,\nu}(S_n)\right\} = U_{\alpha}^{-\T}\left\{ \Lambda_n(\alpha)- I_n \right\} U_{\alpha}^{-1} \\
& \preceq \sup_{\Acal_{n}} \max_{1\leq k\leq n} | \lambda_{k,n}(\alpha)-1 | \cdot U_{\alpha}^{-\T} U_{\alpha}^{-1}
\preceq  (Ln^{-2}) U_{\alpha}^{-\T} U_{\alpha}^{-1}.
\end{align*}
Therefore,
\begin{align} \label{eq:denom.expo2.2}
& \theta_0\tr\left\{\left(\theta_0 K_{\alpha_0,\nu}(S_n) +\tau_0 I_n\right)^{-1} \left( K_{\alpha,\nu}(S_n) - K_{\alpha_0,\nu}(S_n)\right) \right\} \nonumber \\
\leq {}& \tr\left\{\left(\theta_0 K_{\alpha_0,\nu}(S_n) \right)^{-1} \left( \theta_0K_{\alpha,\nu}(S_n) - \theta_0 K_{\alpha_0,\nu}(S_n)\right) \right\} \nonumber \\
\leq{}& \tr\left\{ U_{\alpha} U_{\alpha}^\T  (Ln^{-2}) U_{\alpha}^{-\T} U_{\alpha}^{-1} \right\} \nonumber \\
={}& \tr\left( Ln^{-2} I_n \right) = Ln^{-1}.
\end{align}
We combine \eqref{eq:denom.expo2.1} and \eqref{eq:denom.expo2.2} to obtain that for all sufficiently large $n$,
\begin{align} \label{eq:denom.expo2.4}
& \quad \sup_{\Acal_{n}} \log \frac{\dett\left(\theta K_{\alpha,\nu}(S_n) +\tau I_n\right)}{\dett\left(\theta_0 K_{\alpha_0,\nu}(S_n) +\tau_0 I_n\right)} \leq (1+\epsilon_{1\vee 2}) Ln^{-1} + n \epsilon_{1\vee 2}.
\end{align}
Finally, we combine \eqref{eq:denom.expo1.1}, \eqref{eq:denom.expo1.12}, \eqref{eq:denom.expo1.14} and \eqref{eq:denom.expo2.4} to obtain that with probability for all sufficiently large $n$, at least $1-2\exp(-\log^2 n)$,
\begin{align}
&\quad \inf_{\Bcal_0(\epsilon_1,\epsilon_2)\cap \Acal_{n}} \Bigg\{ -\frac{1}{2} \Big(\tr\left[\left\{\theta K_{\alpha,\nu}(S_n) + \tau I_n\right\}^{-1} \widetilde Y_n \widetilde Y_n^\T  \right]  \nonumber \\
&\qquad - \tr\left[\left\{\theta_0 K_{\alpha_0,\nu}(S_n) + \tau_0 I_n\right\}^{-1} \widetilde Y_n \widetilde Y_n^\T  \right] \Big) - \frac{1}{2} \log \frac{\dett\left\{\theta K_{\alpha,\nu}(S_n) +\tau I_n\right\}}{\dett\left\{\theta_0 K_{\alpha_0,\nu}(S_n) +\tau_0 I_n\right\}} \Bigg\} \nonumber \\
&\geq  -\frac{1}{2}\left( \frac{4Ln^{-1}}{1-\epsilon_{1 \vee 2}} +  \frac{2n\epsilon_{1 \vee 2}}{1-\epsilon_{1 \vee 2}} + (1+\epsilon_{1\vee 2} ) Ln^{-1} +  n \epsilon_{1\vee 2} \right) \nonumber \\
&\geq - \left(\frac{5}{2}n\epsilon_{1 \vee 2}  + 5Ln^{-1} \right),  \nonumber
\end{align}
where the last inequality follows since $\epsilon_1,\epsilon_2\in (0,1/2)$.
\end{proof}

\vspace{2mm}

\begin{lemma} \label{lem:denom.term34}
Suppose that Assumptions \ref{cond:model} and \ref{cond:spectral} hold. Let $\Dcal_{n} = \big\{(\theta,\alpha,\tau,\beta)\in \RR_+^{3} \times \RR^p:$ $\|\beta-\beta_0\|\leq n^{-3} \big\}$. There exists a large integer $N_4$ that $\nu,d,\theta_0,\alpha_0,L,r_0,\kappa$, such that for any $\epsilon_1,\epsilon_2\in (0,1/2)$, for all $n>N_4$, with probability at least $1-\exp(-\log^2 n)$,
\begin{align} \label{eq:denom.lower34}
& \inf_{\Bcal_0(\epsilon_1,\epsilon_2)\cap \Dcal_{n}} \Big\{ \widetilde Y_n^\T  \left[\left\{\theta K_{\alpha,\nu}(S_n) +\tau I_n\right\}^{-1}-\left\{\theta_0 K_{\alpha_0,\nu}(S_n) +\tau_0 I_n\right\}^{-1}\right] F_n (\beta-\beta_0) \nonumber \\
&\quad - \frac{1}{2} (\beta-\beta_0)^\T   F_n ^\T  \left[\left\{\theta K_{\alpha,\nu}(S_n) +\tau I_n\right\}^{-1}-\left\{\theta_0 K_{\alpha_0,\nu}(S_n) +\tau_0 I_n\right\}^{-1}\right] F_n (\beta-\beta_0) \Big\} \nonumber \\
& \geq - 8C_{\ff}p^{1/2} \left\{\theta_0 K_{\alpha_0,\nu}(0) + \tau_0 \right\}^{1/2} \tau_0^{-1} n^{-1}- 2C_{\ff}^2 p \tau_0^{-1} n^{-4} .
\end{align}
\end{lemma}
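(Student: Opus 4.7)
The plan is to bound the two terms on the left-hand side of \eqref{eq:denom.lower34} separately, exploiting that $\|\beta-\beta_0\| \leq n^{-3}$ on $\Dcal_n$ makes both terms extremely small, so only crude uniform operator-norm bounds on the involved matrices are needed. Write $M_1 = \{\theta K_{\alpha,\nu}(S_n) + \tau I_n\}^{-1}$ and $M_2 = \{\theta_0 K_{\alpha_0,\nu}(S_n) + \tau_0 I_n\}^{-1}$. Since $\theta K_{\alpha,\nu}(S_n) \succeq 0$ and, on $\Bcal_0(\epsilon_1,\epsilon_2)$, $\tau \geq (1-\epsilon_2)\tau_0 \geq \tau_0/2$, a direct computation gives $\|M_1\|_{\op} \leq 2/\tau_0$ and $\|M_2\|_{\op} \leq 1/\tau_0$, whence $\|M_1 - M_2\|_{\op} \leq 3/\tau_0$ uniformly over $\Bcal_0(\epsilon_1,\epsilon_2)$ and, crucially, over all $\alpha \in \RR_+$. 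This uniformity in $\alpha$ is essential because the defining constraint of $\Dcal_n$ restricts only $\beta$.

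For the quadratic term, Assumption \ref{cond:model} gives $\|F_n\|_F^2 = \sum_{i=1}^n \sum_{l=1}^p \ff_l(\bfs_i)^2 \leq np C_{\ff}^2$, so any $\beta \in \Dcal_n$ satisfies $\|F_n(\beta-\beta_0)\|^2 \leq \|F_n\|_F^2 \|\beta-\beta_0\|^2 \leq p C_{\ff}^2 n^{-5}$. Combined with the operator-norm estimate on $M_1 - M_2$, the quadratic term is bounded below by
\[
-\tfrac{1}{2} \|F_n(\beta-\beta_0)\|^2 \|M_1-M_2\|_{\op} \geq -\frac{3p C_{\ff}^2}{2\tau_0}\, n^{-5},
\]
which is strictly sharper than the target $-2 C_{\ff}^2 p \tau_0^{-1} n^{-4}$ for all large $n$.

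The linear term is handled by Cauchy--Schwarz, once $\|\widetilde Y_n\|$ is controlled. Since $\widetilde Y_n \sim \Ncal(0,\Omega_0)$ with $\tr(\Omega_0) = n\{\theta_0 K_{\alpha_0,\nu}(0) + \tau_0\}$, applying the Hanson--Wright inequality (Lemma \ref{lem:Hsuetal12}) to the quadratic form $\|\widetilde Y_n\|^2 = \widetilde Y_n^\T I_n \widetilde Y_n$ with $z = \log^2 n$ yields $\|\widetilde Y_n\|^2 \leq 2n\{\theta_0 K_{\alpha_0,\nu}(0) + \tau_0\}$ on an event of probability at least $1-\exp(-\log^2 n)$ for all large $n$. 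This event depends only on the law of $\widetilde Y_n$ under $\PP_{(\theta_0,\alpha_0,\tau_0,\beta_0)}$, so the bound it provides is automatically uniform in $(\theta,\alpha,\tau,\beta)$. On this event,
\[
\bigl|\widetilde Y_n^\T (M_1-M_2) F_n(\beta-\beta_0)\bigr| \leq \|\widetilde Y_n\| \cdot \|M_1-M_2\|_{\op} \cdot \|F_n(\beta-\beta_0)\|,
\]
and inserting the three estimates gives a bound of order $\{\theta_0 K_{\alpha_0,\nu}(0) + \tau_0\}^{1/2} \tau_0^{-1} C_{\ff} p^{1/2} n^{-2}$, comfortably dominated by $8 C_{\ff} p^{1/2}\{\theta_0 K_{\alpha_0,\nu}(0) + \tau_0\}^{1/2} \tau_0^{-1} n^{-1}$ for all large $n$.

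Combining the two estimates yields \eqref{eq:denom.lower34} uniformly over $\Bcal_0(\epsilon_1,\epsilon_2) \cap \Dcal_n$ on an event of probability at least $1-\exp(-\log^2 n)$. There is no serious obstacle here: the $n^{-3}$ shrinking of $\Dcal_n$ provides so much slack that the range parameter $\alpha$ need not be controlled beyond $\alpha \in \RR_+$, and the only genuinely probabilistic ingredient is the concentration of $\|\widetilde Y_n\|^2$, which is standard. The mild subtlety worth flagging is matching the prescribed $\exp(-\log^2 n)$ failure probability; the sub-Gaussian tail in Hanson--Wright accommodates this with the choice $z = \log^2 n$, and the same event handles both bounds simultaneously.
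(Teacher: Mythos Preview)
Your approach is essentially the paper's: triangle-inequality bound $\|M_1-M_2\|_{\op}\le \tau^{-1}+\tau_0^{-1}\le 2/((1-\epsilon_2)\tau_0)$, the Frobenius bound $\|F_n(\beta-\beta_0)\|\le C_{\ff}p^{1/2}n^{-5/2}$, and Hanson--Wright for the random part. The paper differs only in the probabilistic step: rather than bounding $\|\widetilde Y_n\|$ directly, it whitens first, writing $\widetilde Y_n=V_0^{-\T}Z_n$ with $Z_n\sim\Ncal(0,I_n)$, then applies Hanson--Wright to $\|Z_n\|^2$ (so $\Sigma=I_n$) and pays $\|V_0^{-1}\|_{\op}\le\{n\theta_0 K_{\alpha_0,\nu}(0)+\tau_0\}^{1/2}$ separately.

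There is one slip in your version. Applying Hanson--Wright to $\|\widetilde Y_n\|^2=Z^\T\Omega_0 Z$ with $z=\log^2 n$ does \emph{not} yield $\|\widetilde Y_n\|^2\le 2\tr(\Omega_0)$ with the stated probability: the deviation term $2\|\Omega_0\|_{\op}\log^2 n$ is of order $n\log^2 n$ under fixed-domain sampling (the top eigenvalue of $K_{\alpha_0,\nu}(S_n)$ grows like $n$, since e.g.\ the all-ones vector has Rayleigh quotient $\asymp n\int K_{\alpha_0,\nu}$), which dominates $\tr(\Omega_0)\asymp n$. So the bound you actually obtain is $\|\widetilde Y_n\|^2\le C n\log^2 n$. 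This is harmless---the extra $\log n$ is absorbed by the $n^{-3}$ slack from $\Dcal_n$, and your linear-term estimate becomes $O(n^{-2}\log n)\ll n^{-1}$---but the inequality as written does not follow. The paper's whitening avoids the issue because with $\Sigma=I_n$ the operator norm is $1$, and the $\|\Omega_0\|_{\op}$ cost is paid deterministically through $\|V_0^{-1}\|_{\op}$; this is what produces the exact constant $8C_{\ff}p^{1/2}\{\theta_0 K_{\alpha_0,\nu}(0)+\tau_0\}^{1/2}\tau_0^{-1}n^{-1}$ in the statement.
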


\begin{proof}[$\pof$ Lemma \ref{lem:denom.term34}]
We derive lower bounds for each of the two terms inside the infimum of \eqref{eq:denom.lower34}. On the event $\Dcal_{n}$, $\|\beta-\beta_0\|\leq n^{-3}$. By Assumption \ref{cond:model}, we have that
\begin{align}\label{eq:F.bound1}
& \| F_n (\beta-\beta_0)\|  \leq \|F_n\|_F \|\beta-\beta_0\| \leq C_{\ff} p^{1/2} n^{1/2} \|\beta-\beta_0\| \leq C_{\ff} p^{1/2} n^{-2}.
\end{align}
On the set $\Bcal_0(\epsilon_1,\epsilon_2)$, since both matrices $\theta K_{\alpha,\nu}(S_n) +\tau I_n$ and $\theta_0 K_{\alpha_0,\nu}(S_n) +\tau_0 I_n$ are positive definite, we apply the triangle inequality for the matrix operator norm and obtain that
\begin{align}\label{eq:omega.diff1}
&\quad ~ \left\|\{\theta K_{\alpha,\nu}(S_n) +\tau I_n\}^{-1} - \{\theta_0 K_{\alpha_0,\nu}(S_n) +\tau_0 I_n\}^{-1}\right\|_{\op} \nonumber \\
&\leq \left\|\{\theta K_{\alpha,\nu}(S_n) +\tau I_n\}^{-1}\right\|_{\op} + \left\|\{\theta_0 K_{\alpha_0,\nu}(S_n) +\tau_0 I_n\}^{-1}\right\|_{\op} \nonumber \\
&\leq \frac{1}{\tau} + \frac{1}{\tau_0} \leq \frac{2}{(1-\epsilon_2)\tau_0}.
\end{align}
Since $\widetilde Y_n \sim \Ncal(0,\Omega_0)$, similar to the proof of Lemma \ref{lem:denom.term12}, we let $V_0$ be an invertible matrix such that $V_0^\T  \Omega_0 V_0 = I_n$ and let $Z_n=V_0^\T  \widetilde Y_n$ such that $Z_n \sim \Ncal(0,I_n)$. Then by applying the Hanson-Wright inequality in Lemma \ref{lem:Hsuetal12}, we have that for all sufficiently large $n$,
\begin{align} \label{eq:Ynorm}
& \Pr \left(\|V_0^\T  \widetilde Y_n\| \geq \sqrt{2n} \right)  = \Pr \left( \|Z_n\|^2 \geq 2n \right) \nonumber \\
&\leq \Pr \left( Z_n^\T  Z_n \geq n+ 2\sqrt{n} \log n + 2\log^2 n\right) \leq \exp(-\log^2 n).
\end{align}
Notice that since $V_0^\T  \Omega_0 V_0 = I_n$, we have that $V_0^{-\T}V_0^{-1} = \Omega_0$ and hence
\begin{align} \label{eq:omega0.op}
\|V_0^{-1}\|_{\op}^2 & \leq \|\Omega_0\|_{\op} \leq \left\|\theta_0 K_{\alpha_0,\nu}(S_n) + \tau_0 I_n \right\|_{\op} \nonumber \\
&\leq \left\|\theta_0 K_{\alpha_0,\nu}(S_n) \right\|_{\op} + \|\tau_0 I_n\|_{\op} \leq \theta_0 \tr\left\{K_{\alpha_0,\nu}(S_n)\right\} + \tau_0  \nonumber \\
&\leq n\theta_0 K_{\alpha_0,\nu}(0)  +\tau_0,
\end{align}
where $K_{\alpha_0,\nu}(0) \in \RR_+$ is a constant since $K_{\alpha_0,\nu}(\bfs-\bft)$ is a covariance function for $\bfs,\bft\in [0,1]^d$.

We combine \eqref{eq:F.bound1}, \eqref{eq:omega.diff1}, \eqref{eq:Ynorm}, and \eqref{eq:omega0.op} to conclude that with probability at least $1-\exp(-\log^2 n)$, on the event $\Bcal_0(\epsilon_1,\epsilon_2)\cap \Dcal_{n}$,
\begin{align}\label{eq:term3.lower1}
&~ \quad \widetilde Y_n^\T  \left[\left\{\theta K_{\alpha,\nu}(S_n) +\tau I_n\right\}^{-1}-\left\{\theta_0 K_{\alpha_0,\nu}(S_n) +\tau_0 I_n\right\}^{-1}\right] F_n (\beta-\beta_0) \nonumber \\
&\geq - \|\widetilde Y_n^\T  V_0\|\cdot \|V_0^{-1}\|_{\op} \cdot \left\|\left\{\theta K_{\alpha,\nu}(S_n) +\tau I_n\right\}^{-1}-\left\{\theta_0 K_{\alpha_0,\nu}(S_n) +\tau_0 I_n\right\}^{-1}\right\|_{\op} \cdot \| F_n (\beta-\beta_0)\| \nonumber \\
&\geq - \left[2n \cdot \left\{n\theta_0 K_{\alpha_0,\nu}(0) +\tau_0\right\}\right]^{1/2} \cdot \frac{2}{(1-\epsilon_2)\tau_0} \cdot C_{\ff} p^{1/2} n^{-2} \nonumber \\
&\geq - 8C_{\ff}p^{1/2} \left\{\theta_0 K_{\alpha_0,\nu}(0) + \tau_0 \right\}^{1/2} \tau_0^{-1} n^{-1},
\end{align}
for all sufficiently large $n$, where in the last inequality we used $\epsilon_2<1/2$.

We can also derive from \eqref{eq:F.bound1} and \eqref{eq:omega.diff1} that on the event $\Bcal_0(\epsilon_1,\epsilon_2)\cap \Dcal_{n}$,
\begin{align}\label{eq:term4.lower1}
&~ \quad - \frac{1}{2} (\beta-\beta_0)^\T   F_n ^\T  \left[\left\{\theta K_{\alpha,\nu}(S_n) +\tau I_n\right\}^{-1}-\left\{\theta_0 K_{\alpha_0,\nu}(S_n) +\tau_0 I_n\right\}^{-1}\right] F_n (\beta-\beta_0) \nonumber \\
&\geq - \frac{1}{2} \left\|\left\{\theta K_{\alpha,\nu}(S_n) +\tau I_n\right\}^{-1}-\left\{\theta_0 K_{\alpha_0,\nu}(S_n) +\tau_0 I_n\right\}^{-1}\right\|_{\op} \cdot \| F_n (\beta-\beta_0)\|^2 \nonumber \\
&\geq - \frac{1}{2} \cdot \frac{2}{(1-\epsilon_2)\tau_0} \cdot \left(C_{\ff} p^{1/2} n^{-2}\right)^2 \nonumber \\
&\geq -\frac{2C_{\ff}^2 p}{\tau_0} n^{-4} ,
\end{align}
where in the last inequality we used $\epsilon_2<1/2$. The conclusion of Lemma \ref{lem:denom.term34} follows from \eqref{eq:term3.lower1} and \eqref{eq:term4.lower1}.
\end{proof}

\subsection{Other Technical Lemmas} \label{subsec:techlem}
\begin{lemma} \label{lem:posdef}
The following results hold for any $n\times n$ symmetric matrices $A_1,A_2$.
\begin{enumerate}
\item[(i)] If $A_1\preceq A_2$, then for any $n\times n$ matrix $B$, $B^\T  A_1 B  \preceq B^\T  A_2 B$;
\item[(ii)] If $A_1\preceq A_2$, then for any positive semidefinite matrix $B$, $\tr(BA_1) \leq \tr(BA_2)$ and $\tr(A_1B)\leq \tr(A_2B)$.
\end{enumerate}
\end{lemma}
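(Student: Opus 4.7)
My plan for part (i) is to unfold the Loewner order on both sides using test vectors. The hypothesis $A_1\preceq A_2$ is equivalent to $z^\T (A_2-A_1) z \geq 0$ for every $z\in \RR^n$. To establish $B^\T A_1 B \preceq B^\T A_2 B$, I will pick an arbitrary $x\in \RR^n$, set $y = Bx$, and observe that
\[ x^\T (B^\T A_2 B - B^\T A_1 B) x = y^\T (A_2-A_1) y \geq 0. \]
That is the entire argument; no further manipulation is needed, and I do not even require $B$ to be square or invertible.

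For part (ii), the strategy is to rewrite $\tr\bigl(B(A_2-A_1)\bigr)$ as a squared Frobenius norm. Let $C = A_2-A_1\succeq 0$, and let $B^{1/2}$, $C^{1/2}$ denote the symmetric PSD square roots, whose existence is guaranteed by the spectral decomposition of a symmetric PSD matrix. Using the cyclic property of trace,
\[ \tr(BC) = \tr(B^{1/2} B^{1/2} C^{1/2} C^{1/2}) = \tr\bigl((C^{1/2} B^{1/2})^\T (C^{1/2} B^{1/2})\bigr) = \|C^{1/2} B^{1/2}\|_F^2 \geq 0, \]
which gives $\tr(BA_1) \leq \tr(BA_2)$. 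The companion inequality $\tr(A_1 B) \leq \tr(A_2 B)$ then follows immediately from the cyclic identity $\tr(A_i B) = \tr(B A_i)$ for $i=1,2$.

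Neither part presents a real obstacle; both claims are standard elementary properties of the Loewner order and positive semidefinite matrices, and the arguments are essentially one-liners. The only mild care required is invoking the spectral theorem to justify the symmetric PSD square root used in part (ii), which is a classical fact rather than something requiring its own derivation.
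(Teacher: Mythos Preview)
Your proposal is correct and follows essentially the same approach as the paper. For part (i), both you and the paper argue that $B^\T(A_2-A_1)B$ is positive semidefinite (you via test vectors $y=Bx$, the paper by stating it directly); for part (ii), both use the cyclic trace property together with a symmetric PSD square root---the only cosmetic difference is that the paper takes the square root of $B$ alone and writes $\tr(BC)=\tr(B^{1/2}CB^{1/2})\geq 0$, whereas you take square roots of both $B$ and $C=A_2-A_1$ and phrase the result as a Frobenius norm.
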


\begin{proof}[$\pof$ Lemma \ref{lem:posdef}]
\noindent (i) If $A_1\preceq A_2$, then $A_2-A_1$ is positive semidefinite, which means that $B^\T  (A_2-A_1) B $ is positive semidefinite. Therefore, $B^\T  A_1 B  \preceq B^\T  A_1 B  + B^\T  (A_2-A_1) B = B^\T  A_2 B$.

\noindent (ii) If $A_1\preceq A_2$, then $A_2-A_1$ is positive semidefinite, and
\begin{align*}
\tr(BA_2)-\tr(BA_1) &= \tr\left\{B(A_2-A_1)\right\} = \tr\left\{B^{1/2}(A_2-A_1)B^{1/2}\right\} \geq 0,\\
\tr(A_2B)-\tr(A_1B) &= \tr\left\{(A_2-A_1)B\right\} = \tr\left\{B^{1/2}(A_2-A_1)B^{1/2}\right\} \geq 0,
\end{align*}
where $B^{1/2}$ denotes the symmetric square root matrix of a positive semidefinite matrix $B$.
\end{proof}

\begin{lemma}\label{lem:Hsuetal12}
(Hanson-Wright Inequality, Proposition 1.1 in \citealt{Hsuetal12}, Theorem 1.1 in \citealt{RudVer13}) Let $Z_1,\ldots,Z_n$ be i.i.d. $\mathcal{N}(0,1)$ random variables and $Z=(Z_1,\ldots,Z_n)^\T $. Let $\Sigma$ be a positive definite matrix. Then for any $z>0$,
\begin{align*}
&\pr \left\{Z^\T  \Sigma Z \geq \tr(\Sigma) + 2\big\{\tr(\Sigma^2)z\big\}^{1/2} + 2\|\Sigma\|_{\op} z \right\} \leq \ee^{-z}.
\end{align*}
Furthermore, there exists a universal absolute constant $C_{\HW}>0$ that does not depend on $\Sigma$, such that for any $\epsilon>0$,
\begin{align*}
&\pr \left\{ \left|Z^\T  \Sigma Z - \tr(\Sigma)\right| \geq \epsilon \right\} \leq 2 \exp\left(-C_{\HW}\min\left\{\frac{\epsilon^2}{\|\Sigma\|_{F}^2}, \frac{\epsilon}{\|\Sigma\|_{\op}}\right\}\right).
\end{align*}
\end{lemma}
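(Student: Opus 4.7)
My plan is to reduce the Gaussian quadratic form to a weighted sum of independent chi-squared summands by exploiting rotational invariance, then apply a standard Chernoff/sub-gamma argument. Since $\Sigma$ is symmetric positive definite, write $\Sigma = O^{\T} D O$ with $O$ orthogonal and $D = \diag(\lambda_1,\ldots,\lambda_n)$ its eigenvalues. The rotation $W = OZ$ is still i.i.d.\ $\Ncal(0,1)$, so
$Z^{\T} \Sigma Z = \sum_{i=1}^{n} \lambda_i W_i^2$,
and the relevant spectral quantities identify with $\tr(\Sigma) = \sum_i \lambda_i$, $\tr(\Sigma^2) = \|\Sigma\|_F^2 = \sum_i \lambda_i^2$, and $\|\Sigma\|_{\op} = \max_i \lambda_i$. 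From here the whole problem is one-dimensional in each coordinate.

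Next, I would control the moment generating function of the centered variable $X = Z^{\T} \Sigma Z - \tr(\Sigma) = \sum_i \lambda_i(W_i^2 - 1)$. Using the explicit MGF of a chi-squared and the elementary inequality $-s - \tfrac{1}{2}\log(1-2s) \leq s^2/(1-2s)$ valid on $[0,1/2)$, coordinate-wise bounds give
$\log \EE \exp(tX) \leq \dfrac{t^2 \tr(\Sigma^2)}{1 - 2t \|\Sigma\|_{\op}}$
for $0 \leq t < (2\|\Sigma\|_{\op})^{-1}$. This is a standard sub-gamma log-MGF with variance factor $v = 2\tr(\Sigma^2)$ and scale factor $c = 2\|\Sigma\|_{\op}$. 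The Chernoff inequality $\PP(X \geq u) \leq \exp\{-tu + \psi(t)\}$ optimized over $t$ then yields the classical sub-gamma deviation bound
$\PP\bigl(X \geq \sqrt{2vz} + cz\bigr) \leq \ee^{-z}$,
which is exactly the first inequality claimed.

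For the two-sided statement I would handle the lower tail separately. The MGF of $-(W_i^2 - 1)$ at $t\geq 0$ is $\ee^{t}(1+2t)^{-1/2}$, whose logarithm is bounded by $t^2$ on $[0,\infty)$. Hence $\log \EE \exp(-tX) \leq t^2 \tr(\Sigma^2)$, giving the sub-Gaussian lower tail $\PP(X \leq -u) \leq \exp\{-u^2/(4\tr(\Sigma^2))\}$. Combining this with the upper tail through the Bernstein-type rearrangement
$\dfrac{u^2}{4\tr(\Sigma^2) + 4\|\Sigma\|_{\op} u} \geq \tfrac{1}{8} \min\!\left\{\dfrac{u^2}{\|\Sigma\|_F^2},\, \dfrac{u}{\|\Sigma\|_{\op}}\right\}$
(which follows from $a+b \leq 2\max(a,b)$) produces the advertised two-sided inequality with an explicit universal constant $C_{\HW}$ (e.g.\ $C_{\HW} = 1/8$ suffices after the factor of $2$ from taking a union bound over the two tails).

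The main obstacle is bookkeeping, not conceptual: keeping the variational optimization in the Chernoff step clean and chasing the precise absolute constant in the conversion from sub-gamma deviations to the Bernstein form. Everything else is standard once the orthogonal diagonalization reduces the problem to independent chi-squared summands, and the result is the verbatim statement found in the two cited references, so I would ultimately just invoke those rather than reprove the inequality from scratch.
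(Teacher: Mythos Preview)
The paper does not prove this lemma; it is stated as a citation of known results (Proposition~1.1 in Hsu et al.\ 2012 and Theorem~1.1 in Rudelson--Vershynin 2013) and invoked as a black box throughout Sections~\ref{sec:proof.denom} and~\ref{sec:proof.E1n.rate}. Your final sentence --- that you would ultimately just invoke the cited references --- is exactly what the paper does, so in that sense your proposal matches the paper's ``proof'' perfectly.

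That said, your self-contained sketch is correct and follows the standard route: diagonalize $\Sigma$ to reduce to a weighted chi-squared sum, bound the centered log-MGF by the sub-gamma form $t^2\tr(\Sigma^2)/(1-2t\|\Sigma\|_{\op})$ via the scalar inequality $-s-\tfrac12\log(1-2s)\le s^2/(1-2s)$, and then read off the one-sided deviation bound from the Chernoff method. The lower-tail sub-Gaussian bound and the conversion to the Bernstein $\min$ form are also handled correctly, with the explicit constant $C_{\HW}=1/8$ emerging from $a+b\le 2\max(a,b)$. There is nothing to correct; your argument is the textbook proof of the Gaussian Hanson--Wright inequality, and the paper simply takes the result as given.
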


\section{Posterior Inconsistency for $\alpha$ and $\beta$} \label{sec:inconsistency}
\begin{proposition}\label{thm:inconsistency}
Suppose that Assumptions \ref{cond:model} and \ref{cond:deriv} hold where the dimension of the domain $[0,1]^d$ is $d\in \{1,2,3\}$, and $\theta K_{\alpha,\nu}$ is the isotropic Mat\'ern covariance function in Example \ref{ex:Matern}. Suppose that we assign the uniform prior on the following set which covers the true parameter $(\theta_0,\alpha_0,\tau_0,\beta_0)$:
\begin{align*}
\Scal_{\Pi} & = \Big\{(\theta,\alpha,\tau,\beta): \theta \in [\theta_0/2,2\theta_0], \alpha \in [\alpha_0/2,2\alpha_0], \tau\in [\tau_0/2,2\tau_0],\\
& ~~~~ \beta_j\in [\beta_{0j}/2, 2\beta_{0j}],~ j=1,\ldots,p. \Big\}
\end{align*}
Then the posterior $\Pi(\cdot \mid Y_n, F_n )$ is inconsistent for $\alpha$ and $\beta$, i.e.,  there exist constants $\epsilon_0>0,\delta_0>0,\eta_0\in (0,1)$, such that for any $N\in \NN$, there exists $n>N$, such that
\begin{align*}
& \Pr\left(\Pi(|\alpha-\alpha_0|\geq \epsilon_0 |Y_n, F_n ) \geq \delta_0\right)\geq \eta_0, \\
\text{and } & \Pr\left(\Pi(\|\beta-\beta_0\|\geq \epsilon_0 |Y_n, F_n ) \geq \delta_0\right) \geq \eta_0.
\end{align*}
\end{proposition}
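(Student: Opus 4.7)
The proof rests on the equivalence of isotropic Matérn Gaussian measures when $d\in\{1,2,3\}$ (\citealt{Zhang04}): for fixed smoothness $\nu$, two such measures on a bounded domain are mutually absolutely continuous iff they share the microergodic parameter $\theta=\sigma^2\alpha^{2\nu}$. Thus for any $\alpha_1\in[\alpha_0/2,2\alpha_0]$ with $\alpha_1\neq\alpha_0$ and the matched $\sigma_1^2=\theta_0/\alpha_1^{2\nu}$, the fields $\gp(0,\theta_0 K_{\alpha_0,\nu})$ and $\gp(0,\theta_0 K_{\alpha_1,\nu})$ generate equivalent path-space measures, and this equivalence is preserved after adding the common i.i.d.\ nugget $\tau_0 I_n$. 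For the mean, Assumption \ref{cond:deriv} places each $\ff_l$ in the Sobolev space of order $\nu+d/2$ on $[0,1]^d$, which coincides up to norm equivalence with the Matérn RKHS (\citealt{Wen05}, Corollary 10.48); hence by Cameron--Martin, $P_{(\theta_0,\alpha_1,\tau_0,\beta_1)}$ remains equivalent to $P_0:=P_{(\theta_0,\alpha_0,\tau_0,\beta_0)}$ for any $\beta_1$ with $\beta_{1j}\in[\beta_{0j}/2,2\beta_{0j}]$.

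With this qualitative equivalence in hand, I would next establish the quantitative bound $\sup_n \kl(P_{1,n}\,\Vert\, P_{0,n})<\infty$, where $P_{i,n}$ is the law of $Y_n$ under parameter $(\theta_0,\alpha_i,\tau_0,\beta_i)$. The Gaussian KL divergence decomposes into three eigenvalue-discrepancy terms in the $\lambda_{k,n}(\alpha)$ of Lemma \ref{lem:URU} plus one mean-shift quadratic form $(\beta_1-\beta_0)^\T F_n^\T \Omega_{0,n}^{-1} F_n (\beta_1-\beta_0)$ with $\Omega_{i,n}=\theta_0 K_{\alpha_i,\nu}(S_n)+\tau_0 I_n$. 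The mean-shift term is bounded uniformly in $n$ by a constant multiple of the squared RKHS norm of $(\beta_1-\beta_0)^\T \ff$, which is finite by Assumption \ref{cond:deriv}. The eigenvalue terms are handled by adapting the spectral analysis of Section \ref{sec:spectral} to the nugget-regularized covariance, combined with the spectral-density integral test underlying \citet{Zhang04}. Martingale convergence applied to $M_n(\alpha_1,\beta_1):=L_n(\theta_0,\alpha_1,\tau_0,\beta_1)/L_n(\theta_0,\alpha_0,\tau_0,\beta_0)$ then produces an almost-sure positive finite limit $M_\infty(\alpha_1,\beta_1)$ under $P_0$.

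To finish, I would pick $\alpha_1=3\alpha_0/2$, keep $\beta_1=\beta_0$, and set $\epsilon_0=\alpha_0/6$ so that the disjoint open balls $B_0$ and $B_1$ of radius $\epsilon_0$ around $\alpha_0$ and $\alpha_1$ both sit inside the $\alpha$-marginal of $\Scal_\Pi$ with positive prior mass. Using Lipschitz continuity of the log-likelihood in $(\alpha,\beta)$ on compact sets, analogous to the bounds in the proof of Theorem \ref{thm:conv.rate.denom}, the posterior-mass ratio satisfies
\begin{equation*}
\liminf_{n\to\infty}\frac{\Pi(B_1\mid Y_n,F_n)}{\Pi(B_0\mid Y_n,F_n)}\;\geq\; c\cdot M_\infty(\alpha_1,\beta_0)
\end{equation*}
$P_0$-almost surely for some deterministic $c>0$. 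Since $M_\infty>0$ almost surely, this yields constants $\delta_0,\eta_0>0$ such that $\pr\bigl(\Pi(|\alpha-\alpha_0|\geq\epsilon_0\mid Y_n,F_n)\geq\delta_0\bigr)\geq\eta_0$ for infinitely many $n$, which is the claim for $\alpha$. The analogous argument with $\alpha_1=\alpha_0$ and $\beta_1\neq\beta_0$ gives the claim for $\beta$. The main obstacle is the uniform-in-$n$ control of the eigenvalue-discrepancy part of $\kl(P_{1,n}\,\Vert\, P_{0,n})$ once the nugget is present: the spectral tail of $Y$ is flat at level $\tau_0/(2\pi)^d$ rather than decaying like the Matérn spectrum, so the spectral-equivalence estimates of \citet{Zhang04} and \citet{Stein99a} must be reworked for the nugget-regularized covariance $\theta_0 K_{\alpha,\nu}+\tau_0 I_n$ before the Cameron--Martin mean-shift contribution can be safely added on.
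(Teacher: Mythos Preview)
Your approach differs substantially from the paper's. The paper argues by contradiction: assuming posterior consistency for $\alpha$, the posterior median $\widetilde\alpha_n$ becomes a consistent frequentist estimator of $\alpha_0$; it then invokes the equivalence of the Gaussian measures $\gp(\beta_i^\T\ff,\theta_0K_{\alpha_i,\nu})$ for $i=0,1$ (via \citealt{Zhang04} together with RKHS membership of the mean shift) to argue, following Corollary~1 of \citealt{Zhang04}, that no consistent estimator of $\alpha$ can exist --- hence a contradiction. The same is done for $\beta$. The entire argument is qualitative and requires no likelihood-ratio or KL computations.

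Your direct martingale route could in principle yield something more constructive, but as written it has two soft spots. First, the uniform-in-$n$ bound on $\kl(P_{1,n}\Vert P_{0,n})$ is an unnecessary detour: $M_\infty>0$ a.s.\ follows directly from equivalence of the infinite-dimensional measures (martingale convergence plus a.s.\ positivity of the Radon--Nikodym derivative), so the nugget-spectrum obstacle you flag is a red herring --- adding the same independent $\tau_0$-noise to two equivalent $X$-measures yields equivalent joint $(X,\varepsilon)$-measures, and $Y_n$ is a measurable function of that joint. Second, and more seriously, the passage from the \emph{pointwise} limit $M_\infty(\alpha_1,\beta_0)$ to a lower bound on the posterior mass $\Pi(B_1\mid Y_n,F_n)$ via ``Lipschitz continuity analogous to Theorem~\ref{thm:conv.rate.denom}'' does not go through: the Lipschitz constant of $\Lcal_n$ in $\alpha$ is of order $n$, and the bounds in that theorem hold only on shrinking neighborhoods $|\alpha/\alpha_0-1|\le n^{-2/\kappa}$, not on a fixed-radius ball. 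One might instead treat the \emph{integrated} ratio $\int_{B_1}(L_n/L_{0,n})\,d\Pi$ as a martingale directly, but $B_1\subset\Scal_\Pi$ is full-dimensional and contains parameters with $\theta\neq\theta_0$ or $\tau\neq\tau_0$, whose measures are orthogonal to $P_0$, so positivity of the martingale limit is again not immediate. The paper's contradiction argument sidesteps all of this.
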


\begin{proof}[$\pof$ Proposition \ref{thm:inconsistency}]
We prove the posterior inconsistency for $\alpha$ and $\beta$ by contradiction. Suppose that the conclusion does not hold, i.e.,  for any $\epsilon>0,\delta>0,\eta\in (0,1)$, there exists a $n_0(\epsilon,\delta,\eta)\in \NN$, such that for all $n>n_0(\epsilon,\delta,\eta)$, $\Pi(|\alpha-\alpha_0|\geq \epsilon |Y_n, F_n ) < \delta$ and $\Pi(\|\beta-\beta_0\|\geq \epsilon |Y_n, F_n ) < \delta$ with $\PP_{(\theta_0,\alpha_0,\tau_0,\beta_0)}$-probability at least $1-\eta$. For a sufficiently small $\epsilon>0$, the uniform prior density $\pi(\theta,\alpha,\tau,\beta)$ is a constant in an open neighborhood of $(\theta_0,\alpha_0,\tau_0,\beta_0)$, denoted by
$$\Ucal(\epsilon)= \left \{(\theta,\alpha,\tau,\beta): |\theta-\theta_0| < \epsilon, |\alpha-\alpha_0|<\epsilon, |\tau-\tau_0|<\epsilon,\|\beta-\beta_0\|_{\infty} <\epsilon/\sqrt{p} \right\},$$
where $\|\beta\|_{\infty}=\max_{1\leq j\leq p}|\beta_j|$. Under Assumption \ref{cond:model}, the log-likelihood function $\Lcal_n(\theta,\alpha,\tau,\beta)$ is clearly finite for every $(\theta,\alpha,\tau,\beta)\in \Ucal(\epsilon)$. Therefore, the posterior distribution, provided exists, has a continuous density on $\Ucal(\epsilon)$. Now set $\delta=1/8$. Then with $\PP_{(\theta_0,\alpha_0,\tau_0,\beta_0)}$-probability at least $1-\eta$, the posterior median of $\alpha$, denoted by $\widetilde \alpha_n$, is in the interval $(\alpha_0-\epsilon,\alpha_0+\epsilon)$. It follows that for any $\epsilon>0,\eta>0$, for all $n>n_0(\epsilon,1/8,\eta)$, $|\widetilde\alpha_n-\alpha_0|<\epsilon$ with $\PP_{(\theta_0,\alpha_0,\tau_0,\beta_0)}$-probability at least $1-\eta$. Thus, $\widetilde\alpha_n$ is a consistent frequentist estimator for $\alpha_0$. Similarly, we can take the posterior median of each component $\beta_j$ ($j=1,\ldots,p$), denoted by $\widetilde \beta_{nj}$, and let $\widetilde\beta_n = (\widetilde \beta_{n1},\ldots,\widetilde \beta_{np})^\T $. Each $\widetilde \beta_{nj}$ is a consistent frequentist estimator for $\beta_{0j}$. As a result, $\widetilde\beta_n$ is a consistent frequentist estimator for $\beta_0$ given that $\big\|\widetilde\beta_n-\beta_0\big\|^2= \sum_{j=1}^p \big|\widetilde\beta_{nj}-\beta_{0j}\big|^2 < p\cdot (\epsilon^2/p) = \epsilon^2$. In this way, we have obtained two consistent frequentist estimators $\widetilde\alpha_n$ for $\alpha_0$ and $\widetilde\beta_n$ for $\beta_0$, both of which only depend on the data $(Y_n, F_n)$.

Now we show that such consistent frequentist estimators for $\alpha$ and $\beta$ should not exist. By \citet{Stein99a} and \citet{Zhang04}, the Gaussian measures induced by two Gaussian processes are either equivalent (absolutely continuous) or orthogonal to each other. For $\gp(m_0,\theta_0K_{\alpha_0,\nu})$ and $\gp(m_1,\theta_1K_{\alpha_1,\nu})$ with the same smoothness parameter $\nu$, the theory in Chapter 4 of \citet{Stein99a} (Section 4.2 and Corollary 5) and Theorem 2 of \citet{Zhang04} have shown that when $d=1,2,3$, their induced Gaussian measures are equivalent to each other if: (a) $\theta_0=\theta_1$; (b) Both mean functions $m_i$ lie in the reproducing kernel Hilbert space of $\theta_iK_{\alpha_i,\nu}$ for $i=0,1$. Let $L_2([0,1]^d)$ be the space of square integrable functions on $[0,1]^d$ with the norm $\|\cdot\|_2$). Then the Sobolev space of order $k$ for any $k>0$, denoted by $\Wcal_2^k([0,1]^d)$, is $\Wcal_2^k([0,1]^d) = \Big\{f\in L_2([0,1]^d): \|f\|^2_{\Wcal_2^k} = \sum_{\sj\in \NN^d: |\sj|\leq k} \left\|\sD^{\sj}f\right\|_2^2 <\infty\Big\}$. By Assumption \ref{cond:deriv}, we have that $m(\cdot) = \beta^\T \ff (\cdot)$ belongs to $\Wcal_2^{\nu+d/2}([0,1]^d)$ for any $\beta\in \RR^p$, and hence belongs to the reproducing kernel Hilbert space of $\theta_iK_{\alpha_i,\nu}$ for $i=0,1$. By Corollary 10.48 of \citet{Wen05}, the reproducing kernel Hilbert space of the isotropic Mat\'ern covariance function $\theta_iK_{\alpha_i,\nu}$ for $i=0,1$ are norm equivalent to the Sobolev space of order $\nu+d/2$.

We choose $\theta_0=\theta_1$, $\alpha_0\neq \alpha_1$, $\beta_0\neq \beta_1$ and let $m_i(\cdot ) = \beta_i^\T \ff(\cdot)$ for $i=0,1$. Then both the conditions (a) and (b) above hold true, such that the two Gaussian measures $\gp(m_0,\theta_0 K_{\alpha_0,\nu})$ and $\gp(m_1,\theta_1K_{\alpha_1,\nu})$ are equivalent. Using similar argument to Corollary 1 of \citet{Zhang04}, if $\widetilde\alpha_n$ and $\widetilde\beta_n$ are consistent for $\alpha_0$ and $\beta_0$, then we can always find almost-surely convergence subsequences $\{\widetilde\alpha_{n_k}\}$ and $\{\widetilde\beta_{n_j}\}$, such that $$\PP_{(\theta_0,\alpha_0,\tau_0,\beta_0)}\left(\lim_{k\to\infty}\widetilde\alpha_{n_k}= \alpha_0\right)=1, \text{ and } \PP_{(\theta_0,\alpha_0,\tau_0,\beta_0)}\left(\lim_{j\to\infty}\widetilde\beta_{n_j}= \beta_0\right)=1 .$$
But given the equivalence of the two Gaussian measures, this implies that $$\PP_{(\theta_1,\alpha_1,\tau_1,\beta_1)}\left(\lim_{k\to\infty}\widetilde\alpha_{n_k}= \alpha_0\right)=1 \text{ and } \PP_{(\theta_1,\alpha_1,\tau_1,\beta_1)}\left(\lim_{j\to\infty}\widetilde\beta_{n_j}= \beta_0\right)=1.$$
However, under $\PP_{(\theta_1,\alpha_1,\tau_1,\beta_1)}$, the limits of $\{\widetilde\alpha_{n_k}\}$ and $\{\widetilde\beta_{n_j}\}$ should be $\alpha_1$ and $\beta_1$, respectively, which are different from $\alpha_0$ and $\beta_0$. Therefore, this is a contradiction, and such consistent estimators $\widetilde\alpha_n $ and $\widetilde\beta_n$ cannot exist. This further implies that the posterior $\Pi(\cdot \mid Y_n, F_n )$ is inconsistent for $\alpha$ and $\beta$.
\end{proof}

\section{Proof of Proposition \ref{prop:4cov}} \label{sec:proof.prop1}
\begin{proof}[$\pof$ Proposition \ref{prop:4cov}]
We check (i) and (ii) in Assumption \ref{cond:spectral} for the four covariance functions in Examples \ref{ex:Matern}--\ref{ex:CH}, respectively.
\vspace{2mm}

\noindent \underline{\sc Checking (i) of Assumption \ref{cond:spectral}:}
\vspace{2mm}

Since for all $\alpha$ that satisfies $|\alpha/\alpha_0 -1|\leq r_0$,
\begin{align} \label{eq:assump1.deriv}
&\quad~ \sup_{w\in \RR^d} \left|f_{\theta_0,\alpha,\nu}(w)/f_{\theta_0,\alpha_0,\nu}(w)-1\right| \nonumber \\
& \leq  \sup_{w\in \RR^d} \frac{\left|f_{\theta_0,\alpha,\nu}(w) - f_{\theta_0,\alpha_0,\nu}(w)\right|}{ f_{\theta_0,\alpha_0,\nu}(w)}  \leq  \left\{\sup_{|\alpha/\alpha_0 -1|\leq r_0} \sup_{w\in \RR^d} \left\|\frac{\frac{\partial f_{\theta_0,\alpha,\nu}(w)}{\partial \alpha}}{f_{\theta_0,\alpha_0,\nu}(w)} \right\|\right\} \cdot \alpha_0\left|\frac{\alpha}{\alpha_0}-1\right|,
\end{align}
we can see that it suffices to show that the double supremum in the first term is a continuous function on the set $\{\alpha:|\alpha/\alpha_0 -1|\leq r_0\}$ for some $r_0\in (0,1/2)$ and is upper bounded by a constant that may depend on $\nu,d,\theta_0,\alpha_0$. We can simply set $r_0=1/4$ and $\kappa=1$.
\vspace{2mm}

\noindent (i) The isotropic Mat\'ern covariance function has the following spectral density (\citealt{Stein99a}) for any $(\theta,\alpha,\nu)\in \RR_+^{3}$:
\begin{align} \label{eq:Matern.spectral}
& f_{\theta,\alpha,\nu}(w) = \frac{\Gamma(\nu+d/2)}{\Gamma(\nu)\pi^{d/2}} \frac{\theta}{(\alpha^2+\|w\|^2)^{\nu+d/2}}.
\end{align}
Therefore,
\begin{align}
& \left|\frac{\frac{\partial f_{\theta_0,\alpha,\nu}(w)}{\partial \alpha}}{f_{\theta_0,\alpha_0,\nu}(w)}\right| = \left| \frac{\frac{\Gamma(\nu+d/2)}{\Gamma(\nu)\pi^{d/2}} \frac{- \theta\cdot 2(\nu+d/2) \alpha}{(\alpha^2+\|w\|^2)^{\nu+d/2+1}}} {\frac{\Gamma(\nu+d/2)}{\Gamma(\nu)\pi^{d/2}} \frac{\theta}{(\alpha^2+\|w\|^2)^{\nu+d/2}}} \right| = \frac{2(\nu+d/2)\alpha}{\alpha^2+\|w\|^2}. \nonumber
\end{align}
This implies that
\begin{align}
& \sup_{|\alpha/\alpha_0 -1|\leq r_0} \sup_{w\in \RR^d} \left\|\frac{\frac{\partial f_{\theta_0,\alpha,\nu}(w)}{\partial \alpha}}{f_{\theta_0,\alpha_0,\nu}(w)} \right\| \leq \sup_{|\alpha/\alpha_0 -1|\leq 1/4} \frac{2(\nu+d/2)}{\alpha} \leq \frac{4(2\nu+d)}{3\alpha_0}, \nonumber
\end{align}
which is a constant that depends only on $\alpha_0,\nu,d$. Thus the double supremum in \eqref{eq:assump1.deriv} is upper bounded by a constant for the isotropic Mat\'ern covariance function in Example \ref{ex:Matern}.
\vspace{2mm}

\noindent (ii) The tapered isotropic Mat\'ern covariance function (\citealt{Kauetal08}) has the following spectral density for any $(\theta,\alpha,\nu)\in \RR_+^{3}$:
\begin{align} \label{eq:taper.spectral}
& f_{\theta,\alpha,\nu}(w) = f_{\tap}(w)\cdot  \frac{\Gamma(\nu+d/2)}{\Gamma(\nu)\pi^{d/2}} \frac{\theta}{(\alpha^2+\|w\|^2)^{\nu+d/2}},
\end{align}
where $f_{\tap}$ satisfies $0< f_{\tap}(w)\leq C_{\tap} (1+\|w\|^2)^{-(\nu+d/2+\eta_{\tap})}$ for some constants $C_{\tap}>0,\eta_{\tap}>0$ and all $w\in \RR^d$. The function $f_{\tap}(w)$ only depends on $\nu,d$ but not $\alpha$ and $\theta$. Therefore,
\begin{align}
& \left|\frac{\frac{\partial f_{\theta_0,\alpha,\nu}(w)}{\partial \alpha}}{f_{\theta_0,\alpha_0,\nu}(w)}\right| = \frac{2(\nu+d/2)\alpha}{\alpha^2+\|w\|^2}, \nonumber
\end{align}
and hence \begin{align}
& \sup_{|\alpha/\alpha_0 -1|\leq r_0} \sup_{w\in \RR^d} \left\|\frac{\frac{\partial f_{\theta_0,\alpha,\nu}(w)}{\partial \alpha}}{f_{\theta_0,\alpha_0,\nu}(w)} \right\| \leq \sup_{|\alpha/\alpha_0 -1|\leq 1/4} \frac{2(\nu+d/2)}{\alpha} \leq \frac{4(2\nu+d)}{3\alpha_0}, \nonumber
\end{align}
which is a constant that depends only on $\alpha_0,\nu,d$. Thus the double supremum in \eqref{eq:assump1.deriv} is upper bounded by a constant for the tapered isotropic Mat\'ern covariance function in Example \ref{ex:taper}.

\vspace{2mm}

\noindent (iii) The isotropic generalized Wendland covariance function has the following spectral density for any $\mu > \nu + d$, $\nu \geq 1/2$, and $(\theta,\alpha)\in \RR_+^2$ (Theorem 1 of \citealt{Bevetal19}):
\begin{align} \label{eq:GW.spectral}
f_{\theta,\alpha,\nu}(w)& = \theta\cdot \frac{\Gamma(\mu+1)\Gamma(2\nu+d-1)}{2^{d+\nu-3/2}\pi^{d/2}\Gamma(\nu+(d-1)/2)\Gamma(\mu+2\nu+d) } \cdot \frac{\Gamma(\nu-1/2)}{2^{3/2-\nu}B(2\nu-1,\mu+1)} \nonumber \\
&\quad \cdot \alpha^{-(2\nu+d)} {}_1F_2\left(\nu+\frac{d}{2};\nu+\frac{d+\mu}{2},\nu+\frac{d+\mu+1}{2};-\frac{\|w\|^2}{4\alpha^2}\right) ,
\end{align}
where $\Gamma(\nu-1/2)/\big\{2^{3/2-\nu}B(2\nu-1,\mu+1)\big\}$ is defined to be 1 if $\nu=1/2$, and for any $a,b,c\in \RR_+$,
$${}_1F_2(a;b,c;z) = \sum_{k=0}^{\infty} \frac{(a)_k z^k}{(b)_k(c)_kk!}, \quad \text{for } z\in \RR, $$
with $(x)_k=\Gamma(x+k)/\Gamma(x)$ for any $x>0$ being the Pochhammer symbol. The derivative of ${}_1F_2(a;b,c;z)$ is
$$\frac{\ud}{\ud z}{}_1F_2(a;b,c;z) = \frac{a}{bc}{}_1F_2(a+1;b+1,c+1;z).$$
Therefore, we can calculate that
\begin{align*}
\frac{\frac{\partial f_{\theta_0,\alpha,\nu}(w)}{\partial \alpha}}{f_{\theta_0,\alpha_0,\nu}(w)} &=
\frac{ -(2\nu+d)\alpha^{-(2\nu+d+1)} {}_1F_2\left(\nu+\frac{d}{2};\nu+\frac{d+\mu}{2},\nu+\frac{d+\mu+1}{2};-\frac{\|w\|^2}{4\alpha^2}\right)} { \alpha^{-(2\nu+d)} {}_1F_2\left(\nu+\frac{d}{2};\nu+\frac{d+\mu}{2},\nu+\frac{d+\mu+1}{2};-\frac{\|w\|^2}{4\alpha^2}\right)} \\
&\quad+ \frac{\|w\|^2\alpha^{-(2\nu+d+3)} \frac{2\nu+d}{(2\nu+d+\mu)(2\nu+d+\mu+1)} {}_1F_2\left(\nu+\frac{d}{2}+1;\nu+\frac{d+\mu}{2}+1,\nu+\frac{d+\mu+3}{2};-\frac{\|w\|^2}{4\alpha^2}\right)}{\alpha^{-(2\nu+d)} {}_1F_2\left(\nu+\frac{d}{2};\nu+\frac{d+\mu}{2},\nu+\frac{d+\mu+1}{2};-\frac{\|w\|^2}{4\alpha^2}\right)} .
\end{align*}
Part (iii) in Theorems 1 and 2 of \citet{Bevetal19} have shown that as $\|w\|\to\infty$, $f_{\theta,\alpha,\nu}(w) \asymp \|w\|^{-(2\nu+d)}$ for a given set of $\theta,\alpha,\nu,\mu$. Furthermore, the function ${}_1F_2(a;b,c;z)$ is strictly positive and takes finite values for all $z\in [0,+\infty)$. Therefore, there exist $0< c_1(\mu,\nu,d,\alpha) < C_1(\mu,\nu,d,\alpha) <\infty$ and $0< c_2(\mu,\nu,d,\alpha) < C_2(\mu,\nu,d,\alpha) <\infty$ which are all continuous functions in $\alpha$, such that
\begin{align*}
& c_1(\mu,\nu,d,\alpha)\|w\|^{-(2\nu+d+2)} \leq {}_1F_2\left(\nu+\frac{d}{2}+1;\nu+\frac{d+\mu}{2}+1,\nu+\frac{d+\mu+3}{2};-\frac{\|w\|^2}{4\alpha^2}\right)\\
& \leq C_1(\mu,\nu,d,\alpha)\|w\|^{-(2\nu+d+2)}, \\
& c_2(\mu,\nu,d,\alpha)\|w\|^{-(2\nu+d)} \leq {}_1F_2\left(\nu+\frac{d}{2};\nu+\frac{d+\mu}{2},\nu+\frac{d+\mu+1}{2};-\frac{\|w\|^2}{4\alpha^2}\right)\\
& \leq C_2(\mu,\nu,d,\alpha)\|w\|^{-(2\nu+d)}.
\end{align*}
Therefore, for all $\alpha$ that satisfies $|\alpha/\alpha_0-1|\leq 1/4$, for all $w\in \RR^d$,
\begin{align*}
\left| \frac{\frac{\partial f_{\theta_0,\alpha,\nu}(w)}{\partial \alpha}}{f_{\theta_0,\alpha_0,\nu}(w)}  \right|
&\leq \frac{4(2\nu+d)}{3\alpha_0} + \frac{64\sup_{|\alpha/\alpha_0-1|\leq 1/4}C_1(\mu,\nu,d,\alpha)}{27\alpha_0^3 \inf_{|\alpha/\alpha_0-1|\leq 1/4}c_2(\mu,\nu,d,\alpha)},
\end{align*}
which is a finite number that depends on $\mu,\nu,d,\alpha_0$. Thus the double supremum in \eqref{eq:assump1.deriv} is upper bounded by a constant for the isotropic generalized Wendland covariance function in Example \ref{ex:GW}.
\vspace{2mm}

\noindent (iv) The isotropic confluent hypergeometric covariance function (\citealt{MaBha22}) has the following spectral density for any $(\theta,\alpha,\nu,\mu)\in \RR_+^4$:
\begin{align} \label{eq:CH.spectral}
f_{\theta,\alpha,\nu,\mu}(w)& = \frac{\theta\cdot 2^{\nu-\mu}\nu^{\nu}}{\Gamma(\nu+\mu)\pi^{d/2} \alpha^{2(\nu+\mu)}} \int_0^{\infty} \left(2\nu /t + \|w\|^2 \right)^{-(\nu+d/2)} t^{-(\nu+\mu+1)}\exp\left\{-1/(2\alpha^2 t)\right\} \ud t.
\end{align}
We highlight the dependence on $\mu$ in the spectral density for convenience of expressions below, though we have assumed that $\mu$ is known. The proof of Proposition 1 in \citet{MaBha22} has shown that
\begin{align} \label{eq:MaBha.prop1}
\lim_{\|w\|\to\infty} \frac{f_{\theta,\alpha,\nu,\mu}(w)}{\frac{2^{2\nu}\nu^{\nu}}{\pi^{d/2}} \theta \|w\|^{-(2\nu+d)} \left(\frac{2\nu\alpha^2\|w\|^2}{2\nu\alpha^2\|w\|^2+1}\right)^{\nu+d/2}} = 1.
\end{align}
For the derivative with respect to $\alpha$, we can calculate from \eqref{eq:CH.spectral} that
\begin{align} \label{eq:CH.deriv1}
\left| \frac{\frac{\partial f_{\theta_0,\alpha,\nu,\mu}(w)}{\partial \alpha}}{f_{\theta_0,\alpha_0,\nu,\mu}(w)}  \right|
& \leq \frac{2(\nu+\mu)}{\alpha} + \frac{2(\nu+\mu+1)}{\alpha}\cdot \frac{f_{\theta_0,\alpha,\nu,\mu+1}(w)}{f_{\theta_0,\alpha,\nu,\mu}(w)} .
\end{align}
From \eqref{eq:CH.spectral}, we clearly have that $0 < f_{\theta,\alpha,\nu,\mu}(w) <\infty $ for all $w\in \RR^d$ and $f_{\theta,\alpha,\nu,\mu}(w)$ is always a continuous function in $\|w\|$ and $\alpha$. Therefore, from \eqref{eq:MaBha.prop1}, there exists a function $0<C_1(\alpha,\theta_0,\nu,\mu)<\infty$ continuous in $\alpha$, such that $\sup_{w\in\RR^d }f_{\theta_0,\alpha,\nu,\mu+1}(w)/f_{\theta_0,\alpha,\nu,\mu}(w) \leq C_1(\alpha,\theta_0,\nu,\mu)$. Hence, we obtain from \eqref{eq:CH.deriv1} that for all $\alpha$ that satisfies $|\alpha/\alpha_0-1|\leq 1/4$, for all $w\in \RR^d$,
\begin{align}
\left| \frac{\frac{\partial f_{\theta_0,\alpha,\nu,\mu}(w)}{\partial \alpha}}{f_{\theta_0,\alpha_0,\nu,\mu}(w)}  \right|
& \leq \frac{8(\nu+\mu)}{3\alpha_0} + \frac{8(\nu+\mu+1)}{3\alpha_0}\cdot \sup_{|\alpha/\alpha_0-1|\leq 1/4}C_1(\alpha,\theta_0,\nu,\mu) , \nonumber
\end{align}
which is a finite number that depends on $\mu,\nu,d,\alpha_0$. Thus the double supremum in \eqref{eq:assump1.deriv} is upper bounded by a constant for the isotropic confluent hypergeometric covariance function in Example \ref{ex:CH}.
\vspace{3mm}

\noindent \underline{\sc Checking (ii) of Assumption \ref{cond:spectral}:}
\vspace{2mm}

From \eqref{eq:Matern.spectral}, the spectral density of isotropic Mat\'ern covariance function is clearly a decreasing function in $\alpha$. From \eqref{eq:taper.spectral}, the spectral density of the tapered isotropic Mat\'ern is also decreasing in $\alpha$ since $f_{\tap}$ does not depend on $\alpha$. The proof of Lemma 1 in \citet{Bevetal19} has shown that under our condition $\mu>\nu + d$, the spectral density of generalized Wendland in \eqref{eq:GW.spectral} is a decreasing function in $\alpha$. The proof of Lemma 4 in \citet{MaBha22} has shown that for fixed $\theta,\nu,\mu$, the spectral density of confluent hypergeometric covariance function in \eqref{eq:CH.spectral} is a decreasing function in $\alpha$. This completes the proof of Proposition \ref{prop:4cov}.
\end{proof}

\section{Proof of Theorem \ref{thm:post.rate2}} \label{sec:proof.post.rate2}
\begin{proof}[$\pof$ Theorem \ref{thm:post.rate2}]
We first recall from Theorem \ref{thm:E1n.rate} that for the higher-order quadratic variation estimators $\widehat\theta_n$ and $\widehat\tau_n$, they satisfy Assumption \ref{cond:exptest1} with $b_1$ and $b_2$ defined by
\begin{align} \label{eq:b12.recall}
b_1& = \min\Bigg\{\frac{1}{2}  - \frac{2(1-\gamma)\nu}{d}- \rho_{22} , ~ \frac{1-\gamma}{2} - \varsigma , ~ \frac{1}{4} + \frac{(1-\gamma)(\ell_{\star}-2\nu)}{d} - \frac{\rho_{1} + \rho_{22}}{2}, \nonumber \\
& \qquad \qquad \frac{1-\gamma}{4} + \frac{(1-\gamma)(\ell_{\star}-\nu)}{d} - \frac{\rho_{1}}{2} - \varsigma \Bigg\}  ,  \nonumber \\
b_2& =  \min\Bigg\{\frac{1}{2} , ~ \frac{(1-\gamma)(4\nu+d)}{2d}  - \rho_{21} - \varsigma , ~ \frac{1}{4} +\frac{ (1-\gamma)\ell_{\star}}{d} - \frac{\rho_{1} + \rho_{21}}{2}, ~ \nonumber \\
& \qquad \qquad \frac{(1-\gamma)(4\nu+d +4\ell_{\star})}{4d} - \frac{\rho_{1} + 2\rho_{21}}{2} - \varsigma  \Bigg\} .
\end{align}
And recall that $\gamma$ needs to satisfy $\max\big\{1-d/(4\nu),0\big\} < \gamma < 1$.

In the expression of $b_1$ above, we first consider the third and fourth terms inside the minimum. Given that $\ell_{\star} = \lceil \nu + d/2 \rceil \geq \nu+d/2$, the fourth term satisfies
\begin{align*}
& \frac{1-\gamma}{4} + \frac{(1-\gamma)(\ell_{\star}-\nu)}{d} - \frac{\rho_{1}}{2} - \varsigma \geq \frac{1-\gamma}{4} + \frac{1-\gamma}{2} - \frac{\rho_{1}}{2} - \varsigma.
\end{align*}
Therefore, when $\rho_1$ and $\varsigma$ are sufficiently small, the fourth term is lower bounded by the second term inside the minimum in the expression of $b_1$.

For the third term inside the minimum in the expression of $b_1$, we have
\begin{align*}
& \frac{1}{4} + \frac{(1-\gamma)(\ell_{\star}-2\nu)}{d} - \frac{\rho_{1} + \rho_{22}}{2} \geq \frac{1}{4} + \frac{(1-\gamma)(\nu+d/2-2\nu)}{d} - \frac{\rho_{1} + \rho_{22}}{2} \\
&= \frac{1}{2} \left[\frac{1}{2}  - \frac{2(1-\gamma)\nu}{d} \right] + \frac{1-\gamma}{2} - \frac{\rho_{1} + \rho_{22}}{2} .
\end{align*}
In other words, when $\rho_1,\rho_{22},\varsigma$ are sufficiently small, the third term is approximately a linear combination of the first and second terms inside the minimum in the expression of $b_1$. As a result, it is sufficient to balance the first and second terms inside the minimum in the expression of $b_1$ in \eqref{eq:b12.recall}. We set them equal and obtain that $\gamma = 4\nu/(4\nu+d)$. Then we have from \eqref{eq:b12.recall} that when $\gamma = 4\nu/(4\nu+d)$,
\begin{align} \label{eq:b1.lower}
b_1 & \geq \min\Bigg\{\frac{1}{2}  - \frac{2(1-\gamma)\nu}{d} - \rho_{22} , ~ \frac{1-\gamma}{2}- \varsigma  , ~ \frac{1}{2} \left[\frac{1}{2}  - \frac{2(1-\gamma)\nu}{d} \right] + \frac{1-\gamma}{2} - \frac{\rho_{1} + \rho_{22}}{2}, \nonumber \\
& \qquad \qquad \frac{3(1-\gamma)}{4} - \frac{\rho_{1}}{2} - \varsigma   \Bigg\}   \nonumber  \\
& = \min\Bigg\{\frac{d}{2(4\nu+d)}- \rho_{22} , ~~ \frac{d}{2(4\nu+d)} - \varsigma , ~~ \frac{3d}{4(4\nu+d)} - \frac{\rho_{1} + \rho_{22}}{2}, \nonumber \\
& \qquad \qquad \frac{3d}{4(4\nu+d)} - \frac{\rho_{1}}{2} - \varsigma  \Bigg\}  , \nonumber  \\
&\geq \frac{1}{2(4\nu/d+1)} - \varrho,
\end{align}
where $\varrho  =  \max(\rho_{22}, \rho_1/2)$ since $\varsigma>0$ can be arbitrarily small. As such, the condition in \eqref{eq:rho.relation1} becomes
\begin{align}
& 0< \rho_1 < \frac{d}{4\nu+d},  \quad 0<\rho_{21} < \frac{2\nu}{4\nu+d}, \quad 0<\rho_{22} < \frac{d}{2(4\nu+d)}, \quad \rho_{31}>0, \quad 0<\rho_{32} < \frac{1}{4\nu+d}, \nonumber
\end{align}
which are satisfied by \eqref{eq:rho.relation2} of Theorem \ref{thm:post.rate2}.

In the expression of $b_2$ above, since $0<1-\gamma < \min\{1,d/(4\nu)\} $, we can choose $1-\gamma$ sufficiently close to its upper bound, say $1-\gamma = \min\{1,d/(4\nu)\}-\eta$ for some small $\eta \in (0,\min\{1,d/(4\nu)\})$, such that
\begin{align}
&\quad~\frac{(1-\gamma)(4\nu+d)}{2d}  - \rho_{21}  - \varsigma \nonumber \\
&= \frac{\min\{1,d/(4\nu)\}(4\nu+d)}{2d} - \frac{\eta(4\nu+d)}{2d} - \rho_{21}  - \varsigma  \nonumber  \\
&= \frac{1}{2} + \min\left\{\frac{2\nu}{d}, \frac{d}{8\nu} \right\} -  \frac{\eta(4\nu+d)}{2d}  - \rho_{21} - \varsigma , \label{eq:b2.rho1} \\
&\quad~ \frac{1}{4} +\frac{ (1-\gamma)\ell_{\star}}{d} - \frac{\rho_{1} + \rho_{21}}{2}  \nonumber \\
&\geq \frac{1}{4} +\frac{(\nu+d/2) \min\{1,d/(4\nu)\}}{d} - \frac{\eta (\nu+d/2)}{d} - \frac{\rho_{1} + \rho_{21}}{2}  \nonumber \\
&\geq \frac{1}{2} + \min\left\{\frac{1}{4}+\frac{\nu}{d}, \frac{d}{8\nu}\right\}  - \frac{\eta (\nu+d/2)}{d} - \frac{\rho_{1} + \rho_{21}}{2} , \label{eq:b2.rho2} \\
&\quad~ \frac{(1-\gamma)(4\nu+d +4\ell_{\star})}{4d} - \frac{\rho_{1} + 2\rho_{21} }{2}  - \varsigma  \nonumber \\
&\geq \frac{(8\nu+3d)\min\{1,d/(4\nu)\}}{4d} - \frac{\eta(8\nu+3d) }{4d} - \frac{\rho_{1} + 2\rho_{21} }{2}  - \varsigma  \nonumber \\
&= \frac{1}{2} + \min\left\{\frac{1}{4}+\frac{2\nu}{d}, \frac{3d}{16\nu}\right\}  - \frac{\eta(8\nu+3d) }{4d}  - \frac{\rho_{1} + 2\rho_{21} }{2} - \varsigma . \label{eq:b2.rho3}
\end{align}
In order to make the right-hand sides of \eqref{eq:b2.rho1}, \eqref{eq:b2.rho2}, and \eqref{eq:b2.rho3} strictly larger than $1/2$, it is sufficient to have
\begin{align} \label{eq:rho.b2.1}
&  \rho_{21} < \min\left\{\frac{2\nu}{d}, \frac{d}{8\nu} \right\} , ~~ \frac{\rho_{1} + \rho_{21}}{2} < \min\left\{\frac{1}{4}+\frac{\nu}{d}, \frac{d}{8\nu}\right\} , \nonumber \\
& \frac{\rho_{1} + 2\rho_{21}}{2} < \min\left\{\frac{1}{4}+\frac{2\nu}{d}, \frac{3d}{16\nu}\right\},
\end{align}
since both $\eta>0$ and $\varsigma>0$ can be chosen as arbitrarily small. We notice that the third relations in \eqref{eq:rho.b2.1} is implied by adding up the first two relations in \eqref{eq:rho.b2.1}, which are included in \eqref{eq:rho.relation2} of Theorem \ref{thm:post.rate2}. Therefore, with such choice of $\gamma$, we can set $b_2=1/2$ in \eqref{eq:b12.recall}. Finally, the conclusion of Theorem \ref{thm:post.rate2} follows by combining the lower bound of $b_1$ in \eqref{eq:b1.lower} and $b_2=1/2$ together with the posterior convergence in Theorem \ref{thm:convergence}.
\end{proof}

\section{Proof of Proposition \ref{prop:prior}} \label{sec:proof.prop:prior}
\begin{proof}[$\pof$ Proposition \ref{prop:prior}]

First, we verify Assumption \ref{cond:prior1}. Clearly the joint prior density $\pi(\theta,\alpha,\tau,\beta)$ is continuous everywhere and satisfies $\pi(\theta_0,\alpha_0,\tau_0,\beta_0)>0$. Furthermore, since $\tau\sim \textup{IG}(a_2,b_2)$, we have that for any $n\in \ZZ_+$,
\begin{align*}
\int_0^{\infty} \tau^{-n/2}\pi(\tau) \ud \tau & = \frac{b_2^{a_2}}{\Gamma(a_2)} \int_0^{\infty} \tau^{-n/2} \cdot \tau^{-(a_2+1)} \exp(-b_2/\tau) \ud \tau = \frac{\Gamma(a_2+n/2)}{b_2^{n/2} \Gamma(a_2)} <\infty.
\end{align*}
Therefore Assumption \ref{cond:prior1} is satisfied.

Next, we verify Assumption \ref{cond:prior2}. For the isotropic Mat\'ern covariance function, Assumption \ref{cond:spectral} is satisfied with $\kappa=1$. We first decompose the set $\Ecal_{n}^c$ based on Equation \ref{eq:E1n} into several sets:
\begin{align} \label{eq:Ecal.complement}
\Ecal_{n}^c & \subseteq \{\|\beta\|^2/\theta > n^{\rho_1}\} \cup \{ \tau/\theta < n^{-\rho_{21}}\} \cup \{\tau/\theta > n^{\rho_{22}} \}  \cup \{\alpha < n^{-\rho_{31}}\} \cup \{ \alpha > n^{\rho_{32}}\}.
\end{align}
We show each of the set on the right-hand side of \eqref{eq:Ecal.complement} satisfies Assumption \ref{cond:prior2} with the prior specified in Proposition \ref{prop:prior}. Recall that for the isotropic Mat\'ern in Example \ref{ex:Matern}, we have $\kappa=1$ from the proof of Proposition \ref{prop:4cov}.

First, given $\beta\sim \Ncal(0,a_0I_p)$, $\theta\sim \text{IG}(a_1,b_1)$, we have that $\|\beta\|^2/a_0 \sim \chi^2_{p}$, and hence for sufficiently large $n$,
\begin{align} \label{eq:Ecal.c.1}
\Pi(\|\beta\|^2/\theta > n^{\rho_1}) &\leq \int_0^{\infty} \frac{b_1^{a_1}}{\Gamma(a_1)}\theta^{-(a_1+1)} \ee^{-b_1/\theta} \int_{n^{\rho_1} \theta/a_0}^{\infty} \frac{1}{2^p\Gamma(p/2)} z^{p/2-1} \ee^{-z/2} \ud z \ud \theta \nonumber \\
&\leq \int_0^{n^{-\rho_1/2}} \frac{b_1^{a_1}}{\Gamma(a_1)}\theta^{-(a_1+1)} \ee^{-b_1/\theta} \int_0^{\infty} \frac{1}{2^p\Gamma(p/2)} z^{p/2-1} \ee^{-z/2} \ud z \ud \theta  \nonumber \\
&\quad + \int_{n^{-\rho_1/2}}^{\infty} \frac{b_1^{a_1}}{\Gamma(a_1)}\theta^{-(a_1+1)} \ee^{-b_1/\theta} \int_{n^{\rho_1/2}/a_0}^{\infty} \frac{1}{2^p\Gamma(p/2)} z^{p/2-1} \ee^{-z/2} \ud z \ud \theta  \nonumber \\
&\stackrel{(i)}{\leq} \int_{n^{\rho_1/2}}^{\infty}  \frac{b_1^{a_1}}{\Gamma(a_1)} t^{a_1-1} \ee^{-b_1t} \ud t  + \int_{n^{\rho_1/2}/a_0}^{\infty} \frac{1}{2^p\Gamma(p/2)} z^{p/2-1} \ee^{-z/2} \ud z  \nonumber \\
&\stackrel{(ii)}{<} \int_{n^{\rho_1/2}}^{\infty}  \frac{b_1^{a_1}}{\Gamma(a_1)} \ee^{-b_1t/2 } \ud t  + \int_{n^{\rho_1/2}/a_0}^{\infty} \frac{1}{2^p\Gamma(p/2)}  \ee^{-z/4} \ud z  \nonumber \\
&= \frac{2b_1^{a_1-1}}{\Gamma(a_1)} \exp\big(-b_1n^{\rho_1/2}/2\big) + \frac{1}{2^{p-2}\Gamma(p/2)} \exp\big\{-n^{\rho_1/2}/(4a_0)\big\}  \nonumber \\
&\prec n^{-(3p+6)},
\end{align}
where for (i) we use a change of variable $t=1/\theta$, and (ii) follows when $n$ is sufficiently large such that $z^{p/2-1}<\ee^{z/4}$ for all $z\geq n^{\rho_1/2}/a_0$.

Second, given $\tau\sim \text{IG}(a_2,b_2)$, $\theta\sim \text{IG}(a_1,b_1)$, we have that
\begin{align} \label{eq:Ecal.c.2}
\Pi(\tau/\theta < n^{-\rho_{21}}) &\leq \int_0^{\infty} \frac{b_1^{a_1}}{\Gamma(a_1)}\theta^{-(a_1+1)} \ee^{-b_1/\theta} \int_0^{n^{-\rho_{21}} \theta} \frac{b_2^{a_2}}{\Gamma(a_2)} \tau^{-(a_2+1)} \ee^{-b_2/\tau} \ud \tau \ud \theta \nonumber \\
&\leq \int_0^{n^{\rho_{21}/2}} \frac{b_1^{a_1}}{\Gamma(a_1)}\theta^{-(a_1+1)} \ee^{-b_1/\theta} \int_0^{n^{-\rho_{21}/2}} \frac{b_2^{a_2}}{\Gamma(a_2)} \tau^{-(a_2+1)} \ee^{-b_2/\tau} \ud \tau \ud \theta   \nonumber \\
&\quad + \int_{n^{\rho_{21}/2}}^{\infty} \frac{b_1^{a_1}}{\Gamma(a_1)}\theta^{-(a_1+1)} \ee^{-b_1/\theta} \int_0^{n^{-\rho_{21}} \theta} \frac{b_2^{a_2}}{\Gamma(a_2)} \tau^{-(a_2+1)} \ee^{-b_2/\tau} \ud \tau \ud \theta   \nonumber \\
&\leq \int_0^{n^{-\rho_{21}/2}} \frac{b_2^{a_2}}{\Gamma(a_2)} \tau^{-(a_2+1)} \ee^{-b_2/\tau} \ud \tau  + \int_{n^{\rho_{21}/2}}^{\infty} \frac{b_1^{a_1}}{\Gamma(a_1)}\theta^{-(a_1+1)} \ee^{-b_1/\theta} \ud \theta   \nonumber \\
&\stackrel{(i)}{=}  \int_{n^{\rho_{21}/2}}^{\infty} \frac{b_2^{a_2}}{\Gamma(a_2)} t^{a_2-1} \ee^{-b_2 t} \ud t  + \int_{n^{\rho_{21}/2}}^{\infty} \frac{b_1^{a_1}}{\Gamma(a_1)}\theta^{-(a_1+1)} \ee^{-b_1/\theta} \ud \theta   \nonumber \\
&\leq \int_{n^{\rho_{21}/2}}^{\infty} \frac{b_2^{a_2}}{\Gamma(a_2)} \ee^{-b_2 t/2} \ud t  + \int_{n^{\rho_{21}/2}}^{\infty} \frac{b_1^{a_1}}{\Gamma(a_1)} \theta^{-(a_1+1)}  \ud \theta   \nonumber \\
&= \frac{2b_2^{a_2-1}}{\Gamma(a_2)} \exp\big(-b_2 n^{\rho_{21}/2}/2 \big) + \frac{b_1^{a_1}}{a_1\Gamma(a_1)}n^{ -a_1\rho_{21}/2}  \nonumber \\
&\prec n^{-(3p+6)},
\end{align}
if $a_1\rho_{21}/2> 3p+6$, or $a_1 > 2(3p+6)/\rho_{21}$, where in (i) we use the change of variable $t=1/\tau$.

Third, for the right tail of $\tau/\theta$, we have that
\begin{align} \label{eq:Ecal.c.3}
\Pi(\tau/\theta > n^{\rho_{22}}) &\leq \int_0^{\infty} \frac{b_1^{a_1}}{\Gamma(a_1)}\theta^{-(a_1+1)} \ee^{-b_1/\theta} \int_{n^{\rho_{22}}\theta}^{\infty} \frac{b_2^{a_2}}{\Gamma(a_2)} \tau^{-(a_2+1)} \ee^{-b_2/\tau} \ud \tau \ud \theta \nonumber \\
&\leq \int_0^{n^{-\rho_{22}/2}} \frac{b_1^{a_1}}{\Gamma(a_1)}\theta^{-(a_1+1)} \ee^{-b_1/\theta} \int_{0}^{\infty} \frac{b_2^{a_2}}{\Gamma(a_2)} \tau^{-(a_2+1)} \ee^{-b_2/\tau} \ud \tau \ud \theta   \nonumber \\
&\quad + \int_{n^{-\rho_{22}/2}}^{\infty} \frac{b_1^{a_1}}{\Gamma(a_1)}\theta^{-(a_1+1)} \ee^{-b_1/\theta} \int_{n^{\rho_{22}/2}}^{\infty} \frac{b_2^{a_2}}{\Gamma(a_2)} \tau^{-(a_2+1)} \ee^{-b_2/\tau} \ud \tau \ud \theta   \nonumber \\
&\leq \int_0^{n^{-\rho_{22}/2}} \frac{b_1^{a_1}}{\Gamma(a_1)}\theta^{-(a_1+1)} \ee^{-b_1/\theta} \ud \theta  + \int_{n^{\rho_{22}/2}}^{\infty} \frac{b_2^{a_2}}{\Gamma(a_2)} \tau^{-(a_2+1)} \ee^{-b_2/\tau} \ud \tau   \nonumber \\
&\stackrel{(i)}{=}  \int_{n^{\rho_{22}/2}}^{\infty} \frac{b_1^{a_1}}{\Gamma(a_1)} t^{a_1-1} \ee^{-b_1 t} \ud t  + \int_{n^{\rho_{22}/2}}^{\infty} \frac{b_2^{a_2}}{\Gamma(a_2)}\tau^{-(a_2+1)} \ee^{-b_2/\tau} \ud \tau   \nonumber \\
&\leq \int_{n^{\rho_{22}/2}}^{\infty} \frac{b_1^{a_1}}{\Gamma(a_1)} \ee^{-b_1 t/2} \ud t  + \int_{n^{\rho_{22}/2}}^{\infty} \frac{b_2^{a_2}}{\Gamma(a_2)} \tau^{-(a_2+1)}  \ud \tau   \nonumber \\
&= \frac{2b_1^{a_1-1}}{\Gamma(a_1)} \exp\big(-b_1 n^{\rho_{22}/2}/2 \big) + \frac{b_2^{a_2}}{a_2\Gamma(a_2)}n^{ -a_2\rho_{22}/2}  \nonumber \\
&\prec n^{-(3p+6)},
\end{align}
if $a_2\rho_{22}/2 > 3p+6$, or $a_2 > 2(3p+6)/\rho_{22}$, where in (i) we use the change of variable $t=1/\theta$.

Fourth, the inverse Gaussian prior on $\alpha$ has the density for $\alpha\in \RR_+$,
\[
\pi(\alpha) = \left(\frac{\lambda}{2\pi\alpha^3}\right)^{1/2} \exp\left\{-\frac{\lambda(\alpha-\mu)^2}{2\mu^2\alpha} \right\}.
\]
For the left tail of $\alpha$, we have that
\begin{align} \label{eq:Ecal.c.4}
\Pi(\alpha < n^{-\rho_{31}}) &= \int_0^{n^{-\rho_{31}}} \left(\frac{\lambda}{2\pi\alpha^3}\right)^{1/2} \exp\left(-\frac{\lambda\alpha}{2\mu^2} + \frac{\lambda}{\mu} - \frac{\lambda}{2\alpha} \right) \ud \alpha\nonumber  \\
&\leq \{\lambda/(2\pi)\}^{1/2}\ee^{\lambda/\mu} \int_0^{n^{-\rho_{31}}}  \alpha^{-3/2} \exp \big\{-\lambda/(2\alpha) \big\} \ud \alpha   \nonumber \\
&\stackrel{(i)}{=}  \{\lambda/(2\pi)\}^{1/2}\ee^{\lambda/\mu} \int_{n^{\rho_{31}}}^{\infty}  t^{-1/2} \exp \big\{-\lambda t/2 \big\} \ud t   \nonumber \\
&\leq \{2/(\lambda\pi)\}^{1/2}\ee^{\lambda/\mu} \exp \big(-\lambda n^{\rho_{31}}/2\big) \nonumber \\
&\prec n^{-(3p+6)},
\end{align}
where in (i) we use the change of variable $t=1/\alpha$.

Similarly, for the right tail of $\alpha$, we have that
\begin{align} \label{eq:Ecal.c.5}
\Pi(\alpha > n^{\rho_{32}}) &= \int_{n^{\rho_{32}}}^{\infty} \left(\frac{\lambda}{2\pi\alpha^3}\right)^{1/2} \exp\left(-\frac{\lambda\alpha}{2\mu^2} + \frac{\lambda}{\mu} - \frac{\lambda}{2\alpha} \right) \ud \alpha   \nonumber  \\
&\leq \{\lambda/(2\pi)\}^{1/2} \ee^{\lambda/\mu} \int_{n^{\rho_{32}}}^{\infty} \exp \big\{-\lambda\alpha/(2\mu^2) \big\} \ud \alpha   \nonumber \\
&\leq \{2/(\lambda\pi)\}^{1/2} \mu^2 \ee^{\lambda/\mu} \exp \big\{ -\lambda n^{\rho_{32}}/(2\mu^2) \big\} \nonumber \\
&\prec n^{-(3p+6)}.
\end{align}

Finally, we combine \eqref{eq:Ecal.c.1}, \eqref{eq:Ecal.c.2}, \eqref{eq:Ecal.c.3}, \eqref{eq:Ecal.c.4} and \eqref{eq:Ecal.c.5} with \eqref{eq:Ecal.complement} to conclude that $\Pi(\Ecal_{n}^c)\leq n^{-(3p+6)}$ for all sufficiently large $n$. Hence, Assumption \ref{cond:prior2} is satisfied.
\end{proof}

\section{Proof of Theorem \ref{thm:E1n.rate}} \label{sec:proof.E1n.rate}
The proof of Theorem \ref{thm:E1n.rate} is long and we proceed in several steps. We first present the Taylor series expansion for Mat\'ern covariance function. The Mat\'ern covariance function $\theta K_{\alpha,\nu}$ defined in \eqref{eq:MaternCov} can be expressed as the sum of an infinite series, whose formula depends whether or not $\nu$ is an integer. The following expansion of $\theta K_{\alpha,\nu}(\bfs-\bft)$ for $\bfs,\bft \in [0,1]^d$ can be found on page 2772 of \citet{Loh15}.
\begin{align}\label{eq:K expansion}
& \theta K_{\alpha,\nu}(\bfs-\bft) = \textstyle \sum_{j=0}^{\infty}\big\{ \zeta_j\|\bfs-\bft\|^{2j} + \zeta^*_{\nu+j}G_{\nu+j}(\|\bfs-\bft\|) \big\},
\end{align}
where $\ZZ_+$ is the set of all positive integers, and
\begin{align} \label{eq:G.nu}
G_s (t) & = \left\{ \begin{array}{ll} t^{2 s} \log(t), & \text{ if } s \in \ZZ_+,
\\
t^{2 s}, & \text{ if } s \notin \ZZ_+.
\end{array}
\right.
\end{align}
The terms of $\zeta_j$ and $\zeta^*_{\nu+j}$ for $j=0,1,2,\ldots$ are defined as follows:

If $\nu\not\in\mathbb{Z}$, then
\begin{align}\label{eq:zeta.xi1}
\zeta_j &= \theta\alpha^{2j-2\nu} \xi_j, \qquad \xi_j = 1/ \big\{2^{2j}j!\textstyle \prod_{i=1}^{j}(i-\nu) \big\}, \nonumber \\
\zeta^*_{\nu+j} &= \theta \alpha^{2j} \xi^*_{\nu+j}, \qquad  \xi^*_{\nu+j} = -\pi/ \left\{2^{2j+2\nu}j!\Gamma(j+1+\nu) \Gamma(\nu)\sin(\nu\pi)\right\} .
\end{align}
If $\nu\in\mathbb{Z}_+$, then
\begin{align}\label{eq:zeta.xi2}
&\zeta_j=  \theta \alpha^{2j-2\nu} (-1)^j \frac{(\nu-j-1)!}{2^{2j}(\nu-1)!j!}, \quad \text{if ~} j<\nu, \nonumber \\
&\zeta_j=  \theta \alpha^{2j-2\nu}\left\{\xi_{1,j}+\xi_{2,j}\log(\alpha)\right\}, ~{\rm if ~} j\ge \nu, ~ \text{ where } \nonumber \\
&\xi_{1,j}  = \frac{(-1)^{\nu}\left[\psi(j-\nu+1)+\psi(j+1)+2\log 2\right]}{2^{2j}(j-\nu)!j!(\nu-1)!}, \quad \xi_{2,j}  = \frac{(-1)^{\nu+1}}{2^{2j-1}(j-\nu)!j!(\nu-1)!}, \nonumber \\
&\zeta^*_{\nu+j} = \theta \alpha^{2j} \xi^*_{\nu+j}, \quad \xi^*_{\nu+j} = \frac{(-1)^{\nu+1}}{2^{2\nu+2j-1}(\nu-1)!j!(\nu+j)!} ,
\end{align}
where $\psi(\cdot)$ is the digamma function. For both cases, the coefficients $\xi^*_{\nu+j},\xi_{1,j},\xi_{2,j}$ for all $j=0,1,\ldots$ are all upper bounded by constant.
\vspace{2mm}

We then cite an important lemma about the series of $m$-dependent constants $c_{\bfi,d,\ell}^{(k_1,\ldots,k_d)}$ and $c_{d,\ell}^{(k_1,\ldots,k_d)}$ as defined in Lemma \ref{lem:Cor1.Loh21} of the main text. As $n=m^d\to\infty$ (and so $m\to\infty$), the $m$-dependent constants $c_{\bfi,d,\ell}^{(k_1,\ldots,k_d)}$ are uniformly close to a deterministic sequence of $m$-independent constants $c_{d,\ell}^{(k_1,\ldots,k_d)}$ which satisfy certain relations, as shown by Lemma 2 and Corollary 2 of \citet{Lohetal21}.
\begin{lemma}\label{lem:Cor2.Loh21}
(Lemma 2 and Corollary 2 of \citet{Lohetal21}) Let $d,\ell\in \ZZ_+$. Let $\bfi=(i_1,\ldots,i_d)^\T $ where $1\leq i_1,\ldots,i_d\leq m - \ell \omega_m$. Then there exists a deterministic sequence of $m$-independent constants $c_{d,\ell}^{(k_1,\ldots,k_d)}$, such that $c_{\bfi,d,\ell}^{(k_1,\ldots,k_d)}$ in Lemma \ref{lem:Cor1.Loh21} of the main text satisfies
\begin{align*}
& c_{\bfi,d,\ell}^{(k_1,\ldots,k_d)} = c_{d,\ell}^{(k_1,\ldots,k_d)} + O(\omega_m^{-1}), \quad \text{for all } 0\leq k_1,\ldots,k_d\leq \ell,
\end{align*}
as $m\to\infty$, where the term $O(\omega_m^{-1})$ is uniform over all $1\leq i_1,\ldots,i_d\leq m- \ell\omega_m$ and any $\delta_{\bfi;k}\in [0,1)$. Furthermore,
\begin{align*}
& \sum_{0\leq k_1,\ldots,k_d\leq \ell} c_{d,\ell}^{(k_1,\ldots,k_d)} k_d^{\ell} = \ell!, \\
\text{and } & \sum_{0\leq k_1,\ldots,k_d\leq \ell} c_{d,\ell}^{(k_1,\ldots,k_d)} \cdot \prod_{j=1}^d k_j^{l_j} = 0,
\end{align*}
for all integers $l_1,\ldots,l_d$ satisfying $0\leq l_1,\ldots,l_{d-1}\leq \ell$, $0\leq l_d\leq \ell-1$, and $0\leq l_1+\ldots + l_d\leq \ell$.
\end{lemma}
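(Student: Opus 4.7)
This statement is cited as Lemma 2 and Corollary 2 of \citet{Lohetal21}, so the most direct route is to import it; for completeness here is how I would organize the argument. I would first exhibit an explicit candidate for the limit $c_{d,\ell}^{(k_1,\ldots,k_d)}$ via the tensor-product ansatz
\begin{align*}
c_{d,\ell}^{(k_1,\ldots,k_d)} & = w^{(k_1,\ldots,k_{d-1})} \cdot (-1)^{\ell-k_d}\binom{\ell}{k_d},
\end{align*}
for any fixed probability weights $w$ on $\{0,\ldots,\ell\}^{d-1}$. Using the classical one-dimensional finite-difference identity $\sum_{k=0}^{\ell}(-1)^{\ell-k}\binom{\ell}{k}k^r = \ell!\,\delta_{r,\ell}$ (valid for $0\le r\le\ell$), the two claimed identities follow by direct factorisation: the inner sum $\sum_{k_d}(-1)^{\ell-k_d}\binom{\ell}{k_d}k_d^{l_d}$ vanishes whenever $l_d<\ell$ and equals $\ell!$ when $l_d=\ell$, while the outer sum $\sum_{k_1,\ldots,k_{d-1}}w^{(\cdot)}\prod_{j<d}k_j^{l_j}$ is always finite and collapses to $1$ in the marginal case $l_1=\ldots=l_{d-1}=0$.

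Next I would connect the $m$-dependent weights $c_{\bfi,d,\ell}^{(k_1,\ldots,k_d)}$ produced by Lemma \ref{lem:Cor1.Loh21} to this limiting ansatz. Writing $s_j(i_1+k_1\omega_m,\ldots,i_d+k_d\omega_m) = a_j + k_j h + \epsilon_j^{(\bfi,k)}$ with $a_j=(i_j-1)/m$, $h=\omega_m/m$, and $\epsilon_j^{(\bfi,k)} = \delta_{(\cdot);j}/m = O(1/m)$, and setting $c_{\bfi,d,\ell}^{(k)} = c_{d,\ell}^{(k)} + \Delta_{\bfi}^{(k)}$, I would substitute into the defining constraints of Lemma \ref{lem:Cor1.Loh21}, expand each product by the binomial theorem, and regroup by powers of $h$. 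The tensor-product identities of the first step ensure that the contribution of $c_{d,\ell}^{(k)}$ reproduces the right-hand side exactly at every order of $h$ when the perturbations $\epsilon_j^{(\bfi,k)}$ are absent; reintroducing those $O(1/m)$ perturbations leaves a residual linear system for $\Delta_{\bfi}$ whose right-hand side has magnitude $O(1/m)$ uniformly in $\bfi$ and in the stratified perturbations $\delta_{\bfi;k}\in[0,1)$.

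The main obstacle is then Step~3: bounding the solution of this residual system by $O(\omega_m^{-1})$. A naive norm estimate would lose a factor of the inverse condition number and give only $O(1)$, because the system matrix, whose entries are $\prod_j(a_j+k_jh)^{l_j}$, becomes singular as $h\to 0$ (all rows collapse to their leading $a$-only parts). The fix is to organise the constraints hierarchically by powers of $h$: viewing each identity as a polynomial in $h$ and matching the coefficients of $h^0,h^1,\ldots,h^\ell$ separately, one obtains a block-triangular system in which each block has coefficient matrix built from pure-monomial moments $\sum_k \Delta^{(k)}\prod_j k_j^{r_j}$, and these blocks can be inverted successively. The effective scale of the inversion is $h$, so the residual of size $O(1/m)$ produces $\|\Delta_{\bfi}\|_\infty \le C\cdot (1/m)/h = C\omega_m^{-1}$. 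The uniformity in $\bfi$ and $\delta_{\bfi;k}$ is automatic since the bound depends only on $(a_1,\ldots,a_d,\epsilon)\in[0,1]^d\times[0,1/m)^d$ via compactly supported binomial coefficients. Carrying out the block-triangular inversion cleanly, and keeping track of all the binomial combinatorics, is the technically involved portion of the proof.
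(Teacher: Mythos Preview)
The paper does not prove this lemma; it is imported verbatim from \citet{Lohetal21} (their Lemma~2 and Corollary~2), exactly as you note at the outset. There is thus no in-paper argument to compare against, and your proposal to cite the source matches what the paper does.

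Your supplementary sketch is a reasonable outline. One point worth flagging: the defining constraints of Lemma~\ref{lem:Cor1.Loh21} are underdetermined (fewer equations than the $(\ell+1)^d$ unknowns), which is why your tensor-product ansatz carries a free weight $w$. For the convergence $c_{\bfi,d,\ell}^{(k)}=c_{d,\ell}^{(k)}+O(\omega_m^{-1})$ to make sense, a specific $m$-dependent solution must be paired with a specific limit; since both lemmas are existence statements, the cleanest route is the one you implicitly take---fix $w$, then construct a nearby $m$-dependent solution via the block-triangular perturbation. The displayed identities then follow, as you say, from the one-dimensional finite-difference formula $\sum_{k=0}^{\ell}(-1)^{\ell-k}\binom{\ell}{k}k^{r}=\ell!\,\delta_{r,\ell}$.
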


In the rest of the proof, we use $C$ to denote a generic positive constant that can take different values at different places, and $C$ only depends on $\nu,d,p,\theta_0,\alpha_0,\tau_0,\beta_0$. The order notation $O(\cdot)$ and $o(\cdot)$ is uniform over all $m$. For abbreviation, we write $\PP_0 = \PP_{(\theta_0,\alpha_0,\tau_0,\beta_0)}$, use $\EE_0$ to denote the expectation under $\PP_0$, and use $\EE$ to denote the expectation under $\PP_{(\theta,\alpha,\tau,\beta)}$ for a generic parameter vector $(\theta,\alpha,\tau,\beta)$. We will assume that Assumptions \ref{cond:model}, \ref{cond:sampling} and \ref{cond:deriv} hold throughout this section.

\begin{proof}[$\pof$ Theorem \ref{thm:E1n.rate}]
\vspace*{1mm}

\noindent \underline{Proof of Part (i):}
\vspace{2mm}

By definition, we have that $(\theta_0,\alpha_0,\tau_0,\beta_0)\in \Ecal_{n}$ as $n\to\infty$. From \eqref{eq:EV.case1.ratio1} and \eqref{eq:EV.case2.ratio1} in Section \ref{subsec:EV}, for any given $\epsilon_1>0$, we have that uniformly for all $(\theta,\alpha,\tau,\beta)\in \Ecal_{n}$, for all sufficiently large $n$,
\begin{align} \label{eq:V1.e4}
\left|\EE\left(V_{1,d,\ell_{\star}}\right)/(\theta g_{\ell_{\star},\nu}) - 1\right| < \epsilon_1/4,
\end{align}
with the expectation taken under $\PP_{(\theta,\alpha,\tau,\beta)}$.

Under the true measure $\PP_0$, for any $\epsilon_1\in (0,1)$,
\begin{align} \label{eq:V1.ineq1}
&\quad~ \PP_0 \left(\left|\frac{\widehat\theta_n}{\theta_0}-1\right|\geq \frac{\epsilon_1}{2} \right) \nonumber \\
&\leq \PP_0\left(\left|\frac{\EE_0\left(V_{1,d,\ell_{\star}}\right)}{\theta_0 g_{\ell_{\star},\nu}}\right| \cdot \left|\frac{V_{1,d,\ell_{\star}}}{\EE_0\left(V_{1,d,\ell_{\star}}\right)} -1 \right|  +  \left|\frac{\EE_0\left(V_{1,d,\ell_{\star}}\right)}{\theta_0 g_{\ell_{\star},\nu}} -1\right|\geq \frac{\epsilon_1}{2} \right) \nonumber \\
&\leq  \PP_0\left( \left|\frac{V_{1,d,\ell_{\star}}}{\EE_0\left(V_{1,d,\ell_{\star}}\right)} -1\right| > \frac{\epsilon_1}{5} \right).
\end{align}

Similarly, on the set $\Bcal^c(\epsilon_1,\epsilon_2)\cap \Ecal_{n}$, since $|\theta/\theta_0-1|\geq \epsilon_1$, for all sufficiently large $n$ such that \eqref{eq:V1.e4} holds for all $(\theta,\alpha,\tau,\beta)\in \Ecal_{n}$, we have that for any $\epsilon_1\in (0,1/2)$,
\begin{align} \label{eq:V1.ineq2}
&\quad~ \sup_{\Bcal^c(\epsilon_1,\epsilon_2)\cap \Ecal_{n}} \PP_{(\theta,\alpha,\tau,\beta)} \left(\left|\frac{\widehat\theta_n}{\theta_0}-1\right|\leq \frac{\epsilon_1}{2} \right) \nonumber \\
&\leq \sup_{\Bcal^c(\epsilon_1,\epsilon_2)\cap \Ecal_{n}} \PP_{(\theta,\alpha,\tau,\beta)} \left(\left|\frac{\theta}{\theta_0}-1\right| - \frac{\theta}{\theta_0}\left|\frac{\widehat\theta_n}{\theta}-1\right| \leq \frac{\epsilon_1}{2} \right) \nonumber \\
&\leq \sup_{\Bcal^c(\epsilon_1,\epsilon_2)\cap \Ecal_{n}} \PP_{(\theta,\alpha,\tau,\beta)} \left(\left|\frac{\widehat\theta_n}{\theta}-1\right| \geq \frac{\epsilon_1}{2(1+\epsilon_1)} \right) \nonumber \\
&\leq \sup_{\Bcal^c(\epsilon_1,\epsilon_2)\cap \Ecal_{n}} \PP_{(\theta,\alpha,\tau,\beta)} \left( \left|\frac{\EE\left(V_{1,d,\ell_{\star}}\right)}{\theta g_{\ell_{\star},\nu}}\right|  \cdot \left|\frac{V_{1,d,\ell_{\star}}}{\EE\left(V_{1,d,\ell_{\star}}\right)} -1\right|  +  \left|\frac{\EE(V_{1,d,\ell_{\star}})}{\theta g_{\ell_{\star},\nu}} -1\right|\geq \frac{\epsilon_1}{2} \right) \nonumber \\
&\leq  \sup_{\Bcal^c(\epsilon_1,\epsilon_2)\cap \Ecal_{n}} \PP_{(\theta,\alpha,\tau,\beta)}  \left( \left|\frac{V_{1,d,\ell_{\star}}}{\EE\left(V_{1,d,\ell_{\star}}\right)} -1\right| > \frac{\epsilon_1}{15} \right).
\end{align}

From \eqref{eq:V1.concentration} in Section \ref{subsec:V.EV}, we have that uniformly for all $(\theta,\alpha,\tau,\beta)\in \Ecal_{n}$, there exists $c_1>0$, such that
\begin{align} \label{eq:theta.E1n.concentration}
&\quad~  \PP_{(\theta,\alpha,\tau,\beta)}  \left( \left|\frac{V_{1,d,\ell_{\star}}}{\EE\left(V_{1,d,\ell_{\star}}\right)} -1\right| > \frac{\epsilon_1}{15} \right) \leq \exp \left\{- c_1 \varphi\left(n^{b_1'}\epsilon_1\right) \right\}, \nonumber \\
\text{where } b_1' &= \min\Bigg\{\frac{1}{2}  - \frac{2(1-\gamma)\nu}{d}- \rho_{22} , ~ \frac{1-\gamma}{2} - \frac{\log\log n}{\log n}, ~ \frac{1}{4} + \frac{(1-\gamma)(\ell_{\star}-2\nu)}{d}- \frac{\rho_{1} +\rho_{22}}{2}, \nonumber \\
& \qquad \qquad \frac{1-\gamma}{4} + \frac{(1-\gamma)(\ell_{\star}-\nu)}{d} - \frac{\rho_{1}}{2} - \frac{\log\log n}{2\log n} \Bigg\} \nonumber \\
&\geq \min\Bigg\{\frac{1}{2}  - \frac{2(1-\gamma)\nu}{d}- \rho_{22} , ~ \frac{1-\gamma}{2} -\varsigma, ~  \frac{1}{4} + \frac{(1-\gamma)(\ell_{\star}-2\nu)}{d}- \frac{\rho_{1} +\rho_{22}}{2} , \nonumber \\
& \qquad \qquad \frac{1-\gamma}{4} + \frac{(1-\gamma)(\ell_{\star}-\nu)}{d} - \frac{\rho_{1}}{2} - \varsigma \Bigg\} = b_1,
\end{align}
for an arbitrarily small $\varsigma>0$ as $n\to\infty$.
Therefore, \eqref{eq:V1.ineq1}, \eqref{eq:V1.ineq2} and \eqref{eq:theta.E1n.concentration} implies that \eqref{eq:test.theta1} and \eqref{eq:test.theta2} in Assumption \ref{cond:exptest1} are satisfied with $b_1$ given in \eqref{eq:E1n.b1}.
\vspace{2mm}

\noindent \underline{Proof of Part (ii):}
\vspace{2mm}

Similar to the proof of Part (i), for $\widehat\tau_n$, by using \eqref{eq:EV.case1.ratio2} and \eqref{eq:EV.case2.ratio2} from Section \ref{subsec:EV}, we obtain that for any $\epsilon_2\in (0,1)$,
\begin{align} \label{eq:V0.ineq1}
&\PP_0 \left(\left|\frac{\widehat\tau_n}{\tau_0}-1\right|\geq \frac{\epsilon_2}{2} \right) \leq  \PP_0\left( \left|\frac{V_{0,d,\ell_{\star}}}{\EE_0\left(V_{0,d,\ell_{\star}}\right)} -1\right| > \frac{\epsilon_2}{5} \right), \nonumber \\
&\sup_{\Bcal^c(\epsilon_1,\epsilon_2)\cap \Ecal_{n}} \PP_{(\theta,\alpha,\tau,\beta)} \left(\left|\frac{\widehat\tau_n}{\tau_0}-1\right|\leq \frac{\epsilon_2}{2} \right) \leq  \sup_{\Bcal^c(\epsilon_1,\epsilon_2)\cap \Ecal_{n}} \PP_{(\theta,\alpha,\tau,\beta)}  \left( \left|\frac{V_{0,d,\ell_{\star}}}{\EE\left(V_{0,d,\ell_{\star}}\right)} -1\right| > \frac{\epsilon_2}{15} \right).
\end{align}
From \eqref{eq:V0.concentration} in Section \ref{subsec:V.EV}, we have that for any $\epsilon_2\in (0,1/2)$, uniformly for all $(\theta,\alpha,\tau,\beta)\in \Ecal_{n}$, there exists $c_2>0$, such that
\begin{align} \label{eq:tau.E1n.concentration}
&\quad~  \PP_{(\theta,\alpha,\tau,\beta)}  \left( \left|\frac{V_{0,d,\ell_{\star}}}{\EE\left(V_{0,d,\ell_{\star}}\right)} -1\right| > \frac{\epsilon_2}{15} \right) \leq \exp \left\{- c_2 \varphi\left(n^{b_2'}\epsilon_2\right) \right\}, \nonumber \\
\text{where } b_2' &= \min\Bigg\{\frac{1}{2} , ~ \frac{(1-\gamma)(4\nu+d)}{2d}  - \rho_{21} - \frac{\log\log n}{\log n}, \nonumber \\
& \qquad~~ \frac{1}{4} +\frac{ (1-\gamma)\ell_{\star}}{d} - \frac{\rho_{1} + \rho_{21}}{2}, ~ \frac{(1-\gamma)(4\nu+d +4\ell_{\star})}{4d} - \frac{\rho_{1} + 2\rho_{21} }{2} - \frac{\log\log n}{2\log n} \Bigg\} \nonumber \\
&\geq \min\Bigg\{\frac{1}{2} , ~ \frac{(1-\gamma)(4\nu+d)}{2d}  - \rho_{21} - \varsigma, \nonumber \\
& \qquad~~ \frac{1}{4} +\frac{ (1-\gamma)\ell_{\star}}{d} - \frac{\rho_{1} + \rho_{21}}{2}, ~ \frac{(1-\gamma)(4\nu+d +4\ell_{\star})}{4d} - \frac{\rho_{1} + 2\rho_{21} }{2} - \varsigma \Bigg\} = b_2,
\end{align}
for an arbitrarily small $\varsigma>0$ as $n\to\infty$.
Therefore, \eqref{eq:V0.ineq1} and \eqref{eq:tau.E1n.concentration} implies that \eqref{eq:test.tau1} and \eqref{eq:test.tau2} in Assumption \ref{cond:exptest1} are satisfied with $b_2$ given in \eqref{eq:E1n.b2}.
\end{proof}

The rest of this section includes several auxiliary technical results. Section \ref{subsec:EV} presents the derivation for the uniform upper bounds of $\EE(V_{u,d,\ell_{\star}})$ for $u\in\{0,1\}$ and all $(\theta,\alpha,\tau,\beta)\in \Ecal_{n}$. Section \ref{subsec:V.EV} presents the uniform error bounds for $V_{u,d,\ell_{\star}}/\EE\left(V_{u,d,\ell_{\star}}\right)-1$ for $u\in\{0,1\}$ and all $(\theta,\alpha,\tau,\beta)\in \Ecal_{n}$, which has been used in the proof of Theorem \ref{thm:E1n.rate} above. Section \ref{subsec:Sigma.Fnorm} includes the derivation for the Frobenius norms of some matrices that are used in Section \ref{subsec:V.EV}. Section \ref{subsec:Matern.deriv} presents a technical lemma on the derivative of Mat\'ern covariance function that is used in Section \ref{subsec:V.EV}.

\subsection{Uniform Bounds for $\EE(V_{u,d,\ell_{\star}})$ on $\Ecal_{n}$}  \label{subsec:EV}

We first derive some useful bounds for $\EE(V_{u,d,\ell_{\star}})$ for $u\in \{0,1\}$ and all $(\theta,\alpha,\tau,\beta)\in \Ecal_{n}$. Recall that $\ell_{\star}=\lceil \nu+d/2 \rceil$. For short, we write $\mm(\bfs)=\ff(\bfs)^\T  \beta$ for $\bfs\in [0,1]^d$ as the mean function of $Y(\cdot)$. In the following derivation, we always write $\bfk_1=(k_1,\ldots,k_d)^\T $ and $\bfk_2=(k_{d+1},\ldots,k_{2d})^\T $ for $k_1,\ldots,k_{2d}\in \NN$.

From Lemma \ref{lem:Cor1.Loh21} of the main text, we have that for any integer $0\leq \ell \leq \ell_{\star}-1$, for any $\bfi\in \Xi_{u,m}$ for $u\in\{0,1\}$,
\begin{align}
& \sum_{0\leq k_1, \ldots, k_{2d}\leq \ell_{\star}} c_{\bfi, d, \ell_{\star}}^{(k_1, \ldots, k_d)}c_{\bfi +u \bfe_1,  d, \ell_{\star}}^{(k_{d+1}, \ldots, k_{2d})} \left\|\bfs(\bfi+\bfk_1\omega_m)-\bfs(\bfi+u\bfe_1+\bfk_1\omega_m)\right\|^{2\ell} \nonumber \\
={}& \sum_{0\leq k_1, \ldots, k_{2d}\leq \ell_{\star}} c_{\bfi, d, \ell_{\star}}^{(k_1, \ldots, k_d)}c_{\bfi +u \bfe_1,  d, \ell_{\star}}^{(k_{d+1}, \ldots, k_{2d})} \left\{\sum_{j=1}^d \left[ s_j(\bfi+\bfk_1\omega_m)-s_j(\bfi+u\bfe_1+\bfk_1\omega_m) \right]^2 \right\}^{\ell} \nonumber \\
={}& 0, \label{eq:2equiv.x1} \\
& \sum_{0\leq k_1, \ldots, k_{2d}\leq \ell_{\star}} c_{\bfi, d, \ell_{\star}}^{(k_1, \ldots, k_d)}c_{\bfi +u \bfe_1,  d, \ell_{\star}}^{(k_{d+1}, \ldots, k_{2d})} \left\|\bfs(\bfi+\bfk_1\omega_m)-\bfs(\bfi+u\bfe_1+\bfk_1\omega_m)\right\|^{2\ell_{\star}} \nonumber \\
={}& \frac{(-1)^{\ell_{\star}}(2\ell_{\star})!}{(\ell_{\star}!)^2} \sum_{0\leq k_1, \ldots, k_{2d}\leq \ell_{\star}} c_{\bfi, d, \ell_{\star}}^{(k_1, \ldots, k_d)}c_{\bfi +u \bfe_1,  d, \ell_{\star}}^{(k_{d+1}, \ldots, k_{2d})} s_j(\bfi+\bfk_1\omega_m)^{\ell_{\star}} s_j(\bfi+u\bfe_1+\bfk_1\omega_m)^{\ell_{\star}} \nonumber \\
={}& (-1)^{\ell_{\star}}(2\ell_{\star})! \left(\frac{\omega_m}{m}\right)^{2\ell_{\star}}. \label{eq:2equiv.x2}
\end{align}
These two formulas will be useful in the following derivation.

We observe that
\begin{align} \label{eq:E1}
& \quad ~ \EE (V_{u,d,\ell_{\star}}) \nonumber \\
&= \sum_{\bfi\in \Xi_{u,m}} \sum_{0\leq k_1, \ldots, k_{2d}\leq \ell_{\star}} c_{\bfi, d, \ell_{\star}}^{(k_1, \ldots, k_d)}c_{\bfi +u \bfe_1,  d, \ell_{\star}}^{(k_{d+1}, \ldots, k_{2d})}   \EE \left\{  Y(\bfs(\bfi + \bfk_1 \omega_m)) Y(\bfs(\bfi + u\bfe_1+ \bfk_2\omega_m))  \right\} \nonumber \\
&= \sum_{\bfi\in \Xi_{u,m}} \sum_{0\leq k_1, \ldots, k_{2d}\leq \ell_{\star}} c_{\bfi, d, \ell_{\star}}^{(k_1, \ldots, k_d)}c_{\bfi+u \bfe_1,  d, \ell_{\star}}^{(k_{d+1}, \ldots, k_{2d})} \mm \big(\bfs(\bfi + \bfk_1\omega_m) \big) \mm\big(\bfs(\bfi + u \bfe_1+ \bfk_2 \omega_m) \big) \nonumber \\
&\quad + \tau(1-u) \sum_{\bfi\in \Xi_{u,m}} \sum_{0\leq k_1, \ldots, k_d\leq \ell_{\star}} \left(c_{\bfi, d, \ell_{\star}}^{(k_1, \ldots, k_d)}\right)^2 \nonumber \\
&\quad +\sum_{\bfi\in \Xi_{u,m}} \sum_{0\leq k_1, \ldots, k_{2d}\leq \ell_{\star}}
	c_{\bfi, d, \ell_{\star}}^{(k_1, \ldots, k_d)}c_{\bfi +u \bfe_1,  d, \ell_{\star}}^{(k_{d+1}, \ldots, k_{2d})} \theta K_{\alpha,\nu} \Big(  \bfs(\bfi+ \bfk_1 \omega_m )-	\bfs( \bfi+u\bfe_1+ \bfk_2 \omega_m) \Big).
\end{align}
	
For the first term in \eqref{eq:E1}, since the partial derivatives of $\ff_1,\ldots,\ff_p$ up to the order $\lceil \nu+d/2 \rceil \geq \ell_{\star}$ are all upper bounded by $C_{\ff}'$ by Assumption \ref{cond:deriv}, we can apply the Taylor series expansion to the mean function $\mm(\cdot)$ to obtain that
\begin{align}\label{eq:mm.bound}
&\quad~\left|\sum_{\bfi\in \Xi_{u,m}} \sum_{0\leq k_1, \ldots, k_d\leq \ell_{\star}}  c_{\bfi, d, \ell_{\star}}^{(k_1, \ldots, k_d)} \mm\big(\bfs(\bfi+\bfk_1\omega_m) \big)\right|\nonumber\\
&= \Bigg|\sum_{\bfi\in \Xi_{u,m}} \sum_{0\leq k_1, \ldots, k_d\leq \ell_{\star}}
c_{\bfi, d, \ell_{\star}}^{(k_1, \ldots, k_d)}  \nonumber\\
&\quad \times \Bigg\{\sum_{a_1+\ldots + a_d \le \ell_{\star}-1}\frac{\mm^{(a_1,\ldots,a_d)}(\bfs(\bfi) ) }{a_1!\ldots a_d!}\prod_{j=1}^{d}[s_j(\bfi+\bfk_1\omega_m)-s_j(\bfi) ]^{a_j} \nonumber\\
&\quad +\sum_{a_1+\ldots + a_d =\ell_{\star}}\frac{\ell_{\star}}{a_1!\ldots a_d!} \prod_{j=1}^{d}[s_j(\bfi+\bfk_1\omega_m)-s_j(\bfi) ]^{a_j} \int_0^1 (1-t)^{\ell_{\star}-1}   \nonumber\\
&\quad\times \mm^{(a_1,\ldots,a_d)}\Big(\bfs(\bfi)+t \big( \bfs(\bfi+\bfk_1\omega_m)-\bfs(\bfi) \big)  \Big) \ud t \Bigg\} \Bigg| \nonumber \\
&\stackrel{(i)}{=} \Bigg|\sum_{\bfi\in \Xi_{u,m}} \sum_{0\leq k_1, \ldots, k_d\leq \ell_{\star}}
c_{\bfi, d, \ell_{\star}}^{(k_1, \ldots, k_d)}  \nonumber\\
&\quad \times \Bigg\{\sum_{a_1+\ldots + a_d =\ell_{\star}}\frac{\ell_{\star}}{a_1!\ldots a_d!} \prod_{j=1}^{d}[s_j(\bfi+\bfk_1\omega_m)-s_j(\bfi) ]^{a_j} \int_0^1 (1-t)^{\ell_{\star}-1}   \nonumber\\
&\quad\times \mm^{(a_1,\ldots,a_d)}\Big(\bfs(\bfi)+t \big( \bfs(\bfi+\bfk_1\omega_m)-\bfs(\bfi) \big)  \Big) \ud t \Bigg\}\nonumber\\
&\quad \times \Bigg\{\sum_{a_1+\ldots + a_d =\ell_{\star}}\frac{\ell_{\star}}{a_1!\ldots a_d!} \prod_{j=1}^{d}[s_j(\bfi+u \bfe_1+\bfk_2\omega_m)-s_j(\bfi+u \bfe_1)]^{a_j} \int_0^1 (1-t)^{\ell_{\star}-1}   \nonumber\\
&\quad\times \mm^{(a_1,\ldots,a_d)}\Big(\bfs(\bfi+u \bfe_1)+t \big( \bfs(\bfi+u \bfe_1+\bfk_2\omega_m)-\bfs(\bfi+u \bfe_1) \big)  \Big)\ud t  \Bigg\} \Bigg| \nonumber \\
&\stackrel{(ii)}{\leq} C C_{\ff}' \|\beta\|_{1} \left(\frac{\omega_m}{m}\right)^{\ell_{\star}},
\end{align}
as $n\rightarrow\infty$ for some $C>0$ dependent on $\ell_{\star},\nu,d$, where $\|\beta\|_1=\sum_{j=1}^{p}|\beta_j|$; the equation (i) follows from Lemma \ref{lem:Cor1.Loh21} of the main text; the inequality (ii) follows from that $\big|\mm^{(a_1,\ldots,a_d)}(\bfs)\big|\leq C_{\ff}' \|\beta\|_1$ by Assumption \ref{cond:model}, $\int_0^1 (1-t)^{\ell_{\star}-1}\ud t\leq 1$, and Lemma \ref{lem:Cor1.Loh21} of the main text. Therefore,
\begin{align} \label{eq:E2}
&\quad~\left|\sum_{\bfi\in \Xi_{u,m}} \sum_{0\leq k_1, \ldots, k_{2d} \leq \ell_{\star}}  c_{\bfi, d, \ell_{\star}}^{(k_1, \ldots, k_d)} c_{\bfi, d, \ell_{\star}}^{(k_{d+1}, \ldots, k_{2d})} \mm\big(\bfs(\bfi+\bfk_1\omega_m) \big)\mm\big(\bfs(\bfi+u\bfe_1+\bfk_1\omega_m) \big)  \right|\nonumber\\
&\leq C C_{\ff}^{'2}\|\beta\|_{1}^2 |\Xi_{u,m}|\left(\frac{\omega_m}{m}\right)^{2\ell_{\star}} \leq C C_{\ff}^{'2}\|\beta\|_{1}^2 m^d \left(\frac{\omega_m}{m}\right)^{2\ell_{\star}} .
\end{align}

Now we turn to the second and third terms in \eqref{eq:E1}. We consider two cases, either $\nu\notin \ZZ$ or $\nu\in\ZZ_+$. Define
\begin{align}
\mathcal{F}_{u,d,\ell} (\nu)
&= \zeta_\nu^*\sum_{\bfi  \in \Xi_{u,m}} \sum_{0\leq k_1, \ldots, k_{2d}\leq \ell}  	c_{\bfi, d, \ell}^{(k_1, \ldots, k_d)}c_{\bfi + u\bfe_1, d, \ell}^{(k_{d+1}, \ldots, k_{2d})}
\nonumber \\
& \quad \times
G_\nu\Big(\|\bfs( \bfi+ \bfk_1 \omega_m ) - \bfs(\bfi+u\bfe_1+ \bfk_2 \omega_m ) \|\Big),
\end{align}
where $G_\nu$ is as defined in \eqref{eq:G.nu}.

\noindent {\sc Case A1.} If $\nu\notin\ZZ$, then using \eqref{eq:2equiv.x2}, we observe that
\begin{align}\label{eq:EV_u 1}
&\quad~ \tau(1-u) \sum_{\bfi\in \Xi_{0,m}} \sum_{0\leq k_1, \ldots, k_{d} \leq \ell_{\star}} (c_{\bfi,d,\ell_{\star}}^{k_1,\ldots,k_d})^2  +  \sum_{\bfi \in \Xi_{u,m}} \sum_{0\leq k_1, \ldots, k_{2d}\leq \ell_{\star}} c_{\bfi, d, \ell_{\star}}^{(k_1, \ldots, k_d)}c_{\bfi+ u\bfe_1,  d, \ell_{\star}}^{(k_{d+1}, \ldots, k_{2d})} \nonumber \\
&\quad \times \theta K_{\alpha,\nu} \left(  \bfs(\bfi+ \bfk_1 \omega_m )-\bfs( \bfi+u\bfe_1+ \bfk_2 \omega_m) \right) \nonumber\\
&= \tau(1-u) \sum_{\bfi\in \Xi_{0,m}} \sum_{0\leq k_1, \ldots, k_{d} \leq \ell_{\star}} (c_{\bfi,d,\ell_{\star}}^{k_1,\ldots,k_d})^2     + \sum_{\bfi\in \Xi_{u,m}} \sum_{0\leq k_1, \ldots, k_{2d}\leq \ell_{\star}} c_{\bfi, d, \ell_{\star}}^{(k_1, \ldots, k_d)}c_{\bfi+ u\bfe_1,  d, \ell_{\star}}^{(k_{d+1}, \ldots, k_{2d})} \nonumber \\
&\quad \times \sum_{j=0}^\infty \Big\{ \zeta_j \|\bfs( \bfi+ \bfk_1 \omega_m ) -  \bfs( \bfi+u\bfe_1+ \bfk_2 \omega_m ) \|^{2j}
	\nonumber \\
& \quad+ \zeta^*_{\nu+j} G_{\nu+j} \Big(\|\bfs( \bfi+ \bfk_1 \omega_m ) -  \bfs( \bfi+u\bfe_1+ \bfk_2 \omega_m ) \|\Big) \Big\}\nonumber \\
&= \tau(1-u) \sum_{\bfi\in \Xi_{0,m}} \sum_{0\leq k_1, \ldots, k_{d} \leq \ell_{\star}} (c_{\bfi,d,\ell_{\star}}^{k_1,\ldots,k_d})^2 + \zeta_{\ell_{\star}} |\Xi_{u,m}| (-1)^{\ell_{\star}}(2\ell_{\star})!\left(\frac{\omega_m}{m}\right)^{2\ell_{\star}}  \nonumber\\
&\quad +\sum_{\bfi \in \Xi_{u,m}}  \sum_{0\leq k_1, \ldots, k_{2d}\leq \ell_{\star}} c_{\bfi, d, \ell_{\star}}^{(k_1, \ldots, k_d)}c_{\bfi+u \bfe_1, d, \ell_{\star}}^{(k_{d+1}, \ldots, k_{2d})} \Big[ \zeta_\nu^* \|\bfs( \bfi+ \bfk_1 \omega_m ) -  \bfs( \bfi+u\bfe_1+ \bfk_2 \omega_m ) \|^{2\nu} \nonumber \\
&\quad +  \sum_{j=\ell_{\star}+1}^{\infty}\zeta_j \|\bfs( \bfi+ \bfk_1 \omega_m ) -  \bfs( \bfi+u\bfe_1+ \bfk_2 \omega_m ) \|^{2j} \nonumber\\
&\quad + \sum_{j=1}^{\infty}\zeta^*_{\nu+j} \|\bfs( \bfi+ \bfk_1 \omega_m ) -  \bfs( \bfi+u\bfe_1+ \bfk_2 \omega_m ) \|^{2\nu+2j}  \Big] \nonumber \\
&= \tau(1-u) \sum_{\bfi\in \Xi_{0,m}} \sum_{0\leq k_1, \ldots, k_{d} \leq \ell_{\star}} (c_{\bfi,d,\ell_{\star}}^{k_1,\ldots,k_d})^2  + \theta \alpha^{2\ell_{\star}-2\nu}\xi_{\ell_{\star}} |\Xi_{u,m}| (-1)^{\ell_{\star}}(2\ell_{\star})!\left(\frac{\omega_m}{m}\right)^{2\ell_{\star}} \nonumber\\
&\quad +  \mathcal{F}_{u,d,\ell_{\star}} (\nu) +\theta\sum_{\bfi \in \Xi_{u,m}}  \sum_{0\leq k_1, \ldots, k_{2d}\leq \ell_{\star}} c_{\bfi, d, \ell_{\star}}^{(k_1, \ldots, k_d)}c_{\bfi+u \bfe_1,  d, \ell_{\star}}^{(k_{d+1}, \ldots, k_{2d})} \nonumber \\
&\quad \times \Bigg\{ \sum_{j=\ell_{\star}+1}^{\infty}\alpha^{2j-2\nu}\xi_j \|\bfs( \bfi+ \bfk_1 \omega_m )-  \bfs( \bfi+u\bfe_1+ \bfk_2 \omega_m ) \|^{2j}\nonumber\\
&\qquad + \sum_{j=1}^{\infty}\alpha^{2j}\xi^*_{\nu+j} \|\bfs( \bfi+ \bfk_1 \omega_m )-  \bfs( \bfi+u\bfe_1+ \bfk_2 \omega_m ) \|^{2\nu+2j}  \Bigg\}.
\end{align}
For $\bfk_1 \neq \bfk_2$, we use Lemma \ref{lem:Cor2.Loh21} to obtain that
\begin{align*}
&\quad~ \left\|\bfs( \bfi+ \bfk_1 \omega_m ) -  \bfs( \bfi+u \bfe_1+ \bfk_2 \omega_m ) \right\|^{2\nu} = \left(\frac{\omega_m }{m}\right)^{2\nu}  \left\{ \sum_{j=1}^d \left[ k_j - k_{d+j} + O(\omega_m^{-1}) \right]^2 \right\}^\nu  \nonumber \\
&= \left(\frac{\omega_m }{m}\right)^{2\nu}  \left\{ \sum_{j=1}^d ( k_j - k_{d+j} )^2 + O(\omega_m^{-1})  \right\}^\nu  = \left(\frac{\omega_m }{m}\right)^{2\nu}  \left\{ \left[\sum_{j=1}^d ( k_j - k_{d+j} )^2 \right]^\nu + O(\omega_m^{-1})  \right\},
\end{align*}
as $m\rightarrow \infty$ uniformly over $\bfi\in \Xi_{u,m}$.

For $\bfk_1 = \bfk_2$, we have that
\begin{align*}
&\quad~ \left\|\bfs( \bfi+ \bfk_1 \omega_m ) -  \bfs( \bfi+u\bfe_1+ \bfk_2 \omega_m) \right\|^{2\nu} \\
&= \left\{\sum_{j=1}^d \left[\frac{\delta_{\bfi+ \bfk_1 \omega_m;j}-ue_j-\delta_{\bfi+u\bfe_1+ \bfk_2 \omega_m;j}}{m}\right]\right\}^{\nu} = u O \left(\frac{1}{m^{2\nu}}\right),
\end{align*}
as $m\rightarrow \infty$ uniformly over $\bfi\in \Xi_{u,m}$. Therefore, by Lemma \ref{lem:Cor2.Loh21},
\begin{align}\label{eq:F.udl}
\mathcal{F}_{u,d,\ell_{\star}} (\nu) &= \zeta_\nu^* \sum_{\bfi\in \Xi_{u,m}} \sum_{0\leq k_1, \ldots, k_{2d}\leq \ell_{\star}}  c_{\bfi, d, \ell_{\star}}^{(k_1, \ldots, k_d)}c_{\bfi+u \bfe_1, d, \ell_{\star}}^{(k_{d+1}, \ldots, k_{2d})}  \nonumber \\
&\times  \left\|\bfs( \bfi+ \bfk_1 \omega_m ) -  \bfs( \bfi+u\bfe_1+ \bfk_2 \omega_m) \right\|^{2\nu} \nonumber \\
&= \zeta_\nu^* \sum_{\bfi\in \Xi_{u,m}}  \sum_{0\leq k_1, \ldots, k_{2d}\leq \ell_{\star}: \bfk_1 \neq \bfk_2}  \left\{ c_{d, \ell_{\star}}^{(k_1, \ldots, k_d)} + O(\omega_m^{-1}) \right\}  \nonumber \\
&\quad \times \left\{ c_{d, \ell_{\star}}^{(k_{d+1}, \ldots, k_{2d})} + O(\omega_m^{-1}) \right\}\left(\frac{\omega_m }{m}\right)^{2\nu}  \left\{ \left[\sum_{j=1}^d ( k_j - k_{d+j} )^2 \right]^\nu + O(\omega_m^{-1})  \right\}  \nonumber \\
&\quad +\zeta_\nu^* \sum_{\bfi\in \Xi_{u,m}} \sum_{0\leq k_1, \ldots, k_{2d}\leq \ell_{\star}: \bfk_1 = \bfk_2}
\left\{ c_{d, \ell_{\star}}^{(k_1, \ldots, k_d)} + O(\omega_m^{-1}) \right\} \nonumber \\
&\quad \times \left\{ c_{d, \ell_{\star}}^{(k_{d+1}, \ldots, k_{2d})} + O(\omega_m^{-1}) \right\} uO\left(\frac{1}{m^{2\nu}}\right)  \nonumber \\
&= \zeta_\nu^*\left(\frac{\omega_m}{m}\right)^{2\nu} |\Xi_{u,m}| \sum_{0\leq k_1, \ldots, k_{2d}\leq \ell_{\star}}c_{d, \ell_{\star}}^{(k_1, \ldots, k_d)} c_{d, \ell_{\star}}^{(k_{d+1}, \ldots, k_{2d})} \left\| \bfk_1- \bfk_2\right\|^{2\nu}	\nonumber \\
&\quad    +\zeta^*_\nu |\Xi_{u,m}| O \left\{ \left(\frac{\omega_m}{m}\right)^{2\nu} \omega_m^{-1} + u\left(\frac{ \omega_m}{m}\right)^{2 \nu}\omega_m^{-2 \nu} \right\}  \nonumber \\
&= \theta \xi_\nu^* \left(\frac{\omega_m }{m}\right)^{2\nu} |\Xi_{u,m}| H_{\ell_{\star},\nu} + \theta \xi_\nu^* |\Xi_{u,m}|\left(\frac{\omega_m}{m}\right)^{2\nu} O\left\{\omega_m^{-1}+u\omega_m^{-2\nu} \right\} \\
&\asymp  \theta m^d \left(\frac{\omega_m}{m}\right)^{2\nu},\nonumber
\end{align}
as $m\rightarrow \infty$, where $H_{\ell_{\star},\nu}$ defined in \eqref{eq:CgH} satisfies $H_{\ell_{\star},\nu}>0$, and $O(\cdot)$ in the last equality only depends on $\nu$. Combining \eqref{eq:F.udl} with \eqref{eq:E1}, \eqref{eq:E2} and \eqref{eq:EV_u 1}, we conclude that
\begin{align}\label{eq:EV.case1}
&\quad~ \EE \left(V_{u,d,\ell_{\star}}\right) \nonumber \\
&= \tau(1-u) \sum_{\bfi\in \Xi_{0,m}} \sum_{0\leq k_1, \ldots, k_{d} \leq \ell_{\star}} \left(c_{\bfi,d,\ell_{\star}}^{(k_1,\ldots,k_d)}\right)^2 + \theta \alpha^{2\ell_{\star}-2\nu}\xi_{\ell_{\star}} |\Xi_{u,m}| (-1)^{\ell_{\star}}(2\ell_{\star})!\left(\frac{\omega_m}{m}\right)^{2\ell_{\star}} \nonumber \\
&\quad +\zeta_\nu^* \left(\frac{\omega_m }{m}\right)^{2\nu} |\Xi_{u,m}| H_{\ell_{\star},\nu} +\zeta^*_\nu |\Xi_{u,m}|\left(\frac{\omega_m}{m}\right)^{2\nu} O\{\omega_m^{-1}+u\omega_m^{-2\nu} \}  \nonumber\\
&\quad +\theta\sum_{\bfi \in \Xi_{u,m}}  \sum_{0\leq k_1, \ldots, k_{2d}\leq \ell_{\star}} c_{\bfi, d, \ell_{\star}}^{(k_1, \ldots, k_d)}c_{\bfi+u \bfe_1,  d, \ell_{\star}}^{(k_{d+1}, \ldots, k_{2d})}  \nonumber \\
&\qquad \times \Bigg[ \sum_{j=\ell_{\star}+1}^{\infty} \alpha^{2j-2\nu}\xi_j \left\|\bfs( \bfi+ \bfk_1 \omega_m ) -  \bfs( \bfi+u\bfe_1+ \bfk_2 \omega_m ) \right\|^{2j} \nonumber\\
&\qquad + \sum_{j=1}^{\infty} \alpha^{2j} \xi^*_{\nu+j} \left\|\bfs( \bfi+ \bfk_1 \omega_m ) -  \bfs( \bfi+u\bfe_1+ \bfk_2 \omega_m ) \right\|^{2\nu+2j}  \Bigg] + O(1)  \|\beta\|_1^2 |\Xi_{u,m}|\left(\frac{\omega_m}{m}\right)^{2\ell_{\star}}  \nonumber \\
&= \tau(1-u) \sum_{\bfi\in \Xi_{0,m}} \sum_{0\leq k_1, \ldots, k_{d} \leq \ell_{\star}} \left(c_{\bfi,d,\ell_{\star}}^{(k_1,\ldots,k_d)}\right)^2  + \theta \alpha^{2\ell_{\star}-2\nu}\xi_{\ell_{\star}} |\Xi_{u,m}| (-1)^{\ell_{\star}}(2\ell_{\star})!\left(\frac{\omega_m}{m}\right)^{2\ell_{\star}}  \nonumber \\
&\quad + \theta\xi_\nu^* \left(\frac{\omega_m }{m}\right)^{2\nu} |\Xi_{u,m}| H_{\ell_{\star},\nu} +\theta\xi^*_\nu |\Xi_{u,m}|\left(\frac{\omega_m}{m}\right)^{2\nu} O\left(\omega_m^{-1}+u\omega_m^{-2\nu} \right) \nonumber\\
&\quad +\theta\left(\frac{\omega_m}{m}\right)^{2\nu}|\Xi_{u,m}|O(1)\left\{\sum_{j=\ell_{\star}+1}^{\infty} \xi_j \left(\alpha \frac{\omega_m}{m}\right)^{2j-2\nu}+\sum_{j=1}^{\infty}\xi^*_{\nu+j} \left(\alpha \frac{\omega_m}{m}\right)^{2j} \right\} \nonumber \\
&\quad + O(1) \|\beta\|_1^2 |\Xi_{u,m}|\left(\frac{\omega_m}{m}\right)^{2\ell_{\star}},
\end{align}
as $m\rightarrow \infty$, and $O(\cdot)$ only depends on $\ell_{\star},\nu,d,C_{\ff}'$ and not on the parameters $\beta,\theta,\tau,\alpha$.

Hence, using the definitions of $C_{V,0},g_{\ell_{\star},\nu},H_{\ell_{\star},\nu}$ in \eqref{eq:CgH}, we can derive from \eqref{eq:EV.case1} that when $u=1$,
\begin{align}
\frac{\EE \left(V_{1,d,\ell_{\star}}\right)}{\theta g_{\ell_{\star},\nu}} &= \frac{\EE\left(V_{1,d,\ell_{\star}}\right)}{\zeta_\nu^* (\frac{\omega_m }{m})^{2\nu} |\Xi_{1,m}| H_{\ell_{\star},\nu}}  \nonumber \\
&= 1+ O\left(\omega_m^{-1}+\omega_m^{-2\nu} \right) + O(1)\frac{\|\beta\|_1^2}{\theta} |\Xi_{u,m}|\left(\frac{\omega_m}{m}\right)^{2\ell_{\star}-2\nu} \nonumber \\
&\quad +  O(1) \left[ \sum_{j=\ell_{\star}}^{\infty}\frac{\xi_j}{\xi^*_{\nu}}\left(\alpha\frac{\omega_m }{m}\right)^{2j-2\nu}
+ \sum_{j=1}^{\infty}\frac{\xi^*_{\nu+j}}{\xi^*_{\nu}}\left(\alpha\frac{\omega_m }{m}\right)^{2j} \right] \label{eq:EV.case1.ratio1} \\
&\stackrel{(i)}{=} 1+o(1), \nonumber
\end{align}
as $m\to\infty$, where the relation (i) holds uniformly on the set $\Ecal_{n}$, because according to \eqref{eq:rho.relation1}, for all $(\theta,\alpha,\tau,\beta)\in \Ecal_{n}$, as $n\to\infty$,
\begin{align*}
& \frac{\|\beta\|_1^2}{\theta}\left(\frac{\omega_m}{m}\right)^{2\ell_{\star}-2\nu} \leq \frac{p\|\beta\|^2}{\theta}\left(\frac{\omega_m}{m}\right)^{2\ell_{\star}-2\nu} \leq pn^{\rho_{1} - (2\ell_{\star}-2\nu)(1-\gamma)/d} = o(1),  \\
& \alpha \frac{\omega_m}{m} \leq n^{\rho_{32} - (1 - \gamma)/d} =o(1),
\end{align*}
and that the two summations in \eqref{eq:EV.case1.ratio1} are all finite given the definition of $\xi_j,\xi^*_{\nu+j}$ for $j\in\NN$ in \eqref{eq:zeta.xi1}. The $O(\cdot)$ and $o(\cdot)$ only depend on $\ell_{\star},\nu,d,C_{\ff}'$ and not on $\beta,\theta,\tau,\alpha$, and the same applies to the derivation below as well.

For $u=0$, we have that
\begin{align}
\frac{\EE\left(V_{0,d,\ell_{\star}}\right)-\tau C_{V,0}}{\zeta_\nu^* (\frac{\omega_m }{m})^{2\nu} |\Xi_{0,m}| H_{\ell_{\star},\nu}}
&= 1+ O\left(\omega_m^{-1}+\omega_m^{-2\nu} \right)  +O(1)\frac{\|\beta\|_1^2}{\theta} |\Xi_{u,m}|\left(\frac{\omega_m}{m}\right)^{2\ell_{\star}-2\nu} \nonumber \\
&\quad + O(1) \left[ \sum_{j=\ell_{\star}}^{\infty}\frac{\xi_j}{\xi^*_{\nu}}\left(\alpha\frac{\omega_m }{m}\right)^{2j-2\nu}
+ \sum_{j=1}^{\infty}\frac{\xi^*_{\nu+j}}{\xi^*_{\nu}}\left(\alpha\frac{\omega_m }{m}\right)^{2j} \right] . \label{eq:EV.case1.ratio2.1}
\end{align}	
The summations in \eqref{eq:EV.case1.ratio2.1} are all finite given the definition of $\xi_j,\xi^*_{\nu+j}$ for $j\in\NN$ in \eqref{eq:zeta.xi1}.

Furthermore, since as $m\to\infty$,
\begin{align} \label{eq:tau.CV0}
\tau C_{V,0} &= \tau \sum_{\bfi \in \Xi_{0,m}} \sum_{0\leq k_1, \ldots, k_{d}\leq \ell} \left(c_{\bfi, d, \ell_{\star}}^{(k_1, \ldots, k_d)} \right)^2 \nonumber \\
&\stackrel{(i)}{=} \tau |\Xi_{0,m}| \left\{\sum_{0\leq k_1, \ldots, k_{d}\leq \ell_{\star}} \left(c_{d, \ell_{\star}}^{(k_1, \ldots, k_d)} \right)^2 + O(\omega_m^{-1}) \right\}  \\
&\stackrel{(ii)}{\asymp} \tau m^d, \nonumber
\end{align}
where (i) follows from Lemma \ref{lem:Cor2.Loh21} and the $O(\cdot)$ is uniform over all $\bfi \in \Xi_{0,m}$; (ii) follows because the constants $c_{d, \ell_{\star}}^{(k_1, \ldots, k_d)}$ do not depend on $\bfi$ and $m$, and that $\ell_{\star}$ is a constant.

From \eqref{eq:EV.case1.ratio2.1} and \eqref{eq:tau.CV0}, it follows that
\begin{align}\label{eq:EV.case1.ratio2}
\frac{\EE\left(V_{0,d,\ell_{\star}}\right)}{\tau C_{V,0}}&= \frac{\EE\left(V_{0,d,\ell_{\star}}\right)-\tau C_{V,0}}{\zeta_\nu^* (\frac{\omega_m}{m})^{2\nu} |\Xi_{0,m}| H_{\ell_{\star},\nu}} \frac{\zeta_\nu^* \left(\frac{\omega_m }{m}\right)^{2\nu} |\Xi_{0,m}| H_{\ell_{\star},\nu}}{\tau C_{V,0}} + 1  \nonumber  \\
&= 1+ O(1)\frac{\theta}{\tau} \left(\frac{\omega_m}{m}\right)^{2\nu} \Bigg\{1+ O\left(\omega_m^{-1}+\omega_m^{-2\nu} \right)  +\frac{\|\beta\|_1^2}{\theta} |\Xi_{u,m}|\left(\frac{\omega_m}{m}\right)^{2\ell-2\nu}   \nonumber \\
&\quad + O(1) \left[ \sum_{j=\ell}^{\infty}\frac{\xi_j}{\xi^*_{\nu}}\left(\alpha\frac{\omega_m}{m}\right)^{2j-2\nu} + \sum_{j=1}^{\infty}\frac{\xi^*_{\nu+j}}{\xi^*_{\nu}}\left(\alpha\frac{\omega_m}{m}\right)^{2j} \right]  \Bigg\} \\
&\stackrel{(i)}{=} 1+o(1), \nonumber
\end{align}
as $m\rightarrow\infty$, where the relation (i) holds uniformly on the set $\Ecal_{n}$, because according to \eqref{eq:rho.relation1}, for all $(\theta,\alpha,\tau,\beta)\in \Ecal_{n}$, as $n\to\infty$,
\begin{align*}
& \frac{\theta}{\tau} \left(\frac{\omega_m}{m}\right)^{2\nu} \leq n^{\rho_{21} - 2\nu(1-\gamma)/d} = o(1), \\
& \frac{\|\beta\|_1^2}{\theta}\left(\frac{\omega_m}{m}\right)^{2\ell_{\star}-2\nu} \leq \frac{p\|\beta\|^2}{\theta}\left(\frac{\omega_m}{m}\right)^{2\ell_{\star}-2\nu} \leq pn^{\rho_{1} - (2\ell_{\star}-2\nu)(1-\gamma)/d} = o(1),  \\
& \alpha \frac{\omega_m}{m} \leq n^{\rho_{32} - (1 - \gamma)/d} =o(1),
\end{align*}
and that the two summations in \eqref{eq:EV.case1.ratio2} are all finite given the definition of $\xi_j,\xi^*_{\nu+j}$ for $j\in\NN$ in \eqref{eq:zeta.xi1}.

\vspace{3mm}

\noindent {\sc Case A2.} If $\nu\in\ZZ_+$, then the bound in \eqref{eq:mm.bound} still applies, and we only need to derive the order for the second and the third terms in \eqref{eq:E1}.

We observe that	
\begin{align}\label{eq:EV_u 2}
&\quad~ \tau(1-u) \sum_{\bfi\in \Xi_{0,m}} \sum_{0\leq k_1, \ldots, k_{d} \leq \ell_{\star}} \left(c_{\bfi,d,\ell_{\star}}^{(k_1,\ldots,k_d)}\right)^2  + \sum_{\bfi\in \Xi_{u,m}} \sum_{0\leq k_1, \ldots, k_{2d}\leq \ell_{\star}}  c_{\bfi, d, \ell_{\star}}^{(k_1, \ldots, k_d)}c_{\bfi+u \bfe_1, d, \ell_{\star}}^{(k_{d+1}, \ldots, k_{2d})}
\nonumber \\
&\quad \times \theta K_{\alpha,\nu} \left(\bfs(\bfi+ \bfk_1 \omega_m )- \bfs( \bfi+u\bfe_1+ \bfk_2 \omega_m) \right)
\nonumber\\
&= \tau(1-u) \sum_{\bfi\in \Xi_{0,m}} \sum_{0\leq k_1, \ldots, k_{d} \leq \ell_{\star}}\left(c_{\bfi,d,\ell_{\star}}^{(k_1,\ldots,k_d)}\right)^2   + \sum_{\bfi\in \Xi_{u,m}} \sum_{0\leq k_1, \ldots, k_{2d}\leq \ell_{\star}}  c_{\bfi, d, \ell_{\star}}^{(k_1, \ldots, k_d)}c_{\bfi+u \bfe_1,  d, \ell_{\star}}^{(k_{d+1}, \ldots, k_{2d})} \nonumber \\
& \quad \times \sum_{j=0}^\infty \Big\{ \zeta_j \|\bfs(\bfi+ \bfk_1 \omega_m ) -  \bfs( \bfi+u\bfe_1+ \bfk_2 \omega_m) \|^{2j} \nonumber \\
& \quad+ \zeta^*_{\nu+j} \|\bfs( \bfi+ \bfk_1 \omega_m ) -  \bfs( \bfi+u\bfe_1+ \bfk_2 \omega_m) \|^{2\nu+2j}  \log \left(\|\bfs(\bfi+ \bfk_1 \omega_m ) -  \bfs( \bfi+u\bfe_1+ \bfk_2 \omega_m) \| \right) \Big\} \nonumber \\
&=  \tau(1-u) \sum_{\bfi\in \Xi_{0,m}} \sum_{0\leq k_1, \ldots, k_{d} \leq \ell_{\star}} \left(c_{\bfi,d,\ell_{\star}}^{(k_1,\ldots,k_d)}\right)^2   + \zeta_{\ell_{\star}} |\Xi_{u,m}| (-1)^{\ell_{\star}}(2\ell_{\star})!\left(\frac{\omega_m}{m}\right)^{2\ell_{\star}} \nonumber\\
&\quad +\sum_{\bfi\in \Xi_{u,m}}\sum_{0\leq k_1, \ldots, k_{2d}\leq \ell_{\star}} c_{\bfi,  d, \ell_{\star}}^{(k_1, \ldots, k_d)}c_{\bfi+ u\bfe_1, d, \ell_{\star}}^{(k_{d+1}, \ldots, k_{2d})} \nonumber \\
&\quad \times \Big[ \zeta_\nu^* \|\bfs(\bfi+ \bfk_1 \omega_m ) -  \bfs( \bfi+u\bfe_1+ \bfk_2 \omega_m) \|^{2\nu}
\log \left(\frac{m}{\omega_m } \|\bfs(\bfi+ \bfk_1 \omega_m ) -  \bfs( \bfi+u\bfe_1+ \bfk_2 \omega_m) \| \right)	\nonumber \\
&\quad +\zeta^*_{\nu+1} \|\bfs(\bfi+ \bfk_1 \omega_m ) -  \bfs( \bfi+u\bfe_1+ \bfk_2 \omega_m) \|^{2\nu+2} \nonumber \\
&\quad \times  \log \left(\frac{m}{\omega_m } \|\bfs( \bfi+ \bfk_1 \omega_m ) -  \bfs( \bfi+u\bfe_1+ \bfk_2 \omega_m) \| \right) \nonumber \\
&\quad -\zeta^*_{\nu+1} \|\bfs(\bfi+ \bfk_1 \omega_m ) -  \bfs( \bfi+u\bfe_1+ \bfk_2 \omega_m) \|^{2\nu+2} \log \left(\frac{m}{\omega_m} \right)\nonumber \\
&\quad  +  \sum_{j=\ell_{\star}+1}^{\infty}\zeta_j \|\bfs( \bfi+ \bfk_1 \omega_m ) -  \bfs( \bfi+u\bfe_1+ \bfk_2 \omega_m ) \|^{2j}  \nonumber\\
&\quad + \sum_{j=2}^{\infty}\zeta^*_{\nu+j} \|\bfs( \bfi+ \bfk_1 \omega_m ) -  \bfs( \bfi+u\bfe_1+ \bfk_2 \omega_m ) \|^{2\nu+2j} \nonumber\\
&\qquad \times\log (\|\bfs( \bfi+ \bfk_1 \omega_m ) -  \bfs( \bfi+u\bfe_1+ \bfk_2 \omega_m ) \|) \Big] \nonumber \\
&=  \tau(1-u) \sum_{\bfi\in \Xi_{0,m}} \sum_{0\leq k_1, \ldots, k_{d} \leq \ell_{\star}} \left(c_{\bfi,d,\ell_{\star}}^{(k_1,\ldots,k_d)}\right)^2 \nonumber\\
&\quad + \theta \alpha^{2\ell_{\star}-2\nu}\{\xi_{1,\ell_{\star}}+\xi_{2,\ell_{\star}}\log(\alpha)\} |\Xi_{u,m}| (-1)^{\ell_{\star}}(2\ell_{\star})!\left(\frac{\omega_m}{m}\right)^{2\ell_{\star}} \nonumber\\
&\quad + \mathcal{F}_{u,d,\ell_{\star}}(\nu)-\theta \alpha^2\xi^*_{\nu+1}|\Xi_{u,m}| (-1)^{\ell_{\star}}(2\ell_{\star})!\left(\frac{\omega_m}{m}\right)^{2\ell_{\star}}\log \left(\frac{m}{\omega_m } \right) \Ical\{\ell_{\star}=\nu+1 \}\nonumber\\
&\quad +\theta \sum_{\bfi\in \Xi_{u,m}}\sum_{0\leq k_1, \ldots, k_{2d}\leq \ell_{\star}} c_{\bfi,  d, \ell_{\star}}^{(k_1, \ldots, k_d)}c_{\bfi+ u\bfe_1,  d, \ell_{\star}}^{(k_{d+1}, \ldots, k_{2d})} \nonumber\\
&\quad \times \Big[ \alpha^2\xi^*_{\nu+1} \|\bfs(\bfi+ \bfk_1 \omega_m ) -  \bfs( \bfi+u\bfe_1+ \bfk_2 \omega_m) \|^{2\nu+2} \nonumber \\
&\qquad \times  \log \left(\frac{m}{\omega_m} \|\bfs( \bfi+ \bfk_1 \omega_m ) -  \bfs( \bfi+u\bfe_1+ \bfk_2 \omega_m) \| \right) \nonumber \\
&\qquad +  \sum_{j=\ell_{\star}+1}^{\infty}\alpha^{2j-2\nu}\xi_{1,j} \|\bfs( \bfi+ \bfk_1 \omega_m ) -  \bfs( \bfi+u\bfe_1+ \bfk_2 \omega_m ) \|^{2j}  \nonumber\\
&\qquad  +  \sum_{j=\ell_{\star}+1}^{\infty}\alpha^{2j-2\nu}\log(\alpha)\xi_{2,j} \|\bfs( \bfi+ \bfk_1 \omega_m ) -  \bfs( \bfi+u\bfe_1+ \bfk_2 \omega_m ) \|^{2j}   \nonumber\\
&\qquad + \sum_{j=2}^{\infty}\alpha^{2j}\xi^*_{\nu+j} \|\bfs( \bfi+ \bfk_1 \omega_m ) -  \bfs( \bfi+u\bfe_1+ \bfk_2 \omega_m ) \|^{2\nu+2j}  \nonumber\\
&\qquad \times\log \left(\|\bfs( \bfi+ \bfk_1 \omega_m ) -  \bfs( \bfi+u\bfe_1+ \bfk_2 \omega_m ) \|\right) \Big].
\end{align}

Similar to Case A1, we can use Lemma \ref{lem:Cor2.Loh21} to derive the order for $\mathcal{F}_{u,d,\ell_{\star}} (\nu)$:
\begin{align}\label{eq:F.udl2}
& \quad~ \mathcal{F}_{u,d,\ell_{\star}} (\nu) \nonumber \\
&= \zeta_\nu^* \sum_{\bfi\in \Xi_{u,m}} \sum_{0\leq k_1, \ldots, k_{2d}\leq \ell_{\star}} c_{\bfi,  d, \ell_{\star}}^{(k_1, \ldots, k_d)}c_{\bfi+u\bfe_1,  d, \ell_{\star}}^{(k_{d+1}, \ldots, k_{2d})}  \|\bfs( \bfi+ \bfk_1 \omega_m ) -  \bfs( \bfi+u\bfe_1+ \bfk_2 \omega_m) \|^{2\nu} \nonumber \\
&\times \log \left( \|\bfs( \bfi+ \bfk_1 \omega_m ) -  \bfs( \bfi+u\bfe_1+ \bfk_2 \omega_m) \| \right) \nonumber \\
&=\zeta_\nu^*  \sum_{\bfi\in \Xi_{u,m}} \sum_{0\leq k_1, \ldots, k_{2d}\leq \ell_{\star}: \bfk_1\neq \bfk_2} \left\{ c_{d, \ell_{\star}}^{(k_1, \ldots, k_d)} + O(\omega_m^{-1}) \right\} \left \{ c_{d, \ell_{\star}}^{(k_{d+1}, \ldots, k_{2d})} + O(\omega_m^{-1}) \right\} \left(\frac{\omega_m}{m}\right)^{2\nu}   \nonumber \\
& \qquad\times \left\{ \left[\sum_{j=1}^d ( k_j - k_{d+j} )^2 \right]^\nu + O(\omega_m^{-1})  \right\}  \log \left\{ \left[\sum_{j=1}^d ( k_j - k_{d+j} )^2 \right]^{1/2} + O(\omega_m^{-1})  \right\} \nonumber \\
&+ \zeta_\nu^* \sum_{\bfi\in \Xi_{u,m}} \sum_{0\leq k_1, \ldots, k_{2d}\leq \ell_{\star}: \bfk_1= \bfk_2} \left\{ c_{d, \ell_{\star}}^{(k_1, \ldots, k_d)} + O(\omega_m^{-1}) \right\} \nonumber \\
& \qquad \times \left\{ c_{d, \ell_{\star}}^{(k_{d+1}, \ldots, k_{2d})} + O(\omega_m^{-1}) \right\} uO \left\{  \left(\frac{1}{m}\right)^{2\nu} \log m   \right\} \nonumber \\
&= \zeta_\nu^* \left(\frac{\omega_m}{m}\right)^{2\nu}|\Xi_{u,m}| H_{\ell_{\star},\nu} + \zeta_\nu^*\left(\frac{\omega_m}{m}\right)^{2\nu} |\Xi_{u,m}| O\left\{ \omega_m^{-1}  + u\omega_m^{-2\nu} \log m \right\} \nonumber \\
&= \theta\xi_\nu^* \left(\frac{\omega_m}{m}\right)^{2\nu}|\Xi_{u,m}| H_{\ell_{\star},\nu} + \theta\xi_\nu^*\left(\frac{\omega_m}{m}\right)^{2\nu} |\Xi_{u,m}| O( \omega_m^{-1} + u\omega_m^{-2\nu} \log m )  \\
&\asymp  \theta m^d \left(\frac{\omega_m}{m}\right)^{2\nu},  \nonumber
\end{align}
as $m\rightarrow \infty$. Combining \eqref{eq:F.udl2} with \eqref{eq:E1}, \eqref{eq:E2} and \eqref{eq:EV_u 2}, we conclude that
\begin{align}\label{eq:EV.case2}
&\quad ~ \EE \left(V_{u,d,\ell_{\star}}\right)  \nonumber \\
&= \tau(1-u) \sum_{\bfi\in \Xi_{0,m}} \sum_{0\leq k_1, \ldots, k_{d} \leq \ell_{\star}} \left(c_{\bfi,d,\ell_{\star}}^{(k_1,\ldots,k_d)}\right)^2  + \theta\xi_\nu^* \left(\frac{\omega_m}{m}\right)^{2\nu} |\Xi_{u,m}| H_{\ell_{\star},\nu}  \nonumber \\
&\quad + \theta\xi^*_\nu |\Xi_{u,m}|\left(\frac{\omega_m}{m}\right)^{2\nu} O(\omega_m^{-1}) + \theta \alpha^{2\ell_{\star}-2\nu}\{\xi_{1,\ell_{\star}}+\xi_{2,\ell_{\star}}\log(\alpha)\} |\Xi_{u,m}| (-1)^{\ell_{\star}}(2\ell_{\star})!\left(\frac{\omega_m}{m}\right)^{2\ell_{\star}}  \nonumber\\
&\quad -\theta \alpha^2\xi^*_{\nu+1}|\Xi_{u,m}| (-1)^{\ell_{\star}}(2\ell_{\star})!\left(\frac{\omega_m}{m}\right)^{2\ell_{\star}}\log \left(\frac{m}{\omega_m}\right) \Ical\{\ell_{\star}=\nu+1 \}  \nonumber\\
&\quad +\theta\left(\frac{\omega_m}{m}\right)^{2\nu}|\Xi_{u,m}|O(1)\Bigg\{\xi^*_{\nu+1}(\alpha \frac{\omega_m}{m})^2+ \sum_{j=\ell_{\star}+1}^{\infty} \xi_{1,j}\left(\alpha \frac{\omega_m}{m}\right)^{2j-2\nu}  \nonumber \\
&\quad+\sum_{j=\ell_{\star}+1}^{\infty} \log(\alpha)\xi_{2,j}\left(\alpha \frac{\omega_m}{m}\right)^{2j-2\nu} +\sum_{j=1}^{\infty}\xi^*_{\nu+j}(\alpha \frac{\omega_m}{m})^{2j}\log\left(\frac{m}{\omega_m}\right)  \Bigg\}  \nonumber \\
&\quad+O(1) \|\beta\|_1^2 |\Xi_{u,m}|\left(\frac{\omega_m}{m}\right)^{2\ell_{\star}},
\end{align}
as $m\rightarrow \infty$.

Hence, using the definitions of $C_{V,0},g_{\ell_{\star},\nu},H_{\ell_{\star},\nu}$ in \eqref{eq:CgH}, we can derive from \eqref{eq:EV.case2} that for $u=1$,
\begin{align}
\frac{\EE V_{1,d,\ell_{\star}}}{\theta g_{\ell_{\star},\nu}}
&= \frac{\EE V_{1,d,\ell_{\star}}}{\zeta_\nu^* \left(\frac{\omega_m}{m}\right)^{2\nu} |\Xi_{1,m}| H_{\ell_{\star},\nu}}  \nonumber \\
&= 1+ O(\omega_m^{-1})+O(1)\frac{\|\beta\|_1^2}{\theta}\left(\frac{\omega_m}{m}\right)^{2\ell_{\star}-2\nu} \nonumber \\
&\quad + O(1) \frac{\xi^*_{\nu+1}}{\xi^*_{\nu}}\left(\alpha \frac{\omega_m}{m}\right)^{2}\log\left(\frac{m}{\omega_m}\right) \Ical\{ \ell_{\star}=\nu+1\} \nonumber  \\
&\quad +  O(1) \Bigg[ \frac{\xi^*_{\nu+1}}{\xi^*_{\nu}}\left(\alpha \frac{\omega_m}{m}\right)^{2}+\sum_{j=\ell_{\star}}^{\infty}\frac{\xi_{1,j}}{\xi^*_{\nu}}
\left(\alpha\frac{\omega_m}{m}\right)^{2j-2\nu}  \nonumber \\	
&\quad +\sum_{j=\ell_{\star}}^{\infty}\frac{\xi_{2,j}}{\xi^*_{\nu}}\log(\alpha)\left(\alpha\frac{\omega_m}{m}\right)^{2j-2\nu}
+ \sum_{j=2}^{\infty}\frac{\xi^*_{\nu+j}}{\xi^*_{\nu}}\left(\alpha\frac{\omega_m}{m}\right)^{2j}\log\left(\frac{m}{\omega_m}\right) \Bigg] \label{eq:EV.case2.ratio1} \\
&\stackrel{(i)}{=} 1+ o(1), \nonumber
\end{align}
where the last relation (i) holds uniformly on the set $\Ecal_{n}$, because according to \eqref{eq:rho.relation1}, for all $(\theta,\alpha,\tau,\beta)\in \Ecal_{n}$, as $n\to\infty$,
\begin{align} \label{eq:case2.rho.order1}
& \frac{\|\beta\|_1^2}{\theta}\left(\frac{\omega_m}{m}\right)^{2\ell_{\star}-2\nu} \leq \frac{p\|\beta\|^2}{\theta}\left(\frac{\omega_m}{m}\right)^{2\ell_{\star}-2\nu} \leq pn^{\rho_{1} - (2\ell_{\star}-2\nu)(1-\gamma)/d} = o(1),  \nonumber \\
& \alpha \frac{\omega_m}{m} \leq n^{\rho_{32} - (1 - \gamma)/d} =o(1), \nonumber \\
& \left(\alpha \frac{\omega_m}{m} \right)^2 |\log \alpha| \leq n^{2\rho_{32} - 2(1 - \gamma)/d} \cdot \frac{\rho_{31}+\rho_{32}}{d}\log n = o(1),  \nonumber \\
& \left(\alpha \frac{\omega_m}{m} \right)^{2(\ell_{\star}-\nu)} \log \left(\frac{m}{\omega_m}\right) \leq   n^{2(\ell_{\star}-\nu) [2\rho_{32} - 2(1 - \gamma)/d]} \cdot \frac{1-\gamma}{d}\log n = o(1),
\end{align}
and that the three summations in \eqref{eq:EV.case2.ratio1} are all finite given the definition of $\xi_{1,j},\xi_{2,j},\xi^*_{\nu+j}$ for $j\in\NN$ in \eqref{eq:zeta.xi2}.

For $u=0$, we have that
\begin{align}
\frac{\EE \left(V_{0,d,\ell_{\star}}\right)-\tau C_{V,0}}{\zeta_\nu^* \left(\frac{\omega_m}{m}\right)^{2\nu} |\Xi_{0,m}| H_{\ell_{\star},\nu}}
&= 1+ O(\omega_m^{-1})+O(1)\frac{\|\beta\|_1^2}{\theta}\left(\frac{\omega_m}{m}\right)^{2\ell_{\star}-2\nu}  \nonumber  \\
&\quad + O(1) \frac{\xi^*_{\nu+1}}{\xi^*_{\nu}}\left(\alpha \frac{\omega_m}{m}\right)^{2}\log\left(\frac{m}{\omega_m}\right)\Ical\{ \ell_{\star}=\nu+1\}  \nonumber  \\
&\quad +  O(1) \Bigg[ \frac{\xi^*_{\nu+1}}{\xi^*_{\nu}}\left(\alpha \frac{\omega_m}{m}\right)^{2}+\sum_{j=\ell_{\star}}^{\infty}\frac{\xi_{1,j}}{\xi^*_{\nu}}
\left(\alpha\frac{\omega_m}{m}\right)^{2j-2\nu} \nonumber \\	
&\quad +\sum_{j=\ell_{\star}}^{\infty}\frac{\xi_{2,j}}{\xi^*_{\nu}}\log(\alpha)\left(\alpha\frac{\omega_m}{m}\right)^{2j-2\nu}
 + \sum_{j=2}^{\infty}\frac{\xi^*_{\nu+j}}{\xi^*_{\nu}}\left(\alpha\frac{\omega_m}{m}\right)^{2j}
 \log\left(\frac{m}{\omega_m}\right) \Bigg]. \nonumber
\end{align}
Therefore,
\begin{align}
\frac{\EE \left(V_{0,d,\ell_{\star}}\right)}{\tau C_{V,0}} &= \frac{\EE \left(V_{0,d,\ell_{\star}}\right)-\tau C_{V,0}}{\zeta_\nu^* \left(\frac{\omega_m}{m}\right)^{2\nu} |\Xi_{0,m}| H_{\ell_{\star},\nu}} \frac{\zeta_\nu^* \left(\frac{\omega_m}{m}\right)^{2\nu} |\Xi_{0,m}| H_{\ell_{\star},\nu}}{\tau C_{V,0}} + 1 \nonumber \\
&= 1+ O(1)\frac{\theta}{\tau} \left(\frac{\omega_m}{m}\right)^{2\nu} \Bigg\{1+ O(\omega_m^{-1})+O(1)\frac{\|\beta\|_1^2}{\theta}\left(\frac{\omega_m}{m}\right)^{2\ell_{\star}-2\nu}  \nonumber \\
&\quad + O(1) \frac{\xi^*_{\nu+1}}{\xi^*_{\nu}}\left(\alpha \frac{\omega_m}{m}\right)^{2}\log\left(\frac{m}{\omega_m}\right)\Ical\{ \ell_{\star}=\nu+1\} \nonumber \\
&\quad +  O(1) \Bigg[ \frac{\xi^*_{\nu+1}}{\xi^*_{\nu}}\left(\alpha \frac{\omega_m}{m}\right)^{2}+\sum_{j=\ell_{\star}}^{\infty}\frac{\xi_{1,j}}{\xi^*_{\nu}}
\left(\alpha\frac{\omega_m}{m}\right)^{2j-2\nu} \nonumber \\ &\quad+\sum_{j=\ell_{\star}}^{\infty}\frac{\xi_{2,j}}{\xi^*_{\nu}}\log(\alpha)\left(\alpha\frac{\omega_m}{m}\right)^{2j-2\nu}
+ \sum_{j=2}^{\infty}\frac{\xi^*_{\nu+j}}{\xi^*_{\nu}}\left(\alpha\frac{\omega_m}{m}\right)^{2j}
\log\left(\frac{m}{\omega_m}\right) \Bigg]  \Bigg\} \label{eq:EV.case2.ratio2} \\
&\stackrel{(i)}{=} 1+ o(1), \nonumber
\end{align}	
as $m\to\infty$, where the relation (i) holds uniformly on the set $\Ecal_{n}$ because of the relations in \eqref{eq:case2.rho.order1} on $\Ecal_{n}$,
\begin{align*}
& \frac{\theta}{\tau} \left(\frac{\omega_m}{m}\right)^{2\nu} \leq n^{\rho_{21} - 2\nu(1-\gamma)/d} = o(1)
\end{align*}
on $\Ecal_{n}$, and that the three summations in \eqref{eq:EV.case2.ratio2} are all finite given the definition of $\xi_{1,j},\xi_{2,j},\xi^*_{\nu+j}$ for $j\in\NN$ in \eqref{eq:zeta.xi2}.

\subsection{Uniform Error bounds for $V_{u,d,\ell_{\star}}/\EE\left(V_{u,d,\ell_{\star}}\right)-1$ on $\Ecal_{n}$} \label{subsec:V.EV}
We consider the two cases $u=0$ and $u=1$ separately. The $u=0$ case is used for showing the convergence of $\widehat\tau_n$ in \eqref{eq:tau.theta.es} and the $u=1$ case is used for showing the convergence of $\widehat\theta_n$ in \eqref{eq:tau.theta.es}.
\vspace{2mm}

\noindent {\sc Case B1.} If $u=0$, we write $\widetilde{W}=\big(W_1,\ldots,W_{|\Xi_{0,m}|}\big)^\T  $ where
\begin{align*}
\left\{W_1,\ldots,  W_{|\Xi_{0,m}|}\right\} &= \left\{ \frac{\nabla_{d,\ell_{\star}} Y\left(\bfs(\bfi)\right)}{\sqrt{ \EE  \left(V_{0,d,\ell_{\star}}\right)}} :
\bfi=(i_1,\ldots, i_d)^\T   \in \Xi_{0,m} \right\}.
\end{align*}
Define $\mu_W=\EE \big(\widetilde{W}\big)$, $\Sigma_W= \EE \Big[\big(\widetilde{W}-\mu_W\big)\big(\widetilde{W}-\mu_W\big)^\T   \Big]$. We can write
\begin{align} \label{eq:V0.decomp}
\frac{V_{0,d,\ell_{\star}}}{\EE \left(V_{0,d,\ell_{\star}}\right)}= \widetilde{Z}^\T  \Sigma_W \widetilde{Z}+2\mu^\T  _W\Sigma_W^{1/2}\widetilde{Z}+\mu_W^\T  \mu_W,
\end{align}
where $\widetilde{Z} = (Z_1,\ldots, Z_{|\Xi_{0,m}|} )^\T   \sim \Ncal \left({0}_{|\Xi_{0,m}|}, I_{|\Xi_{0,m}|}\right)$.

Therefore, using the upper bounds of $\mu_W^\T  \mu_W$ and $\|\Sigma_W\|_F$ proved in the later Section \ref{subsec:Sigma.Fnorm}, \eqref{eq:E2}, \eqref{eq:EV.case2.ratio2} and \eqref{eq:tau.CV0} imply that on the set $\Ecal_{n}$, as $m\to\infty$ (equivalently $n=m^d\to\infty$),
\begin{align}\label{eq:mu2.SigmaW.o1}
\mu_W^\T  \mu_W &=  \frac{1}{\EE  (V_{0,d,\ell_{\star}})}\sum_{\bfi \in \Xi_{0,m}}\sum_{0\leq k_1, \ldots, k_{2d}\leq \ell_{\star}} c_{\bfi, d, \ell_{\star}}^{(k_1, \ldots, k_d)}c_{\bfi,  d, \ell_{\star}}^{(k_{d+1}, \ldots, k_{2d})} \mm \big(\bfs(\bfi+\bfk_1\omega_m) \big) \mm \big(\bfs(\bfi+ \bfk_2 \omega_m) \big) \nonumber \\
&=   O(1) \frac{\|\beta\|_1^2}{\tau}\left(\frac{\omega_m}{m}\right)^{2\ell_{\star}}  \nonumber \\
&\leq O(1)\frac{p\|\beta\|^2}{\theta} \frac{\theta}{\tau} \left(\frac{\omega_m}{m}\right)^{2\ell_{\star}}  \nonumber \\
&\leq C n^{\rho_{1} + \rho_{21} - 2(1-\gamma)\ell_{\star}/d} = o(1) , \nonumber \\
\|\Sigma_W\|_F^2  &\leq  Cm^{-d} + C\frac{\theta^2}{\tau^2}\left(\frac{\omega_m}{m}\right)^{4\nu+d} \left(1 + [1+|\log(\alpha)|]^2 \right) \nonumber \\
&\leq C \left\{n^{-1} + n^{2\rho_{21} - (1-\gamma)(4\nu+d)/d} \log^2 n \right\} =o(1) ,
\end{align}
where the two $o(1)$'s are based on the order of $\omega_m=\lfloor m^{\gamma} \rfloor$, the upper bounds of $\|\beta_1\|^2/\theta$, $\tau/\theta$ and $\alpha$ in the set $\Ecal_{n}$, and the condition \eqref{eq:rho.relation1}.
Hence $\mu_W^\T  \Sigma_W\mu_W\le \mu_W^\T  \mu_W\big\|\Sigma_W\big\|_F=o(1)$ as well.

Notice that taking expectation on both sides of \eqref{eq:V0.decomp} implies that $1= \EE\left(\widetilde{Z}^\T  \Sigma_W \widetilde{Z}\right) + \mu_W^\T  \mu_W$, and hence $\EE\left(\widetilde{Z}^\T  \Sigma_W \widetilde{Z}\right) =1- \mu_W^\T  \mu_W$. We conclude that for any $\epsilon>0$, for all sufficiently large $n$,
\begin{align} \label{eq:V0.concentration}
&\quad ~ \PP\left(\left|\frac{V_{0,d,\ell_{\star}}}{\EE V_{0,d,\ell_{\star}}}-1\right|>\epsilon\right) \nonumber \\
&\leq  \PP\left(\left|\widetilde{Z}^\T  \Sigma_W\widetilde{Z} + \mu_W^\T  \mu_W - 1\right| > \frac{\epsilon}{2}\right)  +  \PP\left(\left|\mu_W^\T  \Sigma_W^{1/2}\widetilde{Z}\right| > \frac{\epsilon}{4}\right) \nonumber \\
&=\PP\left(\left|\widetilde{Z}^\T  \Sigma_W\widetilde{Z}- \EE\left(\widetilde{Z}^\T  \Sigma_W\widetilde{Z}\right)\right| > \frac{\epsilon}{4}\right) + \PP\left(\left|\mu_W^\T  \Sigma_W^{1/2}\widetilde{Z}\right| > \frac{\epsilon}{4} \right) \nonumber \\
&\stackrel{(i)}{\leq}  2\exp\left\{-\frac{C_{\HW}}{16}\min\left( \frac{\epsilon^2}{\|\Sigma_W\|_F^2},\frac{\epsilon}{\|\Sigma_W\|_F} \right) \right\}
+ 2 \exp\left\{-\frac{\epsilon^2}{16\mu_W^\T  \Sigma_W\mu_W} \right\} \nonumber \\
&\leq 2\exp\left\{-\frac{C_{\HW}}{16}\min\left( \frac{\epsilon^2}{\|\Sigma_W\|_F^2},\frac{\epsilon}{\|\Sigma_W\|_F} \right) \right\}  +  2\exp\left\{-\frac{\epsilon^2}{16\mu_W^\T  \mu_W\big\|\Sigma_W\big\|_F} \right\} \nonumber \\
&\stackrel{(ii)}{\leq} 2\exp\left\{-C \varphi\left(\min\left\{n^{1/2}, n^{(1-\gamma)(4\nu+d)/(2d) - \rho_{21}} /\log n\right\} \epsilon \right) \right\} \nonumber \\
&\quad +  2\exp\left\{- Cn^{2(1-\gamma)\ell_{\star}/d - \rho_{1} -\rho_{21}} \min\left\{n^{1/2} , n^{(1-\gamma)(4\nu+d)/(2d) - \rho_{21}} /\log n \right\}  \epsilon^2 \right\} \nonumber \\
&\leq 2\exp\Big\{ -C \varphi\Big( \min\Big\{ n^{1/2}, ~~ n^{(1-\gamma)(4\nu+d)/(2d) - \rho_{21}} /\log n,  \nonumber \\
&\qquad  n^{1/4 + (1-\gamma)\ell_{\star}/d - (\rho_1+\rho_{21})/2} , ~~ n^{(1-\gamma)(4\nu+d+4\ell_{\star})/(4d) - (\rho_1+2\rho_{21})/2} /\log^{1/2} n \Big\}  \epsilon \Big)\Big\},
\end{align}
for some constant $C>0$, where in the inequality (i), the first term follows from the Hanson-Wright inequality in Lemma \ref{lem:Hsuetal12} and $\|\Sigma_W\|_{\op}\leq \|\Sigma_W\|_F$, and the second term follows from the sub-Gaussian concentration inequality for the Gaussian random variable $\mu_W^\T  \Sigma_W^{1/2}\widetilde{Z}$; the inequality (ii) follows from \eqref{eq:mu2.SigmaW.o1} and \eqref{eq:Sigma.W.5} in Section \ref{subsec:Sigma.Fnorm}.
\vspace{3mm}
	
{\sc Case B2.} If $u=1$, we write $\widetilde{U} = \big(U_1,\ldots, U_{2|\Xi_{1,m}|}\big)^\T  $,  where
\begin{align*}
\left\{ U_1,\ldots, U_{|\Xi_{1,m}| } \right\} &=   \left\{ \frac{\nabla_{d,\ell_{\star}} Y(\bfs(\bfi))}{\sqrt{ \EE  \left(V_{1,d,\ell_{\star}}\right)}} :  \bfi=(i_1,\ldots, i_d)^\T   \in \Xi_{1,m} \right\}, \nonumber \\
\left\{ U_{|\Xi_{1,m}| +1},\ldots, U_{2 |\Xi_{1,m}| } \right\} &=   \left\{ \frac{\nabla_{d,\ell_{\star}} Y(\bfs(\bfi+\bfe_1))}{\sqrt{\EE \left(V_{1,d,\ell_{\star}}\right)}} : \bfi=(i_1,\ldots, i_d)^\T   \in \Xi_{1,m} \right\}.
\end{align*}
Define the  $2|\Xi_{1,m}| \times 2 |\Xi_{1,m}|$ symmetric matrix $A = (a_{i,k})_{1\leq i, k\leq 2|\Xi_{1,m}|}$ by
\begin{align*}
a_{j, |\Xi_{1,m}| +j} &=   a_{|\Xi_{1,m}| + j, j} = 1/2, \qquad \text{for } j= 1,\ldots, |\Xi_{1,m}|,\nonumber \\
a_{j,k} &=   0, \qquad \mbox{otherwise.}
\end{align*}
Then
\begin{align}
\frac{ V_{1,d,\ell_{\star}} }{\EE  ( V_{1,d,\ell_{\star}} )} = \widetilde{U}^\T   A \widetilde{U}. \nonumber
\end{align}
Define $\mu_U=\EE \left(\widetilde{U}\right)$ and $\Sigma_U = \EE \left[(\widetilde{U}-\mu_U) (\widetilde{U}-\mu_U)^\T  \right]$.
We observe that
\begin{align}\label{eq:V1.decomp}
\frac{ V_{1,d,\ell_{\star}} }{\EE  \left(V_{1,d,\ell_{\star}} \right)}  = \widetilde{Z}^\T   \Sigma_U^{1/2} A \Sigma_U^{1/2} \widetilde{Z}+ 2\mu_U^\T  A \Sigma_U^{1/2}\widetilde{Z} + \mu_U^\T  A A^\T  \mu_U,
\end{align}
where $\widetilde{Z} = \big(Z_1,\ldots, Z_{2|\Xi_{1,m}|} \big)^\T   \sim \Ncal \left(0_{|\Xi_{0,m}|}, I_{2 |\Xi_{1,m}|}\right)$.
	
Therefore, using the upper bounds of $\mu_U^\T  AA^\T  \mu_U$, $\left\| \Sigma^{1/2}_U A \Sigma^{1/2}_U \right\|_F$, and $\left\|\Sigma_U \right\|_F$ proved in the later Section \ref{subsec:Sigma.Fnorm}, and from \eqref{eq:E2}, \eqref{eq:EV.case1.ratio1} and \eqref{eq:EV.case2.ratio1}, we have that on the event $\Ecal_{n}$, as $m\to\infty$ (equivalently $n=m^d\to\infty$),
\begin{align}\label{eq:mu2.SigmaU.o1}
\mu_U^\T  AA^\T  \mu_U &=  \frac{1}{4\EE \left(V_{1,d,\ell_{\star}}\right)}\sum_{\bfi \in \Xi_{1,m}}\Bigg\{\sum_{0\leq k_1, \ldots, k_{2d}\leq \ell_{\star}}   c_{\bfi, d, \ell_{\star}}^{(k_1, \ldots, k_d)}c_{\bfi,  d, \ell_{\star}}^{(k_{d+1}, \ldots, k_{2d})} \nonumber\\
&\quad\times \mm \big(\bfs(\bfi+\bfk_1\omega_m) \big) \mm \big(\bfs(\bfi+ \bfk_2 \omega_m) \big)  \nonumber \\
& \quad +\sum_{0\leq k_1, \ldots, k_{2d}\leq \ell_{\star}}c_{\bfi+\bfe_1, d, \ell_{\star}}^{(k_1, \ldots, k_d)}c_{\bfi+ \bfe_1,  d, \ell_{\star}}^{(k_{d+1}, \ldots, k_{2d})} \mm \big(\bfs(\bfi+\bfe_1+\bfk_1\omega_m) \big) \mm \big(\bfs(\bfi+\bfe_1+ \bfk_2 \omega_m) \big)\Bigg\} \nonumber \\
&=  O(1)\frac{\|\beta\|_1^2}{\theta}\left(\frac{\omega_m}{m}\right)^{2\ell_{\star}-2\nu}   \nonumber \\
&\leq O(1)\frac{p\|\beta\|^2}{\theta}\left(\frac{\omega_m}{m}\right)^{2\ell_{\star}-2\nu}  \nonumber \\
&\leq C n^{\rho_{1} - (1-\gamma)(2\ell_{\star}-2\nu)/d} = o(1) , \nonumber \\
\left\| \Sigma^{1/2}_U A \Sigma^{1/2}_U \right\|_F^2  &\leq C \Bigg\{\frac{\tau^2}{\theta^2} m^{-d}\left(\frac{\omega_m}{m}\right)^{-4\nu} + \frac{\tau}{\theta} m^{-d} \left(\frac{\omega_m}{m}\right)^{-2\nu}  \nonumber \\
&\qquad + \left(1+ [1+|\log(\alpha)|]^2\right)\left(\frac{\omega_m}{m}\right)^{d} \Bigg\} \nonumber \\
&\leq C\left\{n^{2\rho_{22} - 1 + 4(1-\gamma)\nu/d}  + n^{- (1-\gamma)} \log^2 n\right\} = o(1), \nonumber \\
\left\|\Sigma_U \right\|_F^2 &\leq  C \Bigg\{\frac{\tau^2}{\theta^2} m^{-d}\left(\frac{\omega_m}{m}\right)^{-4\nu} + \frac{\tau}{\theta} m^{-d} \left(\frac{\omega_m}{m}\right)^{-2\nu}  \nonumber \\
&\qquad + \left(1+[1+|\log(\alpha)|]^2\right) \left(\frac{\omega_m}{m}\right)^{d} \Bigg\} \nonumber \\
&\leq C\left\{ n^{2\rho_{22} - 1 + 4(1-\gamma)\nu/d}  + n^{- (1-\gamma)} \log^2 n \right\}  = o(1),
\end{align}
where the three $o(1)$'s are based on the order of $\omega_m=\lfloor m^{\gamma} \rfloor$, the upper bounds of $\|\beta_1\|^2/\theta$, $\tau/\theta$ and $\alpha$ in the set $\Ecal_{n}$, and the condition \eqref{eq:rho.relation1}.
Hence $\mu_U^\T  A \Sigma_U A^\T  \mu_U\leq \mu_U^\T  AA^\T  \mu_U \big\|\Sigma_U\big\|_F=o(1)$ as well.
Notice that taking expectation on both sides of \eqref{eq:V1.decomp} implies that $1= \EE\left(\widetilde{Z}^\T  \Sigma_U^{1/2}A \Sigma_U^{1/2} \widetilde{Z}\right) + \mu_U^\T  AA^\T  \mu_U$, and hence $\EE\left(\widetilde{Z}^\T  \Sigma_U^{1/2}A \Sigma_U^{1/2} \widetilde{Z}\right) = 1- \mu_U^\T  AA^\T  \mu_U$. Then similar to the derivation of \eqref{eq:V0.concentration}, we conclude from \eqref{eq:mu2.SigmaU.o1}, \eqref{eq:Sigma.U.3} and \eqref{eq:Sigma.U.4} in Section \ref{subsec:Sigma.Fnorm} that for any $\epsilon>0$, for all sufficiently large $n$,
\begin{align} \label{eq:V1.concentration}
&\quad~ \PP \left(\left|\frac{V_{1,d,\ell_{\star}}}{\EE \left(V_{1,d,\ell_{\star}}\right)}-1\right|>\epsilon\right)  \nonumber \\
&\leq  \PP\left(\left|\widetilde{Z}^\T  \Sigma_U^{1/2}A\Sigma_U\widetilde{Z}-\mu_U^\T  AA^\T  \mu_U-1\right| >\frac{\epsilon}{2}\right) +  \PP\left(\left|\mu^\T _U A\Sigma_U^{1/2}\widetilde{Z}\right|>\frac{\epsilon}{4}\right)  \nonumber \\
&= \PP\left(\left|\widetilde{Z}^\T  \Sigma_U^{1/2}A\Sigma_U\widetilde{Z}- \EE\left(\widetilde{Z}^\T  \Sigma_U^{1/2}A\Sigma_U\widetilde{Z}\right)\right|>\frac{\epsilon}{2}\right) + \PP\left(\left|\mu^\T _U A\Sigma_U^{1/2}\widetilde{Z}\right|>\frac{\epsilon}{4}\right) \nonumber \\
&\leq  2\exp\left\{- \frac{C_{\HW}}{16} \min\left( \frac{\epsilon^2}{\big\| \Sigma^{1/2}_U A \Sigma^{1/2}_U \big\|_F^2}, \frac{\epsilon}{\big\| \Sigma^{1/2}_U A \Sigma^{1/2}_U \big\|_F} \right) \right\}  \nonumber \\
&\quad + 2 \exp\left\{-\frac{\epsilon^2}{16\mu_U^\T  A\Sigma_U A^\T  \mu_U} \right\}  \nonumber \\
&\leq  2\exp\left\{- \frac{C_{\HW}}{16}\min\left( \frac{\epsilon^2}{\big\| \Sigma^{1/2}_U A \Sigma^{1/2}_U \big\|_F^2}, \frac{\epsilon}{\big\| \Sigma^{1/2}_U A \Sigma^{1/2}_U \big\|_F} \right) \right\}  \nonumber \\
&\quad + 2 \exp\left\{-\frac{\epsilon^2}{16\mu_U^\T  A A^\T  \mu_U \left\|\Sigma_U\right\|_F} \right\}  \nonumber \\
&\stackrel{(i)}{\leq}  2\exp\left\{- C \varphi\left(\min\left\{n^{1/2 - \rho_{22} - 2(1-\gamma)\nu/d } , n^{(1-\gamma)/2 } /\log n \right\} \epsilon\right) \right\}  \nonumber \\
&\quad + 2 \exp\left\{-C n^{(1-\gamma)(2\ell_{\star}-2\nu)/d - \rho_{1}} \min\left\{ n^{ 1/2 - 2(1-\gamma)\nu/d - \rho_{22}} , n^{(1-\gamma)/2} /\log n \right\}\epsilon^2 \right\}  \nonumber \\
&\leq  2\exp\Big\{- C \varphi\Big(\min\Big\{ n^{1/2 - \rho_{22} - 2(1-\gamma)\nu/d } , ~~ n^{(1-\gamma)/2} /\log n ,  \nonumber \\
&\qquad   n^{ 1/4 + (1-\gamma)(\ell_{\star}-2\nu)/d - (\rho_{1} + \rho_{22})/2} , ~~
n^{(1-\gamma)/4 + (1-\gamma)(\ell_{\star}-\nu)/d - \rho_{1}/2 }  /\log^{1/2} n \Big\} \epsilon \Big) \Big\} ,
\end{align}
for some constant $C>0$, where the inequality (i) follows from \eqref{eq:Sigma.U.3} and \eqref{eq:Sigma.U.4} in Section \ref{subsec:Sigma.Fnorm}.

\subsection{Bounds for the Frobenius Norms of $\big\|\Sigma_W\big\|_F^2$, $\big\| \Sigma^{1/2}_U A \Sigma^{1/2}_W \big\|_F^2$ and $\big\|\Sigma_U\big\|_F^2$} \label{subsec:Sigma.Fnorm}
This section provides the detailed derivation of upper bounds for $\big\|\Sigma_W\big\|_F^2$, $\big\| \Sigma^{1/2}_U A \Sigma^{1/2}_W \big\|_F^2$, and $\big\|\Sigma_U\big\|_F^2$.
\vspace{3mm}

\noindent \underline{(1) Upper bound for $\|\Sigma_W\|_F^2$.}
\vspace{2mm}

We first use $(a+b)^2\leq 2(a^2+b^2)$ for any $a,b\in \RR$ to obtain that
\begin{align} \label{eq:Sigma.W.1}
\|\Sigma_W\|_F^2&=  \frac{1}{\left\{\EE \left(V_{0,d,\ell_{\star}}\right)\right\}^2} \sum_{\bfi, \bfj\in \Xi_{0,m}}
\Bigg\{ \sum_{0\leq k_1, \ldots, k_{2d} \leq \ell_{\star}} c_{\bfi,d,\ell_{\star}}^{(k_1,\ldots,k_d)}
c_{\bfj,d,\ell_{\star}}^{(k_{d+1},\ldots,k_{2d})} \nonumber \\
& \quad \times \Big[ \tau \Ical \left\{ \bfs(\bfi+ \bfk_1 \omega_m  )=  \bfs (\bfj+ \bfk_2 \omega_m  ) \right\}  + \theta K_{\alpha,\nu}  \left( \bfs(\bfi+ \bfk_1 \omega_m  )-  \bfs (\bfj+ \bfk_2 \omega_m  ) \right) \Big] \Bigg\}^2 \nonumber \\
&\leq  \frac{2\tau^2}{\left\{\EE \left(V_{0,d,\ell_{\star}}\right)\right\}^2} \sum_{\bfi, \bfj\in \Xi_{0,m}}
\Bigg\{ \sum_{0\leq k_1, \ldots, k_{2d} \leq \ell_{\star}} c_{\bfi,  d,\ell_{\star}}^{(k_1,\ldots,k_d)}
c_{\bfj,  d,\ell_{\star}}^{(k_{d+1},\ldots,k_{2d})} \nonumber \\
&\qquad \times \Ical \{ \bfs(\bfi+ \bfk_1 \omega_m )=  \bfs (\bfj+ \bfk_2 \omega_m  ) \} \Bigg\}^2 \nonumber \\
&\quad  + \frac{2}{\left\{\EE \left(V_{0,d,\ell_{\star}}\right)\right\}^2}  \sum_{\bfi, \bfj\in \Xi_{0,m}} \Bigg\{ \sum_{0\leq k_1, \ldots, k_{2d} \leq \ell_{\star}} c_{\bfi,  d,\ell_{\star}}^{(k_1,\ldots,k_d)} c_{\bfj,  d,\ell_{\star}}^{(k_{d+1},\ldots,k_{2d})}  \nonumber \\
&  \qquad\times \theta K_{\alpha,\nu}  \left( \bfs(\bfi+ \bfk_1 \omega_m )-  \bfs (\bfj+ \bfk_2 \omega_m  ) \right) \Bigg\}^2.
\end{align}

For the first term in \eqref{eq:Sigma.W.1}, from Lemma \ref{lem:Cor2.Loh21}, we observe that as $m\to\infty$,
\begin{align} \label{eq:Sigma.W.2}
&\quad~ \sum_{\bfi, \bfj\in \Xi_{0,m}} \left\{ \sum_{0\leq k_1, \ldots, k_{2d} \leq \ell_{\star}} c_{\bfi, d,\ell_{\star}}^{(k_1,\ldots,k_d)} c_{\bfj,  d,\ell_{\star}}^{(k_{d+1},\ldots,k_{2d})} \Ical \{ \bfs(\bfi+ \bfk_1 \omega_m  )=  \bfs (\bfj+ \bfk_2 \omega_m  ) \} \right\}^2  \nonumber \\
&= \sum_{\bfi, \bfj\in \Xi_{0,m}} \Bigg\{ \sum_{0\leq k_1, \ldots, k_{2d} \leq \ell_{\star}} \left[c_{d,\ell_{\star}}^{(k_1,\ldots,k_d)}+O(\omega_m^{-1}) \right] \left[c_{d,\ell_{\star}}^{(k_{d+1},\ldots,k_{2d})} +O(\omega_m^{-1}) \right] \nonumber \\
&  \qquad\times \Ical \{ \bfs(\bfi+ \bfk_1 \omega_m  )=  \bfs (\bfj+ \bfk_2 \omega_m  ) \} \Bigg\}^2 \nonumber \\
&\leq  C m^d,
\end{align}
for some $C>0$ that only depends on $\ell_{\star},\nu,d$.

For the second term in \eqref{eq:Sigma.W.1}, we consider two cases, $\|\bfi-\bfj\|\leq 5\ell_{\star}\omega_m\sqrt{d}$ and $\|\bfi-\bfj\|>5\ell_{\star}\omega_m\sqrt{d}$. For the first case, the number of such $(\bfi,\bfj)$ pairs is at most of order $m^d \omega_m^d$, which implies that
\begin{align} \label{eq:Sigma.W.3}
&\quad~ \sum_{\bfi, \bfj\in \Xi_{0,m}: \| \bfi - \bfj \| \leq 5\ell_{\star} \omega_m \sqrt{d}} \Bigg\{ \sum_{0\leq k_1, \ldots, k_{2d} \leq \ell_{\star}} c_{\bfi,d,\ell_{\star}}^{(k_1,\ldots,k_d)} c_{\bfj, d,\ell_{\star}}^{(k_{d+1},\ldots,k_{2d})} \nonumber \\
&  \qquad\times \theta K_{\alpha,\nu}  \left( \bfs( \bfi+ \bfk_1 \omega_m  )- \bfs (\bfj+ \bfk_2 \omega_m ) \right) \Bigg\}^2 \nonumber \\
&= \sum_{\bfi, \bfj\in \Xi_{0,m}: \| \bfi - \bfj \| \leq 5\ell_{\star} \omega_m \sqrt{d}} \Big\{ \zeta_{\ell_{\star}}(-1)^{\ell_{\star}}(2\ell_{\star})!\left(\frac{\omega_m}{m}\right)^{2\ell_{\star}} \nonumber \\
&\qquad  + \sum_{0\leq k_1, \ldots, k_{2d} \leq \ell_{\star}} c_{\bfi, d,\ell_{\star}}^{(k_1,\ldots,k_d)}c_{\bfj,d,\ell_{\star}}^{(k_{d+1},\ldots,k_{2d})} \nonumber \\
&\qquad \times \Bigg[\sum_{j=\ell_{\star}+1}^{\infty}\zeta_{j} \|\bfs( \bfi+ \bfk_1 \omega_m  )-  \bfs (\bfj+ \bfk_2 \omega_m ) \|^{2j} \nonumber \\
& \qquad+ \sum_{j=0}^{\infty} \zeta^*_{\nu+j} G_{\nu+j}(\|\bfs( \bfi+ \bfk_1 \omega_m  )- \bfs (\bfj+ \bfk_2 \omega_m ) \|) \Bigg] \Bigg\}^2 \nonumber \\
&\leq  C \theta^2 m^d \omega_m^d\left(\frac{\omega_m}{m}\right)^{4\nu},
\end{align}
as $m\to\infty$ for some $C>0$ dependent only on $\ell_{\star},\nu,d$.

For the second case of $\|\bfi-\bfj\|>5\ell_{\star}\omega_m\sqrt{d}$, if we let $\sa=(a_1,\ldots,a_d)\in \NN^d$, then we have that
\begin{align}\label{eq:Sigma.W.4}
&\quad~  \sum_{\bfi, \bfj\in \Xi_{0,m}: \| \bfi-\bfj\| > 5 \ell_{\star} \omega_m \sqrt{d}}
\Bigg\{  \sum_{0\leq k_1,\ldots, k_{2d}\leq \ell_{\star}}  c_{\bfi, d, \ell_{\star}}^{(k_1,\ldots, k_d)}  c_{\bfj,  d, \ell_{\star}}^{(k_{d+1},\ldots, k_{2d})} \nonumber \\
&\qquad \times \theta K_{\alpha,\nu} \left(\bfs( \bfi + \bfk_1 \omega_m ) - \bfs(\bfj + \bfk_2 \omega_m ) \right) \Bigg\}^2 \nonumber \\
&= \sum_{\bfi, \bfj\in \Xi_{0,m}: \| \bfi-\bfj\| > 5 \ell_{\star} \omega_m \sqrt{d}} \Bigg\{\sum_{0\leq k_1,\ldots, k_{2d}\leq \ell_{\star}}  c_{\bfi,d,\ell_{\star}}^{(k_1,\ldots, k_d)}  c_{\bfj,d,\ell_{\star}}^{(k_{d+1},\ldots, k_{2d})} \nonumber \\
&\quad \times \Bigg[ \sum_{0\leq a_1+\ldots + a_d \leq 2\ell_{\star} -1} \frac{\sD^{\sa} \theta K_{\alpha,\nu}  (\bfs( \bfi ) - \bfs(\bfj ) ) }{a_1! \ldots a_d!} \nonumber \\
&\qquad \times \prod_{q=1}^d \left[ s_q(\bfi +\bfk_1 \omega_m ) - s_q(\bfj +\bfk_2 \omega_m ) - s_q (\bfi) + s_q(\bfj) \right]^{a_q} \nonumber \\
&\qquad  + \sum_{a_1+\ldots + a_d = 2\ell_{\star}} \frac{2 \ell_{\star}}{a_1! \ldots a_d!}  \prod_{q=1}^d \left[s_q(\bfi +\bfk_1 \omega_m ) - s_q(\bfj +\bfk_2 \omega_m ) - s_q (\bfi) + s_q(\bfj)\right]^{a_q} \nonumber \\
&\qquad \times \int_0^1 (1-t)^{2 \ell_{\star}-1} \sD^{\sa} \theta K_{\alpha,\nu}  \Big( \bfs (\bfi) - \bfs(\bfj) \nonumber \\
&\qquad + t \{ \bfs (\bfi +\bfk_1 \omega_m ) - \bfs (\bfj +\bfk_2 \omega_m) -\bfs(\bfi) + \bfs(\bfj) \} \Big) \ud t \Bigg] \Bigg\}^2 \nonumber \\
&= \sum_{\bfi, \bfj\in \Xi_{0,m}: \| \bfi-\bfj\| > 5 \ell_{\star} \omega_m \sqrt{d}} \Bigg\{
   \sum_{0\leq k_1,\ldots, k_{2d}\leq \ell_{\star}} c_{\bfi, d, \ell_{\star}}^{(k_1,\ldots, k_d)}  c_{\bfj,  d, \ell_{\star}}^{(k_{d+1},\ldots, k_{2d})} \nonumber \\
&\quad  \times \sum_{a_1+\ldots + a_d = 2\ell_{\star}} \frac{2 \ell_{\star}}{a_1! \ldots a_d!}  \prod_{q=1}^d \left[s_q(\bfi +\bfk_1 \omega_m ) - s_q(\bfj +\bfk_2 \omega_m ) -s_q (\bfi) + s_q(\bfj) \right]^{a_q}  \nonumber \\
&\qquad  \times \int_0^1 (1-t)^{2 \ell_{\star}-1} \sD^{\sa}  \theta K_{\alpha,\nu} \Big( \bfs (\bfi) - \bfs(\bfj) \nonumber \\
&  \qquad + t \{ \bfs (\bfi +\bfk_1 \omega_m) - \bfs (\bfj +\bfk_2 \omega_m )
	-\bfs(\bfi) + \bfs(\bfj) \} \Big) \ud t \Bigg\}^2 \nonumber \\
&\stackrel{(i)}{\leq} C \left(\frac{\omega_m}{m}\right)^{4 \ell_{\star}}  \sum_{\bfi, \bfj\in \Xi_{0,m}: \| \bfi-\bfj\| > 5 \ell_{\star} \omega_m \sqrt{d}} \Bigg\{\sum_{0\leq k_1,\ldots, k_{2d}\leq \ell_{\star}} \sup_{\substack{0\leq t\leq 1 \\ a_1+\ldots a_d=2\ell_{\star}}} \sD^{(a_1,\ldots,a_d)} \theta K_{\alpha,\nu}  \Big( \bfs (\bfi) - \bfs(\bfj) \nonumber \\
&  \qquad  + t \{ \bfs (\bfi +\bfk_1 \omega_m ) - \bfs (\bfj +\bfk_2 \omega_m ) -\bfs(\bfi) + \bfs(\bfj) \} \Big) \Bigg\}^2 \nonumber \\
&\stackrel{(ii)}{\leq}  C \theta^2 (1+|\log(\alpha)|)^2 \left(\frac{\omega_m}{m}\right)^{4 \ell_{\star}}  \nonumber\\
&\quad \times \sum_{\bfi, \bfj\in \Xi_{0,m}: \| \bfi-\bfj\| > 5 \ell_{\star} \omega_m \sqrt{d}}  \left\{ \|\bfs (\bfi) - \bfs(\bfj) \| -\frac{ (4\ell_{\star} \omega_m +2) \sqrt{d} }{m} \right\}^{4\nu -4\ell_{\star}}   \nonumber \\
&\leq  C \theta^2 (1+|\log(\alpha)|)^2 \left(\frac{\omega_m}{m}\right)^{4 \ell_{\star}}\sum_{\bfi, \bfj\in \Xi_{0,m}: \| \bfi-\bfj\| > 5 \ell_{\star} \omega_m \sqrt{d}} \|\bfs (\bfi) - \bfs(\bfj) \|^{4\nu -4\ell_{\star}}  \nonumber \\
&\leq C\theta^2 (1+|\log(\alpha)|)^2 \left(\frac{\omega_m}{m}\right)^{4 \ell_{\star}} \sum_{\bfi, \bfj\in \Xi_{0,m}: \| \bfi-\bfj\| > 5 \ell_{\star} \omega_m \sqrt{d}} \left(\frac{ \|\bfi - \bfj \| }{m}\right)^{4\nu -4\ell_{\star}} \nonumber \\
&\leq C \theta^2 (1+|\log(\alpha)|)^2 \left(\frac{\omega_m}{m}\right)^{4\ell_{\star}} \sum_{\bfi\in \Xi_{0,m}} m^d \int_{\omega_m/m}^{\sqrt{d}} s^{4 \nu -4\ell_{\star} +d-1} \ud s  \nonumber \\
&\stackrel{(iii)}{\leq} C \theta^2  (1+|\log(\alpha)|)^2 m^{2d} \left(\frac{\omega_m}{m}\right)^{4\nu+d},
\end{align}
as $m\to\infty$, where the inequality (i) follows because all $s_q(\bfi +\bfk_1 \omega_m ) , s_q(\bfj +\bfk_2 \omega_m ) , s_q (\bfi) ,  s_q(\bfj) \in [0,1]$ and the power $2\ell_{\star}-1 =2 \lceil \nu+d/2 \rceil - 1 \geq 2 -1 = 1$,
the inequality (ii) follows from Lemma \ref{lem:Macov.deriv}, and (iii) follows from $|\Xi_{0,m}|\leq m^d$ and the definition that $\ell_{\star}=\lceil \nu+d/2 \rceil \geq \nu+d/2 > \nu+d/4$ so $4\nu-4\ell_{\star} +d <0$.

Therefore, we combine \eqref{eq:EV.case1.ratio2}, \eqref{eq:EV.case2.ratio2}, \eqref{eq:Sigma.W.1}, \eqref{eq:Sigma.W.2}, \eqref{eq:Sigma.W.3} and \eqref{eq:Sigma.W.4} to conclude that for all sufficiently large $m$,
\begin{align}\label{eq:Sigma.W.5}
\big\|\Sigma_W\big\|_F^2 &\leq \frac{(\tau C_{V,0})^2}{\left\{\EE \left(V_{0,d,\ell_{\star}}\right)\right\}^2} \cdot \frac{2}{(\tau C_{V,0})^2} \Bigg\{C\tau^2 m^d + C\theta^2 m^d \omega_m^d \left(\frac{\omega_m}{m}\right)^{4\nu} \nonumber \\
&\quad +  C \theta^2 (1+|\log(\alpha)|)^2 m^{2d} \left(\frac{\omega_m}{m}\right)^{4\nu+d}\Bigg\}  \nonumber \\
&\stackrel{(i)}{\leq} \frac{C}{\tau^2 m^{2d}} \Bigg\{\tau^2 m^d + \theta^2 m^d \omega_m^d \left(\frac{\omega_m}{m}\right)^{4\nu} \nonumber \\
&\quad +  \theta^2 (1+|\log(\alpha)|)^2 m^{2d} \left(\frac{\omega_m}{m}\right)^{4\nu+d}\Bigg\} \nonumber \\
&= C \left\{m^{-d} + \frac{\theta^2}{\tau^2}\left(\frac{\omega_m}{m}\right)^{4\nu+d}  \left(1+(1+|\log(\alpha)|)^2\right)  \right\} ,
\end{align}
where the inequality (i) follows from \eqref{eq:tau.CV0}, \eqref{eq:EV.case1.ratio2}, and \eqref{eq:EV.case2.ratio2}.
\vspace{3mm}

\noindent \underline{(2) Upper bound for $\|\Sigma^{1/2}_U A \Sigma^{1/2}_U\|_F^2$ and $\|\Sigma_U\|_F^2$.}
\vspace{2mm}

Based on the definition of $\Sigma_U$ and $A$, we partition them into $2\times 2$ blocks of size $|\Xi_{1,m}|\times |\Xi_{1,m}|$ as follows:
\begin{align*}
& \Sigma_U = \begin{pmatrix}
\Sigma_U^{1,1} & \Sigma_U^{1,2} \\
\Sigma_U^{2,1} & \Sigma_U^{2,2}
\end{pmatrix},
\qquad
A = \begin{pmatrix}
A^{1,1} & A^{1,2} \\
A^{2,1} & A^{2,2}
\end{pmatrix},
\end{align*}
where $\Sigma_U^{j,k}$ and $A^{j,k}$ are $|\Xi_{1,m}|\times |\Xi_{1,m}|$ matrices for $j,k\in \{1,2\}$. From the definition of $A$, we have $A^{1,1}=A^{2,2}=0\times I_{|\Xi_{1,m}|}$ and $A^{1,2}=A^{2,1}=I_{|\Xi_{1,m}|}$. Consequently, we observe that
\begin{align}\label{eq:SAS.1}
\left\|\Sigma_U^{1/2} A \Sigma_U^{1/2} \right\|_F^2 & = \tr\left(A \Sigma_U A \Sigma_U\right) \nonumber \\
&=\frac{1}{4} \tr\left\{
\begin{pmatrix}
\Sigma_U^{2,1} & \Sigma_U^{2,2} \\
\Sigma_U^{1,1} & \Sigma_U^{1,2}
\end{pmatrix}
\begin{pmatrix}
\Sigma_U^{2,1} & \Sigma_U^{2,2} \\
\Sigma_U^{1,1} & \Sigma_U^{1,2}
\end{pmatrix}
\right\} \nonumber \\
&=\frac{1}{4} \tr \left\{
\begin{pmatrix}
\Sigma_U^{2,1}\Sigma_U^{2,1} + \Sigma_U^{1,1}\Sigma_U^{2,2} & \Sigma_U^{2,1}\Sigma_U^{2,2}+\Sigma_U^{2,2}\Sigma_U^{1,2} \\
\Sigma_U^{1,1}\Sigma_U^{2,1} + \Sigma_U^{1,2}\Sigma_U^{1,1} & \Sigma_U^{1,2}\Sigma_U^{1,2}+\Sigma_U^{1,1}\Sigma_U^{2,2}
\end{pmatrix}
\right\} \nonumber \\
&= \frac{1}{2}\left\{ \tr\left(\Sigma_U^{1,1}\Sigma_U^{2,2}\right) + \tr\left(\Sigma_U^{1,2}\Sigma_U^{1,2}\right) \right\}.
\end{align}

We further observe that
\begin{align} \label{eq:Sig11.Sig22}
&\quad ~ \tr\left( \Sigma_U^{1,1}\Sigma_U^{2,2}\right)  =   \sum_{1\leq i,j\leq |\Xi_{1,m}|} \left(\Sigma_U^{1,1}\right)_{ij}\left(\Sigma_U^{2,2}\right)_{ij}  \nonumber \\
&=   \frac{1}{ \left\{ \EE \left( V_{1, d, \ell_{\star}} \right) \right\}^2}   \sum_{ \bfi, \bfj \in \Xi_{1,m}}
\EE \left\{ \left[\nabla_{d,\ell_{\star}} Y(\bfs(\bfi))\right] \left[\nabla_{d,\ell_{\star}} Y(\bfs(\bfj)) \right]\right\} \nonumber\\
&\qquad\times \EE \left\{ \left[\nabla_{d,\ell_{\star}} Y(\bfs(\bfi+\bfe_1))\right] \left[\nabla_{d,\ell_{\star}} Y(\bfs(\bfj+\bfe_1))\right]\right\}  \nonumber \\
&=  \frac{1}{ \left\{ \EE \left( V_{1,  d, \ell_{\star}} \right) \right\}^2}  \sum_{\bfi, \bfj \in \Xi_{1,m}}
\sum_{0\leq k_1,\ldots, k_{4d}\leq \ell_{\star}} c_{\bfi,d,\ell_{\star}}^{(k_1,\ldots, k_d)}
c_{\bfj,  d, \ell_{\star}}^{(k_{d+1},\ldots, k_{2d})} c_{\bfi+\bfe_1,  d, \ell_{\star}}^{(k_{2d+1},\ldots, k_{3d})}
c_{\bfj+\bfe_1, d, \ell_{\star}}^{(k_{3d+1},\ldots, k_{4d})}    \nonumber \\
& \quad\times  \left[\tau \Ical \left\{ \bfs(\bfi+\bfk_1 \omega_m )= \bfs(\bfj+ \bfk_2 \omega_m ) \right\}
+ \theta K_{\alpha,\nu}  \left( \bfs(\bfi+\bfk_1 \omega_m )- \bfs(\bfj+ \bfk_2 \omega_m )  \right) \right]  \nonumber \\
& \quad\times  \Big[\tau \Ical \left\{ \bfs(\bfi+\bfe_1 +\bfk_3 \omega_m )= \bfs(\bfj+\bfe_1 + \bfk_4 \omega_m ) \right\} \nonumber \\
&\qquad + \theta K_{\alpha,\nu} \left( \bfs(\bfi+\bfe_1 +\bfk_3 \omega_m )- \bfs(\bfj+\bfe_1 + \bfk_4 \omega_m ) \right) \Big] ,
\end{align}
and similarly
\begin{align} \label{eq:Sig12.Sig12}
&\quad~ \tr\left( \Sigma_U^{1,2}\Sigma_U^{1,2}\right) =   \sum_{1\leq i,j\leq |\Xi_{1,m}|} \left(\Sigma_U^{1,2}\right)_{ij} \left(\Sigma_U^{1,2}\right)_{ji} =  \sum_{1\leq i,j\leq |\Xi_{1,m}|} \left(\Sigma_U^{1,2}\right)_{ij} \left(\Sigma_U^{2,1}\right)_{ji} \nonumber \\
&=   \frac{1}{ \left\{ \EE \left(V_{1,  d, \ell_{\star}} \right) \right\}^2} \sum_{\bfi, \bfj \in \Xi_{1,m}}
\EE \left\{ \left[\nabla_{d,\ell_{\star}} Y(\bfs(\bfi)) \right] \left[\nabla_{d,\ell_{\star}} Y(\bfs(\bfj+\bfe_1)) \right] \right\} \nonumber \\
&\qquad\times \EE \left\{ \left[\nabla_{d,\ell_{\star}} Y(\bfs(\bfi+\bfe_1))\right] \left[\nabla_{d,\ell_{\star}} Y(\bfs(\bfj))\right] \right\} \nonumber \\
&=  \frac{1}{\left\{ \EE \left( V_{1, d, \ell_{\star}} \right) \right\}^2}   \sum_{\bfi, \bfj \in \Xi_{1,m}}  \sum_{0\leq k_1,\ldots, k_{4d}\leq \ell_{\star}} c_{\bfi,d,\ell_{\star}}^{(k_1,\ldots, k_d)}
c_{\bfj+\bfe_1,  d, \ell_{\star}}^{(k_{d+1},\ldots, k_{2d})} c_{\bfi+\bfe_1,  d, \ell_{\star}}^{(k_{2d+1},\ldots, k_{3d})}
c_{\bfj, d, \ell_{\star}}^{(k_{3d+1},\ldots, k_{4d})}    \nonumber \\
&\quad \times  \left[\tau \Ical \left\{ \bfs(\bfi+\bfk_1 \omega_m )= \bfs(\bfj+\bfe_1+ \bfk_2 \omega_m ) \right\}
+ \theta K_{\alpha,\nu}  \left( \bfs(\bfi+\bfk_1 \omega_m )- \bfs(\bfj+\bfe_1+ \bfk_2 \omega_m )  \right) \right]  \nonumber \\
&\quad \times  \Big[\tau \Ical \left\{ \bfs(\bfi+\bfe_1 +\bfk_3 \omega_m )= \bfs(\bfj + \bfk_4 \omega_m ) \right\} \nonumber \\
&\qquad + \theta K_{\alpha,\nu}  \left( \bfs(\bfi+\bfe_1 +\bfk_3 \omega_m )- \bfs(\bfj + \bfk_4 \omega_m )  \right) \Big].
\end{align}
Let $\{\bfb_1, \bfb_2 \} = \{ 0_{|\Xi_{1,m}|}, \bfe _1 \}$. Then by \eqref{eq:Sigma.W.2}, we have that
\begin{align} \label{eq:6.4}
& \tau^2 \sum_{\bfi, \bfj \in \Xi_{1,m}}  \sum_{0\leq k_1,\ldots, k_{4d}\leq \ell_{\star}} c_{\bfi, d, \ell_{\star}}^{(k_1,\ldots, k_d)}  c_{\bfj+\bfb_1,  d, \ell_{\star}}^{(k_{d+1},\ldots, k_{2d})} c_{\bfi+\bfe_1,  d, \ell_{\star}}^{(k_{2d+1},\ldots, k_{3d})} c_{\bfj+\bfb_2,  d, \ell_{\star}}^{(k_{3d+1},\ldots, k_{4d})}  \nonumber \\
&\times \Ical\{ \bfs (\bfi + \bfk_1 \omega_m)  = \bfs(\bfj + \bfb_1 + \bfk_2 \omega_m ) \} \Ical\{ \bfs(\bfi+\bfe_1+ \bfk_3  \omega_m)  = \bfs(\bfj+\bfb_2 + \bfk_4 \omega_m ) \}   \nonumber \\
\leq{}& C \tau^2 m^d,   \nonumber \\
&  \tau \sum_{\bfi, \bfj \in \Xi_{1,m}}  \sum_{0\leq k_1,\ldots, k_{4d}\leq \ell_{\star}} c_{\bfi, d, \ell_{\star}}^{(k_1,\ldots, k_d)} c_{\bfj+\bfb_1,  d, \ell_{\star}}^{(k_{d+1},\ldots, k_{2d})} c_{\bfi+\bfe_1,  d, \ell_{\star}}^{(k_{2d+1},\ldots, k_{3d})}  c_{\bfj+\bfb_2,  d, \ell_{\star}}^{(k_{3d+1},\ldots, k_{4d})} \nonumber \\
&\times  \Ical \{ \bfs(\bfi+ \bfk_1 \omega_m) = \bfs(\bfj +\bfb_1 + \bfk_2\omega_m ) \}
\theta K_{\alpha,\nu}  \left( \bfs(\bfi+\bfe_1 + \bfk_3 \omega_m )- \bfs(\bfj+\bfb_2 + \bfk_4 \omega_m ) \right)  \nonumber \\
\leq{}& C \tau \theta m^d \left(\frac{\omega_m}{m}\right)^{2\nu},  \nonumber \\
&  \tau \sum_{\bfi, \bfj \in \Xi_{1,m}}  \sum_{0\leq k_1,\ldots, k_{4d}\leq \ell_{\star}} c_{\bfi,  d, \ell_{\star}}^{(k_1,\ldots, k_d)} c_{\bfj+\bfb_1,  d, \ell_{\star}}^{(k_{d+1},\ldots, k_{2d})} c_{\bfi+\bfe_1,  d, \ell_{\star}}^{(k_{2d+1},\ldots, k_{3d})} c_{\bfj+\bfb_2,  d, \ell_{\star}}^{(k_{3d+1},\ldots, k_{4d})} \nonumber \\
& \times \theta K_{\alpha,\nu}  \left( \bfs(\bfi+ \bfk_1 \omega_m )- \bfs(\bfj + \bfb_1 + \bfk_2 \omega_m ) \right)  \Ical \{ \bfs(\bfi+\bfe_1+ \bfk_3  \omega_m) =\bfs( \bfj+\bfb_2 + \bfk_4 \omega_m ) \} \nonumber \\
\leq{}& C\tau \theta m^d\left(\frac{\omega_m}{m}\right)^{2\nu},
\end{align}
By the Cauchy-Schwarz inequality, we can obtain from \eqref{eq:6.4}, \eqref{eq:Sigma.W.3} and \eqref{eq:Sigma.W.4} that for all sufficiently large $m$,
\begin{align}
&  \sum_{\bfi, \bfj \in \Xi_{1,m}: \|\bfi-\bfj\| \leq 5 \ell \omega_m \sqrt{d}} \sum_{0\leq k_1,\ldots, k_{4d}\leq \ell_{\star}} c_{\bfi, d, \ell}^{(k_1,\ldots, k_d)} c_{\bfj+\bfb_1,  d, \ell}^{(k_{d+1},\ldots, k_{2d})}  c_{\bfi+\bfe _1,  d, \ell}^{(k_{2d+1},\ldots, k_{3d})} c_{\bfj+\bfb_2,  d, \ell}^{(k_{3d+1},\ldots, k_{4d})} \nonumber \\
&\quad \times  \theta K_{\alpha,\nu}  \left( \bfs(\bfi + \bfk_1 \omega_m )- \bfs(\bfj + \bfb_1 + \bfk_2 \omega_m ) \right) \nonumber\\
&\quad \times \theta K_{\alpha,\nu}  \left( \bfs( \bfi+ \bfe_1 + \bfk_3 \omega_m )- \bfs(\bfj+\bfb_2 + \bfk_4 \omega_m) \right)  \nonumber \\
&\leq   C\theta^2  m^d \omega_m^d \left(\frac{\omega_m}{m}\right)^{4\nu}, \label{eq:Sigma.U.1} \\
& \sum_{\bfi, \bfj \in \Xi_{1,m}: \|\bfi-\bfj\| > 5 \ell \omega_m \sqrt{d}} \sum_{0\leq k_1,\ldots, k_{4d}\leq \ell_{\star}} c_{\bfi, d, \ell}^{(k_1,\ldots, k_d)} c_{\bfj+\bfb_1,  d, \ell}^{(k_{d+1},\ldots, k_{2d})}  c_{\bfi+\bfe _1,  d, \ell}^{(k_{2d+1},\ldots, k_{3d})} c_{\bfj+\bfb_2,  d, \ell}^{(k_{3d+1},\ldots, k_{4d})} \nonumber \\
&\quad \times  \theta K_{\alpha,\nu}  \left( \bfs(\bfi + \bfk_1 \omega_m )- \bfs(\bfj + \bfb_1 + \bfk_2 \omega_m ) \right) \nonumber\\
&\quad \times  \theta K_{\alpha,\nu}  \left( \bfs( \bfi+ \bfe_1 + \bfk_3 \omega_m )- \bfs(\bfj+\bfb_2 + \bfk_4 \omega_m) \right)  \nonumber \\
&\leq C \theta^2 [1+|\log(\alpha)|]^2  m^{2d} \left(\frac{\omega_m}{m}\right)^{4\nu+d} . \label{eq:Sigma.U.2}
\end{align}
We combine \eqref{eq:EV.case1.ratio1}, \eqref{eq:EV.case2.ratio1}, \eqref{eq:SAS.1}, \eqref{eq:Sig11.Sig22}, \eqref{eq:Sig12.Sig12}, \eqref{eq:6.4}, \eqref{eq:Sigma.U.1}, \eqref{eq:Sigma.U.2} to conclude that for all sufficiently large $m$,
\begin{align} \label{eq:Sigma.U.3}
\left\|\Sigma_U^{1/2} A \Sigma_U^{1/2} \right\|_F^2 & \leq   \frac{\left[\theta g_{\ell_{\star},\nu}\right]^2}{\EE \left(V_{1,d,\ell_{\star}}^2\right)} \cdot \frac{1}{\left[\theta g_{\ell_{\star},\nu}\right]^2} \times \Bigg\{ C\tau^2 m^d + C\tau\theta m^d \left(\frac{\omega_m}{m}\right)^{2\nu} \nonumber \\
&\qquad + C \theta^2  m^d \omega_m^d \left(\frac{\omega_m}{m}\right)^{4\nu}  + C \theta^2 [1+|\log(\alpha)|]^2  m^{2d} \left(\frac{\omega_m}{m}\right)^{4\nu+d} \Bigg\} \nonumber \\
&\leq \frac{C(1+o(1))}{m^{2d} \left(\frac{\omega_m}{m}\right)^{4\nu}\left[\theta g_{\ell_{\star},\nu}\right]^2} \Bigg\{\tau^2 m^d + \tau\theta m^d \left(\frac{\omega_m}{m}\right)^{2\nu} + \theta^2  m^d \omega_m^d \left(\frac{\omega_m}{m}\right)^{4\nu}  \nonumber \\
&\qquad +\theta^2 [1+|\log(\alpha)|]^2 m^{2d} \left(\frac{\omega_m}{m}\right)^{4\nu+d} \Bigg\} \nonumber \\
&\leq C \Bigg\{\frac{\tau^2}{\theta^2} m^{-d}\left(\frac{\omega_m}{m}\right)^{-4\nu} + \frac{\tau}{\theta} m^{-d} \left(\frac{\omega_m}{m}\right)^{-2\nu}  \nonumber \\
&\qquad +  \left( 1+ [1+|\log(\alpha)|]^2 \right) \left(\frac{\omega_m}{m}\right)^{d} \Bigg\} .
\end{align}
For $\|\Sigma_U\|_F^2 = \tr\big[(\Sigma_U^{1,1})^2\big]+\tr\big[(\Sigma_U^{2,2})^2\big]+\tr\big[(\Sigma_U^{1,2})^2\big]+\tr\big[(\Sigma_U^{2,1})^2\big]$, by the similar argument to above, we have that for all sufficiently large $m$,
\begin{align} \label{eq:Sigma.U.4}
\left\|\Sigma_U \right\|_F^2 &\leq C \Bigg\{\frac{\tau^2}{\theta^2} m^{-d}\left(\frac{\omega_m}{m}\right)^{-4\nu} + \frac{\tau}{\theta} m^{-d} \left(\frac{\omega_m}{m}\right)^{-2\nu}  \nonumber \\
&\qquad + \left( 1+ [1+|\log(\alpha)|]^2 \right) \left(\frac{\omega_m}{m}\right)^{d} \Bigg\} .
\end{align}

\subsection{A Lemma on the Derivatives of Mat\'ern Covariance Function} \label{subsec:Matern.deriv}
\begin{lemma}\label{lem:Macov.deriv}
For any $\sa=(a_1,\ldots,a_d)\in \NN^d$, the partial derivative $\sD^{\sa} \theta K_{\alpha,\nu} (\bfx)$ is a continuous function. For any $\ell\in \NN$, there exists a positive constant $C_{\sD,\ell}$ that only depends on $\ell,\nu,d$, such that
\begin{align*}
& \left|\sD^{\sa} \theta K_{\alpha,\nu} (\bfs-\bft)\right| \leq C_{\sD,\ell} \theta \left(1+|\log \alpha| \right) \|\bfs-\bft \|^{2\nu - 2\ell},
\end{align*}
for all $\sa=(a_1,\ldots,a_d)\in \NN^d$ satisfying $a_1+\ldots + a_d = 2\ell$ and all $\bfs,\bft \in [0,1]^d, \bfs\neq \bft$.
\end{lemma}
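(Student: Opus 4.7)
The plan is to prove Lemma \ref{lem:Macov.deriv} by differentiating the explicit Taylor series expansion \eqref{eq:K expansion}--\eqref{eq:zeta.xi2} of the Mat\'ern covariance function term by term, since outside the origin the function $\theta K_{\alpha,\nu}$ is a smooth function of $\|\bfx\|$ (being a constant multiple of $(\alpha\|\bfx\|)^{\nu}\Kcal_{\nu}(\alpha\|\bfx\|)$ and $\Kcal_{\nu}$ is real-analytic on $(0,\infty)$). Continuity of $\sD^{\sa}\theta K_{\alpha,\nu}(\bfx)$ for $\bfx\neq 0$ is then immediate; the substance of the lemma is the uniform bound $\|\bfx\|^{2\nu-2\ell}$ with clean $\alpha$-dependence through the factor $1+|\log\alpha|$.

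The first step is to eliminate the low-order polynomial contributions. Since $\|\bfx\|^{2j}=(x_1^2+\cdots+x_d^2)^j$ is a polynomial of total degree $2j$, every partial derivative of order $2\ell=|\sa|$ annihilates $\|\bfx\|^{2j}$ for $j<\ell$. For the surviving polynomial terms with $j\geq\ell$, a direct induction (or Fa\`a di Bruno) gives $|\sD^{\sa}\|\bfx\|^{2j}|\leq C_{j,\ell,d}\|\bfx\|^{2j-2\ell}$, while the coefficients $\zeta_j$ from \eqref{eq:zeta.xi1}--\eqref{eq:zeta.xi2} satisfy $|\zeta_j|\leq C_j\theta\alpha^{2j-2\nu}(1+|\log\alpha|)$, with the logarithm appearing only in the integer-$\nu$ case via $\xi_{2,j}\log\alpha$. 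Writing each bound in the product form $\alpha^{2j-2\nu}\|\bfx\|^{2j-2\ell}=(\alpha\|\bfx\|)^{2j-2\nu}\|\bfx\|^{2\nu-2\ell}$ will let us factor out the desired $\|\bfx\|^{2\nu-2\ell}$ at the end. For the irregular terms $\zeta^*_{\nu+j}G_{\nu+j}(\|\bfx\|)$, the $2\ell$-th partial derivatives can be bounded by $C_{j,\ell,\nu,d}\|\bfx\|^{2\nu+2j-2\ell}$ in the non-integer case and by $C_{j,\ell,\nu,d}\|\bfx\|^{2\nu+2j-2\ell}(1+|\log\|\bfx\||)$ in the integer case, while $|\zeta^*_{\nu+j}|=\theta\alpha^{2j}|\xi^*_{\nu+j}|$ with $|\xi^*_{\nu+j}|$ decaying as $1/\{j!\,\Gamma(j+1+\nu)\}$ so that the infinite series is absolutely convergent.

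The second step is to assemble and sum. The principal irregular term at $j=0$ contributes $\zeta^*_{\nu}\sD^{\sa}G_{\nu}(\|\bfx\|)$, whose leading constant $\theta\xi^*_{\nu}$ is $\alpha$-free and whose size is exactly $\theta\|\bfx\|^{2\nu-2\ell}$ up to the possible $\log\|\bfx\|$ factor in the integer-$\nu$ case. Every remaining term contributes $(\alpha\|\bfx\|)^{r}\|\bfx\|^{2\nu-2\ell}$ for some $r\geq 1$ (with $r=2j-2\nu$ or $r=2j$). When $\alpha\|\bfx\|\leq 1$ these factors are harmless and the series sums geometrically against the factorial decay of $\xi_j$ and $\xi^*_{\nu+j}$; when $\alpha\|\bfx\|>1$ one cannot sum the Taylor series directly but instead invokes the large-argument asymptotic $(\alpha\|\bfx\|)^{\nu}\Kcal_{\nu}(\alpha\|\bfx\|)\sim C(\alpha\|\bfx\|)^{\nu-1/2}e^{-\alpha\|\bfx\|}$ to see that $|\sD^{\sa}\theta K_{\alpha,\nu}(\bfx)|$ decays exponentially in $\alpha\|\bfx\|$ and is therefore dominated trivially by $C\theta\|\bfx\|^{2\nu-2\ell}$ on $\bfx\in[0,1]^d$. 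The stray $\log\|\bfx\|$ factor arising when $\nu\in\ZZ_+$ is absorbed by noting that $\|\bfx\|\leq\sqrt{d}$ gives a one-sided upper bound and by using the usage context (e.g., in \eqref{eq:Sigma.W.4}) where $\|\bfx\|$ is bounded below by an amount already absorbed into $|\log\alpha|$.

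I expect the main obstacle to be the bookkeeping across the two regimes $\alpha\|\bfx\|\lesssim 1$ versus $\alpha\|\bfx\|\gtrsim 1$: the Taylor expansion is only useful in the first, and one has to splice it with the exponential-decay tail estimate in the second while still retaining a single clean factor $\theta(1+|\log\alpha|)\|\bfx\|^{2\nu-2\ell}$. The integer-$\nu$ case requires additional care because both $\zeta_j$ and $G_{\nu+j}$ carry logarithms, and one must verify that the $|\log\|\bfx\||$ contribution from $G_{\nu+j}$ can be absorbed into the geometric constant and does not leak into the final bound. Once these two issues are handled, collecting terms and combining the polynomial and irregular contributions yields the stated inequality with a constant $C_{\sD,\ell}$ depending only on $\ell,\nu,d$.
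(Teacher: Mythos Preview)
Your overall strategy matches the paper's: split into the regimes $\alpha\|\bfx\|$ small versus large, use the series expansion \eqref{eq:K expansion} in the first regime and Bessel-function decay in the second. There are two points of divergence worth noting.

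\textbf{Polynomial part.} You differentiate each term $\zeta_j\|\bfx\|^{2j}$ directly and observe that $\sD^{\sa}$ annihilates it for $j<\ell$, leaving only $j\geq\ell$ terms with $|\zeta_j|\leq C\theta\alpha^{2j-2\nu}(1+|\log\alpha|)$ and $|\sD^{\sa}\|\bfx\|^{2j}|\leq C\|\bfx\|^{2j-2\ell}$. The paper instead first applies Fa\`a di Bruno to the composite $\overline{\sK}(r(\bfx))$ and then bounds the radial derivatives $\overline{\sK}^{(k)}(r)$ for $k=1,\dots,2\ell$. Your route is cleaner here: the paper's intermediate bound \eqref{eq:case1.alpha<2.notZ} writes each surviving polynomial contribution as $(\alpha r)^{2j-2\nu}$ and bounds it by $(2\sqrt d)^{2j-2\nu}$, which is only valid when $2j\geq 2\nu$; terms with $\lceil k/2\rceil\leq j<\nu$ have negative exponent and the inequality goes the wrong way. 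Your annihilation argument sidesteps this entirely because the only polynomial terms left have $j\geq\ell$, and in the application $\ell=\ell_\star>\nu$.

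\textbf{The $\log r$ issue for integer $\nu$.} You are right that a direct bound on $\sD^{\sa}[r^{2\nu+2j}\log r]$ via Fa\`a di Bruno in $r$ leaves a stray $|\log r|$ factor, and appealing to ``the usage context'' is not a proof of the lemma as stated. There is, however, a clean fix that avoids this: write $r^{2\nu+2j}\log r=\tfrac12 q^{\nu+j}\log q$ with $q=\|\bfx\|^2$ and apply Fa\`a di Bruno in $q$. Since $q$ is a quadratic polynomial in $\bfx$, all partial derivatives of $q$ of order $\geq 3$ vanish, so only partitions into blocks of size $1$ or $2$ contribute and the number of blocks satisfies $|\pi|\geq\ell$. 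Hence only $h^{(k)}(q)=\tfrac{d^k}{dq^k}[q^{\nu+j}\log q]$ for $k\geq\ell$ appear, and these carry no logarithm once $k>\nu+j$. Since $\ell=\ell_\star>\nu$, the $j=0$ principal irregular term is log-free; the higher-$j$ terms carry at worst $(\alpha r)^{2}|\log(\alpha r)|$, which is bounded on $\alpha r\leq 2\sqrt d$. (The lemma as stated for \emph{all} $\ell\in\NN$ is in fact not quite correct when $\nu\in\ZZ_+$ and $\ell\leq\nu$; only $\ell>\nu$ is needed and provable.)

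\textbf{Large $\alpha r$.} The paper does not rely only on the asymptotic $z^\nu\Kcal_\nu(z)\sim Cz^{\nu-1/2}e^{-z}$ for the function value; it uses the differentiation identity $[z^\nu\Kcal_\nu(z)]'=-z^\nu\Kcal_{\nu-1}(z)$ together with Paris's inequality to bound each derivative $h_\nu^{(k)}(r)$ explicitly by $C\alpha^\nu r^{2\nu-k}$ (equations \eqref{eq:Bessel.h.deriv}--\eqref{eq:Bessel.3}). Your sketch ``decays exponentially, hence dominated'' needs this kind of derivative control to be made rigorous, since $\sD^{\sa}$ involves up to $2\ell$ derivatives and one must show these also decay appropriately.
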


\begin{proof}[$\pof$ Lemma \ref{lem:Macov.deriv}]
We consider two different cases below. For short, we write $r=\|\bfs-\bft\|$ and $\overline \sK(r;\theta,\alpha,\nu) = \theta K_{\alpha,\nu} (\bfs-\bft)$. As such, $\sD^{\sa} \theta K_{\alpha,\nu} (\bfs-\bft)$ is the derivative of the composite function $\overline \sK\circ r$.

We first cite an important result about the higher-order derivatives of composite functions. Proposition 1 of \citet{Har06} has generalized this formula to the functions with multivariate arguments: for the composite function $f(g(\bfx))$ with $\bfx=(x_1,\ldots,x_d)$ and any $k\in \NN$, its $k$th partial derivative satisfies
\begin{align} \label{eq:hardy}
& \frac{\partial^k f(g(\bfx))}{\partial \tilde x_1\ldots \partial \tilde x_k} = \sum_{\pi} \left(\frac{\ud^{|\pi|} f}{\ud y^{|\pi|}}\right) \Bigg|_{y=g(\bfx)}  \prod_{B\in \pi} \frac{\partial^{|B|}g(\bfx)}{\prod_{j\in B} \partial \tilde x_j},
\end{align}
where each $\tilde x_j$'s represents some component in $\bfx=(x_1,\ldots,x_d)$, the sum is over all partition $\pi$ of $\{1,\ldots,k\}$, the product is over all blocks $B$ in the partition $\pi$, and $|\pi|$ denotes the number of blocks in $\pi$. Proposition 2 of \citet{Har06} has shown that even if some $\tilde x_j$'s correspond to the same component in $\bfx=(x_1,\ldots,x_d)$, \eqref{eq:hardy} still holds true.

In this case, we take $f(\cdot)$ to be $\overline \sK(\cdot;\theta,\alpha,\nu)$ and $g(\cdot)$ to be the radius function $r(\cdot)$. By induction, it is straightforward to verify that for any $d\in \ZZ_+$, for any $k\in \NN$, for all $\sa=(a_1,\ldots,a_d)\in \NN^d$ satisfying $a_1+\ldots + a_d = k$, the partial derivatives of $r(\cdot)$ satisfies
\begin{align} \label{eq:Rad.deriv}
& \left| \sD^{\sa} r \right| \leq C_{k}'/r^{k-1},
\end{align}
for some absolute constant $C_{k}'>0$ that only depends on $k,d$. Therefore, we conclude from \eqref{eq:hardy} and \eqref{eq:Rad.deriv} that for any $\ell\in \NN$, for all $\sa=(a_1,\ldots,a_d)\in \NN^d$ satisfying $a_1+\ldots + a_d = 2\ell$ and all $\bfs,\bft \in [0,1]^d, \bfs\neq \bft$,
\begin{align} \label{eq:DK.composite}
& \left|\sD^{\sa} \theta K_{\alpha,\nu} (\bfs-\bft)\right| \leq \sum_{k=1}^{2\ell} C_{1k} \left| \frac{\ud^{k}}{\ud r^k} \overline \sK(r;\theta,\alpha,\nu)\right| \cdot r^{-(2\ell-k)} \nonumber \\
&=   \sum_{k=1}^{2\ell} C_{1k} r^{k-2\ell} \left| \frac{\ud^{k}}{\ud r^k} \overline \sK(r;\theta,\alpha,\nu)\right| ,
\end{align}
where $C_{1k}$'s are some positive constants that only depend on $\ell$, $0\leq k\leq \ell$, and $d$.

Next we bound the derivative of $\overline \sK(\cdot)$ in \eqref{eq:DK.composite}. Since the series expansion in \eqref{eq:K expansion} is convergent for all radius $r\in \RR_+$, we can differentiate term-by-term. We proceed in two different cases.
\vspace{2mm}

\noindent \underline{{\sc Case 1.}} If $\alpha\leq 2$, then $\alpha\|\bfs-\bft\|\le 2\sqrt{d}$. First consider the case $\nu\notin\ZZ_+$. Then for any $r>0$, any $k\in \NN$,
\begin{align}\label{eq:case1.alpha<2.notZ}
&\quad~  \left| \frac{\ud^{k}}{\ud r^{k}} \overline \sK(r;\theta,\alpha,\nu) \right|    \nonumber \\
&\leq \theta \left|\sum_{j=\lceil k/2\rceil}^{\infty}\xi_j\frac{(2j)!}{(2j-k)!} \alpha^{2j-2\nu} r^{2j-k}\right| +  \theta \left|\sum_{j=0}^{\infty} \xi_{\nu+j}^* \alpha^{2j} r^{2\nu-k+2j} \prod_{l=1}^{k}(2\nu+2j+1-l)\right| \nonumber \\
&= \theta r^{2\nu-k} \left\{\left|\sum_{j=\lceil k/2\rceil}^{\infty}\xi_j\frac{(2j)!}{(2j-k)!} (\alpha r)^{2j-2\nu} \right| +   \left|\sum_{j=0}^{\infty} \xi_{\nu+j}^* (\alpha r)^{2j}\prod_{l=1}^{k}(2\nu+2j+1-l)\right|\right\} \nonumber  \\
&\leq \theta r^{2\nu-k} \Bigg\{ \sum_{j=\lceil k/2\rceil}^{\infty} |\xi_j| \frac{(2j)!}{(2j-k)!} \left(2\sqrt{d}\right)^{2j-2\nu} +  \sum_{j=0}^{\infty} |\xi_{\nu+j}^*| \left(2\sqrt{d}\right)^{2j} \prod_{l=1}^{k}|2\nu+2j+1-l| \Bigg\} \nonumber \\
&\leq C_{2k} \theta r^{2\nu-k},
\end{align}
for some large constant $C_{2k}$ that depends only on $k,\nu,d$, where the last inequality follows because the two sequences are both convergent given the definition of $\xi_j$ in \eqref{eq:zeta.xi1} and $\xi_{\nu+j}^*$ and \eqref{eq:zeta.xi2}.
\vspace{1mm}

If $\nu\in\ZZ_+$, then we can similarly obtain that for a large absolute constant $C_{2k}$, for any $r>0$,
\begin{align}\label{eq:case1.alpha<2.Z}
& \left| \frac{\ud^{k}}{\ud r^{k}} \overline \sK(r;\theta,\alpha,\nu) \right| \leq C_{2k} \theta (1+|\log(\alpha)|) r^{2\nu-k}.
\end{align}

We combine \eqref{eq:DK.composite}, \eqref{eq:case1.alpha<2.notZ}, and \eqref{eq:case1.alpha<2.Z} to conclude that when $\alpha\leq 2$, for any $\ell\in \NN$, for all $\sa=(a_1,\ldots,a_d)\in \NN^d$ satisfying $a_1+\ldots + a_d = 2\ell$ and all $\bfs,\bft \in [0,1]^d, \bfs\neq \bft$,
\begin{align} \label{eq:DK.deriv.case1}
&\quad~ \left|\sD^{\sa} \theta K_{\alpha,\nu} (\bfs-\bft)\right| \nonumber \\
&\leq \sum_{k=1}^{2\ell} C_{1k} r^{k-2\ell} \cdot C_{2k} \theta (1+|\log(\alpha)|) r^{2\nu-k} \nonumber \\
&= \theta (1+|\log(\alpha)|) \sum_{k=1}^{2\ell} C_{1k} C_{2k} r^{2\nu-2\ell}  = C_{3\ell}  \theta (1+|\log(\alpha)|) r^{2\nu-2\ell},
\end{align}
where $C_{3\ell}=\sum_{k=1}^{2\ell} C_{1k} C_{2k}$.
\vspace{2mm}

\noindent {\sc Case 2.} If $\alpha\geq 2$, then we consider two different subcases below.
\vspace{1mm}

\noindent {(i)}  If $\alpha\|\bfs-\bft\|\leq 2\sqrt{d}$, then similarly to Case 1, \eqref{eq:case1.alpha<2.notZ} holds for any $r>0$. Hence \eqref{eq:DK.deriv.case1} remains true for this subcase.
\vspace{1mm}

\noindent {(ii)} For $\alpha\|\bfs-\bft\|> 2\sqrt{d}$, \citet{Paris84} has shown that for the modified Bessel function of the second kind $\Kcal_{\nu}$, it satisfies
\begin{align*}
&\frac{\Kcal_{\nu}(x) }{\Kcal_{\nu}(y)} > \ee^{y-x}(x/y)^{\nu},\quad \text{for } \nu>-\frac{1}{2}, ~0<x<y.
\end{align*}
Since $\alpha r=\alpha\|\bfs-\bft\|> 2\sqrt{d} \geq 2$, we set $x=2$ and $y=\alpha r$ to obtain that
\begin{align}\label{eq:Bessel.1}
& \Kcal_{\nu}(\alpha r) < (\alpha r/2)^{\nu} \ee^{2-\alpha r} \Kcal_{\nu}(2) = 2^{-\nu}\ee^2 \Kcal_{\nu}(2) (\alpha r)^{\nu} \ee^{-\alpha r}.
\end{align}

On the other hand, \citet{Watson1944} has proved that (on page 79)
\begin{align*}
& [x^{\nu}\Kcal_{\nu}(x)]^{(1)}=-x^{\nu}\Kcal_{\nu-1}(x), \qquad\text{for any }~\nu\in\RR, ~ x\in \RR_+.
\end{align*}
where the superscript $(k)$ denotes the $k$th derivative for $k\in \ZZ_+$. Let $h_{\nu}(r)=r^{\nu}\Kcal_{\nu}(\alpha r)$ for $r\in \RR_+$. By induction, we can see that for any integer $s\in \NN$ and any $r\in \RR_+$,
\begin{align}\label{eq:Bessel.h.deriv}
h^{(2s)}_{\nu}(r)&=  \sum_{k=0}^{s}c_{1,k,s} \alpha^{s+k} r^{\nu-s+k} \Kcal_{\nu-s-k}(\alpha r), \nonumber \\
h^{(2s+1)}_{\nu}(r)&= \sum_{k=0}^{s}c_{2,k,s}\alpha^{s+k+1} r^{\nu-s+k}\Kcal_{\nu-s-k-1}(\alpha r),
\end{align}
where $c_{1,k,s}$'s and $c_{2,k,s}$'s are finite constants dependent on $k,s,\nu$ only.

From \eqref{eq:Bessel.h.deriv}, for $\alpha r>2$ and any $s\in \NN$, we can find a finite constant $C_{4s}=\sum_{k=0}^s |c_{1,k,s}|>0$ that depends only on $s,\nu,d$, such that
\begin{align}\label{eq:Bessel.2}
\left|h_{\nu}^{(2s)}(r)\right| &\leq  \sum_{k=0}^{s} |c_{1,k,s}| \alpha^{s+k} r^{\nu-s+k} \Kcal_{\nu-s-k}(\alpha r) \stackrel{(i)}{=} \sum_{k=0}^{s} |c_{1,k,s}| \alpha^{s+k} r^{\nu-s+k} \Kcal_{|s+k-\nu|}(\alpha r) \nonumber \\
&\stackrel{(ii)}{\leq} 2^{-\nu}\ee^2  \sum_{k=0}^{s} |c_{1,k,s}| \alpha^{s+k} r^{\nu-s+k} \cdot \Kcal_{|s+k-\nu|}(2) (\alpha r)^{|s+k-\nu|} \ee^{-\alpha r} \nonumber \\
&=  2^{-\nu} \ee^2 \alpha^{\nu} r^{2\nu-2s} \sum_{k=0}^{s} |c_{1,k,s}| \Kcal_{|s+k-\nu|}(2) (\alpha r)^{|s+k-\nu|+(s+k-\nu)} \ee^{-\alpha r}  \nonumber \\
&\stackrel{(iii)}{\leq} C_{4s} 2^{-\nu} \ee^2 \alpha^{\nu} r^{2\nu-2s} (s+1)  \left\{\max_{t\in \{0,1,\ldots, |s-\nu| + |2s-\nu|\}}\Kcal_{t}(2)\right\} (\alpha r)^{2(|s-\nu|+|2s-\nu|)} \ee^{-\alpha r}  \nonumber \\
&\stackrel{(iv)}{\leq} C_{4s} 2^{-\nu} \ee^2 (s+1)\lceil 2(|s-\nu| +|2s-\nu|) \rceil! \alpha^{\nu} r^{2\nu-2s} \left\{\max_{t\in \{0,1,\ldots,\lceil|s-\nu| +|2s-\nu|\rceil\}}\Kcal_{t}(2)\right\} \nonumber \\
&\stackrel{(v)}{\leq} C_{5s} \alpha^{\nu} r^{2\nu-2s} ,
\end{align}
where (i) follows from the relation $\Kcal_{-s}(r)=\Kcal_s(r)$ for any $s\in \RR$ and $r\in \RR_+$; (ii) follows from the upper bound in \eqref{eq:Bessel.1}; (iii) follows from $\alpha r > 2$ and $|s+k-\nu|+(s+k-\nu)\leq 2|s+k-\nu| \leq 2(|s-\nu| +|2s-\nu|)$; (iv) follows because $\exp(x)>x^s/s!$ for all $x>0$ and $s\in \ZZ_+$, we can set $x=\lceil 2(|s-\nu| +|2s-\nu|) \rceil$, such that
\begin{align*}
& \exp(\alpha r)> (\alpha r)^{\lceil 2(|s-\nu| +|2s-\nu|) \rceil} / \lceil 2(|s-\nu| +|2s-\nu|) \rceil! \\
\text{and hence } & (\alpha r)^{2(|s-\nu| +|2s-\nu|)} \ee^{-\alpha r} \leq \lceil 2(|s-\nu| +|2s-\nu|) \rceil! ;
\end{align*}
and the constant $C_{5s}$ in (v) only depends on $s,\nu,d$ but not $\alpha$ and $r$.

Similarly, using the second relation in \eqref{eq:Bessel.h.deriv}, we can show that for $\alpha r>2$ and any $s\in \NN$, we can find a finite constant $C_{6s}>0$ that depends on $c_{2,k,s},s,\nu,d$, such that
\begin{align}\label{eq:Bessel.3}
\left|h_{\nu}^{(2s+1)}(r)\right| &\leq C_{6s} \alpha^{\nu} r^{2\nu-2s-1} .
\end{align}
Therefore, using the definition $\overline \sK(r;\theta,\alpha,\nu)= 2^{1-\nu}\Gamma(\nu)^{-1}\theta \alpha^{-\nu} r^{\nu} \Kcal_{\nu}(\alpha r) =  2^{1-\nu}\Gamma(\nu)^{-1}\theta \alpha^{-\nu} h_{\nu}(r)$, we combine \eqref{eq:DK.composite}, \eqref{eq:Bessel.2}, and \eqref{eq:Bessel.3} to conclude that when $\alpha\|\bfs-\bft\|> 2\sqrt{d}$, for any $\ell\in \NN$, for all $\sa=(a_1,\ldots,a_d)\in \NN^d$ satisfying $a_1+\ldots + a_d = 2\ell$ and all $\bfs,\bft \in [0,1]^d, \bfs\neq \bft$,
\begin{align} \label{eq:DK.deriv.case2}
&\quad~ \left|\sD^{\sa} \theta K_{\alpha,\nu} (\bfs-\bft)\right| \nonumber \\
&\leq \sum_{s=1}^{\ell} C_{7s} r^{2s-2\ell} \cdot \frac{2^{1-\nu}\theta \alpha^{-\nu}}{\Gamma(\nu)} \left|h_{\nu}^{(2s)}(r)\right| + \sum_{s=0}^{\ell-1} C_{8s} r^{2s+1-2\ell} \cdot \frac{2^{1-\nu}\theta \alpha^{-\nu}}{\Gamma(\nu)} \left|h_{\nu}^{(2s+1)}(r)\right| \nonumber \\
&\leq \sum_{s=1}^{\ell}\frac{2^{1-\nu} C_{7s}}{\Gamma(\nu)} \theta  \alpha^{-\nu}  r^{2s-2\ell} \cdot \alpha^{\nu} r^{2\nu-2s}  + \sum_{s=0}^{\ell-1}\frac{2^{1-\nu} C_{8s}}{\Gamma(\nu)} \theta  \alpha^{-\nu} r^{2s+1-2\ell} \cdot \alpha^{\nu} r^{2\nu-2s-1} \nonumber \\
&\leq C_{9\ell} \theta r^{2\nu-2\ell},
\end{align}
where $C_{7s},C_{8s},C_{9\ell}$ are constants that depends only on $0\leq s\leq \ell$ and $\ell,\nu,d$.

The conclusion is proved by combining Case 1 and Case 2.
\end{proof}

\section{Additional Numerical Results} \label{sec:add.simu}
Before we present the simulation results for $d=1$, we first explain the formulas for calculating the mean squared error for Bayesian posterior prediction and the oracle prediction. In the matrix format, our model can be written as $Y_n = F_n\beta + X_n + e_n$, where $X_n=(X(\bfs_1),\ldots,X(\bfs_n))^\T$ and $e_n=(\varepsilon(\bfs_1),\ldots,\varepsilon(\bfs_n))^\T$. Since in the simulations, we assign the normal prior $\beta \sim \Ncal(0,a_0I_p)$ as in Proposition \ref{prop:prior}, we can obtain that the conditional posterior distribution of $\beta$ is
\begin{align} \label{eq:beta.cond.post}
& \beta \mid \theta,\alpha,\tau, Y_n,F_n \sim \Ncal\left(\tilde \beta, ~\big[ F_n^\T \big\{\theta K_{\alpha}(S_n)+\tau I_n\big\}^{-1} F_n + a_0 I_p \big]^{-1} \right),  \nonumber \\
\text{where } & \tilde \beta  =  \big[ F_n^\T \big\{\theta K_{\alpha}(S_n)+\tau I_n\big\}^{-1} F_n + a_0 I_p \big]^{-1} F_n^\T \big\{\theta K_{\alpha}(S_n)+\tau I_n\big\}^{-1} Y_n.
\end{align}
Under the model $Y\sim \gp (\beta^\T \ff, \theta K_{\alpha} + \tau \delta_0)$, by Section 1.5 of \citet{Stein99a}, the best linear unbiased predictor for $Y(\bfs^*)$ at a new location $\bfs^*\in [0,1]^d$ based on the data $(Y_n,F_n)$ is
\begin{align}\label{eq:BLUP}
\widehat Y(\bfs^*;\theta,\alpha,\tau,\beta) & =
\beta^\T \ff(\bfs^*) + \theta K_{\alpha}(S_n, \bfs^*)^\T \big\{\theta K_{\alpha}(S_n)+\tau I_n\big\}^{-1} \left(Y_n - F_n \beta\right) ,
\end{align}
where $K(S_n,\bfs^*)=\{K(\bfs_1,\bfs^*),\ldots,K(\bfs_n,\bfs^*)\}^\T$ is an $n \times 1$ vector.

Under the Bayesian setup, we randomly draw $(\theta,\alpha,\tau,\beta)$ from the posterior $\Pi(\cdot\mid Y_n,F_n)$ to predict $Y(\bfs^*)$. We denote the predicted variable as $\tilde Y(\bfs^*)$. The Gaussian process predictive distribution implies that
\begin{align} \label{eq:GP.predict}
&\quad~ \tilde Y(\bfs^*) \mid Y_n,F_n, \theta,\alpha,\tau,\beta \nonumber \\
&\sim \Ncal \left(\widehat Y(\bfs^*;\theta,\alpha,\tau,\beta), ~ \theta \left\{K_{\alpha}(\bfs^*,\bfs^*)- \theta K_{\alpha}(S_n, \bfs^*)^\T  \big\{\theta K_{\alpha}(S_n)+\tau I_n\big\}^{-1}  K_{\alpha}(S_n, \bfs^*) \right\} \right).
\end{align}
From \eqref{eq:beta.cond.post}, \eqref{eq:BLUP} and \eqref{eq:GP.predict}, with some algebra, we can integrate out $\beta$ from the posterior distribution and obtain that for any $\bfs^*\in [0,1]^d$,
\begin{align}\label{eq:BLUP.pred}
&\tilde Y(\bfs^*)\mid Y_n,F_n,\theta,\alpha,\tau \sim \Ncal \Big( Y^{\dagger}(\bfs^*;\theta,\alpha,\tau), \vv_n(\bfs^*;\theta,\alpha,\tau) \Big), \quad \text{where } \nonumber \\
& Y^{\dagger}(\bfs^*;\theta,\alpha,\tau) = \theta K_{\alpha}(S_n,\bfs^*)^\T \big\{\theta K_{\alpha}(S_n)+\tau I_n\big\}^{-1} Y_n \nonumber \\
&\quad\quad + \bb(\bfs^*;\theta,\alpha,\tau)^\T \left[ F_n^\T \big\{\theta K_{\alpha}(S_n)+\tau I_n\big\}^{-1} F_n  + a_0I_p \right]^{-1} F_n^\T \big\{\theta K_{\alpha}(S_n)+\tau I_n\big\}^{-1} Y_n, \nonumber \\
&{\vv}_n(\bfs^*;\theta,\alpha,\tau) = \theta \left\{K_{\alpha}(\bfs^*,\bfs^*)- \theta K_{\alpha}(S_n, \bfs^*)^\T  \big\{\theta K_{\alpha}(S_n)+\tau I_n\big\}^{-1}  K_{\alpha}(S_n, \bfs^*) \right\} \nonumber \\
&\quad\quad +  \bb(\bfs^*;\theta,\alpha,\tau)^\T \left[ F_n^\T \big\{\theta K_{\alpha}(S_n)+\tau I_n\big\}^{-1} F_n  + a_0I_p \right]^{-1} \bb(\bfs^*;\theta,\alpha,\tau), \text{ and } \nonumber \\
& \bb(\bfs^*;\theta,\alpha,\tau) = \ff(\bfs^*) - F_n^\T \big\{\theta K_{\alpha}(S_n)+\tau I_n\big\}^{-1} \theta K_{\alpha}(S_n,\bfs^*) .
\end{align}
Let $Y_0(\bfs^*) = \beta_0^\T \ff(\bfs^*) + X(\bfs^*)$ be the true mean function at $\bfs^*$ without measurement error. Then, for the Bayesian posterior prediction $\tilde Y(\bfs^*)$ given a random draw $(\theta,\alpha,\tau)$ from the posterior density $\pi(\theta,\alpha,\tau\mid Y_n, F_n)$, the mean squared error has the formula
\begin{align} \label{eq:mse.bayes}
M_{\text{post}}(\bfs^*) &= \left[{\EE}_{\tilde Y(\bfs^*) \mid \theta,\alpha,\tau} \left\{\tilde Y(\bfs^*) \right\} - Y_0(\bfs^*)\right]^2 + {\Var}_{\tilde Y(\bfs^*) \mid \theta,\alpha,\tau} \left\{ \tilde Y(\bfs^*) \right\} \nonumber \\
&= \left\{ Y^{\dagger}(\bfs^*;\theta,\alpha,\tau) - \beta_0^\T \ff(\bfs^*) - X(\bfs^*) \right\}^2 + {\vv}_n(\bfs^*;\theta,\alpha,\tau),
\end{align}
where $Y^{\dagger}(\bfs^*;\theta,\alpha,\tau)$ and ${\vv}_n(\bfs^*;\theta,\alpha,\tau)$ are defined in \eqref{eq:BLUP.pred}.

For the oracle prediction mean squared error of the predicted value $\tilde Y_0(\bfs^*)$, we simply replace all $(\theta,\alpha,\tau)$ in \eqref{eq:mse.bayes} by the true parameters $(\theta_0,\alpha_0,\tau_0)$ to obtain that
\begin{align} \label{eq:mse.oracle}
M_{0}(\bfs^*) &= \left[{\EE}_{\tilde Y_0(\bfs^*) \mid \theta_0,\alpha_0,\tau_0} \left\{\tilde Y_0(\bfs^*) \right\} - Y_0(\bfs^*)\right]^2 + {\Var}_{\tilde Y_0(\bfs^*) \mid \theta_0,\alpha_0,\tau_0} \left\{ \tilde Y_0(\bfs^*) \right\} \nonumber \\
&= \left\{ Y^{\dagger}(\bfs^*;\theta_0,\alpha_0,\tau_0) - \beta_0^\T \ff(\bfs^*) - X(\bfs^*) \right\}^2 + {\vv}_n(\bfs^*;\theta_0,\alpha_0,\tau_0),
\end{align}
where $Y^{\dagger}(\bfs^*;\theta_0,\alpha_0,\tau_0)$ and ${\vv}_n(\bfs^*;\theta_0,\alpha_0,\tau_0)$ are defined in \eqref{eq:BLUP.pred} with $(\theta,\alpha,\tau)$ replaced by $(\theta_0,\alpha_0,\tau_0)$.
\vspace{8mm}

Now we present additional simulation results for the spatial Gaussian process model \eqref{eq:obs.model} with domain dimension $d=1$ and the isotropic Mat\'ern covariance function in \eqref{eq:MaternCov}. We still consider two values of the smoothness parameter $\nu=1/2$ and $\nu=1/4$. The true covariance parameters are set to be $\theta_0=5,\alpha_0=1,\tau_0=0.5$ for both $\nu=1/2$ and $\nu=1/4$. For $d=1$ and the domain $[0,1]$, we choose the sampling points $S_n$ to be the regular grid $\bfs_i=(2i-1)/(2n)$ for $i=1,\ldots,n$, where we choose $n=400, 500, 625, 781, 976, 1220, 1525, 1907, 2384, 2980$, such that the sample size roughly follows the geometric sequence $n\approx 400\times 1.25^{k-1}$ for $k=1,\ldots,10$. For the regression functions, we let $\ff(\bfs)=\left(1,\bfs,\bfs^2,\bfs^3\right)^\T$ for $\bfs \in [0,1]$ and $\beta_0=(1,0.66,-1.5,1)^\T$. Other aspects of the Bayesian simulation setup, including the prior specification and the posterior sampling procedures are all the same as the $d=2$ case described in the main text.

The results are summarized in Figure \ref{fig:para.0.5.dim1} for $\nu=1/2$ and Figure \ref{fig:para.0.25.dim1} for $\nu=1/4$. In the left panels, the boxplots of both the marginal posterior distributions of $\theta$ and $\tau$ contract to the true parameter values $\theta_0=5$ and $\tau_0=0.5$ as $n$ increases. The right panels show that the means of absolute differences from all posterior draws of $(\theta,\tau)$ to the true parameters $(\theta_0,\tau_0)$ decreases approximately linearly in the sample size on the logarithm scale, indicating polynomial posterior contraction rates for $\theta$ and $\alpha$.

\begin{figure}[H]
\center
\includegraphics[width=0.76\textwidth]{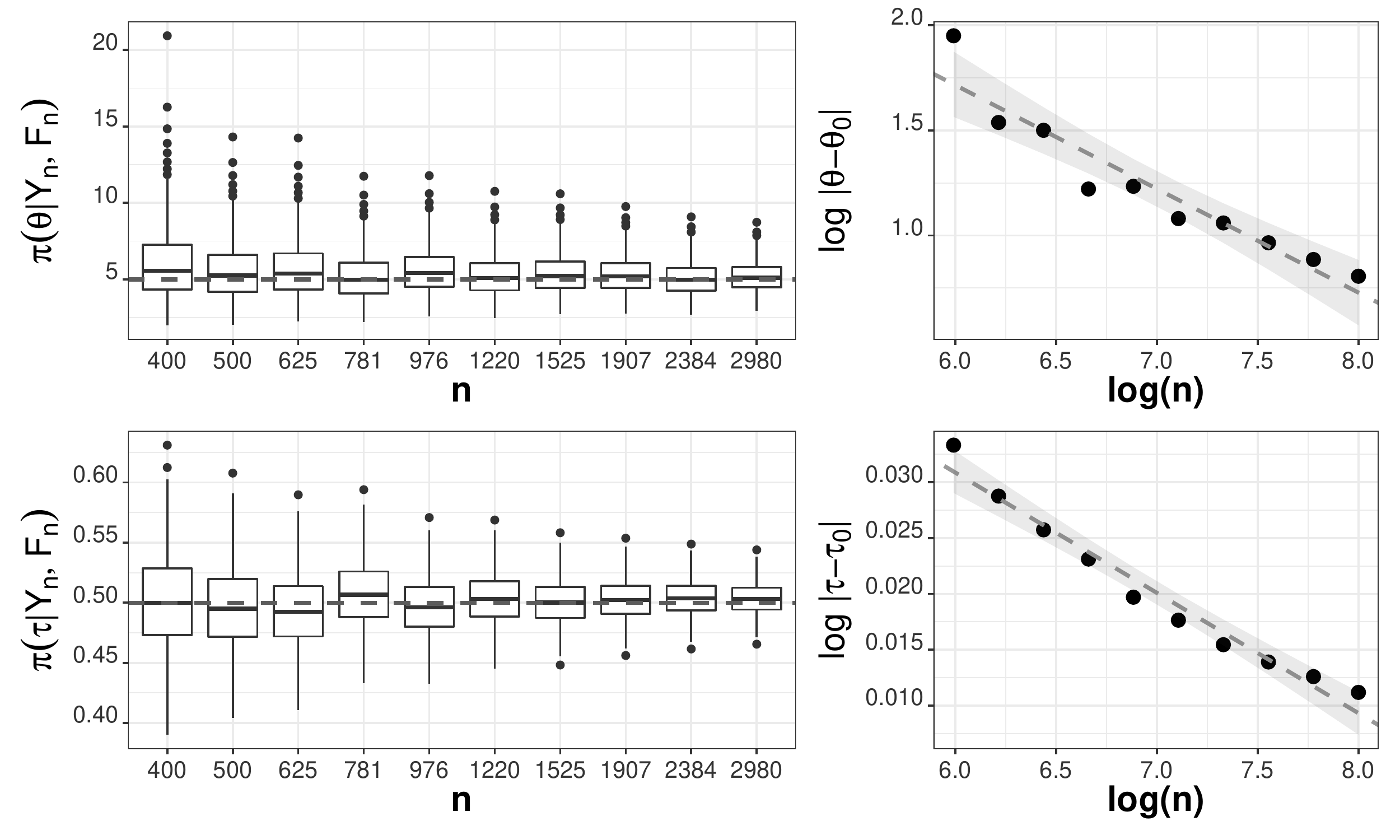}
\caption{\small Posterior contraction for $d=1$ and $\nu=1/2$. Left column: Boxplots for the marginal posterior densities of $\theta$ and $\tau$ versus the increasing sample size $n$. The grey dashed lines are the true parameters $\theta_0=5$ and $\tau_0=0.5$. Right column: Posterior means of $|\theta-\theta_0|$ and $|\tau-\tau_0|$ versus the increasing sample size $n$, on the logarithm scale. The dashed lines are the linear regression fits, and the grey shaded areas are the 95\% confidence bands. All posterior summaries are averaged over 40 macro Monte Carlo replications.}
\label{fig:para.0.5.dim1}
\end{figure}
\begin{figure}[H]
\center
\includegraphics[width=0.76\textwidth]{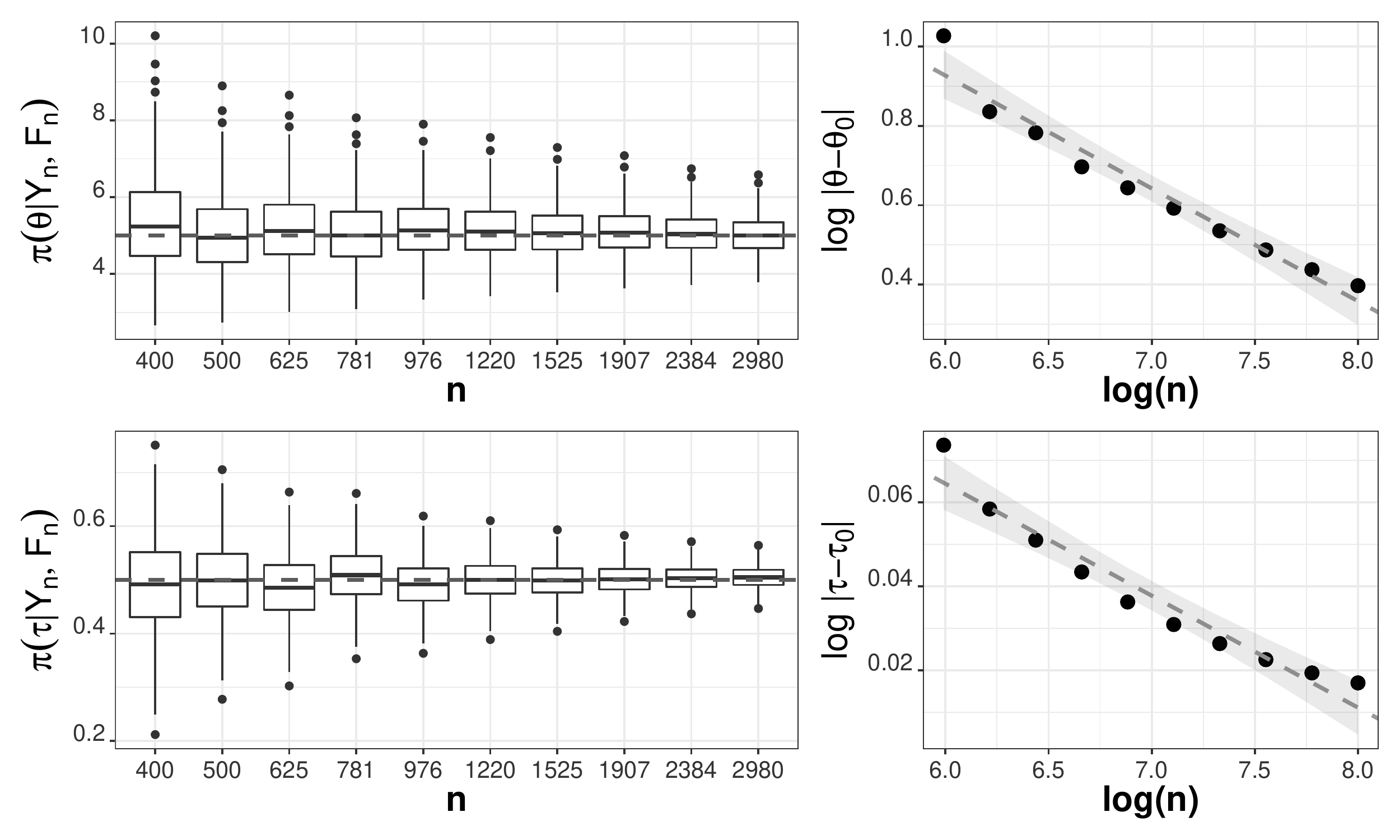}
\caption{\small Posterior contraction for $d=1$ and $\nu=1/4$. Left column: Boxplots for the marginal posterior densities of $\theta$ and $\tau$ versus the increasing sample size $n$. The grey dashed lines are the true parameters $\theta_0=5$ and $\tau_0=0.5$. Right column: Posterior means of $|\theta-\theta_0|$ and $|\tau-\tau_0|$ versus the increasing sample size $n$, on the logarithm scale. The dashed lines are the linear regression fits, and the grey shaded areas are the 95\% confidence bands. All posterior summaries are averaged over 40 macro Monte Carlo replications.}
\label{fig:para.0.25.dim1}
\end{figure}
\begin{figure}[H]
\center
\includegraphics[width=0.82\textwidth]{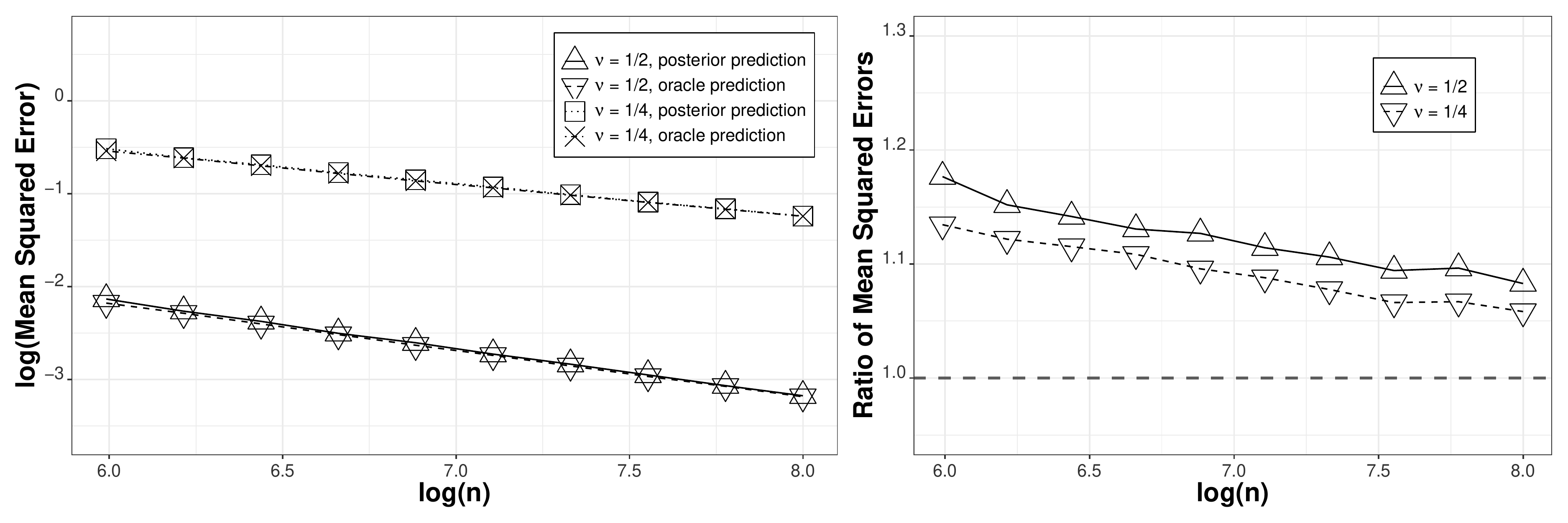}
\caption{\small Prediction mean squared errors for $\nu=1/2$ and $\nu=1/4$ when $d=1$. Left column: The prediction mean squared errors under both the Bayesian posterior prediction and the oracle prediction based on the true parameters. Right column: Ratios of the Bayesian prediction mean squared error and the oracle prediction mean squared error. All posterior summaries are averaged over 2000 testing locations in $[0,1]$ and 40 macro Monte Carlo replications.}
\label{fig:mse.1}
\end{figure}

We also investigate the Bayesian posterior prediction performance for the $d=1$ case and compare with the best possible prediction. We draw another $N=2000$ points $\bfs_1^*,\ldots,\bfs_N^*$ uniformly from the domain $[0,1]$ as the testing locations, and compute the prediction mean squared error $\EE_{Y_n}  \big\{N^{-1}\sum_{l=1}^N M_{\text{post}}(\bfs_l^*)\big\} $ versus the oracle prediction mean squared error $\EE_{Y_n}  \big\{N^{-1}\sum_{l=1}^N M_{0}(\bfs_l^*)\big\}$. The results are shown in Figure \ref{fig:mse.1}. The left panel shows that the Bayesian predictive posterior has almost the same mean square errors as the oracle prediction for both $\nu=1/2$ and $\nu=1/4$. The right panel shows that their ratio averaged over $N=2000$ testing locations decreases towards 1 as the sample size $n$ increases, indicating the asymptotic efficiency of Bayesian posterior prediction.

\bibliographystyle{Chicago}
\bibliography{bvm}

\end{document}